\documentclass[11pt]{article}
\usepackage[utf8]{inputenc}

\usepackage[nottoc]{tocbibind}

\usepackage[abbrev,lite,alphabetic]{amsrefs}
\usepackage[colorlinks,plainpages,urlcolor=blue,linkcolor=reference,citecolor=citation,colorlinks = true, hyperfootnotes=false]{hyperref}
\hypersetup{citecolor=gray, linkcolor = darkgray, urlcolor= gray}

\usepackage[T1]{fontenc}
\urlstyle{sf}
\usepackage{etex}
\usepackage[shortlabels]{enumitem}
\usepackage{moreenum}
\usepackage{subfiles}
 \usepackage{lmodern}
\usepackage{titlesec}
\usepackage{wrapfig}
\usepackage{csquotes}
\usepackage{comment}

\usepackage{titletoc}

\usepackage{stmaryrd}


\usepackage{tikz-cd}
\usepackage{mathrsfs}
\usepackage{sseq}
\usepackage{mathdots}
\usepackage{thm-restate}
\usepackage{amsmath}
\usepackage{amsxtra}
\usepackage{amscd}
\usepackage{amsthm}
\usepackage{amsfonts}
\usepackage{amssymb}
\usepackage{mathtools}
\usepackage{slashed}
\usepackage[scr=rsfs]{mathalfa}
\usepackage{eucal}
\usepackage{bbm}
\usepackage[all,cmtip]{xy}


\usepackage{graphicx}
\usepackage{tikz}
\usepackage{morefloats}
\usepackage{pdfpages}
\usetikzlibrary{matrix,arrows,decorations.pathmorphing,quotes,angles,decorations.markings,shapes.misc}
\RequirePackage{color}


\definecolor{darkpastelpurple}{rgb}{0.59, 0.44, 0.84}
\usepackage{microtype}
\usepackage{epigraph}
\setlength\epigraphwidth{.50\textwidth}
\usepackage{tocloft}
\usepackage[colorinlistoftodos, textsize=tiny]{todonotes}

\usepackage[headsep=1cm,headheight=2cm,margin=2.7cm]{geometry}
\usepackage[framemethod=TikZ]{mdframed}
\usepackage{setspace}
\titleformat{\subsection}[hang]{\large\bfseries}{\thesubsection\hsp\textcolor{gray!75}{|}\hsp}{0pt}{\large\bfseries}
\titleformat{\subsubsection}[hang]{\bfseries}{\thesubsubsection\hsp\textcolor{gray!75}{|}\hsp}{0pt}{\bfseries}
\titleformat{\part}[display]{\Huge\bfseries}{\partname~\thepart:}{20pt}{}{}
\newcommand{\hsp}{\hspace{20pt}}
\titleformat{\section}[hang]{\Large\bfseries}{\thesection\hsp\textcolor{gray!75}{|}\hsp}{0pt}{\Large\bfseries}

\makeatletter
\let\pprevious@stem\@empty

\def\fooo#1#2\global\let\previous@stem\current@stem#3\zz{%
   \def#1{#2%
\global\let\pprevious@stem\previous@stem
\global\let\previous@stem\current@stem
#3}}

\def\foo#1{\expandafter\fooo\expandafter#1#1\zz}

\foo\generate@alphalabel

\let\generate@label\generate@alphalabel

\def\calc@alpha@suffix{%
    \@tempswafalse
    \compare@stems\previous@stem\current@stem
    \ifsame@stems
        \ifx\previous@year\current@year
            \@tempswatrue
        \fi
    \else
    \compare@stems\pprevious@stem\current@stem
    \ifsame@stems
        \ifx\previous@year\current@year
            \@tempswatrue
        \fi
    \fi
    \fi
    \if@tempswa
        \global\advance\alpha@suffix\@ne
        \edef\alpha@label@suffix{\@suffix@format\alpha@suffix}%
        \ifnum\alpha@suffix=\tw@
            \immediate\write\@auxout{%
                \string\ModifyBibLabel{\prev@citekey}%
            }%
        \fi
    \else
        \let\alpha@label@suffix\@empty
        \global\alpha@suffix\@ne
        \@xp\ifx \csname b@\current@citekey @suffix\endcsname \relax
        \else
            \edef\alpha@label@suffix{\@suffix@format\alpha@suffix}%
        \fi
    \fi
}

\makeatother


\newcounter{alphasect}
\def\alphainsection{0}

\let\oldsection=\section
\def\section{%
  \ifnum\alphainsection=1%
    \addtocounter{alphasect}{1}
  \fi%
\oldsection}%

\renewcommand\thesection{%
  \ifnum\alphainsection=1%
    \Alph{alphasect}%
  \else
    \arabic{section}%
  \fi%
}%

%


\usepackage{cleveref}

\definecolor{reference}{rgb}{0.20,0.36,0.74}
\definecolor{citation}{rgb}{0,.40,.80}


\usepackage{makeidx}

\makeindex


\usepackage{lipsum}
\usepackage{cleveref}
\crefname{section}{\S \!\!}{\S\S \!\!}
\crefname{equation}{}{}
\crefname{enumi}{}{}
\crefname{appendix}{\S \!\!}{\S\S \!\!}  

\setenumerate[1]{label={(\arabic*)}}

\allowdisplaybreaks



\theoremstyle{plain}
\newtheorem{theorem}{Theorem}[section]
\newtheorem{mainthm}{Theorem}

\newtheorem{proposition}[theorem]{Proposition}
\newtheorem{lemma}[theorem]{Lemma}
\newtheorem{corollary}[theorem]{Corollary}
\newtheorem*{corollary*}{Corollary}
\theoremstyle{definition}

\newtheorem{definition}[theorem]{Definition}

\newtheorem{notation}[theorem]{Notation}
\newtheorem{observation}[theorem]{Observation}
\newtheorem{remark}[theorem]{Remark}
\newtheorem{example}[theorem]{Example}
\newtheorem*{example*}{Example}
\newtheorem*{remark*}{Remark}
\newtheorem{proposal}[theorem]{Proposal}

\def\cA{\mathcal A}\def\cB{\mathcal B}\def\cC{\mathcal C}\def\cD{\mathcal D}
\def\cE{\mathcal E}
\def\cI{\mathcal I}\def\cL{\mathcal L}
\def\cM{\mathcal M}\def\cO{\mathcal O}
\def\cS{\mathcal S}
\def\cV{\mathcal V}\def\cW{\mathcal W}\def\cX{\mathcal X}

\newcommand{\fB}{\mathfrak{B}}
\newcommand{\fF}{\mathfrak{F}}
\newcommand{\fZ}{\mathfrak{Z}}
\newcommand{\fX}{\mathfrak{X}}

\newcommand{\Spaces}{{\cS}}

\newcommand{\ra}{\rightarrow}

\newcommand{\xra}{\xrightarrow}

\newcommand{\xla}{\xleftarrow}

\newcommand{\xhookra}{\xhookrightarrow}

\newcommand{\longla}{\longleftarrow}

\newcommand{\longra}{\longrightarrow}

\newcommand{\xlongra}[1]{\stackrel{#1}{\longra}}
\newcommand{\xlongla}[1]{\stackrel{#1}{\longla}}

\newcommand{\longhookra}{\lhook\joinrel\longrightarrow}
\newcommand{\longhookla}{\longleftarrow\joinrel\rhook}

\newcommand{\xlonghookla}[1]{\stackrel{#1}{\longhookla}}

\makeatletter
\providecommand{\leftsquigarrow}{%
  \mathrel{\mathpalette\reflect@squig\relax}%
}
\newcommand{\reflect@squig}[2]{%
  \reflectbox{$\m@th#1\rightsquigarrow$}%
}
\makeatother

\newcommand{\hookra}{\hookrightarrow}

\newcommand{\adjarr}{\rightleftarrows}

\newcommand{\Image}{{\sf Im \,}}



\newcommand{\adj}{\dashv}

\DeclareMathOperator{\uno}{\mathbbm{1}}
\newcommand{\ev}{{\sf ev}}

\newcommand{\Fun}{{\sf Fun}}
\newcommand{\Hom}{{\sf Hom}}

\newcommand{\Cat}{{\sf Cat}}

\renewcommand{\Pr}{{\sf Pr}}

\newcommand{\st}{{\sf st}}

\newcommand{\cotangent}{{\rm T}^*}

\newcommand{\PrL}{\Pr^L}

\newcommand{\PrLSt}{\Pr^{L,{\sf st}_{\kk}}}

\newcommand{\Mnd}{{\sf Mnd}}

\newcommand{\Kl}{{\sf Kl}}
\newcommand{\EM}{{\sf EM}}

\newcommand{\St}{{\sf St}}
\newcommand{\Mod}{{\sf Mod}}

\newcommand{\Perf}{{\sf Perf}}

\newcommand{\coCAlg}{{\sf coCAlg}}
\newcommand{\coMod}{{\sf coMod}}

\newcommand{\BiMod}{{\sf BiMod}}

\newcommand{\fgt}{{\sf fgt}}

\renewcommand{\lim}{{\sf lim}}
\newcommand{\lax}{{\sf lax}}

\newcommand{\const}{{\sf const}}
\newcommand{\pt}{{\sf pt}}

\newcommand{\id}{{\sf id}}
\newcommand{\Ar}{{\sf Ar}}
\newcommand{\Exact}{{\sf Exact}}

\newcommand{\LDK}{{\sf LDK}}

\newcommand{\bit}[1]{\textbf{\textit{#1}}}

\newcommand{\unital}{{\sf unital}}

\newcommand{\op}{{\sf op}}

\newcommand{\CMon}{{\sf CMon}}
\newcommand{\Mon}{{\sf Mon}}

\newcommand{\coBar}{{\sf coBar}}

\newcommand{\mon}{{\sf mon}}

\newcommand{\ex}{{\sf ex}}

\newcommand{\ff}{{\sf ff}}

\newcommand{\CAlg}{{\sf CAlg}}
\newcommand{\Alg}{{\sf Alg}}
\newcommand{\Idl}{{\sf Idl}}


\renewcommand{\hom}{{\sf hom}}

\newcommand{\what}{\widehat}

\renewcommand{\max}{{\sf max}}
\renewcommand{\min}{{\sf min}}


\newcommand{\colim}{{\sf colim}}
\newcommand{\cofib}{{\sf cofib}}
\newcommand{\fib}{{\sf fib}}

\renewcommand{\AA}{\mathbb{A}}

\newcommand{\CC}{\mathbb{C}}

\newcommand{\EE}{\mathbb{E}}
\newcommand{\GG}{\mathbb{G}}

\newcommand{\LL}{\mathbb{L}}

\renewcommand{\SS}{\mathbb{S}}

\newcommand{\ZZ}{\mathbb{Z}}

\newcommand{\kk}{\Bbbk}

\newcommand{\gr}{{\sf gr}}
\newcommand{\triv}{{\sf triv}}

\newcommand{\Env}{{\sf Env}}

\newcommand{\Aut}{\mathsf{aut}}
\newcommand{\End}{\mathsf{end}}

\newcommand{\Coh}{\mathsf{Coh}}

\newcommand{\QCoh}{\mathsf{QCoh}}

\newcommand{\IndCoh}{\mathsf{IndCoh}}

\newcommand{\lcone}{\triangleleft}

\newcommand{\rcone}{\triangleright}

\newcommand{\Sph}{\mathsf{Sph}}

\newcommand{\Adj}{\mathsf{Adj}}

\newcommand{\radjt}{{\sf r.adjt}}

\newcommand{\Tot}{{\sf Tot}}

\newcommand{\Perv}{\mathsf{Perv}}
\newcommand{\PS}{\mathsf{Perv}^{(2)}}

\def\bDelta{\mathbf\Delta}

\newcommand{\ul}{\underline}

\newcommand{\Loc}{\mathsf{Loc}}

\renewcommand{\sec}{\S}

\newcommand{\Ind}{{\sf Ind}}

\newcommand{\heart}{\heartsuit}

\renewcommand{\cV}{{{\mathcal{V}_{\kk}}}}
\newcommand{\Stu}{{\sf St}}
\renewcommand{\St}{{{\sf St}_{\kk}}}
\newcommand{\mrk}{{\sf mark}}

\title{Perverse schobers and 3d mirror symmetry}
\author{Benjamin Gammage, Justin Hilburn, and Aaron Mazel-Gee}
\date{\vspace{-5ex}}

\begin{document}

\maketitle

\begin{abstract}

The proposed physical duality known as {3d mirror symmetry} relates the geometries of dual pairs of holomorphic symplectic stacks. It has served in recent years as
a guiding principle for developments in representation theory. However, due to the lack of definitions, thus far only small pieces of the subject have been mathematically accessible. In this paper, we formulate abelian 3d mirror symmetry as an equivalence between a pair of 2-categories constructed from the algebraic and symplectic geometry, respectively, of Gale dual toric cotangent stacks.

In the simplest case, our theorem provides a spectral description of the 2-category of spherical functors -- i.e., perverse schobers on the affine line with singularities at the origin. We expect that our results can be extended from toric cotangent stacks to hypertoric varieties, which would provide a categorification of previous results on Koszul duality for hypertoric categories $\mathcal{O}$. Our methods also suggest approaches to 2-categorical 3d mirror symmetry for more general classes of spaces of interest in geometric representation theory.

Along the way, we establish two results that may be of independent interest: (1) a version of the theory of Smith ideals in the setting of stable $\infty$-categories; and (2) an ambidexterity result for co/limits of presentable enriched $\infty$-categories over $\infty$-groupoids.
\end{abstract}


\setcounter{tocdepth}{2}
\tableofcontents
 
\setcounter{section}{-1}

\section{Introduction}\label{sec:intro}

Mirror symmetry, which began life as a duality of 2-dimensional quantum field theories, entered mathematics in \cite{COGP} as a predicted equality between Gromov--Witten invariants and period integrals 
associated to a dual pair of Calabi--Yau manifolds. 
In \cite{K94}, this equality of numbers was upgraded to a proposed equivalence of categories.
These categories model the boundary conditions in a pair of topological field theories --
the A-model and B-model -- determined by the data of a symplectic or algebraic manifold.
Kontsevich's proposal prefigured the result \cites{Baez-Dolan,Costello-tcft,Lurie-cobordism} that an $n$-dimensional topological field theory is determined by its $(n-1)$-category of boundary conditions.

The program of \bit{3d mirror symmetry} \cite{IS96} (also known as \bit{symplectic duality} \cite{BLPW16}), which entails a duality between $3$-dimensional quantum field theories, has so far been understood mathematically as expressing a relation between invariants 
associated to dual pairs of symplectic resolutions.\footnote{The literature on this subject is vast; some representative research directions are described in \cites{BFN1,BDGH,Okounkov-ICM,Kamnitzer-survey}, and deeper references to the field may be found there.}
As in (2d) mirror symmetry,
most of these invariants probe topological field theories -- the 3d A- and B-models.\footnote{The 3d B-model is also known as Rozansky--Witten theory \cite{RW97}. The $3$d A-model 
studied in this paper is essentially $3$d generalized
Seiberg-Witten theory. In general, this latter theory is different than that considered in \cites{Kap-Vyas, Kap-Set-Vyas}.}
Therefore the deepest formulation of (topologically twisted) 3d mirror symmetry is as a conjectural equivalence of 2-categories.

Unfortunately, the mathematical
existence of these 2-categories is also conjectural.
The B-type 2-category, first studied in \cites{KRS,KR}, is expected to admit a formulation in the emerging mathematical language of coherent sheaves of categories \cites{Ga-shcat,Ari-talk}, but the A-type 2-category, an expected categorification of the Fukaya category,
is much more mysterious. The inspirational ICM address \cite{Tel-ICM} first formulated the A-type 2-category associated to pure gauge theory and described the equivalence of 2-categories predicted by 3d mirror symmetry. 

In this paper, we take the first steps beyond pure gauge theory, by proposing 2-categories associated to abelian gauge theories with matter.
In mathematical terms, we associate 2-categories to \bit{toric cotangent stacks} -- 
quotients of the form ${\cotangent}(\CC^n/G)$ for $G\subset (\CC^\times)^n$ a torus -- and prove equivalences between the 2-categories associated to dual pairs of stacks.
These equivalences, which can be understood in purely mathematical terms, reveal heretofore hidden structure inside the world of stable higher category theory.

In \Cref{subsec:intro-math}, we provide a purely mathematical account of our results, focusing on the basic duality between ${\cotangent}\CC$ and ${\cotangent}(\CC/\CC^\times).$ In this case, we understand 3d mirror symmetry as an equivalence between a 2-category of perverse schobers on $\CC$ and a 2-category of coherent sheaves of categories on $\CC/\CC^\times.$ Our main result is \Cref{mainthm:sph-spectral}, which gives a precise statement of this equivalence.
In \Cref{subsec:abelian-intro}, which may be skipped by a reader interested only in the mathematical results of this paper, we situate our results in the context of abelian gauge theories with matter and give a preview of our future applications to hypertoric categories $\cO.$

\subsection{Mathematical overview}\label{subsec:intro-math}

The notion of \bit{spherical functor} \cites{Anno-Logvinenko} has proven remarkably useful as an organizing principle in contexts throughout algebraic and symplectic geometry. 
\begin{definition}
An adjunction\footnote{The indices $\Phi,\Psi$ on the categories participating in this adjunction are meant to recall the notation for the nearby and vanishing cycles of a perverse sheaf.}
$S:\cC_\Phi\rightleftarrows\cC_\Psi:S^R$ between stable categories\footnote{Throughout this paper, we take the ``implicit $\infty$'' convention (see \Cref{subsection:notn.and.conventions}).} is a \bit{spherical adjunction} if it satisfies the requirement that the endofunctors $T_\Phi:=\fib(\id_{\cC_\Phi}\xra{\eta}S^RS)$ and $T_\Psi:=\cofib(SS^R\xra{\varepsilon}\id_{\cC_\Psi})$ are invertible. The left adjoint $S$ is the underlying \bit{spherical functor} of the spherical adjunction.\footnote{We prefer the term ``spherical adjunction'' over the more common term ``spherical functor'', since it is more symmetric and in particular adheres more closely to the intuition coming from symplectic topology. On the other hand, the terminology ``spherical functor'' will also be useful for us.}
\end{definition}

\noindent Examples include the pushforward of coherent sheaves on a divisor in algebraic geometry, or the ``Orlov/cup'' functor in symplectic geometry; examples coming from representation theory are also discussed in \cite{KS-schobers}*{\sec 4} and
\cite{KS-parabolic}*{\sec 8}.
It was recognized in \cite{KS-schobers} that spherical functors categorify the quiver presentation of perverse sheaves on $\CC$ with stratification $\cS_{toric}$ given by $\CC = 0\sqcup \CC^\times.$
\begin{theorem}[\cite{Verdier}]
The category $\Perv(\CC,\cS_{toric})$ is equivalent to the category of diagrams of vector spaces $u : V_\Phi \rightleftarrows V_\Psi : v$ satisfying the requirement that the endomorphisms $T_\Phi:= 1_\Phi-vu$ and $T_\Psi:=1_\Psi-uv$ are invertible.
\end{theorem}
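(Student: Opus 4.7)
The plan is to realize the equivalence via the nearby and vanishing cycle functors along the coordinate $z:\CC\to\CC$, following the classical approach of Deligne and Beilinson. To a perverse sheaf $\cF\in\Perv(\CC,\cS_{toric})$ I would assign $V_\Psi:=\Psi_z\cF$ and $V_\Phi:=\Phi_z\cF$, together with the variation map $u:=\mathrm{var}:V_\Phi\to V_\Psi$ and the canonical map $v:=\mathrm{can}:V_\Psi\to V_\Phi$. Under the standard identifications (up to a sign convention on the monodromy that can be absorbed into the definition of $u$ or $v$), the compositions $uv$ and $vu$ differ from the monodromy automorphisms $T_\Psi$ and $T_\Phi$ on nearby and vanishing cycles by the identity. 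Since monodromies of constructible sheaves are always automorphisms, both $1-uv$ and $1-vu$ are invertible, and so the assignment lands in the claimed diagram category.

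For a quasi-inverse I would use Beilinson's gluing construction. Given a diagram $(V_\Phi,V_\Psi,u,v)$ satisfying the invertibility hypothesis, the pair $(V_\Psi,1-uv)$ determines a local system $\cL$ on $\CC^\times$ of rank $\dim V_\Psi$. The remaining data $(V_\Phi,u,v)$ can then be packaged as a morphism between Beilinson's maximal (unipotent) extension $\Xi_z\cL$ and the skyscraper $i_*V_\Phi$, and the cone of this morphism is a complex on $\CC$ that one checks lies in the perverse heart, with nearby and vanishing cycles recovering the original diagram. The crucial point is that $\Xi_z$ is equipped with natural transformations playing the roles of $\mathrm{can}$ and $\mathrm{var}$, so the abstract arrows $u,v$ promote canonically to maps of sheaves; the invertibility of $1-uv,1-vu$ is precisely what ensures the perversity conditions at $0$.

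The main obstacle is verifying that these two assignments are mutually quasi-inverse, in particular that every morphism of diagrams lifts uniquely to a morphism of perverse sheaves. A cleaner alternative that avoids the explicit gluing is to pick a compact generator $P$ of $\Perv(\CC,\cS_{toric})$ — for instance a suitable direct sum of $i_*\uno_0$ with an extension of the trivial local system on $\CC^\times$ — compute the endomorphism algebra $\operatorname{End}(P)$ using the six-functor adjunctions between $i^*,i_*,j_!,j^*$, and identify the result with the path algebra of the two-vertex quiver $\Phi \rightleftarrows \Psi$ whose finitely generated modules are the diagrams of the statement. Under this identification the invertibility conditions on $1-uv$ and $1-vu$ correspond exactly to the perversity constraints cutting out the heart, and the equivalence follows from Morita theory for the chosen generator.
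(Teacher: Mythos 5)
The paper does not prove this statement; it is quoted as a classical theorem and attributed to \cite{Verdier}, so there is no internal argument to compare against. Your sketch follows the standard route (nearby/vanishing cycles one way, Beilinson-style gluing the other), which is indeed how the result is established in the literature, and you correctly locate the essential content: the monodromy on nearby and vanishing cycles is $1-uv$ (resp.\ $1-vu$) up to sign, and monodromies are automorphisms, which gives the forward functor; the converse is the gluing construction.

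Two points in your quasi-inverse need repair before this is a proof. First, Beilinson's maximal extension $\Xi_z$ is built from the \emph{unipotent} nearby cycles, so the construction as you describe it only recovers perverse sheaves whose monodromy $1-uv$ is unipotent; for a general invertible $1-uv$ you must either decompose $V_\Psi$ into generalized eigenspaces and twist by Kummer local systems, or note that the non-unipotent part contributes no vanishing cycles and is handled by $j_{!*}$ alone. Second, the glued object is not the cone of a single map $\Xi_z\cL \to i_*V_\Phi$; it is the middle cohomology of the three-term complex $j_!\cL \to \Xi_z\cL \oplus i_*V_\Phi \to j_*\cL$ (equivalently, an object sandwiched between $j_!\cL$ and $j_*\cL$), and the perversity and the recovery of $(u,v)$ are checked from that presentation. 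Your ``cleaner alternative'' via a compact projective generator has a genuine obstruction: the localized path algebra of the two-vertex quiver at $1-uv$ and $1-vu$ is infinite-dimensional, while every object of $\Perv(\CC,\cS_{toric})$ has finite-dimensional endomorphisms, so no single object of the abelian category can serve as the Morita generator — one must pass to pro-objects or restrict to blocks with fixed monodromy eigenvalues, at which point the argument is no longer obviously cleaner than the gluing one.
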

\noindent For this reason, spherical functors are also referred to as \bit{perverse schobers} on $\CC$ with stratification $\cS_{toric}$ \cite{BKS}.  

%
The collection of all spherical functors (not fixing the source or target category) may be usefully organized into a 2-category $\Sph$, a full sub-2-category of the 2-category $\Fun(\Adj,\St)$ of stable adjunctions. As we shall see, this 2-category is remarkably rich: the simple step of demanding that the twists be invertible imposes a great deal of structure, including many symmetries not present for arbitrary stable adjunctions.




The mathematical content of our paper is a spectral description of the 2-category of spherical functors. To give a concrete explanation of what we mean by a ``spectral'' description, we consider the main precursor to our work, the equivalences of 2-categories discussed
in \cite{Tel-ICM}.

\begin{definition}
Let $X$ be a stack which is 1-affine in the sense of \cite{Ga-shcat}. We may associate to $X$ the 2-categories $\QCoh^{(2)}(X):= \Mod_{\Perf(X)}(\St)$ of \bit{quasicoherent sheaves of categories} and $\Loc^{(2)}(X):=\Fun(X_{\sf{B}},\St)$ of \bit{local systems of categories}, where we write $X_{\sf{B}}$ for the homotopy type (``Betti stack'') underlying $X$.
\end{definition}

\begin{example} \label{ex:intro-cat-fourier}
Fix a torus $T := (\CC^\times)^n$, and let $T^\vee$ be its Langlands dual. The (\textit{categorified}) \textit{Fourier transform} \cite{Tel-ICM}*{Remark 4.6} is the equivalence of 2-categories
\[
\QCoh^{(2)}(BT)
\simeq
\Loc^{(2)}(T^\vee)
~,
\]
which follows from the observation that $\QCoh^{(2)}(BT)$ and $\Loc^{(2)}(T^\vee)$ are both equivalent to the $2$-category of categories equipped with $n$ commuting automorphisms.

There is also a dual Fourier transform \cite{Tel-ICM}*{Proposition 4.1}
\[
\Loc^{(2)}(BT)
\simeq
\QCoh^{(2)}(T^\vee)
~,
\]
where now the classifying stack appears on the automorphic side instead of the spectral side. To see this, note that $\Loc^{(2)}(BT)$ is equivalent to the 2-category of categories $\cC$ equipped with a topological action of the torus $T$. Such an action is equivalent to the data of an $\EE_2$ map $\CC[T^\vee] \cong C_*(\Omega T) \ra HH^*(\cC),$ as explained in \cite{Tel-ICM}*{Theorem 2.5}.
\end{example}

\Cref{ex:intro-cat-fourier} expressed a duality between the spaces $BT$ and $T^\vee$. Similarly, our results will relate the space $\CC,$ equipped with stratification $0\sqcup \CC^\times,$ to a dual geometry given by the space $\CC/\CC^\times$ equipped with stratification $0/\CC^\times \sqcup \CC^\times / \CC^\times$. (In \Cref{subsec:abelian-intro}, we will give context for the geometric duality relating both these stratified spaces and the spaces described in \Cref{ex:intro-cat-fourier}.)

Consider the functor
\[
\Coh(\CC/\CC^{\times}) \xlongra{i^*} \Coh(B \CC^{\times})
\]
of pullback along the inclusion $i:0/\CC^\times\hookrightarrow \CC/\CC^\times.$ This functor
is spherical, and we prove that it is the {\em free} spherical functor on a categorified vanishing cycle, in the following sense.

\begin{mainthm}[\Cref{thm.gr.coreps.LHS}]
\label{mainthm}
The functor $\Phi:\Sph\to \St$ taking a spherical adjunction $\cC_\Phi \rightleftarrows\cC_\Psi$ to the category $\cC_\Phi$ is corepresented by the spherical functor $\Coh(\CC/\CC^{\times}) \xra{i^*} \Coh(B \CC^{\times}).$
\end{mainthm}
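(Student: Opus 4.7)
The plan is to establish corepresentability by exhibiting $\cO \in \Coh(\CC/\CC^\times)$ as the universal object: for each spherical adjunction $S: \cC_\Phi \rightleftarrows \cC_\Psi: S^R$, the evaluation map $\Map_{\Sph}(i^*, S) \to \cC_\Phi$, $(F_\Phi, F_\Psi) \mapsto F_\Phi(\cO)$, should be an equivalence. First I would unpack the source: under the identification $\Coh(\CC/\CC^\times) \simeq \Perf^{\CC^\times}(\CC[x])$ with graded perfect $\CC[x]$-modules (with $\cO$ corresponding to $\CC[x]$ in grading zero) and $\Coh(B\CC^\times) \simeq \Perf^{\CC^\times}(\CC)$ with graded perfect vector spaces, the functor $i^*$ is $-\otimes_{\CC[x]} \CC$, i.e., ``setting $x=0$.'' The crucial universal property I would establish is that $\Coh(\CC/\CC^\times)$ is the free (idempotent-complete) stable $\infty$-category carrying an invertible autoequivalence $T$ together with a generator $e$ and a designated morphism $Te \to e$; similarly $\Coh(B\CC^\times)$ is the free such category with only an invertible autoequivalence.

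To construct the inverse $\cC_\Phi \to \Map_{\Sph}(i^*, S)$, given $c \in \cC_\Phi$, I would extract the canonical data supplied by sphericality: the twist $T_\Phi$ is an invertible autoequivalence of $\cC_\Phi$, and the fiber sequence $T_\Phi \to \id \to S^R S$ applied to $c$ produces a distinguished map $T_\Phi c \to c$. This is exactly the input the universal property above consumes, extending uniquely to a $T$-equivariant exact functor $F_\Phi: \Coh(\CC/\CC^\times) \to \cC_\Phi$ with $F_\Phi(\cO) = c$ and $F_\Phi(T\cO \to \cO) = (T_\Phi c \to c)$. The partner $F_\Psi$ is then forced on the generator by $F_\Psi(\cO_{B\CC^\times}) = F_\Psi(i^*\cO) = S(F_\Phi(\cO)) = S(c)$, with its $T_\Psi$-equivariance supplied by the standard intertwining of the two twists across $S$ in a spherical adjunction. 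The commutation $S \circ F_\Phi \simeq F_\Psi \circ i^*$ then follows because both sides are $T$-equivariant and agree on $\cO$.

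The hard part will be verifying that all of the coherence data packaged into a morphism of $\Sph$ is already captured by this procedure. A morphism in $\Sph$ must intertwine not only the left adjoints $i^*$ and $S$, but also their right adjoints, the unit/counit data of both adjunctions, and the invertibility witnesses for both $T_\Phi$ and $T_\Psi$. The key observation I would leverage is that once the underlying left-adjoint functors are specified, all this supplementary data is canonically determined: right adjoints are unique, unit/counit squares are forced by adjointness, and the invertibility of $T_\Psi$ propagates from that of $T_\Phi$ along the adjunction. I expect this coherence verification to be the technical heart of the argument, and I would invoke the paper's $\infty$-categorical theory of Smith ideals (announced in the abstract) to reformulate the defining fiber sequence $T_\Phi \to \id \to S^R S$, together with its dual $SS^R \to \id \to T_\Psi$, in a form that makes this determination transparent.
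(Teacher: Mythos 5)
Your overall strategy --- exhibit $\Coh(\CC/\CC^\times)\simeq f\cV$ as a free object on ``twist data,'' feed in the map $T_\Phi c\to c$ coming from the fiber sequence $T_\Phi\to\id\to S^RS$, and let Smith ideals supply multiplicativity --- is genuinely parallel to the paper's proof of \Cref{thm.gr.coreps.LHS}, but the two steps you defer are exactly where the theorem lives, and as written both are gaps. The universal property you announce (``$f\cV$ is the free idempotent-complete stable category carrying an invertible autoequivalence $T$, a generator $e$, and a morphism $Te\to e$'') is asserted, not proved, and its statement conceals the real difficulty: a single map $T_\Phi c\to c$ is a priori much less structure than what a map out of the adjunction $\gr\dashv\triv$ must be compatible with, namely the full monad $S^RS$ --- equivalently, by \Cref{cor.ideals.are.algebras}, the coherently multiplicative $\ZZ_{\leq 0}$-action (Smith ideal) generated by $T_\Phi\to\id$, not the single arrow alone. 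The paper's proof consists precisely of establishing the corrected freeness statement stably: the equivalence $\Fun(\Mnd,\St)\simeq\Fun(\fB\ZZ_{\leq 0},\St)$ via $\Sigma^{(\infty,2)}_+\fB\bDelta_+\simeq\Sigma^{(\infty,2)}_+\fB\ZZ_{\leq 0}$ (Lurie--Dold--Kan plus the Adams-tower comparison), the monoidal localization $\ZZ_{\leq 0}\hookrightarrow\ZZ$ (\Cref{prop:BN-localization}) to exploit invertibility of the twist, and the identification of $(f\cV,\langle -1\rangle)$ as the free $\fB\ZZ$-module on one object. The authors even remark that the unstable version of the analogous freeness claim ``appears to involve a nontrivial amount of combinatorics''; your proposal must either reproduce the stable argument or do that combinatorics, and gesturing at Smith ideals at the end does not yet do either.

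The second gap is that nothing in your outline identifies the mapping category $\hom_\Sph(\gr,S)$, taken inside $\Fun(\Adj,\St)$, with the data you manipulate. You build a candidate inverse on objects and then claim the remaining data is ``canonically determined'' because right adjoints are unique and units/counits are forced; but even granting \Cref{thm:adjunctions.are.adjoints}, a morphism of adjunctions still consists of the pair $(F_\Phi,F_\Psi)$ together with the commutation square, and it is not automatic that $F_\Psi$ and the square carry no information beyond $c$: the twist-equivariance of $F_\Psi$ (with its shifts, cf.\ the formula $\delta_n\mapsto\Sigma^{2n}T_\Psi^{-n}Sc$, which your sketch ignores) is something to be \emph{derived}, and descending $F_\Psi$ along the non-fully-faithful functor $\gr$ is not formal. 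The paper's mechanism for this reduction is the adjunction $\Kl\dashv\ev_\ell$ of \Cref{lem:KL-EM} together with the recognition (\Cref{prop:kl-of-surj}) that $\gr\dashv\triv$ is a Kleisli adjunction because $\gr$ is essentially surjective, so that mapping out of it into any adjunction is computed at the level of monads. Some ingredient playing this role is indispensable; without it, the ``coherence verification'' you correctly identify as the technical heart has no argument behind it.
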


As a spherical functor, $i^*$ admits both adjoints $i_! \adj i^* \adj i_*,$ which are themselves spherical functors. Moreover, these adjoints $i_!$ and $i_*$ are in fact equivalent (when considered as objects of $\Sph$), and
corepresent the functor $\Psi$ taking a spherical adjunction $\cC_\Phi \rightleftarrows \cC_\Psi$ to the category $\cC_\Psi.$
%
%
%
%
Since the objects corepresenting the functors $\Phi$ and $\Psi$ jointly generate $\Sph,$ in the sense that the functor corepresented by their direct sum is conservative, in the spirit of the Barr-Beck theorem we may use their their monoidal endomorphism category to give a presentation of the full 2-category $\Sph$.
\begin{mainthm}[\Cref{prop:spectral-description}]\label{mainthm:sph-spectral}
The 2-category of spherical functors admits a canonical equivalence $\Sph \simeq \Mod_\cA(\St)$ with the 2-category of modules for the convolution monoidal category
\[
\cA:= \Coh\left(
(\CC/\CC^\times \sqcup 0/\CC^\times)
\times_{\CC/\CC^\times}
(\CC/\CC^\times \sqcup 0/\CC^\times)
\right).
\]
\end{mainthm}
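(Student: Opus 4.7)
The plan is a $2$-categorical Barr--Beck--Lurie monadic reconstruction of $\Sph$ from the pair of corepresenting generators furnished by \Cref{mainthm} and its dual statement for $\Psi$. Writing $Q_\Phi := i^*$ and $Q_\Psi := i_!$, the joint functor $(\Phi,\Psi) : \Sph \to \St \times \St$ is corepresented by $Q_\Phi \oplus Q_\Psi$ and is conservative, since an equivalence of spherical adjunctions is detected on its underlying pair of stable categories. A left adjoint $L$ exists by the presentability of $\Sph$ as a reflective sub-$2$-category of $\Fun(\Adj, \St)$, and satisfies $L(\cD, \cE) \simeq \cD \otimes Q_\Phi \oplus \cE \otimes Q_\Psi$. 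Applying the $2$-categorical Barr--Beck--Lurie theorem to this conservative adjunction yields
\[
\Sph \simeq \Mod_T(\St \times \St)
\]
for the monad $T := (\Phi,\Psi) \circ L$.

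It remains to identify $T$ with the convolution monad for $\cA$. Since $T$ acts on a $2$-fold product, it is equivalent to an $\St$-enriched category on two objects $\cA'$ with hom-objects $\cA'(i,j) = \Hom_\Sph(Q_i, Q_j)$, which gives a further equivalence $\Mod_T(\St \times \St) \simeq \Mod_{\cA'}(\St)$. Writing $X := \CC/\CC^\times$ and $Y := 0/\CC^\times$, the corepresentability gives three of the four hom-objects directly:
\[
\Hom_\Sph(Q_\Phi, Q_\Phi) \simeq \Coh(X), \quad \Hom_\Sph(Q_\Phi, Q_\Psi) \simeq \Hom_\Sph(Q_\Psi, Q_\Phi) \simeq \Coh(Y),
\]
matching the blocks of $\cA$ over the components $X \times_X X = X$ and $X \times_X Y = Y \times_X X = Y$ of the defining fiber product, with composition agreeing with convolution along the respective projections.

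\textbf{Main obstacle.} The subtle point is the $(\Psi,\Psi)$-block: a naive Yoneda calculation gives $\Hom_\Sph(Q_\Psi, Q_\Psi)$ as a stable category equivalent to $\Coh(X)$, yet the target requires it to match $\Coh(Y \times_X Y)$ as a monoidal category. These two are Koszul-dual algebraic presentations of the same underlying $\kk$-linear datum, and matching their monoidal structures reflects the full spherical structure on $Q_\Psi$: the endomorphism category of the universal vanishing-cycle generator is the convolution algebra on the derived self-intersection of $Y$ in $X$, rather than the naive tensor category on $\Coh(X)$. Establishing this identification---essentially a bar/cobar dualization of the spherical adjunction $i_! \adj i^*$---is the technical heart of the proof. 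Once matched, the four hom-objects assemble into the convolution algebra $\cA = \Coh((X \sqcup Y) \times_X (X \sqcup Y))$, giving $\Sph \simeq \Mod_\cA(\St)$.
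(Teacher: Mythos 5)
Your first half reproduces the paper's own argument for \Cref{sph.fctrs.are.modules.over.matrix.alg}: the joint functor $(\Phi,\Psi)$ is corepresented by the direct sum of the two generators, is conservative, admits a left adjoint given by tensoring, and the $2$-categorical Barr--Beck--Lurie theorem identifies $\Sph$ with modules over the resulting two-object endomorphism category. (One small caveat: the paper does not realize $\Sph$ as a reflective sub-$2$-category of $\Fun(\Adj,\St)$ -- that is not obviously true, since invertibility of the twists need not survive pointwise co/limits -- but instead writes $\Sph \simeq \Fun((\Sigma^{(\infty,1)}_+\Adj)',\St)$ as a functor $2$-category out of the localization inverting the twists, from which presentability, pointwise colimits, and tensors follow.)

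The genuine gap is in the second half, and you have in effect flagged it yourself: the identification of $\End_\Sph(Q_\Phi\oplus Q_\Psi)$ with the convolution category \emph{as a monoidal category} is not an obstacle to be noted but the actual content of the theorem, and "essentially a bar/cobar dualization of $i_!\dashv i^*$" is not an argument. Your block-by-block strategy has two problems: even for the three "easy" blocks, the claim that composition agrees with convolution is unproven (the bimodule structures are exactly the data at stake), and for the $(\Psi,\Psi)$-block you only observe that $\sigma(f\cV)$ and $\Coh(0/\GG_m\times_{\AA^1/\GG_m}0/\GG_m)$ are abstractly equivalent, which does not produce a compatible monoidal identification. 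The paper's mechanism is different and is what makes the proof work: the corepresented functor $\Phi$ is lifted (in the proof of \Cref{thm.gr.coreps.LHS}) to a functor $\Sph\to\Mod_{f\cV}(\St)$, which yields a single \emph{monoidal} functor $\cA\to\End_{\Mod_{f\cV}(\St)}(f\cV\oplus g\cV)$; the target is identified with the convolution category $\Coh\bigl((\AA^1/\GG_m\sqcup 0/\GG_m)\times_{\AA^1/\GG_m}(\AA^1/\GG_m\sqcup 0/\GG_m)\bigr)$ by the integral-kernels theorem of Ben-Zvi--Nadler--Preygel (\Cref{lem:BZNP}); and only then is the already-monoidal functor checked to be an equivalence componentwise, the sole nontrivial corner being the standard Koszul duality $\sigma(f\cV)=\Perf_{\kk[\beta]}(g\cV)\simeq\Coh_{\kk[\epsilon]}(g\cV)$. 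This also explains the careful choice of corepresenting object $\fF(\sigma(\gr))$ rather than $\fF(\gr)$ (\Cref{obs:gvtwist}), made precisely so that the $f\cV$-module structure on the $\Psi$-side is the one induced by $\gr$ and the comparison functor lands where it should. Without some such device for producing the monoidal comparison globally, your outline does not yield \Cref{prop:spectral-description}.
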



As in \Cref{ex:intro-cat-fourier}, there is a dual form of \Cref{mainthm:sph-spectral} in which $\CC/\CC^\times = 0/\CC^\times \sqcup \CC^\times / \CC^\times$ appears on the automorphic side and $\CC = 0\sqcup \CC^\times$ appears on the spectral side.

\begin{mainthm}[\Cref{cor:ungauging-dim1}]\label{mainthm:s1sph-spectral}
The 2-category $\Sph$ admits a topological $S^1$-action, and the invariant 2-category admits a canonical equivalence $\Sph^{S^1} \simeq \Mod_{\cA'}(\St)$ with the 2-category of modules for the convolution monoidal category
\[
\cA' := \Coh\left(
(\CC \sqcup 0)
\times_{\CC}
(\CC \sqcup 0)
\right).
\]
\end{mainthm}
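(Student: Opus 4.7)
The plan is to derive \Cref{mainthm:s1sph-spectral} from the presentation $\Sph \simeq \Mod_\cA(\St)$ of \Cref{mainthm:sph-spectral} by an \emph{ungauging} procedure that eliminates the residual $\CC^\times$-gauge symmetry built into $\cA$. First I would identify the $S^1$-action on $\Sph$: tracing through the corepresentability result \Cref{mainthm}, it arises from the natural $\CC^\times$-symmetry of the target $B\CC^\times$ in the corepresenting spherical functor $i^* \colon \Coh(\CC/\CC^\times) \to \Coh(B\CC^\times)$, namely the action by tensoring with powers of the tautological line bundle $\cO(1)$. Under the equivalence \Cref{mainthm:sph-spectral}, this translates into an $S^1$-action on the convolution monoidal category $\cA$ by twisting equivariant sheaves by characters of the residual $\CC^\times$.

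The central ingredient is a general \emph{ungauging lemma}: for a scheme $W$ equipped with a $\CC^\times$-action, there is a canonical equivalence
\[
\Mod_{\Coh([W/\CC^\times])}(\St)^{S^1} \simeq \Mod_{\Coh(W)}(\St),
\]
where the $S^1$-action on the left is the character-twisting action above. The proof proceeds by interpreting both sides through the categorified Fourier duality $\Loc^{(2)}(B\CC^\times) \simeq \QCoh^{(2)}(\CC^\times)$ of \Cref{ex:intro-cat-fourier}: the data of a $\Coh([W/\CC^\times])$-module in $\St$ is, by categorical Barr--Beck for the action of $\Rep(\CC^\times) = \Coh(B\CC^\times)$, the same as a $\Coh(W)$-module equipped with a compatible $\CC^\times$-equivariance, i.e.\ a $\Coh(W)$-module in $\Loc^{(2)}(B\CC^\times)$. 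The $S^1$-invariants are then computed, via the ambidexterity result for $\infty$-groupoid-indexed co/limits of presentable enriched $\infty$-categories announced in the abstract, as the corresponding $S^1$-coinvariants, which geometrically forget the $\CC^\times$-equivariance and yield $\Mod_{\Coh(W)}(\St)$.

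Applying the ungauging lemma to the scheme $W = (\CC \sqcup 0) \times_\CC (\CC \sqcup 0)$ --- for which $[W/\CC^\times] \simeq (\CC/\CC^\times \sqcup 0/\CC^\times) \times_{\CC/\CC^\times} (\CC/\CC^\times \sqcup 0/\CC^\times)$, so that $\Coh([W/\CC^\times]) = \cA$ and $\Coh(W) = \cA'$ --- combined with \Cref{mainthm:sph-spectral} and the compatibility of the two $S^1$-actions yields the desired equivalence $\Sph^{S^1} \simeq \Mod_{\cA'}(\St)$. I expect the main obstacle to be twofold: (i) verifying that the $S^1$-action on $\Sph$ produced from \Cref{mainthm} coincides, under \Cref{mainthm:sph-spectral}, with the character-twisting action on $\Mod_\cA(\St)$ to which the ungauging lemma applies; and (ii) establishing the ungauging lemma with enough functoriality in the enriched 2-categorical setting to propagate it across the equivalence. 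The ambidexterity result is indispensable for (ii): without identifying invariants and coinvariants of $S^1$-actions, the limit defining $S^1$-fixed points of $\Mod_{\Coh([W/\CC^\times])}(\St)$ would admit no clean geometric interpretation, and the statement of \Cref{mainthm:s1sph-spectral} could not be formulated unambiguously.
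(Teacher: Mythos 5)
Your overall route is essentially the paper's: present $\Sph \simeq \Mod_\cA(\St)$, realize the $S^1$-action as a central action of $\Rep(\CC^\times) \simeq \Perf(B\CC^\times)$ on $\cA$ by character twisting, use the categorified Fourier duality together with presentable ambidexterity to identify $S^1$-invariants with coinvariants/deequivariantization, and compute the deequivariantized convolution category by base change. Your ``ungauging lemma'' is precisely the conjunction of \Cref{thm:4d-fourier}, \Cref{lem:bside-ambidexterity}, and \Cref{lem:algebraic-ungauging} (the paper packages it at the level of $\Loc^{(3)}$ and $\QCoh^{(3)}$ rather than via a Barr--Beck argument in $\QCoh^{(2)}$-modules, but the ingredients are the same), and your verification (i) is the content of \Cref{prop:monodromy-intertwining}.

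One caveat concerns the order of operations in your first step. You propose to \emph{produce} the $S^1$-action on $\Sph$ from the $\cO(1)$-twist symmetry of the corepresenting object and then transport it to $\Mod_\cA(\St)$. But an automorphism of the corepresenting object only yields an automorphism of $\id_\Sph$, i.e.\ an $\EE_1$-map $\ZZ \to \End(\id_\Sph)$ (this is the universal twist of \Cref{universal.twist}), whereas a topological $S^1$-action on a 2-category is an $\EE_2$-datum (\Cref{rem:loc3-from-e2}); the paper explicitly notes, in the remark following \Cref{prop:monodromy-intertwining}, that upgrading this $\EE_1$-map to an $\EE_2$-map by hand is not feasible. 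The paper therefore runs the comparison in the opposite direction: the action is \emph{defined} on the spectral side via the $\EE_2$-central functor $\Perf(B\CC^\times) \to Z(\cA)$ of \Cref{lem:e2-constloops} (pushforward along constant loops), and one then only \emph{checks} that its underlying automorphism is the universal twist, which is the explicit computation in \Cref{prop:monodromy-intertwining}. If you reorganize your step (i) accordingly --- define the action spectrally and verify its underlying automorphism --- your plan coincides with the paper's proof; as written, the phrase ``the $S^1$-action on $\Sph$ produced from'' the corepresentability theorem has no construction behind it.
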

We deduce \Cref{mainthm:s1sph-spectral} from \Cref{mainthm:sph-spectral} by applying the following (twice categorified) Fourier transform.
\begin{mainthm}[\Cref{thm:4d-fourier}]\label{mainthm:4d-fourier}
For any torus $T$, there is an equivalence of $3$-categories
\[
\Loc^{(3)}(BT)
\simeq
\QCoh^{(3)}(BT^\vee)
~.
\]
This equivalence is suitably functorial; in particular, it intertwines the functor of $T$-invariants with the functor that forgets the $T^\vee$-action.
\end{mainthm}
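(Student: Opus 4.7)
The plan is to deduce this 3-categorical Fourier transform by iterating the 2-categorical version from \Cref{ex:intro-cat-fourier}, upgraded to a symmetric monoidal equivalence. Since all of the constructions in sight are compatible with products and tori split as products of copies of $\CC^\times$, I may reduce to the case $T = \CC^\times$ and recover the general statement by tensoring. First, I would establish $\QCoh^{(3)}(BT^\vee) \simeq \Mod_{\QCoh^{(2)}(BT^\vee)}(3\St)$ via a higher 1-affineness statement for $BT^\vee$ (where $3\St$ denotes the ambient 3-category of presentable stable 2-categories). Then I would transport along the once-categorified Fourier transform $\QCoh^{(2)}(BT^\vee) \simeq \Loc^{(2)}(T)$, upgraded to a symmetric monoidal equivalence in which $\Loc^{(2)}(T)$ carries the convolution monoidal structure induced by the group structure on $T$; this upgrade follows by naturality of the Fourier transform of \Cref{ex:intro-cat-fourier} under multiplication on $T$. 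The result is an equivalence $\QCoh^{(3)}(BT^\vee) \simeq \Mod_{\Loc^{(2)}(T)}(3\St)$.

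The crux is then to identify $\Mod_{\Loc^{(2)}(T)}(3\St) \simeq \Loc^{(3)}(BT)$. The heuristic is that $\Loc^{(2)}(T) = \Fun(T, \St)$ with its convolution structure is the ``group 2-algebra'' of the abelian group $T$, and its modules in $3\St$ coincide with $T$-equivariant 2-categories, i.e., with $\Fun(BT, 2\St) = \Loc^{(3)}(BT)$. Formally, this is a Barr--Beck descent for the adjunction between $\Loc^{(3)}(BT)$ and $2\St$ given by pullback and pushforward along $BT \to \pt$: conservativity of the pullback is automatic (the constant local system generates under colimits), and the resulting comonad is identified with convolution by $\Loc^{(2)}(\Omega BT) = \Loc^{(2)}(T)$ via base change along the pullback square $\pt \times_{BT} \pt \simeq T$.

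For the compatibility with invariants, the functor $(-)^T : \Loc^{(3)}(BT) \to 2\St$ is the right adjoint to the constant local system functor, i.e., pushforward along $BT \to \pt$; meanwhile, the forgetful functor $\QCoh^{(3)}(BT^\vee) \to 2\St$ is pullback along $\pt \to BT^\vee$. These are intertwined by our chain of equivalences because the Fourier transform is by construction compatible with the basepoints $\pt \to BT$ and $\pt \to BT^\vee$. The main obstacle will be the Barr--Beck step: executing descent at the 3-categorical level requires the ambidexterity result for co/limits of presentable enriched $\infty$-categories over $\infty$-groupoids (promised as a secondary contribution of the paper in the abstract), applied one categorical level higher than stated. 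Without such ambidexterity one cannot freely identify $BT$-indexed limits and colimits of 2-categories, which is exactly what powers the identification of the comonad with a convolution action of $\Loc^{(2)}(T)$.
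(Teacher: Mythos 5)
Your proposal is correct in outline but follows a genuinely different route from the paper. The paper's proof of \Cref{thm:4d-fourier} is a short ``delooping'' argument: the only nontrivial input is the equivalence $\Sigma^{(\infty,1)}_+(\pi_1 T) \simeq \Perf(BT^\vee)$ in $\CAlg(\St)$ (the $\kk$-linear group algebra of the cocharacter lattice is the semisimple category $\Rep^{\sf fin}(T^\vee)$), from which one gets $\Sigma^{(\infty,3)}_+ BT \simeq \fB^2\Sigma^{(\infty,1)}_+(\pi_1 T) \simeq \fB^2\Perf(BT^\vee)$ and then applies $\Fun^{3-\ex}(-,\Pr^{L,\st_{\kk}}_2)$; both sides are by definition functor 3-categories out of these corepresenting objects, so no descent, affineness, or ambidexterity is needed for the equivalence itself. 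Your proof instead works one categorical level up: it requires (i) a higher affineness statement identifying $\QCoh^{(3)}(BT^\vee)$ with modules over $\QCoh^{(2)}(BT^\vee)$, (ii) a \emph{monoidal} upgrade of the Fourier transform of \Cref{ex:intro-cat-fourier} (convolution on $\Loc^{(2)}(T)$ versus tensor on $\QCoh^{(2)}(BT^\vee)$) --- which, once unwound, contains exactly the paper's identification \Cref{eq:perfbgv} --- and (iii) a 3-categorical Barr--Beck argument identifying $\Loc^{(3)}(BT)$ with modules over the group 2-algebra $\Loc^{(2)}(T)$. Each of these is plausible and (ii)--(iii) are standard in spirit, but together they make the argument substantially heavier than necessary; what your route buys is a statement that would generalize beyond tori (to spaces $X$ where $\Loc^{(2)}(\Omega X)$ has no spectral description), whereas the paper's argument is special to the case where $\pi_1$ is a lattice.

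Two points of bookkeeping. First, in step (iii) the monadic functor is evaluation at the basepoint $\pt \to BT$ (whose left adjoint is left Kan extension), not pushforward along $BT \to \pt$; under the resulting equivalence the forgetful functor from modules corresponds to the \emph{underlying 2-category}, not to $T$-invariants. Consequently, matching $T$-\emph{invariants} with the B-side forgetful functor is not automatic from ``compatibility with basepoints'': the plain functoriality \Cref{eq:3d-fourier-functoriality} intertwines right adjoints with right adjoints, and one must separately know that the left and right adjoints of $(Bp^\vee)_*$ agree. This is exactly \Cref{lem:bside-ambidexterity}, deduced from \Cref{thm.prbl.ambidext} via \Cref{cor.ambidext.adjn}. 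You correctly flag that ambidexterity is indispensable, but it enters at this invariants-versus-coinvariants step (and in identifying $\lim_{T}\cX$ with $\colim_{T}\cX$ when recognizing the monad as tensoring with $\Loc^{(2)}(T)$), rather than being needed to make Barr--Beck applicable.
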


Towards establishing the above theorems,
we prove two auxiliary results that may be of independent interest, which we now describe.

Our first auxiliary result is an $\infty$-categorical version of the theory of Smith ideals. In ordinary algebra, given a ring $R$, there is a bijective correspondence between ideals of $R$ and surjective ring homomorphisms out of $R$. Underlying this is the fact that a surjective homomorphism of abelian groups is likewise determined by its kernel. By contrast, in higher algebra, \textit{every} homomorphism of derived abelian groups is determined by its fiber. Thus, one may expect that every homomorphism of derived rings should be equivalent data to its fiber equipped with suitable additional structure; and this is what we establish.

\begin{mainthm}[\Cref{cor.ideals.are.algebras}]\label{mainthm:smith}
Fix a stably monoidal category $\cC$. For each algebra object $A \in \Alg(\cC)$, the fiber $\fib(\uno_\cC \xra{\eta} A) \in \cC$ admits a canonical enhancement to an \bit{ideal}, i.e.\! a monoidal functor $\ZZ_{\leq 0} \ra \cC$. Moreover, this construction defines an equivalence
\[
\Alg(\cC)
\xlongra{\sim}
\Idl(\cC)
:=
\Fun^\mon(\ZZ_{\leq 0},\cC)
~.
\]
\end{mainthm}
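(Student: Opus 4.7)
My strategy is to recognize $\Idl(\cC)$ as the category of algebras in a Day-convolution monoidal enhancement of $\Fun(\ZZ_{\leq 0}, \cC)$, and then construct an explicit equivalence between this and $\Alg(\cC)$ using the stability of $\cC$. The first step is to equip $\Fun(\ZZ_{\leq 0}, \cC)$ with the Day convolution monoidal structure induced by addition on $\ZZ_{\leq 0}$ and $\otimes$ on $\cC$. By the universal property of Day convolution, lax monoidal functors $\ZZ_{\leq 0} \to \cC$ are precisely associative algebra objects in $\Fun(\ZZ_{\leq 0}, \cC)$, giving
\[
\Idl(\cC) \simeq \Alg(\Fun(\ZZ_{\leq 0}, \cC)),
\]
so the task reduces to establishing a canonical equivalence between this and $\Alg(\cC)$.

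Next, I would construct the forward and inverse functors explicitly. Given $A \in \Alg(\cC)$ with unit $\eta$, set $F_A(-n) := \fib(\eta)^{\otimes n}$, with structure maps $F_A(-n-1) \to F_A(-n)$ obtained by tensoring with the augmentation $\fib(\eta) \to \uno$. The lax monoidal structure on $F_A$ is provided by the associator of $\cC$ combined with the multiplication on $A$, which in particular induces a non-unital algebra structure on $\fib(\eta)$ and assembles the requisite coherences into a monoidal functor $F_A$. Conversely, given $F \in \Idl(\cC)$, define $A_F := \cofib(F(-1) \to \uno)$; stability of $\cC$ ensures that the fiber sequence $F(-1) \to \uno \to A_F$ sets up a perfect correspondence, and the monoidal coherences on $F$ transfer to an algebra structure on $A_F$ via the cofiber construction.

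The main obstacle is verifying that these two constructions are mutually inverse: the more delicate direction is the round trip $\Idl(\cC) \to \Alg(\cC) \to \Idl(\cC)$, which amounts to showing that the entire tower $F(-n)$ is recovered (up to coherence) by iterated tensor powers of the augmentation fiber of $A_F$. This requires carefully matching higher coherences on both sides, and is where the stability hypothesis on $\cC$ plays an essential role: without it, the augmentation fiber $\fib(\eta)$ would not retain enough information to recover the algebra $A$, whereas stability (via $\fib \simeq \Omega \cofib$) means that the whole algebra structure is encoded in the coherent tower data of an ideal. This is the essential content behind the promised $\infty$-categorical Smith-ideals correspondence.
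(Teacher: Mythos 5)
There is a genuine gap, and it sits exactly where the theorem's content lives. First, a smaller but real error: your opening reduction $\Idl(\cC)\simeq\Alg(\Fun(\ZZ_{\leq 0},\cC))$ is false as stated. The universal property of Day convolution identifies algebras in $\Fun(\ZZ_{\leq 0},\cC)$ with \emph{lax} monoidal functors $\ZZ_{\leq 0}\to\cC$, whereas $\Idl(\cC):=\Fun^{\mon}(\ZZ_{\leq 0},\cC)$ consists of \emph{strong} monoidal functors; the former is strictly larger (for instance, any algebra gives a lax monoidal functor that is nowhere strong). So the framing you reduce to is not the category you need to hit, and you would at minimum have to characterize which Day-convolution algebras correspond to strong monoidal functors.

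Second, and more seriously: the steps you describe as ``assembles the requisite coherences into a monoidal functor $F_A$'' and ``requires carefully matching higher coherences on both sides'' are not verifications to be deferred --- they \emph{are} the theorem. In the $\infty$-categorical setting one cannot match infinitely many coherences by hand; one needs a universal-property argument that produces them all at once. Concretely, your assignment $-n\mapsto\fib(\eta)^{\otimes n}$ with transition maps ``tensor with the augmentation'' already hides a nontrivial claim (the left- and right-augmentation maps $I^{\otimes 2}\to I$ must be coherently identified --- this symmetry is part of the ideal structure, not automatic from $A$), and the claim that ``the monoidal coherences on $F$ transfer to an algebra structure on $A_F$ via the cofiber construction'' is precisely the statement that $\cofib$ is a \emph{monoidal} equivalence from the pushout-product structure $(\Ar(\cC),\boxdot)$ to the pointwise structure $(\Ar(\cC),\boxtimes)$, which requires proof. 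The paper supplies these mechanisms: it first establishes the monoidal $\cofib/\fib$ equivalence on arrow categories, then sidesteps all object-level coherence-chasing by reducing the statement to an equivalence of corepresenting objects $\Sigma^{(\infty,1)}_+(\bDelta_+)\simeq\Sigma^{(\infty,1)}_+(\ZZ_{\leq 0})$ in $\Alg(\St)$, which is identified using the coaugmented Lurie--Dold--Kan equivalence and the cobar-complex/Adams-tower comparison. Without some substitute for this universal argument (or a model-categorical rectification in the style of Hovey's Smith ideals), your direct construction does not close.
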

The theory of ideals developed in \Cref{mainthm:smith} is the main technical tool used in our understanding of spherical functors and the proof of \Cref{mainthm:sph-spectral}. As we explain in \Cref{rem:segal}, this theory is the necessary ingredient to correct and explain earlier work \cite{Segal-autoequivalences} on spherical functors.

Our second auxiliary result is an ambidexterity theorem for limits and colimits of presentable enriched categories.

\begin{mainthm}[\Cref{thm.prbl.ambidext}]
Fix a presentably symmetric monoidal category $\cW \in \CAlg(\Pr^L)$. For any $\infty$-groupoid $\cA$ and any functor $\cA \xra{F} \Pr^L_\cW$ to the category of presentably $\cW$-enriched categories, there is a canonical equivalence
\[
\colim_\cA(F)
\simeq
\lim_\cA(F)
~.
\]
\end{mainthm}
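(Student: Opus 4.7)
The strategy is to reduce to the enriched version of the classical anti-equivalence between presentable categories and passage to adjoints. Specifically, we rely on the $\cW$-linear refinement
\[
(-)^R: \Pr^L_\cW \xlongra{\sim} (\Pr^R_\cW)^{\op},
\]
sending each $\cW$-linear left adjoint to its right adjoint; this is the enriched counterpart of the standard duality for $\Pr^L$ and is established by the same argument, using the $\cW$-enriched adjoint functor theorem. A formal consequence is that for any diagram $H: I \to \Pr^L_\cW$, the colimit is computed as
\[
\colim_I(H) \simeq \lim_{I^{\op}}(H^R)
\]
(both regarded inside $\Pr^L_\cW \simeq (\Pr^R_\cW)^{\op}$), where $H^R: I^{\op} \to \Pr^R_\cW$ is the diagram of right adjoints. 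Moreover, both forgetful functors $\Pr^L_\cW, \Pr^R_\cW \hookrightarrow \Cat_\cW$ preserve limits, so this limit may equivalently be computed at the underlying enriched level.

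Now specialize to $I = \cA$ an $\infty$-groupoid and $H = F$. Every morphism in $\cA$ being invertible, $F$ factors through the core $(\Pr^L_\cW)^{\simeq} = (\Pr^R_\cW)^{\simeq}$, and on this core the operation of passing to the right adjoint coincides with inversion: the right adjoint of an equivalence is its inverse. Equivalently, the restriction of the duality $(-)^R$ to cores realizes the tautological self-equivalence of the underlying $\infty$-groupoid with its opposite. Consequently, the right-adjoint diagram is canonically identified with
\[
F^R \simeq F \circ \iota : \cA^{\op} \to \Pr^R_\cW,
\]
where $\iota : \cA^{\op} \xlongra{\sim} \cA$ is the tautological equivalence of the groupoid $\cA$ with its opposite. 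Assembling these and reindexing along $\iota$ produces
\[
\colim_\cA(F) \simeq \lim_{\cA^{\op}}(F^R) \simeq \lim_{\cA^{\op}}(F \circ \iota) \simeq \lim_\cA(F),
\]
as desired.

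The main subtlety is the coherence of the identification $F^R \simeq F \circ \iota$. Pointwise the statement is trivial, but promoting it to a natural equivalence of functors valued in $\Pr^R_\cW$ requires comparing two a priori distinct equivalences between the common core $(\Pr^L_\cW)^{\simeq}$ and its opposite: the restriction of the adjoint duality versus the canonical inversion-of-edges map. A clean resolution is to observe that both equivalences have the same underlying behavior on the core groupoid (identity on objects, $f \mapsto f^{-1}$ on morphisms); since both maps land in an $\infty$-groupoid, their coherence data is determined by these pointwise assignments, and the higher $(\infty,2)$-categorical structure of $\Pr^L_\cW$ plays no further role.
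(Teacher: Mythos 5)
Your reduction -- trade $\colim_\cA(F)$ for the limit of the diagram of right adjoints, then use that over a groupoid right adjoints are inverses -- is in the same spirit as the paper's argument, but the step you yourself flag as ``the main subtlety'' is not actually resolved, and the resolution you offer is incorrect. The claim that ``since both maps land in an $\infty$-groupoid, their coherence data is determined by these pointwise assignments'' is false: the core of $\Pr^L_\cW$ is not $1$-truncated, and a functor into an $\infty$-groupoid (i.e.\ a map of spaces) is not determined by its values on objects and $1$-morphisms, nor is a homotopy between two such functors (compare two maps $S^2 \to S^2$ with the same behavior on points and paths). So the identification $F^R \simeq F \circ \iota$ of diagrams $\cA^\op \to \Pr^R_\cW$ -- the crux of your argument -- is asserted rather than proved; producing it coherently requires a genuine construction (e.g.\ via contractibility of the space of adjunction/inversion data attached to an equivalence, or by comparing both functors after mapping to $\what{\Cat}$), none of which is supplied. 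A secondary gap: the enriched duality $(-)^R \colon \Pr^L_\cW \simeq (\Pr^R_\cW)^{\op}$ is also asserted ``by the same argument''; this is more delicate than in the unenriched case, since the right adjoint of a $\cW$-linear left adjoint is a priori only laxly $\cW$-linear, which is exactly why the paper works instead with the module-category model and the identification $\Mod_\cW(\Pr^L) \simeq \coMod_\cW(\Pr^R)^{1\&2-\op}$.

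For contrast, the paper's proof sidesteps the coherence comparison entirely: it computes $\lim_\cA(F)$ as the category of cocartesian sections and $\colim_\cA(F)$ as the category of cartesian sections of the presentable fibration associated to $F$ (first for $\cW = \Spaces$, then for general $\cW$ by observing that limits in $\Mod_\cW(\Pr^L)$ are computed in $\Pr^L$ and colimits are computed in $\Pr^R$ via the comodule identification), and then simply notes that over a groupoid base every section is automatically both cocartesian and cartesian. If you wish to salvage your route, the cleanest fix is precisely this fibrational reformulation, which produces the identification you are missing without ever comparing $F^R$ with $F \circ \iota$.
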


\subsection{Abelian gauge theories and hypertoric category $\mathcal{O}$}\label{subsec:abelian-intro}
It is expected that a symplectic stack $\fX$ admitting a $\CC^\times$ action for which the symplectic form has weight 2
determines a pair of 3-dimensional topological field theories, the 3d A-model and B-model, respectively, relating to the symplectic and algebraic geometry of $\fX.$ This situation encompasses all cotangent stacks $\fX={\cotangent}(M/G),$ for $M$ a smooth variety with action of a reductive group $G$, and all examples we consider will be of this form. Moreover, to a choice of Lagrangian $\LL\subset \fX,$ each of these theories should associate a 2-category of boundary conditions supported on $\LL.$

The \bit{3d mirror symmetry conjecture} suggests that a 3d A-model associated to some geometry $\fX$ may also be realized as a 3d B-model, and vice versa. More precisely, in good cases, given a stack $\fX,$ we can find a ``3d dual'' space $\fX^\vee$ such that the A-model with target $\fX$ is equivalent to the B-model with target $\fX^\vee.$\footnote{Note that not every 3d A-model and B-model arises in this way; thus, even a na\"ive version of the 3d mirror symmetry conjecture will allow that the theory dual to a 3d A-model or B-model associated to $\fX$ may not arise from the data of a symplectic stack $\fX^\vee.$ This is analogous to the phenomenon, familiar from 2d mirror symmetry, that a non-Calabi--Yau K\"ahler manifold will in general be mirror to a Landau--Ginzburg model (rather than another K\"ahler manifold).}
(When $\fX = {\cotangent}(N/G)$ for a linear $G$-representation $N$, the ring of functions on the dual space is produced by the Coulomb branch construction \cites{N,BFN1}.)
In this case, we expect moreover that given a Lagrangian $\LL\subset \fX,$ there ought to exist a Lagrangian $\LL^\vee \subset \fX^\vee$ such that there is an equivalence between the respective 2-categories of boundary conditions supported on $\LL$ and $\LL^\vee.$

\begin{example}\label{ex:3d-triv}
Let $\fX={\cotangent}(\pt/T)$ for some $n$-torus $T\simeq(\CC^\times)^n.$ The dual space in this case is the variety $\fX^\vee = {\cotangent}(T^\vee)$ given by the cotangent bundle to the dual torus. Each of these spaces has a natural Lagrangian given by the zero-section, $\LL=\pt/T\subset \fX$ and $\LL^\vee=T^\vee\subset \fX^\vee.$

To the geometric data $(\LL\subset \fX)$ and $(\LL^\vee\subset \fX^\vee)$, the 3d A-model assigns
the 2-categories $\Loc^{(2)}(\pt/T)$ and $\Loc^{(2)}(T^\vee),$ respectively, whereas the 3d B-model 
assigns $\QCoh^{(2)}(\pt/T)$ and $\QCoh^{(2)}(T^\vee).$ The equivalences of 2-categories predicted by 3d mirror symmetry are described in \Cref{ex:intro-cat-fourier}.
\end{example}

The results of this paper take place in the situation where the space $\fX$ is of the form ${\cotangent}(N/G),$ where $N$ is a vector space with linear action of the torus $G.$
Such a stack may be specified by the data of an exact sequence of tori
\begin{equation}\label{eq:toric-sequence}
\begin{tikzcd}
1 \arrow[r] &
G \arrow[r, "i"]&
(\CC^\times)^n \arrow[r, "p"]&
F \arrow[r] &
1,
\end{tikzcd}
\end{equation}
where we take $N=\CC^n$ equipped with the action of $G$ induced from the inclusion $i.$ 
\begin{definition}
The \bit{toric cotangent stack} determined by the exact sequence \Cref{eq:toric-sequence} is
\[
\fX(N,G) := {\cotangent}(N/G).
\]
\end{definition}

If $\fX = \fX(N,G)$ is specified as above, then the 3d dual $\fX^\vee$ is known \cites{dBHOO,BLPW10,BFN1}
to be a space of the same form; namely, it is the toric cotangent stack associated to the dual exact sequence of tori
\begin{equation}\label{eq:toric-dual-sequence}
\begin{tikzcd}
1 \arrow[r] &
F^\vee \arrow[r, "p^\vee"]&
(\CC^\times)^n \arrow[r, "i^\vee"]&
G^\vee \arrow[r] &
1.
\end{tikzcd}
\end{equation}
In other words, we identify the 3d dual as $\fX(N,G)^\vee = \fX(N^\vee,F^\vee).$

Now, a toric cotangent stack $\fX(N,G)$ 
admits a distinguished Lagrangian subspace.

\begin{definition}
We write $\LL_{toric}\subset \fX(N,G)$ for the Lagrangian
\[
\LL_{toric} = \bigcup_\alpha \cotangent_{\overline{\cO}_\alpha}(N/G)
\]
given by the union of conormals to closures $\overline{\cO}_\alpha$ of $(\CC^\times)^n$-orbits $\cO_\alpha$ in $N/G.$ When we discuss the dual cotangent stack $\fX^\vee = \fX(N^\vee,F^\vee),$ we will write $\LL_{toric}^\vee$ for the analogously defined Lagrangian in $\fX^\vee$.
\end{definition}
\begin{example}
If $n=1$ and $G=\{1\},$ then the space $\fX(N,G) = {\cotangent}\CC$ with the Lagrangian $\LL_{toric}=\CC\cup \cotangent_0\CC$ whose irreducible components are the zero-section and the conormal to $0\in \CC.$ The dual space $\fX^\vee$ is the stack $\fX(N^\vee,F^\vee) = {\cotangent}(\CC/\CC^\times),$ equipped with Lagrangian $\LL^\vee_{toric} = \CC/\CC^\times \cup \cotangent_{0/\CC^\times}(\CC/\CC^\times).$
\end{example}

In general, it is not understood how to associate a 2-category of boundary conditions to the data of a symplectic stack $\fX$ and a Lagrangian $\LL\subset \fX$: the assignment described in \Cref{ex:3d-triv} is one of the only cases where it has been possible to formulate 3d mirror symmetry as an equivalence of 2-categories. The main difficulty is in understanding the symplectic (A-type) 2-category; the B-type 2-category has been explored in several works \cite{KRS,KR,Ari-talk}.

In this paper, we propose the following definitions for the 2-categories of boundary conditions associated to the data of a toric cotangent stack $\fX(N,G)$
equipped with the Lagrangian $\LL_{toric}\subset \fX(N,G).$

\begin{proposal}\label{def:proposal}
    We model boundary conditions for the 3d A-model with target ${\cotangent}(N/G)$ supported on $\LL_{toric}$ by the 2-category 
    \[\PS(N/G,\cS_{toric}) := \PS(N,\cS_{toric})^G \] 
    of $G$-invariant perverse schobers on $N$ equipped with stratification $\cS_{toric}$ by $(\CC^\times)^n$-orbits (incarnated as $\Sph_n^G$ in \Cref{subsec:hypertoric}).
    We model boundary conditions for the 3d B-model with target ${\cotangent}(N/G)$ supported on $\LL_{toric}$ by the 2-category 
    \[\IndCoh^{(2)}(N/G,\cS_{toric})\] 
    of $G$-invariant Ind-coherent sheaves of categories with singular support in the union of conormals to strata in $\cS_{toric}.$ This latter 2-category may be defined \cite{Ari-talk} as the 2-category $\Mod_\cA(\St)$ of modules for the monoidal category of coherent sheaves
    \[
    \cA:=\Coh\left( \left( \bigsqcup_\alpha\overline{\cO}_\alpha \right) \times_{N/G} \left( \bigsqcup_\alpha\overline{\cO}_\alpha \right) \right),
    \]
    where we write $\bigsqcup_\alpha \overline{\cO}_\alpha$ for the disjoint union of closures of $(\CC^\times)^n$-orbits in $N/G.$
\end{proposal}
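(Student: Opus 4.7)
Since \Cref{def:proposal} sets up definitions rather than asserts a provable equation, the content one would actually want to establish is that with these definitions the two proposed 2-categories are equivalent, so that the construction realizes 3d mirror symmetry in the toric cotangent setting. My plan is to bootstrap from the base case, which is already handled by \Cref{mainthm:sph-spectral}, and then reduce the general toric case to an $n$-fold iterated tensor power of that base case followed by a $G$-invariance operation.

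First I would treat the base case $n=1$, $G=\{1\}$ directly. Here the A-side $\PS(\CC,\cS_{toric})$ is tautologically $\Sph$ (no group to quotient by), while the B-side $\IndCoh^{(2)}(\CC/\CC^\times,\cS_{toric})$ is, by the formula in the proposal, literally the 2-category $\Mod_\cA(\St)$ appearing in \Cref{mainthm:sph-spectral}; the desired equivalence is therefore the content of the main theorem. For general toric data I would realize both sides as suitable invariants of an $n$-fold tensor product of this base 2-category. On the A-side, the corepresentability established in \Cref{mainthm} together with the ideal/algebra correspondence of \Cref{mainthm:smith} should exhibit a version of $\Sph^{\otimes n}$ as controlling perverse schobers on $\CC^n$ with respect to the coordinate stratification, after which one imposes $G$-invariance using the embedding $G \hookrightarrow (\CC^\times)^n$ from \Cref{eq:toric-sequence}. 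On the B-side, the convolution category attached via the Gale dual sequence \Cref{eq:toric-dual-sequence} factors through the $n$-fold tensor power of the base B-side category, and one would use \Cref{mainthm:4d-fourier} to exchange the $G$-invariance appearing on the A-side with the $F^\vee$-coaction (equivalently $G^\vee$-invariance) on the B-side, matching the two $n$-variable constructions.

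The main obstacle I anticipate is the careful bookkeeping of singular-support and stratification data under the tensor factorization. The Lagrangian $\LL_{toric} \subset \cotangent(N/G)$ is the union of conormals to closures of $(\CC^\times)^n$-orbits in $N/G$, which does not decompose cleanly as an external product of one-variable conormals; and likewise the full stratification $\cS_{toric}$ of $N$ is finer than, but compatible with, the product stratification used to realize $n$-fold perverse schobers. Matching this geometric structure against the abstract algebraic tensor structure on $\St$-linear 2-categories, compatibly with Gale duality swapping $G$ and $F^\vee$, is where the genuine content lies, and I would expect most of the technical effort to go into verifying that this singular-support/stratification condition intertwines correctly with both the iterated Smith-ideal description on the A-side and the convolution description on the B-side.
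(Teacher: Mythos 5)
This statement is a definition (a ``Proposal''), so the paper offers no proof of it; the substantive claim you correctly pivot to is the equivalence of \Cref{thm:higherdim-mainthm}, and your route is essentially the paper's: the base case is \Cref{mainthm:sph-spectral}, the general case is handled by defining $\Sph_n := \Sph^{\otimes n}$ (\Cref{defn:n-spherical-functors}, \Cref{prop:sph-n-calc}) and then exchanging $G$-invariants on the A-side for $F^\vee$-deequivariantization on the B-side via the Fourier transform \Cref{thm:4d-fourier}, the ambidexterity statement \Cref{lem:bside-ambidexterity}, and the base-change computation \Cref{lem:algebraic-ungauging}. One remark: the obstacle you anticipate largely dissolves, since the $(\CC^\times)^n$-orbit closures in $\CC^n$ are exactly the products $\prod_{i\notin S}\{0\}\times\prod_{i\in S}\CC$, so both the stratification and the union of conormals \emph{do} decompose as external products --- indeed $\bigsqcup_\alpha\overline{\cO}_\alpha=(\CC\sqcup 0)^n$ --- and it is precisely this clean factorization that makes the tensor-power definition of $\Sph_n$ and the identification $X^n\times_{Y^n}X^n\cong(X^1\times_{Y^1}X^1)^{\times n}$ work.
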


\begin{remark}
Physically, the $\hom$-categories in $3$d A- and B-models should be Fukaya--Seidel and matrix factorization categories, respectively, on path spaces, as can be deduced by compactifying the 3d theory on an interval.\footnote{By instead compactifying on an $S^1,$ one obtains the categories of line operators studied in \cite{DGGH,HR, BN-voa}.} The resulting 2d theories are twists of the gauged Landau--Ginzburg model described in \cite{BDGH}*{Appendix A}, whose superpotential is a complexification of the equivariant symplectic action functional on path spaces \cites{Cieliebak,Frauenfelder}. On the B-side, the resulting matrix factorization categories were computed in \cite{KRS,KR} and shown to match a $(\ZZ/2$-graded) version of the $\hom$-categories in $\IndCoh^{(2)}.$
On the A-side, the ``algebra of the infrared'' \cite{GMW,KS-aoi,KSS} presents the resulting Fukaya--Seidel categories as categorifications of Floer groups.\footnote{More precisely, these are categorifications of the moment Floer groups of \cite{Frauenfelder}.} This justifies our proposal to formalize the 2-category of boundary conditions in the 3d A-model as a categorification of the Fukaya category. (For the relation between perverse sheaves and the Fukaya category, see \cite{GPS3,Jin}.)
\end{remark}

\begin{remark}
In \Cref{def:proposal}, we have expressed the 2-categories which 3d mirror symmetry associates to a cotangent bundle ${\cotangent}(N/G)$ in terms of the geometry of the base $N/G.$ We expect that there will eventually be more ``microlocal'' definitions of these 2-categories which do not privilege a particular polarization.
\end{remark}

The prediction that (topologically twisted) dual abelian 3d theories have equivalent boundary conditions may now be formulated precisely as an equivalence between the 2-categories described above. This is what we prove:

\begin{mainthm}[\Cref{thm:higherdim-mainthm}]\label{mainthm:boundary-duality}
Let $N=\CC^n$ be a vector space equipped with an action of torus $G\subset (\CC^\times)^n,$ and let $F=(\CC^\times)^n/G$ be the quotient torus as in \Cref{eq:toric-sequence}. Then
there is an equivalence
\[
\PS(N/G,\cS_{toric}) \simeq \IndCoh^{(2)}(N^\vee/F^\vee, \cS_{toric})
\]
between the A-model 2-category associated to $\LL_{toric}\subset \fX(N,G)$ and the B-model 2-category associated to $\LL_{toric}^\vee\subset \fX(N^\vee,F^\vee).$
\end{mainthm}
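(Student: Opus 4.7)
The plan is to bootstrap from the one-dimensional spectral description (\Cref{mainthm:sph-spectral} and \Cref{mainthm:s1sph-spectral}) via two intermediate steps: an $n$-variable factorization of the spectral theorem, followed by the categorified Fourier transform of \Cref{mainthm:4d-fourier}.

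First I would establish the $n$-variable analogue of \Cref{mainthm:sph-spectral}: the 2-category $\Sph_n := \PS(\CC^n,\cS_{toric})$ of perverse schobers on $\CC^n$ with the toric stratification admits a canonical equivalence
\[
\Sph_n \simeq \Mod_{\cA_n}(\St),
\]
where $\cA_n$ is the $n$-fold convolution algebra built from the incidence of closures of $(\CC^\times)^n$-orbits in $\CC^n/(\CC^\times)^n$. The natural strategy is factorization: the toric stratification on $\CC^n$ is an external product of $n$ copies of the toric stratification on $\CC$, and each coordinate contributes an independent pair of Smith ideals via the theory of \Cref{mainthm:smith}, hence an independent spherical direction. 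Equivalently one iterates the corepresentability statement of \Cref{mainthm}, identifying the generators of $\Sph_n$ as $n$-fold external products of the generators in the one-dimensional case, and then reruns the Barr--Beck argument underlying \Cref{mainthm:sph-spectral}.

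Next I would promote the above to a $(\CC^\times)^n$-equivariant equivalence, i.e.\! an equivalence in $\Loc^{(3)}(B(\CC^\times)^n)$. On the A-side, the topological $(\CC^\times)^n$-action comes from rotating each coordinate of $\CC^n$; on the B-side, the matching $\QCoh^{(3)}(B(\CC^\times)^n)$-structure comes from the natural character grading of $\cA_n$. Both presentations are built functorially from the one-variable case, so the equivariance passes through the factorization. Now I apply the twice-categorified Fourier transform of \Cref{mainthm:4d-fourier}: under
\[
\Loc^{(3)}(B(\CC^\times)^n) \simeq \QCoh^{(3)}(B(\CC^\times)^n),
\]
its functoriality clause identifies the functor of $G$-invariants on the left with restriction along $BF^\vee \to B(\CC^\times)^n$ on the right, since $F^\vee$ is exactly the kernel of $(\CC^\times)^n \to G^\vee$ in the dual exact sequence \Cref{eq:toric-dual-sequence}. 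Applied to the $(\CC^\times)^n$-equivariant spectral description of $\Sph_n$, this yields
\[
\PS(N/G,\cS_{toric}) = \Sph_n^G \simeq \IndCoh^{(2)}(\CC^n/F^\vee,\cS_{toric}) = \IndCoh^{(2)}(N^\vee/F^\vee,\cS_{toric}),
\]
as desired.

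The main obstacle I expect is the first step, the $n$-variable spectral description. The factorization argument must be set up so that the theory of Smith ideals (\Cref{mainthm:smith}) tensors correctly across the $n$ coordinates, and one must verify that external tensor products of the one-variable generators indeed jointly corepresent the family of nearby/vanishing-cycle functors attached to the $n$-variable stratification. Once this structural backbone is in place, the equivariance and Fourier steps become bookkeeping against the functoriality already packaged into \Cref{mainthm:4d-fourier}.
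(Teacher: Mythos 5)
Your overall skeleton (factorize the one-variable spectral description over the $n$ coordinates, promote to a $(\CC^\times)^n$-equivariant statement, then apply the categorified Fourier transform and take $G$-invariants) is the same as the paper's, and your first step is if anything easier than you fear: the paper \emph{defines} $\Sph_n:=\Sph^{\otimes n}$ (\Cref{defn:n-spherical-functors}), so the $n$-variable spectral description is immediate from the factorization $\Coh(X^n\times_{Y^n}X^n)\simeq\Coh(X^1\times_{Y^1}X^1)^{\otimes n}$ (\Cref{prop:sph-n-calc}); no new Barr--Beck argument is needed. The genuine gaps are in your second and third steps. First, you assert that the topological $(\CC^\times)^n$-action on the A-side ``comes from rotating each coordinate of $\CC^n$.'' No such construction is available: there is no sufficiently functorial geometric theory of perverse schobers to produce a coherent action of $B(\CC^\times)^n_{\sf B}$, i.e.\ an $\EE_2$-map $\ZZ^n\to\End(\id_{\Sph_n})$, from the rotation of $\CC^n$. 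The paper instead \emph{constructs} the action on the spectral side, via the central $\EE_2$-functor $\Perf(BD^\vee)\to Z(\cA)$ coming from constant loops (\Cref{lem:e2-constloops}, using the identification of the center with sheaves on the loop space), and then must \emph{verify} (\Cref{prop:monodromy-intertwining}, comparing with \Cref{universal.twist} and \Cref{spectral.univ.twist.calc}) that the underlying monodromy automorphism is the universal twist, so that the invariants being taken are the intended ones. Your phrase ``character grading of $\cA_n$'' gestures at the right source of the action, but the compatibility check is a real computation, not bookkeeping, and without it the statement ``$\Sph_n^G$'' has no defined meaning.

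Second, your claim that the functoriality of \Cref{mainthm:4d-fourier} ``identifies the functor of $G$-invariants with restriction along $BF^\vee\to B(\CC^\times)^n$'' skips the step where the real work happens. The Fourier transform intertwines $(Bf)^*$ with $(Bf^\vee)_*$; taking right adjoints, $G$-invariants (a limit) corresponds to the \emph{right} adjoint $(B\pi^\vee)^!$ on the B-side, whereas the deequivariantization you want is the \emph{left} adjoint $(B\pi^\vee)^*$. Identifying these two requires the presentable ambidexterity theorem (\Cref{thm.prbl.ambidext}, via \Cref{lem:bside-ambidexterity}), which is one of the paper's auxiliary main results precisely because this step does not follow from functoriality alone. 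Even after that, one still needs the base-change computation of \Cref{lem:algebraic-ungauging} to identify $(B\pi^\vee)^*(Bi^\vee)_*\Coh(X^n\times_{Y^n}X^n)$ with the convolution category $\Coh(X^n_G\times_{Y^n_G}X^n_G)$ over $\CC^n/F^\vee$, i.e.\ to see that passing to modules over the deequivariantized algebra really lands on the claimed B-model 2-category. Your proposal treats both of these as packaged into the Fourier transform, which they are not; as written, the argument would stall at the mismatch of adjoints.
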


As a corollary of \Cref{mainthm:boundary-duality}, we are able to rederive the prescription, from \cite{BFN1}, for the algebra of functions on the 3d mirror dual to $\fX(N,G)$, directly from our 2-category of A-type boundary conditions. As in their approach, we follow the expectation that this algebra should be computed by endomorphisms of the trivial line defect inside the category of line operators. In our formulation, this is the endomorphism algebra of the unit object $\uno$ in the categorical center of the 2-category of boundary conditions. (By definition, for a 2-category of the form $\cC = \Mod_{\cA}(\cD),$ for some algebra $\cA\in\Alg(\cD),$ the categorical center $Z_{cat}(\cC)$ is defined to be the center $Z(\cA)\in\Alg_{\EE_2}(\cD)$ of the algebra $\cA.$)
\begin{mainthm}[\Cref{thm:e3-functions}]
There is an equivalence 
\[
\End_{Z_{cat}(\Perv^{(2)}(N/G,\cS_{toric})}(\uno)\simeq \cO(\fX(N^\vee,F^\vee))_{regraded}
\]
between the algebra of endomorphisms of the unit in the categorical center of the 2-category of A-type boundary conditions for $\fX(N,G)$ and a regraded version of the algebra of functions on $\fX(N^\vee,F^\vee)$ where functions linear in the cotangent direction are placed into homological degree 2.
\end{mainthm}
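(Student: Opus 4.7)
The plan is to transport the computation to the B-side via \Cref{mainthm:boundary-duality} and then recognize the resulting endomorphism algebra as a categorified Hochschild cohomology. By \Cref{mainthm:boundary-duality} together with \Cref{def:proposal},
\[
\Perv^{(2)}(N/G,\cS_{toric}) \simeq \Mod_{\cA^\vee}(\St)
\]
for the convolution monoidal category $\cA^\vee = \Coh(Y^\vee\times_{N^\vee/F^\vee}Y^\vee)$, where $Y^\vee = \bigsqcup_\beta \overline{\cO}_\beta$ is the disjoint union of closures of $(\CC^\times)^n$-orbits in $N^\vee/F^\vee$. For any stably presentable monoidal category $\cA$, the unit of the Drinfeld center $Z(\cA)$ is $\cA$ itself, viewed as a bimodule over itself via multiplication, and so the endomorphism $\EE_3$-algebra of this unit is canonically identified with Hochschild cochains:
\[
\End_{Z_{cat}(\Mod_{\cA^\vee}(\St))}(\uno) \simeq \End_{\cA^\vee\otimes(\cA^\vee)^{op}}(\cA^\vee) =: HH^*(\cA^\vee).
\]
Thus the theorem reduces to a geometric identification of $HH^*(\cA^\vee)$.

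Next I would evaluate $HH^*(\cA^\vee)$ by a categorified Hochschild--Kostant--Rosenberg argument. Using 1-affineness of the relevant toric quotient stacks, $\cA^\vee$-bimodules are identified with coherent sheaves on an appropriate fourfold fiber product, and the identity bimodule $\cA^\vee$ corresponds to the structure sheaf of a ``doubled diagonal.'' Its derived endomorphisms then compute the functions on an explicit derived loop/inertia construction $\cL$ built from $N^\vee/F^\vee$ and its toric stratification; in the linear toric setting this is a Koszul-type calculation in which the failure of transversality of the self-intersection produces the cotangent fiber directions of $N^\vee/F^\vee$. Because we pass through \emph{two} layers of convolution (once to express $\cA^\vee$-bimodules as sheaves on a fiber product, and once in the Hochschild complex itself), each cotangent coordinate appears in cohomological degree $2$ rather than degree $1$ as in the plain $1$-categorical HKR. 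Summing contributions over strata and matching against the tautological description of $\cO(\fX(N^\vee,F^\vee)) = \cO({\cotangent}(N^\vee/F^\vee))$ in terms of coordinates on $N^\vee$, their duals, and characters of $F^\vee$ will yield the claimed equivalence of $\EE_3$-algebras with the asserted regrading.

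The main technical difficulty lies in the second step: I would need to control the cohomological and weight gradings carefully through the bar resolution and iterated fiber products, and verify that no extraneous contributions appear beyond ``functions on $\fX^\vee$ with doubled cotangent grading.'' In particular, one must check that the singular-support condition cut out by $\cS_{toric}$ carves out precisely $\cO(\fX(N^\vee,F^\vee))$ (rather than some completion, localization, or open substack), and that the $\EE_3$-structure inherited from the Drinfeld-center construction agrees with the $\EE_3$-structure on the Coulomb-branch algebra predicted by \cite{BFN1}. Both checks should reduce to explicit combinatorial calculations using the fan for $(N^\vee, F^\vee)$, but care is needed because the individual strata are themselves singular quotient stacks.
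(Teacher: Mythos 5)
Your first reduction agrees with the paper: by definition of the categorical center, the problem is to compute $\End_{Z(\cA_G)}(\uno)$ for the convolution category $\cA_G=\Coh(X^n_G\times_{Y^n_G}X^n_G)$ of \Cref{thm:higherdim-mainthm}. But from there your plan has a genuine gap: everything that actually produces the answer is deferred to a ``categorified HKR argument'' via bar resolutions and fourfold fiber products that you do not carry out, and the heuristics you offer in its place would mislead the computation. First, the center of a convolution category for a proper surjection $X\to Y$ of smooth perfect stacks depends only on the \emph{base}: by \cite{BZNP}*{Theorem 1.2.10} (already invoked in \Cref{lem:e2-constloops}), $Z(\cA_G)\simeq \Coh_{prop/(\CC^n/F^\vee)}(\cL(\CC^n/F^\vee))$, so the strata $X^n_G$ drop out entirely and there is no ``summing contributions over strata'' and no residual singular-support condition to worry about. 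Second, the unit of $Z(\cA_G)$ is not ``$\cA^\vee$ as a bimodule'' (that conflates the center with the bimodule category whose endomorphism object it is); under the loop-space description it is $i_*\cO_{\CC^n/F^\vee}$, the pushforward along constant loops, and it is its endomorphisms that form the $\EE_3$-algebra. Third, the mechanism you propose for the regrading (``two layers of convolution each contribute degree one'') is not the correct one: in the paper the degree-$2$ placement arises because the unit is supported on constant loops, so the computation passes to the formal loop space, which by \cite{BZN-connections}*{Theorem 1.23} is the $(-1)$-shifted tangent bundle $\widehat{{\rm T}}[-1](\CC^n/F^\vee)$, and then fiberwise Koszul duality exchanges the degree-$(\pm1)$ exterior (tangent) directions for polynomial generators in degree $2$ on the cotangent fibers. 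This also settles the completion worry you raise: the formal completion enters only through the support of the unit, and the weight grading ensures the output is the polynomial algebra $\cO({\rm T}^*[2]\CC^n/F^\vee)$ rather than a completion.

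In short, your outline identifies the right object to compute but replaces the two decisive inputs of the paper's proof (the BZNP identification of the center of a convolution category with sheaves on the loop space of the base, and the formal-loop-space/shifted-tangent plus Koszul-duality step) with an unexecuted HKR-style calculation whose announced bookkeeping for the degree-$2$ shift and for the strata is incorrect as stated; as written it does not constitute a proof.
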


\begin{remark} \label{rem:3dand4d}
As we have mentioned previously, the proof of \Cref{mainthm:boundary-duality} exploits extra structure in these 3d theories: their residual symmetries enhance them to boundary conditions for 4-dimensional abelian gauge theories \cite{GW}. The equivalence described in \Cref{mainthm:4d-fourier} is a form of the Betti Geometric Langlands equivalence, relating the A- and B-type twists of these 4-dimensional theories. This idea was exploited in \cite{BFN-ring, BFGT, HR} to compute rings/categories of local/line operators and forms the underpinning for the relative Langlands program of Ben-Zvi, Sakellaridis, and Venkatesh.
\end{remark}

From the perspective of supersymmetric gauge theory or representation theory, the support Lagrangian $\LL_{toric}\subset \fX=\fX(N,G)$ may seem unnatural; usually, as in the theory of categories $\cO$ for symplectic resolutions \cite{BLPW16}, one studies a Lagrangian defined as the attracting set for some Hamiltonian $\CC^\times$-action on $\fX.$ Note that a toric cotangent stack $\fX(N,G)$ defined via the exact sequence \Cref{eq:toric-sequence} admits a Hamiltonian action by the residual torus $F$; therefore, a Hamiltonian $\CC^\times$-action may be determined from a cocharacter $m\in X_*(F)$ of the torus $F,$ specifying a 1-parameter subgroup acting on $\fX(N,G).$

\begin{definition}
Given a cocharacter $m\in X_*(F)$, the \bit{relative skeleton} $\LL^m\subset \fX(N,G)$ is obtained by Hamiltonian reduction from the Lagrangian $\overline{\LL}^m\subset {\cotangent}N$ which is the union of attracting sets for all cocharacters of $(\CC^\times)^n$ which map to $m$ under the projection $p_*$ \cite{Web-gencat-O}*{Definition 2.8}. Following \cite{GPS2}, we refer to the process of obtaining $\LL^m$ by removing components of $\LL_{toric}$ as \bit{stop removal}.
\end{definition}

\noindent Our choice of Lagrangian $\LL_{toric}$ is therefore not so unnatural: it is a sort of ``master Lagrangian'' containing the relative skeleta $\LL^m$ for all possible choices of parameter $m.$ 

At this point it is reasonable to ask how the passage from $\LL_{toric}$ to the sub-Lagrangian $\LL^m$ is reflected on the 3d mirror dual. We first recall the parameter $t$ which is dual to $m$.

\begin{definition}
Given a character $t\in X^*(G)$ for the torus $G$, we may treat $t$ as a GIT stability parameter to define the \bit{hypertoric orbifold} $\fX(N,G)_t:={\cotangent}N/\!\!/\!\!/_t G$ \cites{Goto,Biel-Dan,Haus-Sturm}, which embeds as an open substack in $\fX(N,G).$ We write $\LL_t:=\LL_{toric}\cap \fX(N,G)_t$ for the restriction of the toric Lagrangian $\LL_{toric}$ to this subset. We refer to the passage from $\fX(N,G)$ to $\fX(N,G)_t$ as \bit{microrestriction}.
\end{definition}

\begin{definition}
Given $m\in X_*(F)$ and $t\in X^*(G),$
we may simultaneously impose the GIT stability condition $t$ and pass to the relative skeleton $\LL_t^m:=\LL^m\cap \fX(G,N)_t.$ This is the \bit{category $\cO$ Lagrangian} associated to the exact sequence \Cref{eq:toric-sequence} and the parameters $t$ and $m.$
\end{definition}

Observe that when we dualize exact sequence \Cref{eq:toric-sequence} to \Cref{eq:toric-dual-sequence}, the roles of the parameters $t$ and $m$ are exchanged: $m$ becomes a character of the subtorus $F^\vee,$ while $t$ becomes a cocharacter of the quotient torus $G^\vee.$
The prediction of 3d mirror symmetry is that $\fX(N,G)_t$ equipped with Lagrangian $\LL_t^m$ is dual to $\fX(N^\vee,F^\vee)_m$ equipped with Lagrangian $(\LL^\vee)_m^t.$
In slogan form: ``Stop removal is dual to microrestriction.''

In the sequel to this paper, we will explain the effects of stop removal and microrestriction on both the 3d A-model and B-model 2-categories of boundary conditions. We will then be able to make the above slogan into a rigorous mathematical statement, through which we will establish 3d mirror symmetry as an equivalence between ``2-categories $\cO$'' associated to $\LL_t^m$ and $(\LL^\vee)_m^t.$

\subsection{Notation and conventions}
\label{subsection:notn.and.conventions}

Our categorical conventions, which are largely standard, are articulated in \Cref{section.appendix.categorical.conventions}. Here we highlight a few of those and lay out the other conventions that we will use.

We reemphasize here that we take the ``implicit $\infty$'' convention; we use the term ``discrete'' to refer to a non-homotopical object. 

We work $\kk$-linearly for $\kk$ a commutative ring or commutative ring spectrum.
We write $\what{\cV} := \Mod_\kk$ for its category of modules, and $\cV := \Perf_\kk \subseteq \Mod_\kk$ for the subcategory of its perfect modules. Hence, for $\kk$ an ordinary ring we write $\what{\cV}^\heart$ (resp.\! $\cV^\heart$) for its discrete category of discrete (resp.\! finitely-generated projective discrete) modules.

For $0 \leq n \leq 2$, we write $\Stu_{\kk,n}$ for the $(n+1)$-category of $\kk$-linear small stable $n$-categories (with $\Stu_{\kk,0} := \what{\cV}$, and often simply writing $\St := \Stu_{\kk,1}$), and we write
\[
\Cat_n \xra{\Sigma^{(\infty,n)}_+} \Stu_{\kk,n}
\]
for the left adjoint to the forgetful functor, so that we may understand $\Sigma^{(\infty,n)}_+$ as the ``$n$-categorical $\kk$-stabilization'' functor; see \Cref{subsec:stable-ncats} for more details.
We will occasionally wish to refer to the 2-category of \textit{presentable} stable $\kk$-linear categories, which we denote by $\PrLSt := \PrLSt_1 \simeq \Mod_{\what{\cV}}(\Pr^L)$.

We write $\ZZ$ for the poset of integers with its usual ordering, which is symmetric monoidal by addition. We write $\ZZ_{\leq 0} \subset \ZZ$ for the full symmetric monoidal subcategory on the nonpositive elements, and we write $\ZZ^\delta \subset \ZZ$ for its (symmetric monoidal) maximal subgroupoid. 

We write
\[
f\cV
:=
\Fun(\ZZ^\op,\what{\cV})^\omega
\simeq
\Sigma^{(\infty,1)}_+ \ZZ
\quad
\text{and}
\quad
g\cV
:=
\Fun((\ZZ^\delta)^\op,\what{\cV})^\omega
\simeq
\Sigma^{(\infty,1)}_+ (\ZZ^\delta)
~.
\]
As the notation suggests, these are to be understood as the stable categories of {\em filtered} and {\em graded} $\kk$-vector spaces, respectively: see \cite{Gwilliam-Pavlov} or \cite{Lurie-Rotation}*{\S 3} for a review.
Note the implicit finiteness conditions:
\begin{itemize}

\item A filtered $\kk$-module lies in $f\cV$ if and only if its values are perfect, it is eventually constant to the left (i.e.\! at large negative integers), and it is eventually constant at $0_\cV$ to the right (i.e.\! at large positive integers).

\item A graded $\kk$-module lies in $g\cV$ if and only if its values are perfect and it is eventually constant at $0_\cV$ both to the left and to the right.
\end{itemize}
Nevertheless, we simply refer to these as the categories of filtered and graded modules, respectively.

Given a category $\cA$ and an object $a \in \cA$, we write
\[
h_a
:=
\Sigma^{(\infty,0)}_+ \hom_\cA(-,a)
\in
\Sigma^{(\infty,1)}_+ \cA
\quad
\text{and}
\quad
h^a
:=
\Sigma^{(\infty,0)}_+ \hom_\cA(a,-)
\in
\Sigma^{(\infty,1)}_+ \cA^\op
\]
for its stabilized Yoneda functors. In particular, for any $n \in \ZZ$ we have the filtered $\kk$-module
\[
f\cV
\ni
h_n
:=
\Sigma^{(\infty,0)}_+ \hom_{\ZZ}(-,n)
=
\left(
i
\longmapsto
\left\{
\begin{array}{ll}
\kk~,
&
i \leq n
\\
0~,
&
i > n
\end{array}
\right.
\right)
\]
(with nontrivial structure maps $\kk \xra{\id_\kk} \kk$). On the other hand, in the case that $\cA = \ZZ^\delta$, we introduce the particular notation $\delta_n := h_n$ for the graded $\kk$-module
\[
g\cV
\ni
\delta_n
:=
\Sigma^{(\infty,0)}_+ \hom_{\ZZ^\delta}(-,n)
=\left(
i
\longmapsto
\left\{
\begin{array}{ll}
\kk~,
&
i=n
\\
0~,
&
i \not= n
\end{array}
\right.
\right)
~.
\]

We write $
\langle 1 \rangle$ for the shift automorphism of both $f\cV$ and $g\cV$, given in both cases by the formula $(V\langle 1\rangle)^n := V^{n-1}$. This convention is motivated by the formulas for the application of shift automorphism to standard objects:
\[
h_n \langle 1\rangle \cong h_n \otimes h_1 \cong h_{n+1} \in f\cV
\quad
\text{and}
\quad
\delta_n \langle 1\rangle \cong \delta_n \otimes \delta_1 \cong \delta_{n+1} \in g\cV.
\]


\subsection{Acknowledgments}

The authors are grateful to Dima Arinkin, David Ben-Zvi, Alexei Oblomkov, Lev Rozansky, and Constantin Teleman for helpful conversations, encouragement, and inspiration. JH would also like to thank Mathew Bullimore, Tudor Dimofte, and Davide Gaiotto for their patient explanations of the physics of $3$d $\mathcal{N}=4$ theories and German Stefanich for discussions about \Cref{mainthm:4d-fourier} and the 3-categorical Betti Langlands correspondence.

BG acknowledges the support of an NSF Postdoctoral Research Fellowship, DMS-2001897. 

JH is part of the Simons Collaboration on Homological Mirror Symmetry supported by Simons Grant 390287. This research was supported in part by Perimeter Institute for Theoretical Physics. Research at Perimeter Institute is supported by the Government of Canada through the Department of Innovation, Science and Economic Development Canada and by the Province of Ontario through the Ministry of Research, Innovation and Science.

AMG acknowledges the support of NSF grant DMS-2105031.

\setcounter{section}{0}

\section{Algebras and ideals}
\label{section.algs.and.ideals}

In this section, we study algebra objects in a stably monoidal category $\cC := (\cC,\boxtimes) \in \Alg(\St)$. Our main result here (\Cref{cor.ideals.are.algebras}) proves that an algebra object $A \in \Alg(\cC)$ is equivalent data to its corresponding \textit{ideal}: viz., the object $I := \fib(\uno \xra{\eta} A) \in \cC$, equipped with suitable multiplicative structure.

On the one hand, this is basically the theory of Smith ideals \cite{Hovey-Smithideals}, ported over from model categories to $\infty$-categories.\footnote{Note however that it is not a priori clear what exactly an $\infty$-categorical ideal should be -- a drawback of working in the model-categorical setting.} On the other hand, this can also be viewed as an enhancement of \cite[Proposition 2.14]{MNN-nilp}, which says that the Lurie--Dold--Kan equivalence
\[
\Fun(\bDelta,\cC)
\xlongra{\sim}
\Fun(\ZZ_{\leq 0},\cC)
\]
carries the cobar complex $\coBar(A) \in \Fun(\bDelta,\cC)$ to the object $\cofib(T_\bullet(A) \ra \const(\uno)) \in \Fun(\ZZ_{\leq 0} , \cC)$, where we write $T_\bullet(A)$ for the \textit{Adams tower} of $A$.\footnote{Note that \cite[Construction 2.2]{MNN-nilp} defines the Adams tower asymmetrically (denoted as $T_\bullet(A,\uno)$ there), privileging left over right. By contrast, ideals are required essentially by definition to have these maps be equivalent (see \cite[Proposition 1.7]{Hovey-Smithideals}). Such symmetry is implicitly present in our work as well.} Namely, we upgrade $\coBar(A)$ to the \textit{augmented} cobar complex $\coBar_+(A) \in \Fun(\bDelta_+,\cC)$, which we moreover equip with the data of \textit{monoidality}. This becomes equivalent data to the object $A \in \Alg(\cC)$, because $\bDelta_+$ is the free monoidal ($\infty$-)category containing an algebra object.

\subsection{Monoidal structures on arrow categories}

Because $\cC$ is a stable category, taking cofibers and fibers are inverse operations: they assemble into an equivalence
\begin{equation}
\label{equivce.by.cofib.and.fib}
\begin{tikzcd}[column sep=2cm]
\Ar(\cC)
\arrow[yshift=0.9ex]{r}{\cofib}
\arrow[leftarrow, yshift=-0.9ex]{r}[yshift=-0.2ex]{\sim}[swap]{\fib}
&
\Ar(\cC)~.
\end{tikzcd}
\end{equation}
Using the monoidal structure of $\cC$, we will upgrade the equivalence \Cref{equivce.by.cofib.and.fib} to a monoidal equivalence (\Cref{lem.equivce.of.monoidal.strs.on.Ar.C}) with respect to monoidal structures that we introduce presently.

\begin{notation}
Note that the category $[1] \in \Cat$ admits two distinct monoidal structures: minimum and maximum.\footnote{The minimum is also the categorical product (as well as the usual product of integers), while the maximum is also the categorical coproduct.} Choosing either such structure makes the arrow category $\Ar(\cC)=\Fun([1],\cC)$ into a category of functors between monoidal categories, which therefore has itself a monoidal structure given by the Day convolution product
(first introduced in \cite{Day-convolution} and generalized to the $\infty$-categorical context in \cite{Glasman} and \cite{Lurie-HA}*{\S 2.2.6}). 
We denote these respective monoidal structures on $\Ar(\cC)$ by
\[
(\Ar(\cC),\boxdot)
:=
\Fun ( ([1],\min) , (\cC ,\boxtimes) )
\quad
\text{and}
\quad
(\Ar(\cC),\boxtimes)
:=
\Fun ( ([1],\max) , (\cC,\boxtimes) )
~.
\]
\end{notation}

\begin{observation}
Given a monoidal category $\cI\in \Mon(\Cat)$, the Day convolution of functors $F_1,\ldots, F_n \in \Fun(\cI,\cC)$ (if it exists) is given by the left Kan extension
\begin{equation}
\label{LKE.defn.of.day.conv}
\begin{tikzcd}[column sep=2cm]
\cI^{\times n}
\arrow{r}{F_1 \times \cdots \times F_n}
\arrow{d}[swap]{\otimes_\cI}
&
\cE^{\times n}
\arrow{r}{\otimes_\cE}
&
\cE
\\
\cI
\arrow[dashed, bend right=15]{rru}[sloped, swap]{F_1 \circledast \cdots \circledast F_n}
\end{tikzcd}
\end{equation}
(see e.g.\! \cite{Lurie-HA}*{Example 2.2.6.17}). So, the Day convolution evaluates at an object $i \in \cI$ as
\begin{equation}
\label{colimit.formula.for.day.conv}
(F_1 \circledast \cdots \circledast F_n)(i) \simeq \colim_{(i_1\otimes \cdots \otimes i_n \to i) \in \cI^{\times n} \times_\cI \cI_{/i}} \left( F_1(i_1)\otimes \cdots \otimes F_n(i_n) \right)
~.
\end{equation}
In the present situation, the above monoidal structures on $\Ar(\cC) := \Fun([1],\cC)$ are therefore given by the formulas
\[
(A \longra B)
\boxdot
(A' \longra B')
\simeq
\left( \left( A \boxtimes B' \coprod_{A \boxtimes A'} B \boxtimes A' \right)
\longra
B \boxtimes B'
\right)
\]
and
\[
(A \longra B)
\boxtimes
(A' \longra B')
\simeq
( A \boxtimes A'
\longra B \boxtimes B')
~.
\]
That is, they are respectively the \textit{pushout product} monoidal structure and the \textit{pointwise} monoidal structure.\footnote{For the latter, observe that taking $\cI = ([1],\max)$, the categories indexing the colimits \Cref{colimit.formula.for.day.conv} admit terminal objects $(0 \otimes \cdots \otimes 0) \ra 0$ (for $i=0$) and $(1 \otimes \cdots \otimes 1) \ra 1$ (for $i=1$). Moreover, given a morphism $i \ra i'$ in $\cI$, the functoriality of the Day convolution $F_1 \circledast \cdots \circledast F_n$ is given by the functoriality of colimits with respect to the induced functor $\cI^{\times n} \times_\cI \cI_{/i} \ra \cI^{\times n} \times_\cI \cI_{/i'}$. Hence, in this case, the left Kan extension \Cref{LKE.defn.of.day.conv} is simply given by precomposition with the diagonal section.}
\end{observation}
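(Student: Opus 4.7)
The plan is to deduce the two concrete formulas for $\boxdot$ and $\boxtimes$ on $\Ar(\cC)$ by unpacking the general left Kan extension definition \Cref{LKE.defn.of.day.conv}, and then computing the indexing colimit \Cref{colimit.formula.for.day.conv} explicitly in each of the two cases $\cI = ([1],\min)$ and $\cI = ([1],\max)$. The first formula (the Kan extension definition) is cited to \cite{Lurie-HA}*{Example 2.2.6.17}, and the colimit formula is the standard pointwise formula for left Kan extensions along $\otimes_\cI \colon \cI^{\times n} \to \cI$, whose slice category at $i \in \cI$ is by definition $\cI^{\times n} \times_\cI \cI_{/i}$. With both formulas in hand, it then suffices to compute the relevant slice categories and identify the resulting colimits.

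For the case $\cI = ([1],\max)$, the slice category $\cI^{\times n} \times_\cI \cI_{/i}$ consists of tuples $(i_1,\dots,i_n)$ with $\max(i_1,\dots,i_n) \leq i$. For $i = 0$, the only such tuple is $(0,\dots,0)$; for $i = 1$, every tuple qualifies and $(1,\dots,1)$ is terminal. Hence in both cases the slice category has a terminal object, so the colimit \Cref{colimit.formula.for.day.conv} collapses to the value of $F_1 \otimes \cdots \otimes F_n$ there, yielding the pointwise formula $(A \to B) \boxtimes (A' \to B') \simeq (A \boxtimes A' \to B \boxtimes B')$. The functoriality of this assignment in the map $0 \to 1$ of $[1]$ is likewise induced from the maps of terminal objects in the respective slices, and one sees it is simply the pointwise tensor of the structure maps; this is essentially the content of the footnote already in the excerpt.

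For the case $\cI = ([1],\min)$, the slice at $i = 1$ is again all of $[1]^{\times n}$ with terminal object $(1,\dots,1)$, so the colimit is $B \boxtimes B'$ (in the $n=2$ case). The interesting slice is at $i = 0$: it consists of tuples $(i_1,\dots,i_n)$ with at least one $i_j = 0$. For $n = 2$ this subposet of $[1]^{\times 2}$ is the punctured square $(0,0) \to (0,1)$, $(0,0) \to (1,0)$, i.e., a pushout diagram. Computing the colimit of $F_1 \boxtimes F_2$ over this diagram gives exactly
\[
A \boxtimes B' \coprod_{A \boxtimes A'} B \boxtimes A'
~,
\]
and one identifies the structure map to $B \boxtimes B'$ (coming from the unique morphism from this slice's colimit to the slice over $i = 1$) as the universal map out of the pushout, reproducing the pushout-product formula.

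The only real obstacle is bookkeeping: one must check that these slice-category identifications are functorial both in the arrow $0 \to 1$ of $[1]$ and in morphisms of the input functors, so that the formulas lift to monoidal structures on $\Ar(\cC)$ and not just to individual tensor products. This is automatic from functoriality of colimits, but it deserves a brief remark. The higher-$n$ (i.e., $n > 2$) version of the min case follows by an analogous computation: the slice over $0$ is the poset of nonempty subsets $S \subseteq \{1,\dots,n\}$ with $i_j = 0$ for $j \in S$, whose colimit recovers the iterated pushout-product, but for the present purposes only $n = 2$ (binary multiplication) needs to be recorded.
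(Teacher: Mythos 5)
Your proposal is correct and follows exactly the route the paper intends: it unpacks the pointwise colimit formula for the left Kan extension over the comma poset $[1]^{\times 2}\times_{[1]}[1]_{/i}$, identifying the terminal-object collapse in the $\max$ case (as in the paper's footnote) and the punctured-square pushout in the $\min$ case. You simply make explicit the slice-category computation that the paper leaves as an observation, so there is nothing to correct.
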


\begin{lemma}
\label{lem.equivce.of.monoidal.strs.on.Ar.C}
The equivalence \Cref{equivce.by.cofib.and.fib} canonically upgrades to a monoidal equivalence
\[
\begin{tikzcd}[column sep=2cm]
(\Ar(\cC),\boxdot)
\arrow[yshift=0.9ex]{r}{\cofib}
\arrow[leftarrow, yshift=-0.9ex]{r}[yshift=-0.2ex]{\sim}[swap]{\fib}
&
(\Ar(\cC),\boxtimes)~.
\end{tikzcd}
\]
\end{lemma}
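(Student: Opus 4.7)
The plan is to exploit the universal property of the pointwise monoidal structure on the target $(\Ar(\cC), \boxtimes)$: since $\Fun([1], \cC)_{\mathrm{pointwise}}$ is the $\EE_1$-monoidal category of arrows in $(\cC, \boxtimes)$, a monoidal functor $\cD \to (\Ar(\cC), \boxtimes)$ is the same datum as a monoidal arrow in $\Fun^\mon(\cD, \cC)$, i.e.\ a pair of monoidal functors $F_0, F_1 : \cD \to (\cC, \boxtimes)$ together with a monoidal natural transformation $F_0 \Rightarrow F_1$. Thus, to build a monoidal functor $\cofib : (\Ar(\cC), \boxdot) \to (\Ar(\cC), \boxtimes)$ whose underlying functor on $\Ar(\cC)$ sends $(A \to B) \mapsto (B \to \cofib(A \to B))$, it suffices to exhibit two monoidal functors $(\Ar(\cC),\boxdot) \to (\cC,\boxtimes)$ and a monoidal natural transformation between them.

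I would take $F_0 = \ev_1$, the functor $(A \to B) \mapsto B$, and $F_1 = \cofib$, the ordinary cofiber functor $(A \to B) \mapsto \cofib(A \to B)$. Monoidality of $\ev_1$ is immediate from the pushout-product formula, whose codomain is literally $B \boxtimes B'$. Monoidality of $\cofib$ is the classical \emph{total cofiber} identity: expanding the pushout product $(f) \boxdot (f')$ to the $3 \times 3$ grid built from $f$, $f'$, and their cofibers $C$, $C'$, every row and every column is a cofiber sequence (using that $\boxtimes$ is bilinear, which holds because $\cC$ is stably monoidal), whence the cofiber of the pushout-to-corner arrow is identified with $C \boxtimes C' = \cofib(f) \boxtimes \cofib(f')$. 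The natural transformation $\ev_1 \Rightarrow \cofib$ given pointwise by the quotient arrow $B \to \cofib(A \to B)$ is monoidal by the same $3 \times 3$ argument, which factors the global quotient $B \boxtimes B' \to C \boxtimes C'$ as the composite of the two individual pointwise quotients.

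Assembling these three ingredients via the universal property produces the desired monoidal functor $(\Ar(\cC), \boxdot) \to (\Ar(\cC), \boxtimes)$. Its underlying functor of categories is precisely the equivalence \Cref{equivce.by.cofib.and.fib}, so being a monoidal functor whose underlying is an equivalence, it is automatically a monoidal equivalence, and $\fib$ inherits a canonical monoidal inverse structure (alternatively, a symmetric construction with $\ev_0$ and $\fib$ as monoidal functors $(\Ar(\cC), \boxtimes) \to (\cC, \boxtimes)$ produces the inverse directly).

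The main obstacle is executing the coherence of the total cofiber identity at the genuinely $\infty$-categorical level, rather than just on homotopy categories. I would bypass the hands-on verification by presenting all of the above data uniformly through Day-convolution machinery: both $\boxdot$ and $\boxtimes$ on $\Ar(\cC)$ arise as Day convolutions for the two monoidal structures on $[1]$, and the standard functoriality of Day convolution combined with the $\infty$-categorical universal properties of pushout and cofiber (as in \emph{Higher Algebra}) should deliver the required coherences essentially for free. This packaging is the only technically delicate step; once it is in place, the three verifications above produce the monoidal equivalence.
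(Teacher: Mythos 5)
Your reduction is sound in outline: using that the pointwise structure $(\Ar(\cC),\boxtimes)$ is the cotensor of $(\cC,\boxtimes)$ by $[1]$ in monoidal categories, a monoidal functor $(\Ar(\cC),\boxdot)\to(\Ar(\cC),\boxtimes)$ is the same as two monoidal functors to $(\cC,\boxtimes)$ and a monoidal transformation between them, and your $3\times3$ computation is exactly the computational heart of the paper's proof. The genuine gap is at what you yourself flag as the "only technically delicate step": endowing $\cofib\colon(\Ar(\cC),\boxdot)\to(\cC,\boxtimes)$, and the transformation $\ev_1\Rightarrow\cofib$, with \emph{coherent} strong monoidal structure. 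The mechanism you invoke -- "standard functoriality of Day convolution ... essentially for free" -- does not supply this: Lurie's universal property of Day convolution classifies (lax) monoidal functors \emph{into} a Day convolution, and precomposition along monoidal functors of index categories yields lax monoidal restriction functors; neither produces a strong monoidal structure on a colimit-type functor \emph{out of} $(\Ar(\cC),\boxdot)$, and the $3\times3$ identity as you state it gives only an objectwise binary equivalence, not the tower of coherences. So the step that carries the entire content of the lemma is asserted rather than proved.

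The paper closes exactly this gap with a device you would need some version of: it forms $\Ar^2(\cC)=\Fun([1]^{\times 2},\cC)$ with the Day convolution for $\min\times\max$, checks that the full subcategory $\Exact(\cC)$ of cofiber squares is a \emph{monoidal subcategory} (this is where the $3\times3$ colimit-in-two-ways argument enters, now as a mere property of a full subcategory, so no coherence data has to be specified), and then realizes both $\cofib$ and $\fib$ as composites of restrictions along the monoidal functors $([1],\min)\to([1]^{\times2},\min\times\max)\leftarrow([1],\max)$, which are visibly strong monoidal on Day convolutions. Alternatively, your decomposition can be repaired by doctrinal adjunction: the functors $X\mapsto(0\to X)$ and $X\mapsto(X\xra{\id}X)$ are strong monoidal $(\cC,\boxtimes)\to(\Ar(\cC),\boxdot)$ (immediate from the pushout-product formula), and they are right adjoint to $\cofib$ and $\ev_1$ respectively; hence the left adjoints inherit canonical oplax monoidal structures whose unit and binary comparison maps are equivalences by your $3\times3$ argument, making them strong monoidal, and the evident monoidal map $(0\to X)\to(X\xra{\id}X)$ has mate $\ev_1\Rightarrow\cofib$. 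Either of these devices (or an equivalent one) is needed; appealing to Day-convolution functoriality alone is not a proof.
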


\begin{proof}
Consider the product monoidal structure $([1] \times [1],\min \times \max) := ([1],\min) \times ([1],\max)$. Day convolution yields a monoidal structure on the category $\Ar^2(\cC) := \Fun([1]^{\times 2},\cC)$, which is given by the formula
\begin{equation}
\label{day.conv.on.squares}
\left(
\begin{tikzcd}
C
\arrow{r}
&
D
\\
A
\arrow{u}
\arrow{r}
&
B
\arrow{u}
\end{tikzcd}
\right)
~,~
\left(
\begin{tikzcd}
C'
\arrow{r}
&
D'
\\
A'
\arrow{u}
\arrow{r}
&
B'
\arrow{u}
\end{tikzcd}
\right)
\longmapsto
\left(
\begin{tikzcd}
{\displaystyle
C \boxtimes D' \coprod_{C \boxtimes C'} D \boxtimes C'
}
\arrow{r}
&
D \boxtimes D'
\\
{\displaystyle
A \boxtimes B' \coprod_{A \boxtimes A'} B \boxtimes A'
}
\arrow{u}
\arrow{r}
&
B \boxtimes B'
\arrow{u}
\end{tikzcd}
\right)
~;
\end{equation}
its unit object is
\[
\uno_{\Ar^2(\cC)}
\simeq
\left(
\begin{tikzcd}
0
\arrow{r}
&
\uno
\\
0
\arrow{r}
\arrow{u}{\id}
&
\uno
\arrow{u}[swap]{\id}
\end{tikzcd}
\right)
~.
\]

Let us write $\Exact(\cC) \subseteq \Ar^2(\cC)$ for the subcategory on the co/fiber sequences in $\cC$ -- i.e., the functors that select exact squares and that moreover carry the object $(0,1) \in [1]^{\times 2}$ to the zero object $0 \in \cC$. We claim that this is a monoidal subcategory.\footnote{This argument is essentially contained in the proof of \cite[Theorem 1.4]{Hovey-Smithideals}.} First of all, it clearly contains the unit object. To show that it is stable under the monoidal structure, choose any pair of objects $(A \xra{\varphi} B),(A' \xra{\varphi'} B') \in \Ar(\cC)$ and consider the diagram
\begin{equation}
\label{three.by.three.grid.for.proving.cofib.takes.pushout.prod.to.ptwise}
\begin{tikzcd}[row sep=1.5cm, column sep=1.5cm]
0
&
A \boxtimes B'
\arrow{r}{\varphi \boxtimes \id}
\arrow{l}
&
B \boxtimes B'
\\
0
\arrow{u}{\id}
\arrow{d}[swap]{\id}
&
A \boxtimes A'
\arrow{r}{\varphi \boxtimes \id}
\arrow{l}
\arrow{u}{\id \boxtimes \varphi'}
\arrow{d}[swap]{\varphi \boxtimes \id}
&
B \boxtimes A'
\arrow{u}[swap]{\id \boxtimes \varphi'}
\arrow{d}{\id}
\\
0
&
B \boxtimes A'
\arrow{r}[swap]{\id}
\arrow{l}
&
B \boxtimes A'
\end{tikzcd}
\end{equation}
in $\cC$ (a span of spans). Because left Kan extensions compose, we can compute its colimit either by taking the pushout of the vertical pushouts or by taking the pushout of the horizontal pushouts. Hence, we compute on the one hand that
\[
\colim\left(\Cref{three.by.three.grid.for.proving.cofib.takes.pushout.prod.to.ptwise}\right)
\simeq
\colim
\left(
\begin{tikzcd}
0
&
{\displaystyle
A \boxtimes B' \coprod_{A \boxtimes A'} B \boxtimes A'
}
\arrow{l}
\arrow{r}{\varphi \boxdot \varphi'}
&
B \boxtimes B'
\end{tikzcd}
\right)
=:
\cofib(\varphi \boxdot \varphi')
~,
\]
and on the other hand that
\[
\colim\left(\Cref{three.by.three.grid.for.proving.cofib.takes.pushout.prod.to.ptwise}\right)
\simeq
\colim
\left(
\begin{tikzcd}
\cofib(\varphi) \boxtimes B'
\\
\cofib(\varphi) \boxtimes A'
\arrow{u}{\id \boxtimes \varphi'}
\arrow{d}
\\
0
\end{tikzcd}
\right)
\simeq
\cofib(\varphi) \boxtimes \cofib(\varphi')
~.
\]
It follows that the formula \Cref{day.conv.on.squares} indeed preserves the subcategory $\Exact(\cC) \subseteq \Ar^2(\cC)$ (setting $C=C'=0$, $D = \cofib(A \xra{\varphi} B)$, and $D' = \cofib(A' \xra{\varphi'} B')$).

To conclude the proof, observe the monoidal functors
\[
([1],\min)
\xra{(\id , \const_0 )}
([1],\min) \times ([1],\max)
\xla{(\const_1,\id)}
([1],\max)
~,
\]
which by restriction induce monoidal functors
\[
(\Ar(\cC),\boxdot)
\longla
\Ar^2(\cC)
\longra
(\Ar(\cC),\boxtimes)
~.
\]
Because $\cC$ is stable, these restrict to monoidal equivalences
\[
(\Ar(\cC),\boxdot)
\xlongla{\sim}
\Exact(\cC)
\xlongra{\sim}
(\Ar(\cC),\boxtimes)
~,
\]
which by construction compose to the inverse equivalences  \Cref{equivce.by.cofib.and.fib} on underlying categories.
\end{proof}

\subsection{Algebras and ideals}

\begin{definition}\label{def.smith.ideal}
An \bit{ideal} in (the unit object of) $\cC$ is a monoidal functor $\ZZ_{\leq 0} \xra{I_\bullet} \cC$.\footnote{More generally (but actually not), an ideal in an algebra $A \in \Alg(\cC)$ is a monoidal functor $\ZZ_{\leq 0} \ra \BiMod_{(A,A)}(\cC)$.} We simply write $I := I_{-1} \in \cC$ and refer to this as the \textit{underlying object} of the ideal; using this notation, we have canonical equivalences $I_{n} \simeq I^{\boxtimes (-n)}$ for all $n \in \ZZ_{\leq 0}$. We write $\Idl(\cC) := \Fun^\mon ( \ZZ_{\leq 0} , \cC)$ for the category of ideals in $\cC$.
\end{definition}

\begin{proposition}
\label{cor.ideals.are.algebras}
There are canonical equivalences
\[
\begin{tikzcd}[column sep=3cm]
\Idl(\cC)
\arrow[yshift=0.9ex]{r}{I_\bullet \longmapsto \cofib(I \longra \uno)}
\arrow[leftarrow, yshift=-0.9ex]{r}[yshift=-0.2ex]{\sim}[swap]{\fib(\uno \longra A) \longmapsfrom A}
&
\Alg(\cC)~.
\end{tikzcd}
\]
\end{proposition}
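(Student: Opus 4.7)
The plan is to upgrade the monoidal equivalence $\cofib : (\Ar(\cC), \boxdot) \xlongra{\sim} (\Ar(\cC), \boxtimes) : \fib$ of \Cref{lem.equivce.of.monoidal.strs.on.Ar.C} by passing to algebra objects and then carving out compatible full monoidal subcategories on the two sides.

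First I would analyze the easy side. Because $\boxtimes$ is the pointwise tensor product (Day convolution for $([1],\max)$), algebras in $(\Ar(\cC),\boxtimes)$ are the same as arrows in $\Alg(\cC)$, so $\Alg((\Ar(\cC),\boxtimes)) \simeq \Fun([1],\Alg(\cC))$. The full monoidal subcategory spanned by arrows whose source is the initial algebra $\uno$ is then equivalent to $\Alg(\cC)$ itself, via $A \mapsto (\uno \xra{\eta_A} A)$. Under the monoidal equivalence of \Cref{lem.equivce.of.monoidal.strs.on.Ar.C}, this corresponds on the other side to the (also monoidal) full subcategory of $(\Ar(\cC),\boxdot)$ spanned by arrows of the form $(I \to \uno)$, since $\cofib$ sends $(I \to \uno)$ to $(\uno \to \cofib(I \to \uno))$ and $\fib$ sends $(\uno \to A)$ to $(\fib(\uno \xra{\eta_A} A) \to \uno)$. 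Restricting the algebra-level equivalence along these compatible monoidal inclusions produces a canonical equivalence between $\Alg(\cC)$ and the category of algebras in $(\Ar(\cC),\boxdot)$ whose underlying arrow has target $\uno$, realized by the two assignments stated in the proposition.

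The main remaining step is to identify this target-$\uno$ subcategory with $\Idl(\cC) = \Fun^\mon(\ZZ_{\leq 0},\cC)$. Heuristically, by the explicit formula for $\boxdot$, the first-level multiplication on an algebra $(I \to \uno) \in \Alg((\Ar(\cC),\boxdot))$ is a map $I \coprod_{I \boxtimes I} I \to I$ of objects over $\uno$, which by the universal property of the pushout records a homotopy between the two maps $f \boxtimes \id,\ \id \boxtimes f : I \boxtimes I \to I$, with higher multiplications supplying all higher coherences. On the other side, a strong monoidal functor $F : \ZZ_{\leq 0} \to \cC$ is determined by $F(-1 \to 0) = (f : I \to \uno)$, and strong monoidality together with the two tensor-decompositions of the unique morphism $-2 \to -1 \in \ZZ_{\leq 0}$ forces exactly that homotopy $f \boxtimes \id \simeq \id \boxtimes f$ and, inductively, all higher coherences. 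To formalize this, I would articulate $\ZZ_{\leq 0}$ via the universal property of being the free monoidal $\infty$-category on a ``coherently central'' arrow to the unit, and then match that universal property directly to the algebra structure in the pushout-product convolution. An alternative route is to prove a monoidal enhancement of the (unaugmented) Lurie--Dold--Kan equivalence referenced in \cite[Proposition 2.14]{MNN-nilp}, yielding $\Fun^\mon(\bDelta_+,\cC) \simeq \Fun^\mon(\ZZ_{\leq 0},\cC)$, and then to combine it with the standard identification $\Alg(\cC) \simeq \Fun^\mon(\bDelta_+,\cC)$ (the defining universal property of $\bDelta_+$ invoked in the preceding discussion).

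The principal obstacle will be precisely this last identification. The subtle point is that the $\infty$-categorical coherences packaged into an algebra structure in the Day-convolution monoidal category $(\Ar(\cC),\boxdot)$ are exactly---no more and no less---those imposed by strong monoidality against the $1$-truncated monoid $\ZZ_{\leq 0}$, and showing this rigorously at the level of $\infty$-categories (rather than merely on the underlying homotopy $1$-categories) is where the real technical work lies.
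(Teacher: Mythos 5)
Your first step is sound and coincides with the middle of the paper's argument: using \Cref{lem.equivce.of.monoidal.strs.on.Ar.C} to pass to algebras and restrict to the compatible subcategories $\Alg(\Ar(\cC),\boxdot)_{|\uno}$ and $\Alg(\Ar(\cC),\boxtimes)^{|\uno}\simeq\Alg(\cC)$ is exactly what appears in the paper's commutative diagram. The genuine gap is the step you yourself flag as "the real technical work": identifying $\Alg(\Ar(\cC),\boxdot)_{|\uno}$ with $\Idl(\cC)=\Fun^\mon(\ZZ_{\leq 0},\cC)$. By the universal property of Day convolution, $\Alg(\Ar(\cC),\boxdot)_{|\uno}$ is the category of laxly monoidal, strictly unital functors $([1],\min)\to\cC$, so your first proposed formalization -- that $\ZZ_{\leq 0}$ is the free monoidal category on a coherently unital/central arrow to the unit -- is precisely the assertion that $(\ZZ_{\leq 0},+)\simeq\Env^\unital([1],\min)$. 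The authors explicitly remark after their proof that they believe this unstable statement but that verifying it "appears to involve a nontrivial amount of combinatorics," and that their proof proceeds by other means; so this route is not a routine formality but an open-ended coherence problem. Your alternative route, a monoidal enhancement of Lurie--Dold--Kan giving $\Fun^\mon(\bDelta_+,\cC)\simeq\Fun^\mon(\ZZ_{\leq 0},\cC)$, is essentially a restatement of the proposition rather than an argument for it (note that $\LDK_+$ is not given by precomposition with any functor $\ZZ_{\leq 0}\to\bDelta_+$, so there is no obvious sense in which it "restricts" to monoidal functors).

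The paper's mechanism for closing exactly this gap is to exploit stability of $\cC$: it rewrites both sides as exact monoidal functor categories out of the stabilized corepresenting objects, $\Idl(\cC)\simeq\Fun^{\ex,\mon}(\Sigma^{(\infty,1)}_+(\ZZ_{\leq 0}),\cC)$ and $\Alg(\cC)\simeq\Fun^{\ex,\mon}(\Sigma^{(\infty,1)}_+(\bDelta_+),\cC)$, constructs a monoidal functor $\bDelta_+\to\Sigma^{(\infty,1)}_+(\ZZ_{\leq 0})$ out of the lax unital functor $([1],\min)\to(\ZZ_{\leq 0},+)$ classifying $-1\to 0$ (via the same arrow-category machinery you invoke), and then proves the induced map $\Sigma^{(\infty,1)}_+(\bDelta_+)\to\Sigma^{(\infty,1)}_+(\ZZ_{\leq 0})$ is an equivalence by identifying it, on underlying stable categories, with the coaugmented Lurie--Dold--Kan equivalence of \Cref{ldk.plus}; the comparison there only requires checking objects and minimal morphisms of $\ZZ_{\leq 0}$, so no higher coherences arise. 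Without some such device for trading the unstable coherence problem for a stable computation, your outline leaves the central claim of the proposition unproved.
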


\begin{proof}
Observing the equivalences
\[
\Idl(\cC)
\simeq
\Fun^{\ex,\mon}(\Sigma^{(\infty,1)}_+(\ZZ_{\leq 0}),\cC)
\quad
\text{and}
\quad
\Alg(\cC)
\simeq
\Fun^{\ex,\mon}(\Sigma^{(\infty,1)}_+(\bDelta_+),\cC)
~,
\]
we see that it is (necessary and) sufficient to construct an equivalence $\Sigma^{(\infty,1)}_+(\ZZ_{\leq 0}) \simeq \Sigma^{(\infty,1)}_+(\bDelta_+)$ in $\Alg(\St)$ that behaves as described. For this, we first construct a monoidal functor
\[
\bDelta_+
\xlongra{\varphi}
\Sigma^{(\infty,1)}_+(\ZZ_{\leq 0})
~,
\]
or equivalently an object of $\Alg(\Sigma^{(\infty,1)}_+(\ZZ_{\leq 0}),\otimes)$, where we write $\otimes$ for the monoidal structure of $\Sigma^{(\infty,1)}_+(\ZZ_{\leq 0})$ (inherited from the monoidal structure $+$ of $\ZZ_{\leq 0}$ by Day convolution). In turn, for this we begin with the evident laxly monoidal strictly unital functor
\[
([1],\min)
\xlongra{\varphi'}
(\ZZ_{\leq 0},+)
\]
between monoidal (discrete) categories, whose underlying functor classifies the morphism $-1 \ra 0$; its nontrivial laxness data consists of the morphism
\[
\varphi'(0)
+
\varphi'(0)
=
-2
\longra
-1
=
\varphi'(\min(0,0))
~.
\]
We then define $\varphi$ to be the image of $\varphi'$ under the composite left vertical functor in the commutative diagram
\[ \begin{tikzcd}
\Fun^{\lax.\mon,\unital}(([1],\min),(\ZZ_{\leq 0},+))
\arrow[hook]{r}{\ff}
\arrow{d}
&
\Fun^{\lax.\mon}(([1],\min),(\ZZ_{\leq 0},+))
\arrow{d}
\\
\Fun^{\lax.\mon,\unital}(([1],\min),(\Sigma^{(\infty,1)}_+(\ZZ_{\leq 0}),\otimes))
\arrow[hook]{r}{\ff}
\arrow[leftrightarrow]{d}[sloped, anchor=north]{\sim}
&
\Fun^{\lax.\mon}(([1],\min),(\Sigma^{(\infty,1)}_+(\ZZ_{\leq 0}),\otimes))
\arrow[leftrightarrow]{d}[sloped, anchor=south]{\sim}
\\
\Alg(\Ar(\Sigma^{(\infty,1)}_+(\ZZ_{\leq 0})), \boxdot)_{|\uno}
\arrow[hook]{r}{\ff}
\arrow[leftrightarrow]{d}[sloped, anchor=north]{\sim}
&
\Alg(\Ar(\Sigma^{(\infty,1)}_+(\ZZ_{\leq 0})), \boxdot)
\arrow[leftrightarrow]{d}[sloped, anchor=south]{\sim}
\\
\Alg(\Ar(\Sigma^{(\infty,1)}_+(\ZZ_{\leq 0})), \otimes)^{|\uno}
\arrow[hook]{r}{\ff}
\arrow{d}[sloped, anchor=north]{\sim}
&
\Alg(\Ar(\Sigma^{(\infty,1)}_+(\ZZ_{\leq 0})), \otimes)
\\
\Alg(\Sigma^{(\infty,1)}_+(\ZZ_{\leq 0}),\otimes)~,
\end{tikzcd}
\]
which we explain as follows.
\begin{itemize}

\item For any stably monoidal category $(\cD,\boxtimes)$, \Cref{lem.equivce.of.monoidal.strs.on.Ar.C} evidently upgrades to a commutative diagram
\[
\begin{tikzcd}
(\Ar(\cD),\boxdot)
\arrow[yshift=0.9ex]{rr}{\cofib}
\arrow[leftarrow, yshift=-0.9ex]{rr}[yshift=-0.2ex]{\sim}[swap]{\fib}
\arrow{rd}[sloped, swap]{t}
&
&
(\Ar(\cD),\boxtimes)
\arrow{ld}[sloped, swap]{s}
\\
&
(\cD,\boxtimes)
\end{tikzcd}
\]
among stably monoidal categories. Applying the functor $\Alg(\St) \xra{\Alg(-)} \Cat$ and taking fibers over the initial object $\uno \in \Alg(\cD,\boxtimes)$, we obtain an equivalence
\[
\begin{tikzcd}[column sep=2cm]
\Alg(\Ar(\cD),\boxdot)_{|\uno}
\arrow[yshift=0.9ex]{r}{\cofib}
\arrow[leftarrow, yshift=-0.9ex]{r}[yshift=-0.2ex]{\sim}[swap]{\fib}
&
\Alg(\Ar(\cD),\boxtimes)^{|\uno}
\simeq \Alg(\cD,\boxtimes)~,
\end{tikzcd}
\]
where $\Ar(\cD)_{|\uno}$ (resp. $\Ar(\cD)^{|\uno}$) denotes those arrows whose target (resp. source) is the initial object $\uno,$ and the equivalence $\Alg(\Ar(\cD),\boxtimes)^{|\uno}\simeq \Alg(\cD,\boxtimes)$ follows from the fact that $\uno$ is initial.
This explains the third, fourth, and fifth rows.

\item The uppermost vertical functors are postcomposition with the monoidal functor $(\ZZ_{\leq 0},+) \ra (\Sigma^{(\infty,1)}_+(\ZZ_{\leq 0}),\otimes)$.

\item The right vertical equivalence between the second and third rows follows from the universal property of Day convolution. Tracing through the definitions, one obtains its factorization as the left vertical equivalence: strictly unital functors correspond to algebra objects in $(\Ar(\Sigma^{(\infty,1)}_+(\ZZ_{\leq 0})),\boxdot)$ whose target satisfies the condition of being the initial object $\uno \in \Alg(\Sigma^{(\infty,1)}_+(\ZZ_{\leq 0}),\otimes)$.

\end{itemize}
Having constructed the monoidal functor $\varphi$, we obtain our desired exact monoidal extension
\[ \begin{tikzcd}
\bDelta_+
\arrow{r}{\varphi}
\arrow{d}
&
\Sigma^{(\infty,1)}_+(\ZZ_{\leq 0})
\\
\Sigma^{(\infty,1)}_+(\bDelta_+)
\arrow[dashed]{ru}[swap, sloped]{\tilde{\varphi}}
\end{tikzcd}
\]
by the universal property of the left adjoint $\Sigma^{(\infty,1)}_+$; it remains only to show that this functor is an equivalence in $\Alg(\St)$. Since the forgetful functor $\Alg(\St) \xra{\fgt} \St$ is conservative, it suffices to show that $\tilde{\varphi}$ is an equivalence on underlying stable categories. By \Cref{ldk.plus}, there is a distinguished equivalence
\[
\widetilde{\LDK_+}
\in
\Fun^\ex(\Sigma^{(\infty,1)}_+(\bDelta_+),\Sigma^{(\infty,1)}_+ \ZZ_{\leq 0})
\]
(the universal coaugmented Lurie--Dold--Kan equivalence), and it suffices to show that there exists an equivalence
\[
\tilde{\varphi}
\simeq
\widetilde{\LDK_+}
~.
\]
Applying \Cref{ldk.plus} again, we find that it suffices to establish the assignment
\[ \begin{tikzcd}[row sep=0cm]
\Fun^\ex(\Sigma^{(\infty,1)}_+(\bDelta_+),\Sigma^{(\infty,1)}_+ \ZZ_{\leq 0})
\arrow{r}{\sim}
&
\Fun(\bDelta_+,\Sigma^{(\infty,1)}_+ \ZZ_{\leq 0})
\arrow{r}{\LDK_+}[swap]{\sim}
&
\Fun(\ZZ_{\leq 0},\Sigma^{(\infty,1)}_+ \ZZ_{\leq 0})
\\
\rotatebox{90}{$\in$}
&
\rotatebox{90}{$\in$}
&
\rotatebox{90}{$\in$}
\\
\tilde{\varphi}
\arrow[maps to]{r}
&
\varphi
\arrow[dashed, maps to, bend left=15]{rd}
\\
\widetilde{\LDK_+}
\arrow[maps to]{rr}
&
&
h_\bullet~.
\end{tikzcd}
\]
Now, we observe that $\varphi$ is the coaugmented cobar complex of an algebra object structure on $\cofib(h_{-1} \ra h_0)$. (This is essentially a tautology: recall that an algebra object in a monoidal category is a monoidal functor from $\bDelta_+,$ and the underlying cosimplicial object --- i.e., the restriction of this functor to $\bDelta\subset \bDelta_+$ --- is the cobar complex.) Using the formula for $\LDK_+$ given in \Cref{ldk.plus}, the claimed assignment $\varphi \mapsto h_\bullet$ follows exactly as in the proof of \cite[Proposition 2.14]{MNN-nilp}.\footnote{Note that $\ZZ_{\leq 0}$ is the directed colimit of its subcategories $\ZZ_{(-n)//0}$ for $n \geq 0$, each of which is constructed from the former by adjoining a morphism. Hence, in order to construct an equivalence in $\Fun(\ZZ_{\leq 0},\Sigma^{(\infty,1)}_+(\ZZ_{\leq 0}))$, it suffices to construct it on objects and on minimal morphisms: there are no higher coherences to specify.}
\end{proof}

\begin{lemma}
\label{ldk.plus}
For any stable category $\cD$, there is a natural equivalence
\[
\Fun(\bDelta_+,\cD)
\xra[\sim]{\LDK_+}
\Fun(\ZZ_{\leq 0},\cD)
\]
which carries a coaugmented cosimplicial object $(X \ra Y^\bullet) \in \Fun(\bDelta_+,\cD)$ to the tower
\[
\cdots
\longra
\fib(X \longra \Tot_1(Y^\bullet))
\longra
\fib(X \longra \Tot_0(Y^\bullet))
\longra
X
~.
\]
\end{lemma}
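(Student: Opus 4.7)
The plan is to assemble $\LDK_+$ as a composite of three elementary equivalences and then verify that it implements the advertised formula; the argument reduces the augmented statement to the unaugmented Lurie--Dold--Kan equivalence, which I treat as known. Let
\[
\LDK : \Fun(\bDelta, \cD) \xlongra{\sim} \Fun(\ZZ_{\leq -1}, \cD), \qquad Y^\bullet \longmapsto \bigl(n \longmapsto \Tot_{-n-1}(Y^\bullet)\bigr),
\]
denote the classical equivalence (a shift of the form found in \cite{Lurie-HA}*{\S 1.2.4}), where $\ZZ_{\leq -1} \subset \ZZ_{\leq 0}$ is the subposet obtained by removing the terminal object $0$.

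First, I would rewrite both sides as pullbacks via the cone identifications $\bDelta_+ = \bDelta^\lcone$ and $\ZZ_{\leq 0} = (\ZZ_{\leq -1})^\rcone$:
\[
\Fun(\bDelta_+, \cD) \simeq \cD \times_{\const, \Fun(\bDelta, \cD), \ev_0} \Fun(\bDelta, \cD)^{\Delta^1},
\]
\[
\Fun(\ZZ_{\leq 0}, \cD) \simeq \cD \times_{\const, \Fun(\ZZ_{\leq -1}, \cD), \ev_1} \Fun(\ZZ_{\leq -1}, \cD)^{\Delta^1}.
\]
The first tracks $(X \to Y^\bullet)$ as the triple $(X, Y^\bullet, \alpha : \const X \to Y^\bullet)$, and the second tracks a tower ending at position $0$ as $(X, T_\bullet, \beta : T_\bullet \to \const X)$. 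The two differ only in source versus target evaluation on the arrow-category factor.

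Next, applying the classical $\LDK$ to the middle category of the first pullback -- and using $\Tot_n(\const X) \simeq X$ (by weak contractibility of $\bDelta_{\leq n}$), so that $\LDK$ intertwines the two $\const$ functors -- yields
\[
\Fun(\bDelta_+, \cD) \simeq \cD \times_{\const, \Fun(\ZZ_{\leq -1}, \cD), \ev_0} \Fun(\ZZ_{\leq -1}, \cD)^{\Delta^1}.
\]
To convert between source and target evaluation I would invoke stability of $\cE := \Fun(\ZZ_{\leq -1}, \cD)$, in the same spirit as the manipulations in the proof of \Cref{lem.equivce.of.monoidal.strs.on.Ar.C}, to obtain the fiber equivalence
\[
\Ar(\cE) \xlongra{\sim} \Ar(\cE), \qquad (A \xlongra{\alpha} B) \longmapsto (\fib(\alpha) \longra A),
\]
with inverse $(C \to A) \mapsto (A \to \cofib(C \to A))$, which exchanges $\ev_0$ and $\ev_1$. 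Composing with this exchange identifies the pullback above with $\Fun(\ZZ_{\leq 0}, \cD)$, defining $\LDK_+$.

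Unwinding the composite on $(X \to Y^\bullet)$ produces, in turn: the augmentation $\const X \to \LDK(Y^\bullet)$ in $\Fun(\ZZ_{\leq -1}, \cD)$ (whose value at $n \leq -1$ is $X \to \Tot_{-n-1}(Y^\bullet)$); the fiber arrow $\fib(\const X \to \LDK(Y^\bullet)) \longra \const X$ (whose value at $n$ is $\fib(X \to \Tot_{-n-1}(Y^\bullet)) \longra X$); and finally, after reattaching $X$ at position $0$ via the right-hand cone identification, the tower
\[
\cdots \longra \fib(X \longra \Tot_1(Y^\bullet)) \longra \fib(X \longra \Tot_0(Y^\bullet)) \longra X
\]
of the statement. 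The main obstacle I anticipate is bookkeeping: making sure the shifts line up so that position $0$ of the output tower is $X$ (and not $\Tot_0$), and verifying the intertwining $\LDK \circ \const \simeq \const$; once these are handled, each of the three constitutive equivalences is standard given the stability of $\cD$.
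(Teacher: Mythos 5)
Your proposal is correct and follows essentially the same route as the paper's proof: identify coaugmented objects with arrows in $\Fun(\bDelta,\cD)$ having constant source, apply $\LDK$ (which preserves constancy by weak contractibility), flip source and target via the stable $\fib$ equivalence on arrow categories, and reattach the cone point via $(\ZZ_{\leq \bullet})^\rcone$. The only differences are cosmetic -- you phrase the cone identifications as pullbacks and pre-shift the indexing to $\ZZ_{\leq -1}$, where the paper works with full subcategories of $\Fun([1],-)$ and reindexes at the end via $(\ZZ_{\leq 0})^\rcone\simeq\ZZ_{\leq 0}$.
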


\begin{proof}
We write
\[ \begin{tikzcd}[row sep=0cm]
\Fun(\bDelta,\cD)
\arrow{r}{\LDK}[swap]{\sim}
&
\Fun(\ZZ_{\leq 0},\cD)
\\
\rotatebox{90}{$\in$}
&
\rotatebox{90}{$\in$}
\\
Y^\bullet
\arrow[maps to]{r}
&
((-n) \longmapsto \Tot_n(Y^\bullet))
\end{tikzcd}
\]
for the Lurie--Dold--Kan equivalence \cite[Theorem 1.2.4.1]{Lurie-HA}. Observe that both categories $\bDelta$ and $\ZZ_{\leq 0}$ are weakly contractible, so that it is merely a condition for functors out of them to be constant; observe too that the equivalence $\LDK$ restricts to an equivalence between full subcategories of constant functors. Hence, we obtain the lower composite equivalence
\[ \begin{tikzcd}
\Fun([1],\Fun(\bDelta,\cD))
\arrow{r}{\Fun([1],\LDK)}[swap]{\sim}
&[1cm]
\Fun([1],\Fun(\ZZ_{\leq 0},\cD))
\arrow{r}{\fib}[swap]{\sim}
&
\Fun([1],\Fun(\ZZ_{\leq 0},\cD))
\\
\Fun(\bDelta^\lcone,\cD)
\arrow[hook]{u}
\arrow[dashed]{r}{\sim}
&
\Fun((\ZZ_{\leq 0})^\lcone,\cD)
\arrow[hook]{u}
\arrow[dashed]{r}{\sim}
&
\Fun((\ZZ_{\leq 0})^\rcone,\cD)
\arrow[hook]{u}
\end{tikzcd}
\]
among full subcategories on those natural transformations whose source, source, and target functors are respectively constant.\footnote{Here, $(-)^\lcone$ (resp.\! $(-)^\rcone$) denotes the left (resp.\! right) cone construction, given by adjoining an initial (resp.\! terminal) object. The weak contractibility e.g.\! of $\bDelta$ yields a localization $[1] \times \bDelta \ra \bDelta^\lcone$ at the morphisms whose first component is $\id_0$.} Applying the evident equivalences $\bDelta_+ \simeq \bDelta^\lcone$ and $(\ZZ_{\leq 0})^\rcone \simeq \ZZ_{\leq 0}$, this is precisely our desired equivalence $\LDK_+$.
\end{proof}

\begin{remark}
Let us write $\Env([1],\min)$ for the monoidal envelope of $([1],\min)$, and $\Env^\unital([1],\min)$ for its unitalization; the latter is the initial monoidal category equipped with a laxly monoidal strictly unital functor from $([1],\min)$. The proof of \Cref{cor.ideals.are.algebras} implicitly constructs a monoidal functor $\Env^\unital([1],\min) \ra (\ZZ_{\leq 0},+)$, and shows that it becomes an equivalence upon applying $\Sigma^{(\infty,1)}_+$. We believe that it is an equivalence unstably, but checking this appears to involve a nontrivial amount of combinatorics, so our proof proceeds by other means.
\end{remark}

\section{Spherical adjunctions}
\label{sec:sph}

We now use the work done in \Cref{section.algs.and.ideals} to prove our main results, culminating with our spectral description of the 2-category of spherical adjunctions (\Cref{sph.fctrs.are.modules.over.matrix.alg}).

\subsection{Definition and properties}

\begin{definition}
Given an adjunction
\begin{equation}
\label{generic.sphadj}
\begin{tikzcd}[column sep=2cm]
\cC_\Phi
\arrow[yshift=0.9ex]{r}{S}
\arrow[leftarrow, yshift=-0.9ex]{r}[yshift=-0.2ex]{\bot}[swap]{S^R}
&
\cC_\Psi
\end{tikzcd}
\end{equation}
in $\St$, consider the endomorphisms
\[
\hspace{-0.5cm}
T_\Phi
:=
T_{S,\Phi}
:=
\fib \left(
\id_{\cC_\Phi}
\xlongra{\eta}
S^R S
\right)
\in
\End_\St(\cC_\Phi)
\quad
\text{and}
\quad
T_{\Psi}
:=
T_{S,\Psi}
:=
\cofib \left(
S S^R
\xlongra{\varepsilon}
\id_{\cC_\Psi}
\right)
\in
\End_\St(\cC_\Psi)~.
\]
We say that the adjunction $S \adj S^R$ is a \bit{spherical adjunction} if the endomorphisms $T_{\Phi}$ and $T_{\Psi}$ are automorphisms, in which case we refer to them as (\bit{spherical}) \bit{twists}.\footnote{It is reasonable to write $T_{\Phi,R} := T_\Phi$ and $T_{\Psi,R} := T_\Psi$, and thereafter to denote the inverses by $T_{\Phi,L} := (T_{\Phi,R})^{-1}$ and $T_{\Psi,L} := (T_{\Psi,L})^{-1}$ since they are similarly extracted from the adjunction $S^L \adj S$ of \Cref{obs.basics.on.sphadjns}\Cref{item.have.ladjt}.} We write
\[
\Sph
\subset
\Fun(\Adj,\St)
\]
for the full sub-2-category on the spherical adjunctions. 
By \cite{RV-adj}*{Theorem 4.4.18}, an adjunction is equivalent to the data of its left adjoint functor,
so we may equivalently denote a spherical adjunction simply by its left adjoint, which we call its underlying \bit{spherical functor} (as e.g.\! in the notation $T_{S,\Phi}$ and $T_{S,\Psi}$ above). Moreover, we write $\Phi,\Psi \in \Fun(\Sph,\St)$ for the functors carrying a spherical adjunction to its source and target (i.e., to the source and target of its underlying spherical functor). So for instance, given a spherical adjunction \Cref{generic.sphadj}, we may write $\cC_\Phi = \Phi(S)$.
\end{definition}

\begin{observation}
\label{obs.basics.on.sphadjns}
We will require the following basic facts about spherical adjunctions; see \cite{DKSS} for more details.
\begin{enumerate}

\item

The functors $S$ and $S^R$ intertwine the twists, up to a shift:
\[
S T_\Phi
\simeq
\Sigma^{-2} T_\Psi S
\quad
\text{and}
\quad
S^R T_\Psi
\simeq
\Sigma^2 T_\Phi S^R
~.
\]

\item\label{item.have.ladjt}

Given a spherical adjunction $S \adj S^R$, the functor $S$ admits a left adjoint $S^L$, given by the formulas
\[
S^L
\simeq
\Sigma^{-1} T_\Phi^{-1} S^R
\simeq
\Sigma S^R T_\Psi^{-1}
~.
\]
Moreover, the adjunction $S^L \adj S$ is also spherical. Reversely, we can write $S^R$ in terms of $S^L$ by the formulas
\[
S^R
\simeq
\Sigma T_\Phi S^L
\simeq
\Sigma^{-1} S^L T_\Psi
~.
\]
It follows that $S^R$ is also a spherical functor.

\end{enumerate}
\end{observation}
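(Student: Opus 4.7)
The plan is to establish the intertwining relations in (1) first, then leverage them to construct the left adjoint in (2).

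For (1), the key observation is that the triangle identity $(\varepsilon S) \circ (S\eta) = \id_S$ exhibits $S\eta : S \to SS^RS$ as a split monomorphism in the stable functor category $\Fun^\ex(\cC_\Phi,\cC_\Psi)$, with retraction $\varepsilon S$. Any such split monomorphism yields a canonical direct-sum decomposition $SS^RS \simeq S \oplus C$, in which the complement $C$ is identified simultaneously with $\cofib(S\eta)$ and with $\fib(\varepsilon S)$. On the one hand, postcomposing the fiber sequence $T_\Phi \to \id_{\cC_\Phi} \xra{\eta} S^RS$ with the exact functor $S$ yields a fiber sequence $ST_\Phi \to S \xra{S\eta} SS^RS$, and using $\cofib \simeq \Sigma \fib$ (valid in any stable category) gives $\cofib(S\eta) \simeq \Sigma ST_\Phi$. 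On the other hand, precomposing the cofiber sequence $SS^R \xra{\varepsilon} \id_{\cC_\Psi} \to T_\Psi$ with $S$ yields a cofiber sequence $SS^RS \xra{\varepsilon S} S \to T_\Psi S$, so $\fib(\varepsilon S) \simeq \Sigma^{-1} T_\Psi S$. Equating these complementary summands gives $\Sigma ST_\Phi \simeq \Sigma^{-1} T_\Psi S$, i.e., $ST_\Phi \simeq \Sigma^{-2} T_\Psi S$. The companion relation $S^R T_\Psi \simeq \Sigma^2 T_\Phi S^R$ is obtained from the dual argument using the other triangle identity $(S^R\varepsilon) \circ (\eta S^R) = \id_{S^R}$.

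For (2), sphericity makes $T_\Phi$ and $T_\Psi$ invertible, so the formula $S^L := \Sigma^{-1} T_\Phi^{-1} S^R$ is well-defined; the equivalent expression $\Sigma S^R T_\Psi^{-1}$ follows by inverting the first intertwining from (1). To exhibit $S^L \dashv S$, I would construct the unit and counit directly by rotating the defining triangles. Rotating $SS^R \xra{\varepsilon} \id \to T_\Psi$ to the fiber sequence $\Sigma^{-1} T_\Psi \to SS^R \to \id$ and applying the invertible functor $\Sigma T_\Psi^{-1}$ yields a natural map $\id \to \Sigma SS^R T_\Psi^{-1} \simeq SS^L$, giving the unit $\eta^L$. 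Rotating $T_\Phi \to \id \xra{\eta} S^RS$ to the cofiber sequence $\id \to S^RS \to \Sigma T_\Phi$ and applying $\Sigma^{-1} T_\Phi^{-1}$ yields a natural map $S^LS \simeq \Sigma^{-1} T_\Phi^{-1} S^RS \to \id$, giving the counit $\varepsilon^L$. The triangle identities then follow from the same splitting argument used in (1), traced back through these rotations to the triangle identities for $S \dashv S^R$. That the adjunction $S^L \dashv S$ is itself spherical is visible directly from these formulas: the new twists are, up to shift, $T_\Psi^{-1}$ and $T_\Phi^{-1}$, which are automorphisms.

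The inverse formulas $S^R \simeq \Sigma T_\Phi S^L \simeq \Sigma^{-1} S^L T_\Psi$ are immediate algebraic consequences of the formulas for $S^L$. Finally, the statement that $S^R$ is itself a spherical functor is obtained by running the entire argument in the opposite categories: the adjunction $S \dashv S^R$ passes to $(S^R)^\op \dashv S^\op$ in $(\cC_\Psi)^\op \rightleftarrows (\cC_\Phi)^\op$, with twists the opposites of $T_\Phi$ and $T_\Psi$, so the construction above produces a right adjoint $S^{RR}$ to $S^R$ together with a spherical structure on $S^R \dashv S^{RR}$. The main subtlety throughout this program is the $\infty$-categorical bookkeeping: producing $S^L$ as a genuine left adjoint (rather than a mere functor with the right mapping-space behavior) requires packaging coherent unit, counit, and higher data, for which we would appeal to the Riehl--Verity framework of \cite{RV-adj} or to the detailed treatment in \cite{DKSS}.
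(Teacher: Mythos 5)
The paper offers no proof of this Observation --- it is stated as a collection of facts with a citation to \cite{DKSS} --- so there is nothing internal to compare against; your reconstruction is correct and follows the standard argument of that reference. The heart of it is exactly right: the triangle identity $(\varepsilon S)\circ(S\eta)\simeq \id_S$ splits $SS^RS$, identifying $\cofib(S\eta)\simeq \Sigma\,ST_\Phi$ (from postcomposing the fiber sequence defining $T_\Phi$ with $S$) with $\fib(\varepsilon S)\simeq \Sigma^{-1}T_\Psi S$ (from precomposing the cofiber sequence defining $T_\Psi$ with $S$), and this part needs only the adjunction, not sphericity. The one step you leave as a sketch is the assertion that the rotated unit and counit actually exhibit $S^L\dashv S$: "the triangle identities follow from the same splitting argument" is not yet an argument, and the direct mapping-space computation is not completely formal since $S^R$ is a right adjoint and one cannot immediately move it across $\hom$. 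In the $\infty$-categorical setting the clean way to close this is to check that the candidate unit induces equivalences $\hom(S^Lx,y)\xra{\sim}\hom(x,Sy)$ --- a property rather than extra coherence data --- and this verification is precisely \cite{DKSS}*{Proposition 2.3.2 and Corollary 2.5.16}, so your deferral is legitimate. Your final reduction of the sphericity of $S^R$ to the opposite-category form of the statement is also the standard (and correct) move.
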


\begin{definition}
We define the \bit{Fourier transform} to be the autoequivalence
\[
\begin{tikzcd}[row sep=0cm, column sep=2cm]
\Sph
\arrow{r}{\fF}[swap]{\sim}
&
\Sph
\\
\rotatebox{90}{$\in$}
&
\rotatebox{90}{$\in$}
\\
(S \adj S^R)
\arrow[maps to]{r}
&
(S^L \adj S)
\end{tikzcd}
\]
taking a spherical functor $S$ to its left adjoint $S^L$: this definition appeals to both the result \Cref{thm:adjunctions.are.adjoints} that a 2-category of adjunctions is equivalent to a 2-category of left adjoint functors, and the calculation of \Cref{obs.basics.on.sphadjns}\Cref{item.have.ladjt} that the left adjoint of a spherical functor is itself spherical. 

In other words, the Fourier transform of a spherical adjunction \Cref{generic.sphadj} is the spherical adjunction
\[
\begin{tikzcd}[column sep=2cm]
\cC_\Psi
\arrow[yshift=0.9ex]{r}{S^L}
\arrow[leftarrow, yshift=-0.9ex]{r}[yshift=-0.2ex]{\bot}[swap]{S}
&
\cC_\Phi~.
\end{tikzcd}
\]
(In particular, we have equivalences $\Phi \circ \fF \simeq \Psi$ and $\Psi \circ \fF \simeq \Phi$ in $\Fun(\Sph,\St)$.) When we refer to a spherical adjunction by its underlying spherical functor $\cC_\Phi\xra{S}\cC_\Psi,$ we may treat $\fF(S)$ and $S^L$ as synonyms for the same spherical adjunction.
\end{definition}

\begin{remark}
Given a spherical adjunction $(\cC_\Phi\xra{S}\cC_\Psi) \in \Sph$, the twists of its Fourier transform are
\[
T_{\fF(S),\Phi}
\simeq
T_{S,\Psi}^{-1}
\quad
\text{and}
\quad
T_{\fF(S),\Psi}
\simeq
T_{S,\Phi}^{-1}
~.
\]
We refer to \cite{DKSS}*{Corollary 2.5.16} for a detailed calculation.
\end{remark}

\subsection{The universal spherical adjunction}\label{subsec:univ}
Recall
from \Cref{subsection:notn.and.conventions}
the categories $f\cV$ and $g\cV$ of filtered and graded vector spaces.
%
%
%
The latter category is equivalent to $\Coh(B\GG_m)\simeq{\sf Rep}(\GG_m)$ (since a representation of $\GG_m$ is just a vector space graded by its weight-spaces for the $\GG_m$ action) and the former also has a geometric interpretation via the Rees construction, as we now describe.
Consider the free commutative $\kk$-algebra as an object $\kk[x] \in \CAlg(g\cV)$ by declaring that $x$ has weight $-1$. 
Then there is a symmetric monoidal equivalence (described for instance in \cite{Lurie-Rotation}*{Proposition 3.1.6})
\[
\begin{tikzcd}[row sep=0cm]
f\cV \arrow[r, "\sim"] & \Perf_{\kk[x]}(g\cV) \\
\rotatebox{90}{$\in$}
&
\rotatebox{90}{$\in$}
\\
h_n = h_0 \langle n \rangle \arrow[r, mapsto] & \kk[x]\langle n \rangle
\end{tikzcd}
\]
presenting $f\cV$ as the category $\Perf_{\kk[x]}(g\cV)$ of graded $\kk[x]$-modules, or equivalently as the category $\Coh(\AA^1/\GG_m)$ of $\GG_m$-equivariant coherent sheaves on $\AA^1.$ (See also \cite{BZ-Gunningham}*{\S 5.2} for a more detailed discussion of this latter identification.)

\begin{observation}
\label{obs.gr.is.spherical}
The adjunction
\[
\begin{tikzcd}[column sep=2cm]
f\cV = \Perf_{\kk[x]}(g \cV)
\arrow[yshift=0.9ex]{r}{\gr}
\arrow[leftarrow, yshift=-0.9ex]{r}[yshift=-0.2ex]{\bot}[swap]{\triv}
&
\Perf_{\kk}(g \cV) = g\cV
\end{tikzcd}
\]
corresponding to the augmentation $\kk[x] \to \kk$ is spherical: its twist automorphisms are
\[
T_\Phi = \langle -1 \rangle = (-) \otimes h_{-1} \in \Aut_\St(f\cV)
\quad
\text{and}
\quad
T_\Psi = \Sigma^2 \langle -1 \rangle = \Sigma^2 (-) \otimes \delta_{-1} \in \Aut_\St(g\cV)
~.
\]
\end{observation}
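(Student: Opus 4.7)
The plan is to directly compute $T_\Phi$ and $T_\Psi$ using the standard free resolution (``Koszul resolution'') of $\kk$ over $\kk[x]$, and then read off that each twist is invertible.

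Since $x$ has weight $-1$, multiplication by $x$ defines a strict graded map $\kk[x]\langle -1\rangle \xra{\cdot x} \kk[x]$, whose cofiber is $\kk$. Tensoring this cofiber sequence with an arbitrary $V \in f\cV = \Perf_{\kk[x]}(g\cV)$ over $\kk[x]$ yields a cofiber sequence
\[
V\langle -1\rangle
\xlongra{\cdot x}
V
\longra
V \otimes^L_{\kk[x]} \kk
\]
in $f\cV$, in which the last term (with its residual $\kk[x]$-action via $\kk[x] \to \kk$) is by definition $\triv\,\gr(V)$, and the rightmost map is the unit of the adjunction (which is the base-change adjunction along $\kk[x]\to\kk$). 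Taking fibers immediately gives $T_\Phi(V) = V\langle -1\rangle$, so $T_\Phi = \langle -1\rangle \in \Aut_\St(f\cV)$.

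To compute $T_\Psi$, I would specialize the same cofiber sequence to $V = \triv(W)$ for $W \in g\cV$. Since $x$ acts as zero on $\triv(W)$, the map ``$\cdot x$'' is the zero map, hence the cofiber sequence splits and gives
\[
\gr\,\triv(W) \simeq W \oplus \Sigma W\langle -1\rangle
~.
\]
A routine check (using the triangle identity, or equivalently that the base-change counit for $\kk[x]\to\kk$ is the multiplication map) identifies the counit $\gr\,\triv(W) \to W$ with the projection onto the first summand; taking cofibers then yields
\[
T_\Psi(W)
\simeq
\Sigma \cdot \Sigma W\langle -1\rangle
\simeq
\Sigma^2 W \langle -1\rangle
~,
\]
so $T_\Psi = \Sigma^2 \langle -1\rangle \in \Aut_\St(g\cV)$.

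Since the grading shift $\langle -1\rangle$ is an autoequivalence of both $f\cV$ and $g\cV$, and suspension is an autoequivalence of any stable category, both $T_\Phi$ and $T_\Psi$ are invertible, which by definition means the adjunction is spherical. The main potential pitfall is bookkeeping of the grading shift conventions (since $x$ has weight $-1$, multiplication by $x$ produces a $\langle -1\rangle$ in the Koszul resolution rather than a $\langle 1\rangle$) and correctly identifying the counit of the base-change adjunction with the obvious projection, but neither is substantive.
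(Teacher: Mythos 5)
Your computation is correct, and the grading/suspension bookkeeping (the Koszul resolution $\kk[x]\langle -1\rangle \xra{\cdot x} \kk[x] \to \kk$, the unit as base change, the counit as projection off the split summand) all checks out against the paper's conventions. The paper states this as an Observation with no proof, and your argument is exactly the standard verification it implicitly relies on.
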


\begin{theorem}
\label{thm.gr.coreps.LHS}
The object $\gr \in \Sph$ corepresents the functor $\Sph \xra{\Phi} \St$: evaluation at the object $h_0 \in f\cV$ defines an equivalence
\[
\hom_\Sph((f\cV\xra{\gr}g\cV),(\cC_\Phi\xra{S}\cC_\Psi))
\xlongra{\sim}
\cC_\Phi
~.
\]
The inverse sends an object $c \in \cC_{\Phi}$ to the morphism $F^c: \gr \to S$ with components
\[
\begin{tikzcd}[row sep=0cm]
f\cV
\arrow{r}{F^c_\Phi}
&
\cC_{\Phi}
\\
\rotatebox{90}{$\in$}
&
\rotatebox{90}{$\in$}
\\
h_n
\arrow[mapsto]{r}
&
(T_{\Phi})^{-n} c
\end{tikzcd}
\quad
\text{and}
\quad
\begin{tikzcd}[row sep=0cm]
g\cV
\arrow{r}{F^c_\Psi}
&
\cC_{\Psi}
\\
\rotatebox{90}{$\in$}
&
\rotatebox{90}{$\in$}
\\
\delta_n
\arrow[mapsto]{r}
&
\Sigma^{2n} (T_{\Psi})^{-n}Sc~.
\end{tikzcd}
\]
\end{theorem}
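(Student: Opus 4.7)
The plan is to construct a quasi-inverse $c \mapsto F^c$ to the evaluation functor $\ev_{h_0}: \hom_\Sph(\gr, S) \to \cC_\Phi$, assigning to each $c \in \cC_\Phi$ a canonical morphism $F^c: \gr \to S$ in $\Sph$. The main technical input is the algebra--ideal correspondence of \Cref{cor.ideals.are.algebras}, applied to the stably monoidal category $\End(\cC_\Phi)$ in order to repackage the monad $S^R S$ as extra structure on the spherical twist $T_\Phi$.

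Explicitly, I would apply \Cref{cor.ideals.are.algebras} to $S^R S \in \Alg(\End(\cC_\Phi))$ to exhibit $T_\Phi = \fib(\id \to S^R S)$ as an ideal, i.e.\ as a monoidal functor $I_\bullet: (\ZZ_{\leq 0}, +) \to \End(\cC_\Phi)$ with $I_{-n} = T_\Phi^{\otimes n}$ whose underlying diagram is the Adams tower $\cdots \to T_\Phi^{\otimes 2} \to T_\Phi \to \id$. Sphericalness makes $T_\Phi$ a $\otimes$-invertible object of $\End(\cC_\Phi)$, so the universal property of monoidal group completion extends $I_\bullet$ uniquely to a monoidal functor $(\ZZ, +) \to \End(\cC_\Phi)$. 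Evaluating at $c$ yields a functor $\ZZ \to \cC_\Phi$ sending $n \mapsto T_\Phi^{-n} c$, and the universal property of $f\cV \simeq \Sigma^{(\infty,1)}_+ \ZZ$ upgrades this to an exact $\kk$-linear functor $F^c_\Phi: f\cV \to \cC_\Phi$ with $F^c_\Phi(h_n) \simeq T_\Phi^{-n} c$; in particular $F^c_\Phi(h_0) \simeq c$, matching the claimed inverse to evaluation.

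For the $\Psi$-component, since $g\cV \simeq \Sigma^{(\infty,1)}_+ \ZZ^\delta$ is the stable enhancement of a \emph{discrete} groupoid, it suffices to prescribe a $\ZZ$-indexed family of objects; set $F^c_\Psi(\delta_n) := \Sigma^{2n} T_\Psi^{-n} Sc$. Iterating the twist intertwining $S T_\Phi \simeq \Sigma^{-2} T_\Psi S$ from \Cref{obs.basics.on.sphadjns}(1) yields $S T_\Phi^{-n} \simeq \Sigma^{2n} T_\Psi^{-n} S$, producing a natural equivalence $\alpha: F^c_\Psi \circ \gr \simeq S \circ F^c_\Phi$. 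Invoking \Cref{thm:adjunctions.are.adjoints}, which identifies morphisms in $\Fun(\Adj, \St)$ with coherent squares of left adjoints, the triple $(F^c_\Phi, F^c_\Psi, \alpha)$ then assembles into the desired morphism $F^c: \gr \to S$ in $\Sph$, evidently functorial in $c$.

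To conclude that $c \mapsto F^c$ is quasi-inverse to $\ev_{h_0}$, one composite is immediate. For the other, given any $G: \gr \to S$ with $c := G_\Phi(h_0)$, the naturality of $G$ with respect to the ideal structures on either side (the Adams towers of $\triv \circ \gr$ on $f\cV$ and $S^R \circ S$ on $\cC_\Phi$) together with the extension to the full $\ZZ$-indexed diagrams via invertibility of the twists forces $G_\Phi(h_n) \simeq T_\Phi^{-n} c$ and similarly $G_\Psi(\delta_n) \simeq \Sigma^{2n} T_\Psi^{-n} Sc$, pinning $G$ down up to canonical equivalence by the universal properties of $\Sigma^{(\infty,1)}_+$. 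The main obstacle I anticipate is the careful bookkeeping of higher coherences: verifying that the natural equivalence $\alpha$ lifts to the full 2-categorical datum of a morphism in $\Fun(\Adj,\St)$ (including compatibility with units, counits, and their mates on the right-adjoint side), and that the assignment $c \mapsto F^c$ is genuinely functorial at the level of hom-categories rather than just on objects. These should reduce to formal universal-property arguments --- for monoidal group completion along $\ZZ_{\leq 0} \hookrightarrow \ZZ$, for $\Sigma^{(\infty,1)}_+$, and for $\Fun(\Adj,-)$ --- but will require patience to carry out in full.
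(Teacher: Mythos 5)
Your proposal assembles most of the right ingredients (the Smith-ideal theorem \Cref{cor.ideals.are.algebras}, the monoidal localization $\ZZ_{\leq 0} \hookra \ZZ$, the twist-intertwining relations), but there is a genuine gap at the decisive step: the claim that an arbitrary morphism $G \colon \gr \to S$ in $\Sph$ is "natural with respect to the ideal structures on either side," so that $G_\Phi(h_n) \simeq T_\Phi^{-n}c$ is forced. A morphism in $\Fun(\Adj,\St)$ consists of a square $S \circ G_\Phi \simeq G_\Psi \circ \gr$ together with the induced (mate) compatibility $G_\Phi \circ \triv \to S^R \circ G_\Psi$ on right adjoints, and this mate is a priori only a lax transformation, not an equivalence. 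Consequently you only get a lax comparison between the Adams tower of $\triv\circ\gr$ on $f\cV$ and that of $S^R S$ on $\cC_\Phi$, which is not enough to pin down $G$. What closes this gap in the paper is the observation that $\gr$ is essentially surjective, hence $\gr \adj \triv$ is a \emph{Kleisli} adjunction (\Cref{prop:kl-of-surj}), so that by the adjunction $\Kl \adj \ev_\ell$ of \Cref{lem:KL-EM} one has
\[
\hom_{\Fun(\Adj,\St)}(\gr \adj \triv,\, S \adj S^R)
\simeq
\hom_{\Fun(\Mnd,\St)}\bigl((f\cV,\triv\circ\gr),\,(\cC_\Phi,S^R S)\bigr)
~.
\]
This is exactly the statement that maps out of $\gr$ \emph{are} maps of monads, and it is what legitimizes passing to the $\Phi$-side and to $\ZZ_{\leq 0}$-equivariance data. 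Without it, your surjectivity argument is circular.

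A second, related issue is that you verify the quasi-inverse property only on objects of the hom-category. The theorem asserts an equivalence of stable categories $\hom_\Sph(\gr,S) \xra{\sim} \cC_\Phi$, so you must also produce the functoriality of $c \mapsto F^c$ and the naturality of the comparison $G \simeq F^{G_\Phi(h_0)}$ at the level of the full hom-category (morphisms and all coherences), not merely show that each $G$ has the predicted values on the objects $h_n$ and $\delta_n$. The paper avoids this entirely by expressing the equivalence as a composite of universal-property equivalences: Kleisli adjunction, then $\Fun(\Mnd,\St) \simeq \Fun(\fB\ZZ_{\leq 0},\St)$ via \Cref{cor.ideals.are.algebras}, then the full subcategory $\Fun(\fB\ZZ,\St) \subseteq \Fun(\fB\ZZ_{\leq 0},\St)$ via \Cref{prop:BN-localization}, and finally the freeness of $(f\cV,\langle -1\rangle)$ as a $\fB\ZZ$-module on $\cV$. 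If you insert the Kleisli step and phrase the remaining identifications as equivalences of hom-categories rather than object-by-object computations, your argument becomes the paper's proof; as written, the essential-surjectivity half does not go through.
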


\begin{proof}
We recall from \Cref{lem:KL-EM} that pullback along the inclusion $\Mnd\hookra \Adj$ has a left adjoint
\begin{equation}
\label{adjn.Kleisli.is.ladjt}
\begin{tikzcd}[column sep=2cm]
\Fun(\Mnd,\St)
\arrow[yshift=0.9ex, hook]{r}{\Kl}
\arrow[leftarrow, yshift=-0.9ex]{r}[yshift=-0.2ex]{\bot}[swap]{\fgt}
&
\Fun(\Adj,\St)~,
\end{tikzcd}
\end{equation}
the {\em Kleisli construction}.
Below, we will be interested in the adjunction $\gr:f\cV\adjarr g\cV:\triv,$ and we observe that the left adjoint $f\cV \xra{\gr} g\cV$ is essentially surjective, so that by \Cref{prop:kl-of-surj} the adjunction $\gr\adj \triv$ is in the image of $\Kl$. In other words, we may recover $\gr\adj \triv$ as the Kleisli adjunction of its inderlying monad:
\begin{equation}
\label{equation.gr.and.triv.is.Kleisli}
\Kl(f\cV , \triv \circ \gr)
\simeq
(\gr \adj \triv)
~.
\end{equation}

Note also that $\Mnd := \fB \bDelta_+$, where as usual (see \Cref{notation.appendix}) we write $\fB$ of a monoidal category to denote the corresponding 1-object 2-category; combining this presentation of $\Mnd$ with \Cref{cor.ideals.are.algebras} we obtain an equivalence
\[
\Sigma^{(\infty,2)}_+ \Mnd
\simeq
\Sigma^{(\infty,2)}_+ \fB \ZZ_{\leq 0}
\]
in $\Stu_{\kk,2}$, which yields the composite equivalence
\begin{equation}
\label{composite.equivce.betw.monads.and.nonpos.integer.actions.in.St}
\Fun(\Mnd,\St)
\simeq
\Fun^{2-\ex}(\Sigma^{(\infty,2)}_+ \Mnd , \St)
\simeq
\Fun^{2-\ex}(\Sigma^{(\infty,2)}_+ \fB \ZZ_{\leq 0} , \St)
\simeq
\Fun( \fB \ZZ_{\leq 0} , \St )
~.
\end{equation}
By \Cref{prop:BN-localization}, the inclusion $\ZZ_{\leq 0} \hookra \ZZ$ is the monoidal localization at the object $(-1) \in \ZZ_{\leq 0}$, so that by restriction we obtain a full subcategory
\[
\Fun ( \fB \ZZ , \St)
\subseteq
\Fun ( \fB \ZZ_{\leq 0} , \St)
~.
\]
Moreover, by definition (namely the assumption that the twist $T_\Phi$ is invertible) we have a factorization
\begin{equation}
\label{factorization.of.Sph.to.Z.actions}
\begin{tikzcd}
\Fun(\fB \ZZ_{\leq 0} , \St)
\arrow[leftrightarrow]{r}{\sim}
&
\Fun( \Mnd , \St)
\arrow[leftarrow]{r}{\fgt}
&
\Fun(\Adj,\St)
\\
\Fun(\fB \ZZ , \St)
\arrow[hook]{u}
&
&
\Sph~,
\arrow[dashed]{ll}
\arrow[hook]{u}
\end{tikzcd}
\end{equation}
since the upper-left corner of \Cref{factorization.of.Sph.to.Z.actions} sends the generating morphism in $\fB\ZZ_{\leq 0}$ to the twist functor; andinvertibility of the twist in $\Sph$ combined with the presentation of $\ZZ_{\geq 0}\to \ZZ$ as a monoidal localization at this object guarantees the factorization as above.

We now conclude by fixing an object $(\cC_\Phi\xra{S}\cC_\Psi) \in \Sph$ and observing the natural composite equivalence
\begin{align}
\label{composite.equivalence.from.hom.Sph.gr.C.bullet.step.1}
\hom_\Sph ( \gr , S )
& \simeq
\hom_{\Fun(\Adj,\St)} ( \gr \adj \triv , S \adj S^R)
\\
\label{composite.equivalence.from.hom.Sph.gr.C.bullet.step.2}
& \simeq
\hom_{\Fun(\Adj,\St)} ( \Kl(f\cV , \triv \circ \gr) , S \adj S^R)
\\
\label{composite.equivalence.from.hom.Sph.gr.C.bullet.step.3}
& \simeq
\hom_{\Fun(\Mnd,\St)} ( (f\cV , \triv \circ \gr) , (\cC_\Phi,S^R S))
\\
\label{composite.equivalence.from.hom.Sph.gr.C.bullet.step.4}
& \simeq
\hom_{\Fun(\fB \ZZ_{\leq 0} , \St)} ( ( f\cV , \langle -1 \rangle) , (\cC_\Phi , T_\Phi ) )
\\
\label{composite.equivalence.from.hom.Sph.gr.C.bullet.step.5}
& \simeq
\hom_{\Fun(\fB \ZZ , \St)} ( ( f\cV , \langle -1 \rangle) , (\cC_\Phi , T_\Phi ) )
\\
\label{composite.equivalence.from.hom.Sph.gr.C.bullet.step.6}
& \simeq
\hom_\St ( \cV , \cC_\Phi )
\\
\label{composite.equivalence.from.hom.Sph.gr.C.bullet.step.7}
& \simeq
\cC_\Phi
~,
\end{align}
in which
\begin{itemize}

\item equivalence \Cref{composite.equivalence.from.hom.Sph.gr.C.bullet.step.1} follows from the definition of $\Sph \subset \Fun(\Adj,\St)$ as a full subcategory,

\item equivalence \Cref{composite.equivalence.from.hom.Sph.gr.C.bullet.step.2} follows from equivalence \Cref{equation.gr.and.triv.is.Kleisli},

\item equivalence \Cref{composite.equivalence.from.hom.Sph.gr.C.bullet.step.3} follows from adjunction \Cref{adjn.Kleisli.is.ladjt},

\item equivalence \Cref{composite.equivalence.from.hom.Sph.gr.C.bullet.step.4} follows from equivalence \Cref{composite.equivce.betw.monads.and.nonpos.integer.actions.in.St},

\item equivalence \Cref{composite.equivalence.from.hom.Sph.gr.C.bullet.step.5} follows from the factorization \Cref{factorization.of.Sph.to.Z.actions},

\item equivalence \Cref{composite.equivalence.from.hom.Sph.gr.C.bullet.step.6} follows from the adjunction $\St \adjarr \Fun(\fB \ZZ , \St)$ (which is nothing more than the base-change adjunction $\Mod_{\kk}(\St)\adjarr \Mod_{f\cV}(\St)$) and the fact that its left adjoint acts as $\cV \mapsto (f\cV,\langle -1 \rangle)$, and

\item equivalence \Cref{composite.equivalence.from.hom.Sph.gr.C.bullet.step.7} follows from the fact that $\cV \in \St$ is free on an object.

\end{itemize}
Tracing through these equivalences, we see that the composite is indeed implemented by evaluation at $h^0 \in f\cV$, as desired.
\end{proof}

\begin{remark}\label{rem:microlocalization}
In this remark, which will not be used elsewhere in the paper, we motivate a connection to symplectic geometry by explaining how the constructions involved in the proof of \Cref{thm.gr.coreps.LHS} may be understood as categorifications of some well-known functors among perverse sheaves.

The adjoint triple
\[ \begin{tikzcd}[column sep=1.5cm]
\Fun(\Mnd,\St)
\arrow[bend left]{r}{\Kl}
\arrow[leftarrow]{r}[description, pos=0.45]{\ev_{\ell}}[yshift=1.5ex, pos=0.45]{\bot}[swap, yshift=-1.5ex, pos=0.45]{\bot}
\arrow[bend right]{r}[swap]{{\EM}}
&
\Fun(\Adj,\St)
\end{tikzcd}
\]
constructed in \Cref{lem:KL-EM}
induces an adjoint triple
\begin{equation}
\label{adjt.triple.ModfVSt.Sph}
\begin{tikzcd}[column sep=1.5cm]
\Mod_{f\cV}(\St)
\arrow[bend left]{r}{\Kl}
\arrow[leftarrow]{r}[description, pos=0.35]{\Phi}[yshift=1ex, pos=0.35]{\bot}[swap, yshift=-1ex, pos=0.35]{\bot}
\arrow[bend right]{r}[swap]{{\EM}}
&
\Sph~,
\end{tikzcd}
\end{equation}
using the fact (established during the proof of \Cref{thm.gr.coreps.LHS}) that the functor $\Phi:\Sph\to \St$ factors through $\Mod_{f\cV}(\St)$. Furthermore, the ``free module'' functor $F: g\cV \to f\cV \cong \Perf_{\kk[x]}(g\cV)$ is symmetric monoidal and hence induces a change-of-base adjoint triple
\begin{equation}
\label{adjt.triple.ModgvSt.ModfVSt}
\begin{tikzcd}[column sep=1.5cm]
\Mod_{g\cV}(\St)
\arrow[bend left]{r}{f\cV\otimes_{g\cV}(-)}
\arrow[leftarrow]{r}[description, pos=0.5]{F^*}[yshift=1.2ex, pos=0.5]{\bot}[swap, yshift=-1.2ex, pos=0.5]{\bot}
\arrow[bend right]{r}[swap]{\Hom_{\Mod_{g\cV}(\St)}(f\cV,-)}
&
\Mod_{f\cV}(\St)~.
\end{tikzcd}
\end{equation}
The composition of the two adjoint triples \Cref{adjt.triple.ModfVSt.Sph} and \Cref{adjt.triple.ModgvSt.ModfVSt} categorifies the well-known adjoint triple
\begin{equation}\label{eq:perverse-triple} \begin{tikzcd}[column sep=1.5cm]
\Loc(\CC^\times)
\simeq
&[-1.8cm]
\Perv(\CC^\times)
\arrow[bend left]{r}{j_!}
\arrow[leftarrow]{r}[description, pos=0.63]{j^*}[yshift=1.2ex, pos=0.63]{\bot}[swap, yshift=-1.2ex, pos=0.63]{\bot}
\arrow[bend right]{r}[swap]{j_*}
&
\Perv(\CC, \cS_{toric})
\end{tikzcd}
\end{equation}
coming from the open affine embedding $j: \CC^\times \hookrightarrow \CC$.
\end{remark}

\begin{remark}\label{rem:segal}
    The idea of using the Kleisli construction to produce 
    a spherical functor, categorifying the top arrow in \Cref{eq:perverse-triple}, has appeared earlier in \cite{Segal-autoequivalences}. However, the construction sketched in \cite{Segal-autoequivalences}*{\S 3.2} takes as input a category equipped with only a coaugmented autoequivalence, rather than a full invertible Smith ideal. The former structure is not sufficient to give a unique construction of a spherical adjunction, leading to the counterexample described in \cite{Christ-spherical}*{Example 4.10} of two monads with the same underlying coaugmented twist autoequivalence but different multiplications. The main idea behind the proof of \Cref{thm.gr.coreps.LHS} in the present paper is that the full Smith ideal structure is necessary to produce spherical adjunctions via the Kleisli construction.
\end{remark}

\begin{observation}\label{obs:sigma}
Let $\sigma\in \Aut(g\cV)$ denote the symmetric monoidal autoequivalence of $g\cV$ defined as the composite
\[
\begin{tikzcd}[row sep=0cm]
g\cV \arrow[r, "\mathsf{rev}"] & g\cV \arrow[r, "\mathsf{shear}"] & g\cV \\
\delta_n \arrow[r, mapsto] & \delta_{-n} \arrow[r, mapsto] & \Sigma^{-2n} \delta_{-n}
\end{tikzcd}
\]
of grading-reversal and the ``shearing'' autoequivalence \cite{Ga-shcat}*{\S 13.4} which shifts the homological degree according to the grading. The automorphism $\sigma$
is symmetric monoidal and  intertwines $\langle n \rangle$ with $\Sigma^{-2n} \langle -n \rangle$. Let $\kk[\beta] = \sigma(\kk[x])$. Because $\sigma$ sends the augmentation of $\kk[x]$ to an augmentation of $\kk[\beta],$ it induces an equivalence of spherical adjunctions $\gr \simeq \sigma(\gr),$ depicted vertically in the following diagram:
\[
\begin{tikzcd}[column sep=2cm]
f\cV = \Perf_{\kk[x]}(g \cV)
\arrow[yshift=0.9ex]{r}{\gr}
\arrow[leftarrow, yshift=-0.9ex]{r}[yshift=-0.2ex]{\bot}[swap]{\triv}
\arrow[d, "\sigma"]
&
\Perf_{\kk}(g \cV) = g\cV \arrow[d, "\sigma"]
\\
\sigma(f\cV) = \Perf_{\kk[\beta]}(g \cV)
\arrow[yshift=0.9ex]{r}{\sigma(\gr)}
\arrow[leftarrow, yshift=-0.9ex]{r}[yshift=-0.2ex]{\bot}[swap]{\sigma(\triv)}
&
\Perf_{\kk}(g \cV) = g\cV~.
\end{tikzcd}
\]
The twist automorphisms of the spherical functor $\sigma(\gr)$ are
\[
T_\Phi = \Sigma^{-2} \langle 1 \rangle \in \Aut_\St(\sigma(f\cV))
\quad
\text{and}
\quad
T_\Psi = \langle 1 \rangle \in \Aut_\St(g\cV)
~.
\]
See \cite{Lurie-Rotation}*{\S 3.3} for a more detailed study of the category $\sigma(f\cV),$ which is denoted by $\Theta$ there.
\end{observation}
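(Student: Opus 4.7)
The plan is to verify the four claims of the observation in order, each by a direct computation; no deep argument is needed, and the principal obstacle is the careful bookkeeping of suspension and weight shifts.

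For the first two claims, I would present $\mathsf{rev}$ as the Day-convolution extension of the inversion automorphism of the symmetric monoidal group $(\ZZ^\delta,+)$, and $\mathsf{shear}$ as the symmetric monoidal extension of the monoidal homomorphism $(\ZZ^\delta,+)\to\Pic(g\cV)$, $n\mapsto\Sigma^{2n}\delta_n$ (a homomorphism because of the canonical equivalences $\Sigma^{2(m+n)}\delta_{m+n}\simeq\Sigma^{2m}\delta_m\otimes\Sigma^{2n}\delta_n$); their composite $\sigma=\mathsf{shear}\circ\mathsf{rev}$ is then symmetric monoidal. The intertwining $\sigma\circ\langle n\rangle\simeq\Sigma^{-2n}\langle -n\rangle\circ\sigma$ is verified on the generators $\delta_m$, where both sides evaluate to $\Sigma^{-2(m+n)}\delta_{-m-n}$.

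The third claim follows formally: the symmetric monoidal autoequivalence $\sigma$ of $g\cV$ induces autoequivalences of both $\CAlg(g\cV)$ and the category of base-change adjunctions in $\Mod_{g\cV}(\St)$, and these carry the augmented algebra $\kk[x]\to\kk$ together with its adjunction $\gr\dashv\triv$ to the augmented algebra $\kk[\beta]\to\kk$ together with its adjunction $\sigma(\gr)\dashv\sigma(\triv)$, equipped with a canonical equivalence.

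For the fourth claim, the twists of $\sigma(\gr)$ can be computed either by conjugating $T_{\gr,\Phi}=\langle -1\rangle$ and $T_{\gr,\Psi}=\Sigma^{2}\langle -1\rangle$ by $\sigma$ (using the intertwining above to simplify), or directly using the free resolution $\Sigma^{-2}\kk[\beta]\langle 1\rangle\xrightarrow{\beta}\kk[\beta]\to\kk$ of the augmentation module, where the bidegree of the source encodes that of $\beta\in\sigma(\delta_{-1})\simeq\Sigma^{2}\delta_1$. Base-changing this triangle along $M\in\Perf_{\kk[\beta]}(g\cV)$ identifies $T_{\sigma(\gr),\Phi}(M)=\fib(M\to M\otimes_{\kk[\beta]}\kk)$ with $\Sigma^{-2}M\langle 1\rangle$, and the parallel cofiber computation using the counit yields $T_{\sigma(\gr),\Psi}=\langle 1\rangle$. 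The hard part in practice is simply the consistent tracking of shifts through these co/fiber sequences, where sign errors are most likely to creep in.
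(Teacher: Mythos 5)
Your handling of the first three claims is fine and is exactly the direct verification the paper intends: symmetric monoidality of $\mathsf{rev}$ and $\mathsf{shear}$, the intertwining checked on the generators $\delta_m$, and the formal transport of the augmented algebra $\kk[x]\to\kk$ and its base-change adjunction along the symmetric monoidal autoequivalence $\sigma$.

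The problem is in your fourth step, where the two computations you offer are inconsistent with the conventions you fixed, and with each other. With $\sigma(\delta_n)\simeq\Sigma^{-2n}\delta_{-n}$ (your shear $n\mapsto\Sigma^{2n}\delta_n$ composed with reversal, matching the displayed formula), one has $\sigma(\delta_{-1})\simeq\Sigma^{2}\delta_1$, so $\beta$ has weight $1$ and homological degree $+2$, and applying $\sigma$ to the Koszul triangle $\kk[x]\langle-1\rangle\xra{x}\kk[x]\to\kk$ gives $\Sigma^{+2}\kk[\beta]\langle1\rangle\xra{\beta}\kk[\beta]\to\kk$ --- not the $\Sigma^{-2}$-shifted triangle you wrote. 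Carrying either of your routes through honestly with this convention yields $T_{\sigma(\gr),\Phi}=(-)\otimes\sigma(\delta_{-1})=\Sigma^{2}\langle1\rangle$ and $T_{\sigma(\gr),\Psi}=\Sigma^{2}(-)\otimes\sigma(\delta_{-1})=\Sigma^{4}\langle1\rangle$; in particular the assertion that conjugating by $\sigma$ ``using the intertwining above'' recovers $\Sigma^{-2}\langle1\rangle$ and $\langle1\rangle$ is false, since $\sigma\circ\langle-1\rangle\circ\sigma^{-1}\simeq\Sigma^{2}\langle1\rangle$ under your stated intertwining. The stated twist values force $\sigma(\delta_{-1})\simeq\Sigma^{-2}\delta_1$, i.e.\ $\sigma(\delta_n)\simeq\Sigma^{2n}\delta_{-n}$ (shear $\delta_m\mapsto\Sigma^{-2m}\delta_m$). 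To be fair, the printed Observation carries the same tension: its displayed formula for $\sigma$ and its claimed twists are incompatible by exactly this sign, and it is the claimed twists --- hence the convention $\sigma(\delta_n)\simeq\Sigma^{2n}\delta_{-n}$ --- that the rest of the paper relies on (e.g.\ \Cref{obs:gvtwist} needs $T_{\fF(\sigma(\gr)),\Phi}=T_{\sigma(\gr),\Psi}^{-1}=\langle-1\rangle$, and the identifications in \Cref{spectral.univ.twist.calc} and \Cref{prop:monodromy-intertwining} use the corresponding degree of $\beta$). So you should fix one convention and make the bidegree of $\beta$, the resolution, and the conjugation argument agree; as written, ``$\beta\in\sigma(\delta_{-1})\simeq\Sigma^{2}\delta_1$'' and the resolution $\Sigma^{-2}\kk[\beta]\langle1\rangle\xra{\beta}\kk[\beta]\to\kk$ contradict each other, and your verification of the displayed twists only appears to succeed because of this uncorrected sign flip.
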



\begin{corollary}
\label{cor.corep.RHS}
The object $\fF(\sigma(\gr)) \in \Sph$ corepresents the functor $\Sph \xra{\Psi} \St$: evaluation at the object $\kk[\beta] \in \sigma(f\cV)$ defines an equivalence
\[
\hom_\Sph(\fF(\sigma(f\cV)\xra{\sigma(\gr)}g\cV),\cC_\Phi \xra{S}\cC_\Psi)
\xlongra{\sim}
\cC_\Psi
~.
\]
The inverse sends an object $c \in \cC_{\Psi}$ to the morphism $G^c: \gr \to S$ with components
\[
\begin{tikzcd}[row sep=0cm]
\sigma(f\cV)
\arrow{r}{G^c_\Psi}
&
\cC_{\Psi}
\\
\rotatebox{90}{$\in$}
&
\rotatebox{90}{$\in$}
\\
\kk[\beta]\langle n \rangle
\arrow[mapsto]{r}
&
\Sigma^{2n} (T_{\Psi})^{-n} c
\end{tikzcd}
\quad
\text{and}
\quad
\begin{tikzcd}[row sep=0cm]
g\cV
\arrow{r}{G^c_\Phi}
&
\cC_{\Phi}
\\
\rotatebox{90}{$\in$}
&
\rotatebox{90}{$\in$}
\\
\delta_n
\arrow[mapsto]{r}
&
(T_{\Phi})^{-n}S^Rc~.
\end{tikzcd}
\]
\end{corollary}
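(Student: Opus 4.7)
The plan is to deduce this corollary from \Cref{thm.gr.coreps.LHS} by combining the Fourier transform $\fF$ on $\Sph$ with the equivalence $\sigma : \gr \simeq \sigma(\gr)$ in $\Sph$ established in \Cref{obs:sigma}.

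First I observe that $\Phi \circ \fF^{-1} \simeq \Psi$ on $\Sph$. Indeed, by the definition of $\fF$, its inverse carries a spherical adjunction $(S \adj S^R)$ to $(S^R \adj S^{RR})$, whose $\Phi$-category is $\cC_\Psi = \Psi(S)$; alternatively, this is a formal consequence of $\Phi \circ \fF \simeq \Psi$ together with the fact that $\fF$ is an autoequivalence. Thus, for any $S \in \Sph$, there is a chain of natural equivalences
\[
\hom_\Sph(\fF(\sigma(\gr)),S)
\simeq
\hom_\Sph(\sigma(\gr),\fF^{-1}(S))
\simeq
\hom_\Sph(\gr,\fF^{-1}(S))
\simeq
\Phi(\fF^{-1}(S))
\simeq
\Psi(S),
\]
where the first equivalence uses that $\fF$ is an autoequivalence, the second is \Cref{obs:sigma}, the third is \Cref{thm.gr.coreps.LHS} applied to $\fF^{-1}(S)$, and the fourth is the observation just made. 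This establishes the corepresentability claim naturally in $S$.

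To obtain the explicit formula for the inverse, I trace through these equivalences. Given $c \in \cC_\Psi$, \Cref{thm.gr.coreps.LHS} applied to $\fF^{-1}(S)$ (which has source $\cC_\Psi$ and whose underlying spherical functor is $S^R$) produces a morphism $F^c : \gr \to \fF^{-1}(S)$ whose two components are computed from the twists $T_{\fF^{-1}(S),\Phi}$ and $T_{\fF^{-1}(S),\Psi}$. Substituting the identifications $T_{\fF^{-1}(S),\Phi} \simeq T_{S,\Psi}^{-1}$ and $T_{\fF^{-1}(S),\Psi} \simeq T_{S,\Phi}^{-1}$ (which follow formally from the twist formulas in the remark after the definition of $\fF$) rewrites these components in terms of the twists of $S$. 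I then transport along $\sigma$ using the formulas of \Cref{obs:sigma} for how $\sigma$ acts on the generators $h_n \in f\cV$ and $\delta_n \in g\cV$, and finally apply $\fF$, which on a morphism of spherical adjunctions interchanges the $\Phi$- and $\Psi$-components (reflecting the identifications $\Phi \circ \fF \simeq \Psi$ and $\Psi \circ \fF \simeq \Phi$). Combining these operations produces the claimed formulas for $G^c_\Phi$ and $G^c_\Psi$.

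The main obstacle is the bookkeeping in the second paragraph: one must simultaneously track the shifts produced by $\sigma$, the twist identifications under $\fF$, and the component interchange under $\fF$, and confirm that they combine to yield the precise expressions stated. Each individual step is formally straightforward, but the combination is delicate because the Fourier transform is not an involution on the nose (by \Cref{obs.basics.on.sphadjns}, $\fF^2$ differs from the identity by a shift and a twist), so the specific representatives chosen for $\fF$ and $\fF^{-1}$ affect the shifts that appear in the final formulas.
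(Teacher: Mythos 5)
Your chain of equivalences
\[
\hom_\Sph(\fF(\sigma(\gr)),S)
\simeq
\hom_\Sph(\sigma(\gr),\fF^{-1}(S))
\simeq
\hom_\Sph(\gr,\fF^{-1}(S))
\simeq
\Phi(\fF^{-1}(S))
\simeq
\cC_\Psi
\]
is exactly the paper's proof, which stops there and does not carry out the explicit formula-tracing you describe in your later paragraphs. The proposal is correct and takes essentially the same approach.
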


\begin{proof}
This follows from \Cref{thm.gr.coreps.LHS} via the equivalences
\[
\hom_\Sph(\fF(\sigma(\gr)),S)
\simeq
\hom_\Sph(\sigma(\gr),\fF^{-1}(S))
\simeq
\hom_\Sph(\gr,\fF^{-1}(S))
\simeq
\Phi(\fF^{-1}(S))
\simeq
\cC_\Psi
~.
\qedhere
\]
\end{proof}

\begin{observation} \label{obs:gvtwist}
We also could have used $\fF(\gr)$ or $\fF^{-1}(\gr)$ in \Cref{cor.corep.RHS} to corepresent the functor $\Psi.$
Our choice of $\fF(\sigma(\gr))$ is meant
to ensure that the twist automorphism $T_{\Phi}$ of the left-hand category $g\cV$ is $\langle -1 \rangle,$
which will be essential in the proof of \Cref{prop:spectral-description}.
\end{observation}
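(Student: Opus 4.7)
The plan is to verify the two assertions packed into this observation directly from results already assembled in the excerpt: first, that $\fF(\gr)$ and $\fF^{-1}(\gr)$ each corepresent $\Psi$; and second, that among the three natural candidates $\fF(\gr)$, $\fF^{-1}(\gr)$, and $\fF(\sigma(\gr))$, only the last presents the twist $T_\Phi$ on the left-hand category $g\cV$ as the unsuspended shift $\langle -1\rangle$. There is no real obstacle — only careful bookkeeping of twists under the Fourier transform — and the argument runs in two short steps.

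For corepresentability, the key input is the pair of natural equivalences $\Phi \circ \fF \simeq \Psi$ and $\Psi \circ \fF \simeq \Phi$ recorded in the definition of $\fF$, together with \Cref{thm.gr.coreps.LHS}. Chaining these gives, for any $S \in \Sph$,
\[
\hom_\Sph(\fF(\gr), S)
\simeq \hom_\Sph(\gr, \fF^{-1}(S))
\simeq \Phi(\fF^{-1}(S))
\simeq \Psi(S),
\]
so $\fF(\gr)$ corepresents $\Psi$, and the parallel computation (with $\fF$ and $\fF^{-1}$ interchanged) shows the same for $\fF^{-1}(\gr)$. Since \Cref{obs:sigma} provides an equivalence $\gr \simeq \sigma(\gr)$ in $\Sph$, we moreover have $\fF(\sigma(\gr)) \simeq \fF(\gr)$ in $\Sph$, so this third presentation corepresents $\Psi$ as well; this matches the content of \Cref{cor.corep.RHS} and confirms the ``we also could have used'' clause of the observation.

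For the twist comparison, the workhorse is the formula $T_{\fF(S),\Phi} \simeq T_{S,\Psi}^{-1}$ recorded in the remark following the definition of $\fF$, together with the analogous identity $T_{\fF^{-1}(S),\Phi} \simeq T_{S,\Psi}^{-1}$ that one obtains by applying the first formula to $\fF^{-1}(S)$ in place of $S$. Feeding in the explicit twist data $T_{\gr,\Psi} \simeq \Sigma^2\langle -1\rangle$ from \Cref{obs.gr.is.spherical}, both $\fF(\gr)$ and $\fF^{-1}(\gr)$ produce
\[
T_\Phi \simeq \bigl(\Sigma^2\langle -1\rangle\bigr)^{-1} \simeq \Sigma^{-2}\langle 1\rangle
\]
on their common source $g\cV$; feeding in instead $T_{\sigma(\gr),\Psi} \simeq \langle 1\rangle$ from \Cref{obs:sigma} gives
\[
T_{\fF(\sigma(\gr)),\Phi} \simeq \langle 1\rangle^{-1} \simeq \langle -1\rangle,
\]
with no residual suspension. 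This singles out $\fF(\sigma(\gr))$ as the unique candidate among the three that yields the clean normalization $\langle -1\rangle$ on the nose — the other two differ from it by the autoequivalence $\Sigma^{-2}\langle 2\rangle$ of $g\cV$ — and that normalization is precisely what is invoked in the proof of \Cref{prop:spectral-description}.
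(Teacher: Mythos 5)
Your verification is correct and follows exactly the route the paper leaves implicit: corepresentability of $\Psi$ by any of the three objects comes from the adjunction identity $\hom_\Sph(\fF(-),-)\simeq\hom_\Sph(-,\fF^{-1}(-))$ together with \Cref{thm.gr.coreps.LHS} (precisely the chain of equivalences in the proof of \Cref{cor.corep.RHS}), and the twist comparison is the formula $T_{\fF(S),\Phi}\simeq T_{S,\Psi}^{-1}$ fed the explicit twist data of \Cref{obs.gr.is.spherical} and \Cref{obs:sigma}. One cosmetic slip: the identity $T_{\fF^{-1}(S),\Phi}\simeq T_{S,\Psi}^{-1}$ is obtained by applying the \emph{second} formula $T_{\fF(S),\Psi}\simeq T_{S,\Phi}^{-1}$ to $\fF^{-1}(S)$, not the first; the resulting comparison of $\Sigma^{-2}\langle 1\rangle$ for $\fF^{\pm 1}(\gr)$ against $\langle -1\rangle$ for $\fF(\sigma(\gr))$ is unaffected.
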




\begin{remark}\label{universal.spherical.adjunction}
Tautologically, there exists a natural adjunction
\begin{equation}\label{eq:tautological-adjunction}
\begin{tikzcd}[column sep=2cm]
 \Phi
\arrow[yshift=0.9ex]{r}{\mathbf{S}}
\arrow[leftarrow, yshift=-0.9ex]{r}[yshift=-0.2ex]{\bot}[swap]{\mathbf{S}^R}
&
\Psi
\end{tikzcd}
\end{equation}
internal to the 2-category of functors $\Fun(\Sph,\St)$. Using \Cref{thm.gr.coreps.LHS} and \Cref{cor.corep.RHS}, we see that the adjunction \eqref{eq:tautological-adjunction} is corepresented by an adjunction
\begin{equation}\label{eq:universal-adjunction}
\begin{tikzcd}[column sep=2cm]
\gr
\arrow[leftarrow, yshift=0.9ex]{r}{\mathbf{S}_{univ}}
\arrow[yshift=-0.9ex]{r}[yshift=-0.2ex]{\top}[swap]{(\mathbf{S}^R)_{univ}}
&
\fF(\sigma(\gr))
\end{tikzcd}
\end{equation}
internal to $\Sph$ which may be called the \textit{universal spherical adjunction} as it corepresents the identity functor $\Sph \xra{\id} \Sph$.
(Note that because the universal spherical adjunction is a corepresenting object, the handedness of the adjunctions \Cref{eq:universal-adjunction} and \Cref{eq:tautological-adjunction} is reversed: $(\mathbf{S}^R)_{univ}$ is {\em left} adjoint to $\mathbf{S}_{univ}.$)
In the notation of \Cref{thm.gr.coreps.LHS} and \Cref{cor.corep.RHS}, the maps underlying this adjunction are $(\mathbf{S}^R)_{univ} = F^{\delta_0}$ and $\mathbf{S}_{univ} = G^{\delta_0}$.
\end{remark}

\begin{observation}\label{universal.twist}
There is a natural automorphism $\mathbf{T}: \id_{\Sph} \xra{\sim} \id_{\Sph}$ which acts on a spherical adjunction $\cC_\Phi\xra{S}\cC_\Psi$ as
\begin{equation}\label{eq:aside-monodromy} \begin{tikzcd}[column sep=2cm]
\cC_\Phi
\arrow[yshift=0.9ex]{r}{S}
\arrow[leftarrow, yshift=-0.9ex]{r}[yshift=-0.2ex]{\bot}[swap]{S^R}
\arrow[dashed]{d}[swap]{\mathbf{T}_{\Phi} = T_\Phi}
&
\cC_\Psi
\arrow[dashed]{d}{\mathbf{T}_{\Psi} = \Sigma^{-2} T_\Psi}
\\
\cC_\Phi
\arrow[yshift=0.9ex]{r}{S}
\arrow[leftarrow, yshift=-0.9ex]{r}[yshift=-0.2ex]{\bot}[swap]{S^R}
&
\cC_\Psi~.
\end{tikzcd}
\end{equation}
(That this is a map of spherical adjunctions follows from \Cref{obs.basics.on.sphadjns} (1).) We can think of this map as the \textit{universal twist automorphism}. 
In the spirit of \Cref{universal.spherical.adjunction}, we may also understand $\mathbf{T}$ through its corepresenting map, namely the automorphism $\mathbf{T}_{univ}$ of the spherical functor $\gr\oplus \fF(\sigma(\gr))$
(commuting with the universal spherical adjunction described in \Cref{universal.spherical.adjunction})
which we may write componentwise (in the notation of \Cref{thm.gr.coreps.LHS} and \Cref{cor.corep.RHS}) as
\[
\mathbf{T}_{univ} = \left( \begin{array}{cc}
F^{h_{-1}}
&
0
\\
0
&
 G^{\kk[\beta]\langle -1 \rangle}
\end{array}
\right).
\]
\end{observation}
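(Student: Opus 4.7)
The plan is to verify three things in turn: that the pointwise assignment $\mathbf{T}_S := (T_\Phi, \Sigma^{-2}T_\Psi)$ defines a morphism in $\Sph$ at each $S$, that this assignment is natural in $S$, and that the resulting automorphism of $\id_\Sph$ is corepresented by the matrix $\mathbf{T}_{univ}$ as claimed.

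The first two steps are largely bookkeeping. For the pointwise verification, the parenthetical in the statement already observes that the relevant square commutes: the intertwining $ST_\Phi \simeq \Sigma^{-2}T_\Psi S$ is recorded in \Cref{obs.basics.on.sphadjns}(1), and the companion intertwining for $S^R$ follows by applying $\Sigma^{-2}$ to the equivalence $S^RT_\Psi \simeq \Sigma^2 T_\Phi S^R$ from the same observation and rearranging. Invertibility of each component is baked into the definition of a spherical adjunction. For naturality, I would observe that a morphism in $\Sph \subseteq \Fun(\Adj,\St)$ preserves the unit $\eta$ and counit $\varepsilon$ of the adjunction by construction; since $T_\Phi = \fib(\eta)$ and $T_\Psi = \cofib(\varepsilon)$ are functorial constructions from these via exact functors, any such morphism automatically intertwines the twists, so $\mathbf{T}$ assembles into a natural transformation $\id_\Sph \to \id_\Sph$ that is invertible pointwise.

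The main work lies in identifying $\mathbf{T}$ with $\mathbf{T}_{univ}$. By \Cref{thm.gr.coreps.LHS} and \Cref{cor.corep.RHS}, the functors $\Phi$ and $\Psi$ are corepresented by $\gr$ and $\fF(\sigma(\gr))$, respectively, via evaluation at $h_0$ and $\kk[\beta]$. I would evaluate $\mathbf{T}$ at each corepresenting object: the $\Phi$-component at $\gr$ acts as $T_{\gr,\Phi} = \langle -1 \rangle$ by \Cref{obs.gr.is.spherical}, sending $h_0$ to $h_{-1}$, and the inverse correspondence of \Cref{thm.gr.coreps.LHS} identifies this element with the morphism $F^{h_{-1}}$ in $\Sph$. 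A parallel computation for $\Psi$ on $\fF(\sigma(\gr))$ uses \Cref{obs:sigma} together with the Fourier-transform formulas for twists (from the remark following the definition of $\fF$) to identify the relevant element of $\Psi(\fF(\sigma(\gr)))$ with $\kk[\beta]\langle -1 \rangle$, so that the corresponding morphism in $\Sph$ is $G^{\kk[\beta]\langle -1 \rangle}$. The off-diagonal entries vanish because $\gr$ and $\fF(\sigma(\gr))$ corepresent the two disjoint functors $\Phi$ and $\Psi$, on each of which $\mathbf{T}$ acts independently. The main obstacle I anticipate is the bookkeeping around signs and shifts in the second computation, where the combined effect of $\sigma$, $\fF$, and the twist-intertwining formulas must conspire to produce precisely the shift $\langle -1 \rangle$ shown in the matrix; the corresponding calculation on the $\Phi$-side is straightforward because $\gr$ is the spherical adjunction whose twists were computed directly in \Cref{obs.gr.is.spherical}.
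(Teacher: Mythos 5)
Your proposal is correct and follows the same route the paper intends: the pointwise claim is exactly the parenthetical appeal to \Cref{obs.basics.on.sphadjns}(1), and the matrix $\mathbf{T}_{univ}$ is obtained by Yoneda, evaluating the natural automorphism at the universal elements $h_0$ and $\kk[\beta]$ of the corepresenting objects from \Cref{thm.gr.coreps.LHS} and \Cref{cor.corep.RHS}. The shift bookkeeping you flagged on the $\Psi$-side does close up: by the remark following the definition of $\fF$ one has $T_{\fF(\sigma(\gr)),\Psi} \simeq T_{\sigma(\gr),\Phi}^{-1} \simeq \Sigma^{2}\langle -1\rangle$ (using \Cref{obs:sigma}), so $\Sigma^{-2}T_{\fF(\sigma(\gr)),\Psi} \simeq \langle -1\rangle$ carries $\kk[\beta]$ to $\kk[\beta]\langle -1\rangle$, giving $G^{\kk[\beta]\langle -1\rangle}$ as claimed.
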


\subsection{A spectral description of spherical adjunctions}
\label{subsec:spectral}

\begin{notation}\label{notation:algebra-a}
We denote by $\cA := \End_\Sph(\gr \oplus \fF(\sigma(\gr))) \in \Alg(\St)$ the endomorphism algebra of the direct sum.
\end{notation}

\begin{theorem}
\label{sph.fctrs.are.modules.over.matrix.alg}
The functor
\begin{equation}
\label{the.fctr.from.Sph.to.Mod.A.St}
\Sph
\xra{\hom_\Sph(\gr \oplus \fF(\sigma(\gr)),-)}
\Mod_\cA(\St)
\end{equation}
is an equivalence.
\end{theorem}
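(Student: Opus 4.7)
The plan is to invoke a Schwede--Shipley/Morita-style recognition theorem: for a presentably $\St$-linear 2-category $\cC$ with a compact generator $x$, the functor $\hom_\cC(x,-)$ induces an equivalence $\cC \simeq \Mod_{\End_\cC(x)}(\St)$. My candidate generator is $x := \gr \oplus \fF(\sigma(\gr)) \in \Sph$, for which by construction $\End_\Sph(x) = \cA$.

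First, I would exhibit the lift of the functor $\hom_\Sph(x,-)$ to $\Mod_\cA(\St)$: the natural $\St$-enrichment on $\Sph$, inherited from $\Fun(\Adj,\St)$, equips $\hom_\Sph(x,S)$ with a canonical left $\cA$-action by precomposition, producing the functor \Cref{the.fctr.from.Sph.to.Mod.A.St}. Second, I would check conservativity of the composite
\[
\Sph \xra{\hom_\Sph(x,-)} \Mod_\cA(\St) \xra{\fgt} \St~.
\]
By \Cref{thm.gr.coreps.LHS} and \Cref{cor.corep.RHS}, this composite is naturally equivalent to $\Phi \oplus \Psi$, so it suffices to note that a spherical adjunction whose source and target are both zero is itself the zero adjunction.

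Third, I would verify that $\Sph$ is presentable and $\St$-linear, and that $x$ is a compact generator. The category $\Sph \subset \Fun(\Adj, \St)$ is cut out by the conditions that $T_\Phi$ and $T_\Psi$ are equivalences; these conditions are accessible and closed under both filtered colimits and small limits (since being an equivalence is preserved by both), so $\Sph$ is presentable and the inclusion preserves filtered colimits. The $\St$-linear structure descends from $\Fun(\Adj,\St)$. Compactness of $\gr$ and $\fF(\sigma(\gr))$ follows from their corepresenting the functors $\Phi$ and $\Psi$, together with the observation that $\Phi$ and $\Psi$ preserve filtered colimits — which in turn follows from the fact that filtered colimits in $\Sph$ are computed by pointwise filtered colimits in $\Fun(\Adj,\St)$, and that the source/target functors from $\Fun(\Adj,\St)$ to $\St$ commute with these.

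Once these structural inputs are in hand, the conclusion is formal. The left adjoint $(-) \otimes_\cA x : \Mod_\cA(\St) \to \Sph$ to $\hom_\Sph(x,-)$ is fully faithful (the counit is an equivalence on the compact generator $x$ by the tautological computation $\cA \otimes_\cA x \simeq x$, and hence on all of $\Mod_\cA(\St)$ by colimit-preservation), while $\hom_\Sph(x,-)$ is conservative by Step 2 — so both functors are equivalences. The main obstacle I anticipate is the Step 3 bookkeeping: rigorously verifying that $\Sph$ carries a presentable $\St$-linear structure compatible with the abstract Morita theorem, and in particular that the $\St$-enrichment and colimits in $\Sph$ behave as described. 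Once these structural facts are carefully pinned down, the equivalence drops out from the recognition theorem.
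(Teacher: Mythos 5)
Your overall architecture matches the paper's: identify $\hom_\Sph(\gr\oplus\fF(\sigma(\gr)),-)$ with $\Phi\oplus\Psi$ via \Cref{thm.gr.coreps.LHS} and \Cref{cor.corep.RHS}, check conservativity, produce a left adjoint by tensoring, and conclude by a monadicity/Morita argument over $\cA$. However, as written the proposal has two genuine soft spots. First, you invoke a ``Schwede--Shipley for presentably $\St$-linear 2-categories with a compact generator'' as a black box. No such off-the-shelf theorem is cited, and at this level of enrichment it is not free: the paper instead runs Barr--Beck--Lurie by hand on underlying 1-categories $\iota_1$ and then \emph{separately} upgrades the resulting equivalence to one of 2-categories by checking compatibility with cotensors (computed pointwise on both sides). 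Your proposal never addresses this 1-categorical-to-2-categorical upgrade, which is a real step, not bookkeeping. Relatedly, ``compact generator'' (preservation of filtered colimits) is strictly weaker than what your own final argument uses: to propagate the unit equivalence from $\cA$ to all of $\Mod_\cA(\St)$ you need $\hom_\Sph(x,-)$ to preserve \emph{all} colimits, and in the $\St$-enriched setting exactness is not automatic the way it is for stable 1-categories. The stronger statement is in fact true, but your justification via compactness does not deliver it.

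Second, your structural claims about $\Sph$ (presentability, $\St$-linearity, colimits and tensors behaving pointwise) are asserted via an ``accessible subcategory closed under limits and filtered colimits'' argument, which even if correct does not obviously yield what you then use — e.g.\ that filtered (let alone all) colimits in $\Sph$ are computed pointwise, or that tensoring with $\gr\oplus\fF(\sigma(\gr))$ is the left adjoint. The paper's key device here is to rewrite $\Sph$ as a genuine functor 2-category $\Fun((\Sigma^{(\infty,1)}_+\Adj)',\St)$, where $(\Sigma^{(\infty,1)}_+\Adj)'$ is the localization freely inverting the twists; then \Cref{lem:functorcats.presentability} gives presentability, pointwise co/limits, and pointwise co/tensors at once, which is exactly what makes conservativity, the left adjoint, colimit preservation, and the cotensor compatibility checkable. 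If you substitute that presentation for your closure argument, state colimit preservation (not just compactness), and either prove or precisely cite an enriched recognition theorem that includes the 2-categorical enhancement, your argument becomes a faithful variant of the paper's proof.
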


\begin{proof}
We first prove that the functor \Cref{the.fctr.from.Sph.to.Mod.A.St} is an equivalence on $\iota_1$, which amounts to proving that the functor
\begin{equation}
\label{monadic.forgetful.functor.on.iota.one}
\iota_1 \Sph
\xra{\hom_\Sph ( \gr \oplus \fF(\sigma(\gr)) , - )}
\iota_1 \St
\end{equation}
is the right adjoint of a monadic adjunction: in other words, that its source $\iota_1 \Sph$ is the category of modules in its target $\iota_1 \St$ for the monad $\cA \otimes (-)$, with \Cref{monadic.forgetful.functor.on.iota.one} being the forgetful functor.
Note that by \Cref{thm.gr.coreps.LHS} and \Cref{cor.corep.RHS} we can identify the functor \Cref{monadic.forgetful.functor.on.iota.one} as the composite
\begin{equation}
\label{monadic.forgetful.functor.on.iota.one.as.a.composite}
\iota_1 \Sph
\xra{(\ev_\ell,\ev_r)}
\iota_1 \St \times \iota_1 \St
\xlongra{\oplus}
\iota_1 \St
~.
\end{equation}

In order to establish monadicity, we verify the hypotheses of the Barr--Beck--Lurie theorem \cite[Theorem 4.7.3.5]{Lurie-HA}: it suffices to show that the functor \Cref{monadic.forgetful.functor.on.iota.one} is conservative, admits a left adjoint, and preserves colimits.
Conservativity is clear from the description \Cref{monadic.forgetful.functor.on.iota.one.as.a.composite}. For the remaining properties, we will observe that $\iota_1\Sph$ is a functor category: namely, if we write $(\Sigma^{(\infty,1)}_+ \Adj)'$ for the localization of the universal stably-enriched adjunction which freely adjoints inverses to the twists, then $\Sph = \Fun((\Sigma^{(\infty,1)}_+\Adj)', \St).$

As a consequence, we know from \Cref{lem:functorcats.presentability} that $\Sph$ is a presentable stable 2-category, with tensors given by pointwise product, and colimits in $\iota_1\Sph$ computed pointwise. By the definition of tensors, tensoring against the object $\gr\oplus \fF(\sigma(\gr))$ provides a left adjoint to \Cref{monadic.forgetful.functor.on.iota.one}; moreover, since colimits are computed pointwise, 
it is clear that the functor 
\Cref{monadic.forgetful.functor.on.iota.one} preserves colimits.

We have thus shown that the functor \Cref{the.fctr.from.Sph.to.Mod.A.St} between 2-categories is an equivalence on $\iota_1$. Moreover, its source and target both admit cotensors over small categories, computed pointwise. Hence, in order to lift this equivalence on $\iota_1$ to an equivalence of 2-categories, it suffices to observe that the functor \Cref{the.fctr.from.Sph.to.Mod.A.St} commutes with cotensors.
\end{proof}

\begin{remark}\label{rem:matrix-naive}
The monoidal category $\cA$ is the endomorphism category of the direct sum $\gr\oplus\sigma(\gr)$ and can therefore be represented as the categorical $2\times 2$ matrix algebra 
\begin{equation*}
\cA\simeq
\left(\begin{array}{cc}
\hom_\Sph(\gr,\gr)&\hom_\Sph(\gr,\fF(\sigma(\gr)))\\
\hom_\Sph(\fF(\sigma(\gr)),\gr)&\hom_\Sph(\fF(\sigma(\gr)),\fF(\sigma(\gr)))
\end{array}
\right)
\end{equation*}
in $\Alg(\St),$ where the right-hand side has underlying category given by the direct sum of its entries, with monoidal structure given by matrix multiplication (where the off-diagonal entries have the obvious bimodule structure for the monoidal categories on the diagonals).
Using \Cref{thm.gr.coreps.LHS} and \Cref{cor.corep.RHS}, we may rewrite this presentation of $\cA$ as 
\[
\cA
\simeq
\left( \begin{array}{cc}
f\cV
&
g\cV
\\
g\cV
&
\sigma(f\cV)
\end{array}
\right),
\]
where the bimodule structures on the off-diagonal entries are given by the
functors $\gr \adj \triv$ and $\sigma(\gr) \adj \sigma(\triv)$.
\end{remark}

In the rest of this section we will give a spectral description of the monoidal category $\cA$ which will clarify the meaning of the automorphism $\sigma$. Our description will use the monoidal structure on the category of coherent sheaves $\Coh(X\times_YX)$ on a fiber product of the form $X\times_YX$: 
namely, the convolution diagram
\[
\begin{tikzcd}
&X \times_Y X \times_Y X \arrow[dl, "p_{12}"']\arrow[d, "p_{13}"]\arrow[dr, "p_{23}"]&\\
X\times_Y X& X\times_Y X & X\times_Y X
\end{tikzcd}
\]
can be used to define a monoidal product $M\star N:= (p_{13})_*(p_{12}^*M\otimes p_{13}^*N).$ (See for instance \cite{BZFN} for more discussion.) When $X=\bigsqcup_i X_i$ is a disjoint union of components $X_i,$ we will write the monoidal category $\Coh(X\times_YX)$ as a categorical matrix algebra (as in \Cref{rem:matrix-naive}) with $(i,j)$-entry given by $\Coh(X_i\times_Y X_j).$ Equivalently, for the sake of cleaner notation, we will sometimes write this monoidal category as $\Coh(\mathcal{M}),$ where $\mathcal{M}$ is the matrix of varieties or stacks whose $(i,j)$ entry is $X_i\times_Y X_j.$

\begin{proposition}\label{prop:spectral-description}
There is an equivalence of monoidal categories
\begin{align*}
\cA&\simeq
\Coh\left(
(\AA^1/\GG_m \sqcup 0/\GG_m)
\times_{\AA^1/\GG_m}
(\AA^1/\GG_m \sqcup 0/\GG_m)
\right)\\
&\simeq \Coh\left(
\begin{array}{ll}
\AA^1/\GG_m \times_{\AA^1/\GG_m} \AA^1/\GG_m & \AA^1/\GG_m \times_{\AA^1/\GG_m} 0/\GG_m\\
0/\GG_m \times_{\AA^1/\GG_m} \AA^1/\GG_m & 0/\GG_m \times_{\AA^1/\GG_m} 0/\GG_m
\end{array}
\right)\\
&\simeq \Coh\left(
\begin{array}{ll}
\AA^1/\GG_m & 0/\GG_m\\
0/\GG_m & \AA^1[-1]/\GG_m
\end{array}
\right)~,
\end{align*}
where the right-hand side is given the convolution monoidal structure defined above.
\end{proposition}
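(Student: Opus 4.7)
The strategy is to start from the matrix presentation of $\cA$ given in Remark~\ref{rem:matrix-naive} and identify each of its four entries, together with its bimodule structure, with the corresponding coherent sheaf category on the relevant piece of the fiber product.

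The plan proceeds entry by entry. For the $(1,1)$ entry, I would use the equivalence $f\cV \simeq \Coh(\AA^1/\GG_m)$ recalled in \Cref{subsec:univ} (from $f\cV \simeq \Perf_{\kk[x]}(g\cV)$), and note that $\AA^1/\GG_m \times_{\AA^1/\GG_m} \AA^1/\GG_m \simeq \AA^1/\GG_m$ via the diagonal, so the convolution monoidal structure coincides with the usual $\Coh$-tensor, hence with the monoidal structure of $f\cV$ as an endomorphism algebra of the unit functor $\gr$. For the off-diagonal entries, I would compute
\[
\AA^1/\GG_m \times_{\AA^1/\GG_m} 0/\GG_m
\simeq 0/\GG_m
\simeq 0/\GG_m \times_{\AA^1/\GG_m} \AA^1/\GG_m,
\]
so both off-diagonal entries are $\Coh(0/\GG_m) \simeq g\cV$. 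The bimodule structure over $f\cV$ and $\sigma(f\cV)$ arising from convolution with $\Coh(\AA^1/\GG_m)$ and $\Coh(\AA^1[-1]/\GG_m)$ is then precisely the one induced by pullback along $0/\GG_m \to \AA^1/\GG_m$, which matches the bimodule structure on $g\cV$ coming from the spherical adjunctions $\gr \dashv \triv$ and $\sigma(\gr)\dashv \sigma(\triv)$ (these being the categorical incarnations of $i^*$ and $i^!$ along the inclusion $i\colon 0/\GG_m\hookrightarrow \AA^1/\GG_m$).

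The main obstacle — and the conceptually key step — is the $(2,2)$ entry. Here one must compute the derived self-intersection $0 \times_{\AA^1} 0$. Using the Koszul resolution $\kk[x]\xra{\cdot x}\kk[x]$ of $\kk$ as a $\kk[x]$-module, one gets $\kk \otimes^L_{\kk[x]}\kk \simeq \kk[\epsilon]$ with $|\epsilon| = -1$, i.e.\! the derived fiber product is $\AA^1[-1]/\GG_m$. The weight of $\epsilon$ is inherited from the weight of $x$ and a homological shift, so $\beta = \sigma(x)$ from \Cref{obs:sigma} is precisely the generator of $\kk[\epsilon]$ up to reindexing conventions, giving $\sigma(f\cV) \simeq \Perf_{\kk[\beta]}(g\cV)\simeq \Coh(\AA^1[-1]/\GG_m)$. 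This identifies $\sigma$ as the geometric operation that trades the undeformed $\AA^1$ in the spectral direction for its shifted self-intersection, and explains why it is needed to make the matrix form of $\cA$ convolution-compatible.

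Finally, I would assemble the four identifications into a single symmetric monoidal equivalence by checking compatibility of the monoidal structures. Since the monoidal structure on $\cA$ is matrix multiplication (composition of spherical morphisms in $\Sph$) and the monoidal structure on the right-hand side is convolution along the middle factor of $(X\sqcup Y)\times_{\AA^1/\GG_m}(X\sqcup Y)\times_{\AA^1/\GG_m}(X\sqcup Y)$, both multiplications decompose into the same block form; the four diagonal and four off-diagonal composites reduce to pull-push along inclusions $i$ and the identity, which on each side are given by the bimodule actions already identified above. Once entry-by-entry and block-by-block compatibility is verified, one gets the desired monoidal equivalence, and the matrix presentation displayed in the statement follows immediately.
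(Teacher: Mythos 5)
Your entry-by-entry identifications of the four matrix blocks land in the right places, but the proposal has two genuine problems, one of which is the key point the paper's proof is organized around.

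First, the monoidality. You propose to build the equivalence by identifying each of the four entries separately and then ``checking compatibility of the monoidal structures'' block by block. In the $\infty$-categorical setting this does not produce a monoidal functor: a monoidal equivalence is not the data of an equivalence of underlying objects plus a binary compatibility check, but an infinite tower of coherence data. The paper avoids ever having to supply this by hand. It first observes (from the proof of \Cref{thm.gr.coreps.LHS}) that $\Phi$ lifts to a functor $\Sph \to \Mod_{f\cV}(\St)$, so that applying this enhanced evaluation to $\End_\Sph(\gr\oplus\fF(\sigma(\gr)))$ produces a \emph{monoidal} functor $\cA \to \End_{\Mod_{f\cV}(\St)}(f\cV\oplus g\cV)$ for free; it then composes with the integral-kernel equivalence of \Cref{lem:BZNP} (i.e.\! \cite{BZNP}*{Proposition 1.2.1} applied to $X = \AA^1/\GG_m\sqcup 0/\GG_m \to \AA^1/\GG_m = Y$), which is where the convolution monoidal structure enters coherently. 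Only \emph{after} a monoidal functor exists does one check, componentwise, that its underlying functor is an equivalence --- and for that, your four identifications are exactly what is needed. So your computations belong at the end of the argument, not at the beginning; without the functorially-constructed monoidal comparison map, the proof is incomplete.

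Second, the $(2,2)$ entry. You correctly compute the derived self-intersection $0\times_{\AA^1}0 = \Spec(\kk\otimes^L_{\kk[x]}\kk) = \Spec\kk[\epsilon] = \AA^1[-1]$, but the claim that $\beta=\sigma(x)$ ``is precisely the generator of $\kk[\epsilon]$ up to reindexing conventions'' is not right: $\kk[\beta]$ is a polynomial algebra on an even-degree generator, while $\kk[\epsilon]$ is an exterior algebra on an odd-degree generator, and these are Koszul dual rather than isomorphic. The correct identification, as in \Cref{eq:koszul}, is the BGG-type equivalence $\Perf_{\kk[\beta]}(g\cV)\xra{\sim}\Coh_{\kk[\epsilon]}(g\cV)$ exchanging \emph{perfect} modules over the symmetric algebra with \emph{coherent} modules over the exterior algebra (and the zero-module condition with set-theoretic support); note in particular the asymmetry between $\Perf$ and $\Coh$ on the two sides, which your ``reindexing'' gloss erases.
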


In order to prove \Cref{prop:spectral-description}, we first provide an alternative description of the spectral convolution monoidal category in the proposition.
Observe that the categories $f\cV$ and $g\cV$ may be upgraded to stable module categories for the monoidal category $f\cV$: the former as the rank-1 free module, and the latter through the functor $f\cV\xra{\gr}g\cV.$

\begin{lemma}\label{lem:BZNP}
There is an equivalence of monoidal categories
\begin{align}\label{eq:spectral-matrix-description}
\Coh\left(
(\AA^1/\GG_m \sqcup 0/\GG_m)
\times_{\AA^1/\GG_m}
(\AA^1/\GG_m \sqcup 0/\GG_m)
\right)
 &\simeq 
\End_{\Mod_{f\cV}(\St)}(f\cV\oplus g\cV)~.
\end{align}
\end{lemma}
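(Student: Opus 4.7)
The plan is to identify both sides of \Cref{eq:spectral-matrix-description} with a common Morita-theoretic object attached to the inclusion $i: 0/\GG_m \hookra \AA^1/\GG_m$. First, under the Rees equivalence $f\cV \simeq \Coh(\AA^1/\GG_m)$ recalled at the start of \Cref{subsec:univ}, the $f\cV$-module $g\cV \simeq \Coh(0/\GG_m)$ corresponds to pullback along $i$; equivalently, the adjunction $\gr \adj \triv$ is identified with $i^* \adj i_*$. Thus $f\cV \oplus g\cV$ corresponds as an $f\cV$-module category to $\Coh$ of the stack $X := \AA^1/\GG_m \sqcup 0/\GG_m$ equipped with its natural map $\pi := \id \sqcup i$ to $\AA^1/\GG_m$.

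Next I decompose $\End_{\Mod_{f\cV}(\St)}(f\cV \oplus g\cV)$ into a $2\times 2$ categorical matrix algebra as in \Cref{rem:matrix-naive} and match each block with a fiber product. The key input is the standard identification
\[
\Hom_{\Mod_{\Coh(Y)}(\St)}(\Coh(U),\Coh(V)) \simeq \Coh(U \times_Y V),
\]
available whenever $Y$ is 1-affine and $U, V$ are affine over $Y$. Here $Y = \AA^1/\GG_m$ is 1-affine in the sense of \cite{Ga-shcat} (as a torus quotient of an affine), and both components of $X$ are affine over it, so this applies. Computing the four fiber products gives $\AA^1/\GG_m$ on the $(1,1)$-entry, $0/\GG_m$ on each off-diagonal, and the derived self-intersection $0/\GG_m \times_{\AA^1/\GG_m} 0/\GG_m \simeq \AA^1[-1]/\GG_m$ on the $(2,2)$-entry; the last identification is the Koszul calculation $\kk \otimes^{\mathbb{L}}_{\kk[x]} \kk \simeq \kk[\epsilon]/\epsilon^2$ with $|\epsilon| = -1$ and $\GG_m$-weight inherited from that of $x$. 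These match the entries on the spectral side entry-for-entry.

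Finally I match the monoidal structures. On the left, matrix multiplication in the endomorphism algebra is exactly composition of $f\cV$-linear functors, equivalently tensor product of $f\cV$-bimodules. On the right, the convolution product $M \star N := (p_{13})_*(p_{12}^* M \otimes p_{23}^* N)$ along the triple fiber product $X \times_{\AA^1/\GG_m} X \times_{\AA^1/\GG_m} X$ computes this same bimodule composition via base change and the projection formula. I expect this compatibility of monoidal structures to be the main technical point: rather than verify it slot-by-slot, the cleanest route is to invoke the integral transform formalism of \cite{BZFN} together with 1-affineness of $\AA^1/\GG_m$, which together upgrade the pointwise equivalence displayed above to a monoidal equivalence between convolution on stacks over $\AA^1/\GG_m$ and composition of $f\cV$-linear endofunctors of the generator $\Coh(X)$.
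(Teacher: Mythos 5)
Your overall route is the same as the paper's: identify $f\cV \oplus g\cV$ with $\Coh(X)$ for $X := \AA^1/\GG_m \sqcup 0/\GG_m$ viewed over $Y := \AA^1/\GG_m$, and then identify the $f\cV$-linear endomorphism category with coherent sheaves on $X\times_Y X$ via integral kernels, matching convolution with composition. The paper does this in a single stroke by citing \cite{BZNP}*{Proposition 1.2.1}, which asserts exactly the monoidal equivalence $\Coh(X\times_Y X)\simeq \End_{\Mod_{\Perf(Y)}(\St)}(\Coh(X))$ whenever $X\to Y$ is a proper map of perfect stacks with $X$ smooth (and $\Perf(Y)\simeq\Coh(Y)\simeq f\cV$ since $Y$ is smooth); here $X\to Y$ is the identity on one component and the closed immersion $0/\GG_m\hookra \AA^1/\GG_m$ on the other, so the hypotheses hold. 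Your blockwise $2\times 2$ verification and the Koszul computation of $0/\GG_m\times_{\AA^1/\GG_m}0/\GG_m$ are consistent with this but not needed.

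The one substantive issue is the justification of your key step. The criterion you invoke --- that $\Hom_{\Mod_{\Coh(Y)}(\St)}(\Coh(U),\Coh(V))\simeq \Coh(U\times_Y V)$ holds whenever $Y$ is 1-affine and $U,V$ are affine over $Y$ --- is not the right hypothesis at the level of small coherent categories, and likewise the \cite{BZFN} integral transform formalism concerns $\QCoh$ of perfect stacks, not $\Coh$. For coherent sheaves the kernel transform $M\mapsto p_{V*}(p_U^*M\otimes K)$ only preserves coherence under properness and smoothness assumptions (e.g.\ for $Y$ a point and $U=V$ a non-smooth affine scheme, tensoring coherent complexes produces unbounded Tor and the claimed identification fails), which is precisely why the paper appeals to \cite{BZNP} rather than to 1-affineness or to \cite{BZFN}; 1-affineness governs $\QCoh$-linear sheaf-of-category statements, not this one. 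In the present situation the correct hypotheses (properness of $X\to Y$, smoothness of $X$ and $Y$) are satisfied, so your argument is repaired simply by replacing your cited criterion with \cite{BZNP}*{Proposition 1.2.1}, which moreover already supplies the monoidality you were treating as the main technical point.
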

\begin{proof}
We apply \cite{BZNP}*{Proposition 1.2.1}, which gives, for $X\to Y$ a proper map of perfect stacks with $X$ smooth, an equivalence
\begin{equation}\label{eq:integral-kernels-proposition}
\Coh(X\times_Y X)\simeq \End_{\Mod_{\Perf(Y)}(\St)}(\Coh(X))
\end{equation}
by treating the left-hand side as a category of integral kernels. If we assume that $Y$ is also smooth, we may replace $\Perf(Y)$ by the equivalent category $\Coh(Y).$

Now we specialize to the case 
\[
X := (\AA^1/\GG_m\sqcup 0/\GG_m) \to \AA^1/\GG_m =: Y~.
\]
The standard identifications $\Coh(B\GG_m) \simeq g\cV $ and $\Coh(\AA^1/\GG_m) \simeq f\cV$ (discussed at the beginning of \Cref{subsec:univ}) give an identification $\Coh(X)\simeq f\cV\oplus g\cV$. Combining this with \Cref{eq:integral-kernels-proposition}, we have an equivalence
\[
\Coh(X\times_Y X)\simeq \End_{\Mod_{f\cV}}(f\cV\oplus g\cV)~.\qedhere
\]
\end{proof}

\begin{proof}[Proof of \Cref{prop:spectral-description}]
In the proof of \Cref{thm.gr.coreps.LHS} we showed that the functor $\Phi\in \Fun(\Sph,\St)$ can be lifted to an object of $\Fun(\Sph,\Mod_{f\cV}(\St))$. In other words, the left-hand category in a spherical adjunction carries an $f\cV$-module structure that is respected by maps of spherical adjunctions.

Applying this enhanced evaluation functor to the endomorphisms of the spherical adjunction $\gr\oplus \fF(\sigma(\gr))$, we obtain a monoidal functor
\begin{align}
\cA = \End_{\Sph}(\gr\oplus\fF(\sigma(\gr)))
&\to \End_{\Mod_{f\cV}(\St)}(\Phi(\gr)\oplus\Phi(\fF(\gr)))
\nonumber
\\
&\simeq
\End_{\Mod_{f\cV}(\St)}(f\cV\oplus g\cV)~.
\label{eq:A-to-fvmods}
\end{align}
Using the description of the twists in \Cref{obs.gr.is.spherical} and \Cref{obs:gvtwist} we see that the $f\cV$-module structures on $f\cV$ and $g\cV$ above are given by $\id_{f\cV}$ and $\gr,$ respectively. We can therefore compose \Cref{eq:A-to-fvmods} with the equivalence \Cref{eq:spectral-matrix-description} to obtain a monoidal functor
\[
\cA
\to
\Coh\left(
(\AA^1/\GG_m \sqcup 0/\GG_m)
\times_{\AA^1/\GG_m}
(\AA^1/\GG_m \sqcup 0/\GG_m)
\right)~.
\]
As in \Cref{rem:matrix-naive}, we can think of this as a monoidal functor
\begin{equation}\label{eq:A-to-spectral}
\left( \begin{array}{cc}
f\cV
&
g\cV
\\
g\cV
&
\sigma(f\cV)
\end{array}
\right)
\to
\Coh\left(\begin{array}{ll}
\AA^1/\GG_m &  0/\GG_m \\
0/\GG_m & \AA^1[-1]/\GG_m
\end{array}\right)
\end{equation}
of $2\times 2$ matrix algebras in $\St$. To show that \Cref{eq:A-to-spectral} is an equivalence it is sufficient to observe that it induces an equivalence on each of the four components. Indeed, three of these maps are the canonical identifications discussed previously, and the
lower-right corner is the standard Koszul duality equivalence
\begin{equation}\label{eq:koszul}
\sigma(f\cV) = \Perf_{\kk[\beta]}(g\cV)\xra{\sim} \Coh_{\kk[\epsilon]}(g\cV) = \Coh(\AA^1[-1]/\GG_m)~,
\end{equation}
relating perfect (resp. coherent) modules over a symmetric (resp. exterior) algebra with the appropriate shifts, where we write $\kk[\epsilon]$ for the ring of functions on $\AA^1[-1].$ 
\end{proof}

\begin{observation}\label{spectral.univ.twist.calc}
Under the equivalence of \Cref{prop:spectral-description}, the universal twist automorphism $\mathbf{T}_{univ} \in \End_\Sph(\gr\oplus \fF(\sigma(\gr)))= \cA$ computed in \Cref{universal.twist} is sent to the object
\[
\left( \begin{array}{cc}

\cO_{\AA^1/\GG_m}\langle -1 \rangle &
0
\\
0
&
\delta_{0/\GG_m}\langle -1 \rangle
\end{array}
\right)
\in
\Coh\left(\begin{array}{ll}
\AA^1/\GG_m &  0/\GG_m \\
0/\GG_m & \AA^1[-1]/\GG_m
\end{array}\right).
\]
\end{observation}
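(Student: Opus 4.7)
The plan is to exploit the diagonal form of $\mathbf{T}_{univ}$ already recorded in \Cref{universal.twist} and to trace each diagonal entry separately along the chain of monoidal equivalences built in the proof of \Cref{prop:spectral-description}. Since that proof presents the spectral equivalence as a morphism of $2\times 2$ matrix algebras in $\St$ that splits into an entrywise identification of the four components, the off-diagonal zero entries are automatically preserved, reducing the calculation to determining the images of $F^{h_{-1}}$ and $G^{\kk[\beta]\langle -1\rangle}$.

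For the upper-left entry, I would invoke \Cref{thm.gr.coreps.LHS}, whose inverse explicitly sends a morphism $F^c : \gr \to \gr$ to the object $c \in f\cV = \Phi(\gr)$. Taking $c = h_{-1}$ and then applying the identification $f\cV \simeq \Coh(\AA^1/\GG_m)$ under which $h_n$ corresponds to $\cO\langle n\rangle$ (as recalled at the start of \Cref{subsec:univ}) yields the desired $\cO_{\AA^1/\GG_m}\langle -1\rangle$. For the lower-right entry, I would apply \Cref{cor.corep.RHS} to the spherical adjunction $\fF(\sigma(\gr))$ itself, obtaining $\End_\Sph(\fF(\sigma(\gr))) \simeq \Psi(\fF(\sigma(\gr))) \simeq \Phi(\sigma(\gr)) = \sigma(f\cV)$ via $\Psi \circ \fF \simeq \Phi$, and sending $G^{\kk[\beta]\langle -1\rangle}$ to $\kk[\beta]\langle -1\rangle$. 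The Koszul duality equivalence \eqref{eq:koszul} then carries the free rank-one module $\kk[\beta] \in \Perf_{\kk[\beta]}(g\cV)$ to the augmentation module $\kk \in \Coh_{\kk[\epsilon]}(g\cV)$, i.e.\ the skyscraper $\delta_{0/\GG_m}$ at the origin of $\AA^1[-1]/\GG_m$, and intertwines the $\langle -1\rangle$ shifts because on both sides they encode the same underlying $\GG_m$-equivariance.

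The only nontrivial step I expect is pinning down the normalization of Koszul duality, namely confirming that the compact projective generator $\kk[\beta]$ is sent to the augmentation module $\kk$ rather than to $\kk[\epsilon]$ itself, so that the resulting sheaf on $\AA^1[-1]/\GG_m$ is indeed the origin skyscraper $\delta_{0/\GG_m}$. This is the standard normalization, fixed by the fact that the functor is a bar-type construction against $\kk$ and exchanges free modules on one side with augmentation modules on the other. The automorphism $\sigma$ of \Cref{obs:sigma} was arranged precisely so that the internal bigradings match across the equivalence, which ensures that the shift $\langle -1\rangle$ transports as claimed rather than picking up an extra cohomological suspension.
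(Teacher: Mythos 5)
Your proposal is correct and follows essentially the route the paper intends: the paper leaves this as an unproved observation, and the justification is exactly your entrywise tracing of the diagonal matrix from \Cref{universal.twist} through the corepresentability identifications of \Cref{thm.gr.coreps.LHS} and \Cref{cor.corep.RHS} and the componentwise equivalence of \Cref{prop:spectral-description}. Your normalization check for the lower-right entry is also the right one, and can be confirmed independently of any bar-construction convention by noting that $\kk[\beta]=\sigma(\kk[x])$ is the monoidal unit of $\sigma(f\cV)$ while the unit of the convolution category $\Coh(0/\GG_m\times_{\AA^1/\GG_m}0/\GG_m)\simeq\Coh(\AA^1[-1]/\GG_m)$ is the diagonal pushforward $\delta_{0/\GG_m}$, so the monoidal equivalence \eqref{eq:koszul} must match them.
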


\section{Torus actions and toric stacks}\label{sec:higherdim}
In order to simplify our notation, in this section we specialize to coefficient field $\kk = \CC.$\footnote{Of course, the appropriate statements remain true for general coefficients. But we believe the following discussion is made clearer and simpler by treating the A-side, which is always ``geometric'' (i.e., defined over $\CC$) on equal footing with the B-side, which requires taking $\CC$ coefficients.}

The 2-categories we have studied so far (and some generalizations which we are to discuss soon) admit actions by tori, and we will study invariants for these actions. For instance, understanding the $S^1$-action on the 2-category $\Sph$ and the procedure of taking invariants will allow us to give a notion of ``$S^1$-equivariant spherical functors,'' which are associated to the symplectic geometry of ${\cotangent}(\CC/\CC^\times)$ rather than that of ${\cotangent}\CC.$ Indeed, as we shall see in \Cref{cor:ungauging-dim1}, the mirror spectral description of this 2-category $\Sph^{S^1}$ will be stated in terms of ${\cotangent} \CC$ rather than ${\cotangent}(\CC/\CC^\times).$

We begin in \Cref{subsec:ambidexterity}, by establishing a technical result identifying limits and colimits of groupoid-shaped diagrams of presentable enriched categories. We apply this in \Cref{subsec:gauging} to deduce an equivalence between invariants and coinvariants for $G$-actions on 2-categories. Finally, in \Cref{subsec:hypertoric} we apply the preceding results to describe the 2-category of $S^1$-invariant spherical functors and higher-dimensional generalizations.

\subsection{Presentable ambidexterity}\label{subsec:ambidexterity}

\begin{notation}
Given a 2-category $\cX$, we write $\cX^L \subseteq \cX$ for the 1-full sub-2-category on the left adjoint 1-morphisms.\footnote{This is consistent with the notation $\PrL$ if one takes $\Pr \subset \what{\Cat}$ to denote the full sub-2-category on the presentable categories (or alternatively the 1-full sub-2-category on the accessible functors between presentable categories).} We write
\[
\cX
\xlonghookla{\fgt}
\cX^L
\xhookra{\radjt}
\cX^{1\&2-\op}
\]
for the evident forgetful functors (which are both inclusions of 1-full sub-2-categories).
\end{notation}

\begin{theorem}
\label{thm.prbl.ambidext}
Fix any presentably symmetric monoidal category $\cW \in \CAlg(\Pr^L)$, any 1-category $\cA$, and any diagram $\cA \xra{F} \Pr^L_\cW$ of presentable $\cW$-enriched categories. If the category $\cA$ is a groupoid, then there is a canonical equivalence
\[
\colim_\cA^{\Pr^L_\cW}(F)
\simeq
\lim_\cA^{\Pr^L_\cW}(F)
~.
\]
More precisely, there is a canonical dashed functor making the diagram
\[ \begin{tikzcd}
&[2cm]
&
\Pr^L_\cW
\\
\Spaces_{/\Pr^L_\cW}
\arrow[dashed]{r}[description]{{\sf co}/\lim^{\Pr^L_\cW}}
\arrow[bend left=10]{rru}[sloped]{\colim^{\Pr^L_\cW}}
\arrow[bend right=10]{rrd}[sloped, swap]{\lim^{\Pr^L_\cW}}
&
\Pr^{LL}_\cW
\arrow{ru}[sloped, swap]{\fgt}
\arrow{rd}[sloped]{\radjt}
\\
&
&
(\Pr^L_\cW)^{1\&2-\op}
\end{tikzcd}
\]
commutative.
\end{theorem}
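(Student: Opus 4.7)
The approach is to exploit the fact that in $\Pr^L$, colimits can be identified with limits of right-adjoint diagrams, and that when the indexing category is a groupoid these two diagrams coincide up to the canonical involution $\cA \simeq \cA^\op$.

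First I would reduce to the unenriched case $\cW = \Spaces$. The forgetful functor $\Pr^L_\cW \to \Pr^L$ is conservative and creates both limits and small colimits (by the general theory of presentable module categories over a presentably symmetric monoidal category), so the desired equivalence can be checked after forgetting enrichment.

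Next I would invoke the standard equivalence $\Pr^L \simeq (\Pr^R)^\op$ given by passing to right adjoints. Under this, the colimit of a diagram $F : \cA \to \Pr^L$ is computed as the limit of the right-adjoint diagram $F^R : \cA^\op \to \Pr^L$, where the latter is formed in $\what{\Cat}$ and agrees with the limit taken in $\Pr^L$ since $\Pr^L$ is closed under all small limits in $\what{\Cat}$. This yields a canonical equivalence
\[
\colim_\cA^{\Pr^L}(F) \simeq \lim_{\cA^\op}^{\Pr^L}(F^R).
\]
The crucial observation is then that when $\cA$ is a groupoid, $F$ sends every morphism to an equivalence of presentable categories, so its right adjoint is canonically the inverse. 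The groupoidal involution $\cA \simeq \cA^\op$ therefore intertwines $F$ and $F^R$ up to canonical equivalence, producing $\lim_{\cA^\op}^{\Pr^L}(F^R) \simeq \lim_\cA^{\Pr^L}(F)$ and completing the underlying equivalence.

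The main difficulty will be promoting this into the refined $2$-functorial statement involving the dashed factorization through $\Pr^{LL}_\cW$, i.e.\ exhibiting the comparison as a natural construction in $F$ whose image under $\fgt$ recovers the colimit functor and whose image under $\radjt$ recovers the limit functor. To handle this, I would package the construction via the unstraightened fibration $\cE \to \cA$ classifying $F$: because $\cA$ is a groupoid, every cocartesian fibration over $\cA$ is automatically also cartesian, with cartesian lifts given by the adjoint-inverses of the cocartesian ones. One can then realize both $\colim F$ and $\lim F$ as categories of sections of $\cE \to \cA$ (via left and right Kan extension along $\cA \to \pt$ respectively), which come pre-equipped with the adjoint structure maps to and from each fiber $F(a)$; this is precisely the datum of a lift to $\Pr^{LL}_\cW$ compatible with the two projections, and the fact that it simultaneously computes both constructions is the statement of the commutative diagram.
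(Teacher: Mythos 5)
Your proposal is essentially the paper's argument. The underlying equivalence comes from the observation that over a groupoid every section of the un/straightening is simultaneously cocartesian and cartesian, so the limit (computed as cocartesian sections) and the colimit (computed as cartesian sections via $\Pr^L \simeq (\Pr^R)^{1\&2-\op}$) are literally the same category of sections; your closing paragraph, realizing both as section categories pre-equipped with adjoint structure maps to the fibers, is exactly how the paper produces the factorization through $\Pr^{LL}_\cW$. (Your intermediate "straightened" variant --- $F$ inverts all morphisms, so the groupoid involution $\cA \simeq \cA^\op$ intertwines $F$ with $F^R$ --- is a fine alternative way to see the same point.) The one place your write-up is looser than it should be is the reduction to $\cW = \Spaces$: conservativity of $\fgt \colon \Mod_\cW(\Pr^L) \to \Pr^L$ lets you check that a \emph{given} $\cW$-linear functor is an equivalence, but at that stage no comparison functor $\colim F \to \lim F$ in $\Pr^L_\cW$ has been produced, so the equivalence cannot literally be "checked after forgetting." The paper instead shows that the limit and the colimit in $\Mod_\cW(\Pr^L)$ are both computed by the same underlying sections category with fiberwise $\cW$-action: for limits because $\fgt$ has a left adjoint $\cW \otimes (-)$, and for colimits by passing through $\Mod_\cW(\Pr^L) \simeq \coMod_\cW(\Pr^R)^{1\&2-\op}$, whose forgetful functor likewise has both adjoints. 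Your section-category construction, carried out $\cW$-linearly, amounts to the same thing, so this is a gap of exposition rather than substance --- but it is precisely the step that requires an argument in the enriched case.
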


\begin{remark}
A similar ambidexterity result appears in \cite{Stefanich-Pres}*{Theorem 1.3.3}.
\end{remark}

\begin{proof}
We first address the unenriched case: i.e., we set $\cW := \Spaces = \uno_{\Pr^L}$. For this, we recall the following constructions and functorialities of limits and colimits in $\Pr^L$ (indexed over 1-categories), which follow from the evident equivalence $\Pr^L \simeq (\Pr^R)^{1\&2-\op}$ along with the fact that both forgetful functors $\Pr^L \xra{\fgt} \what{\Cat}$ and $\Pr^R \xra{\fgt} \what{\Cat}$ commute with limits \cite[Proposition 5.5.3.13 and Theorem 5.5.3.18]{Lurie-HTT}.
\begin{itemize}

\item

The limit of a functor $\cA \xra{F} \Pr^L$ can be computed (in $\what{\Cat}$) as the category of cocartesian sections of the presentable fibration corresponding to $F$ (i.e., the cocartesian unstraightening of the composite $\cA \xra{F} \Pr^L \xra{\fgt} \what{\Cat}$). Moreover, given a morphism
\begin{equation}
\label{morphism.in.Cat.over.PrL}
\begin{tikzcd}
\cA
\arrow{rd}[sloped, swap]{F}
\arrow{rr}{\varphi}
&
&
\cB
\arrow{ld}[sloped,swap]{G}
\\
&
\Pr^L
\end{tikzcd}
\end{equation}
in $\Cat_{/\Pr^L}$, the canonical morphism
\[
\lim^{\Pr^L}_\cA(F)
\longla
\lim^{\Pr^L}_\cB(G)
\]
in $\Pr^L$ can be computed (in $\what{\Cat}$) as the restriction functor between categories of cocartesian sections.

\item

The colimit of a functor $\cA \xra{F} \Pr^L$ can be computed as the limit of the corresponding functor $\cA^\op \simeq \cA^{1\&2-\op} \xra{F^{1\&2-\op}} (\Pr^L)^{1\&2-\op} \simeq \Pr^R$, which can be computed (in $\what{\Cat}$) as the category of cartesian sections of the presentable fibration corresponding to $F$ (which is also the cartesian unstraightening of the composite $\cA^\op \xra{F^{1\&2-\op}} \Pr^R \xra{\fgt} \what{\Cat}$). Moreover, given a morphism \Cref{morphism.in.Cat.over.PrL} in $\Cat_{/\Pr^L}$, the canonical morphism
\[
\colim^{\Pr^L}_\cA(F)
\longra
\colim^{\Pr^L}_\cB(G)
\]
in $\Pr^L$ is the left adjoint of the restriction functor between categories of cartesian sections.

\end{itemize}
The claim (in the case that $\cW := \Spaces$) now follows immediately from the observation that given a presentable fibration over a groupoid, all sections are both cocartesian and cartesian.

We now turn to the general case. For this, we use the equivalence $\Pr^L_\cW \simeq \Mod_\cW(\Pr^L)$. Observe that the forgetful functor $\Mod_\cW(\Pr^L) \xra{\fgt} \Pr^L$ admits both a left and a right adjoint (namely $\cW \otimes (-)$ and $\Fun^L(\cW,-)$, respectively). In particular, because it has a left adjoint, limits in $\Mod_\cW(\Pr^L)$ can be computed in $\Pr^L$ -- i.e., as cocartesian sections of the corresponding presentable fibration, with residual $\cW$-action computed fiberwise. In order to deduce the corresponding result for colimits in $\Mod_\cW(\Pr^L)$, introduce the commutative diagram
\begin{equation}\label{enriching.diagram} \begin{tikzcd}
\Mod_\cW(\Pr^L)
\arrow[leftrightarrow]{r}{\sim}
\arrow{d}[swap]{\fgt}
&
\coMod_\cW(\Pr^R)^{1\&2-\op}
\arrow{d}{\fgt}
\\
\Pr^L
\arrow[leftrightarrow]{r}[swap]{\sim}
&
(\Pr^R)^{1\&2-\op}
\end{tikzcd}
\end{equation}
(considering $\cW$ as an object of $\CAlg(\Pr^L) \simeq \CAlg((\Pr^R)^{1\&2-\op}) \simeq \coCAlg(\Pr^R)^{1\&2-\op}$). The fact that the left vertical functor in \Cref{enriching.diagram} admits both adjoints implies that the right vertical functor does as well. Hence, limits in $\coMod_\cW(\Pr^R)$ can be computed in $\Pr^R$ -- i.e., as cartesian sections of the corresponding presentable fibration, with residual $\cW$-coaction computed fiberwise. Combining these observations with the unenriched case, the claim now follows.
\end{proof}

\begin{corollary}
\label{cor.ambidext.adjn}
The functor
\[
\Spaces^\op
\xra{\Fun(-,\Pr^L_\cW)}
\what{\what{\Cat}}
\]
takes values in ambidextrous adjunctions: for any morphism $X \xra{f} Y$ in $\Spaces$, there is a canonical equivalence $f_! \simeq f_*$ between the left and right adjoints to the functor $f^* = \Fun(f,\Pr^L_\cW)$.
\end{corollary}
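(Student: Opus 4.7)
The plan is to reduce the statement to a fiberwise application of \Cref{thm.prbl.ambidext}.

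First I would observe that for any morphism $f : X \to Y$ in $\Spaces$, the pullback functor $f^* = \Fun(f,\Pr^L_\cW)$ admits both a left adjoint $f_!$ and a right adjoint $f_*$, given by left and right Kan extension along $f$ respectively. Because $\Pr^L_\cW$ admits all small colimits and limits, these Kan extensions exist and are computed pointwise: for a functor $F : X \to \Pr^L_\cW$ and a point $y \in Y$, we have
\[
(f_! F)(y) \simeq \colim^{\Pr^L_\cW}_{X_y}(F|_{X_y})
\quad\text{and}\quad
(f_* F)(y) \simeq \lim^{\Pr^L_\cW}_{X_y}(F|_{X_y})~,
\]
where $X_y := X \times_Y \{y\}$ denotes the fiber of $f$ over $y$.

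Since $X$ and $Y$ lie in $\Spaces$, each fiber $X_y$ is itself an $\infty$-groupoid. \Cref{thm.prbl.ambidext} therefore produces a canonical equivalence $\colim^{\Pr^L_\cW}_{X_y}(F|_{X_y}) \simeq \lim^{\Pr^L_\cW}_{X_y}(F|_{X_y})$ for each $y$, yielding a pointwise equivalence $f_! F \simeq f_* F$.

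The main subtlety is promoting this pointwise equivalence to a functorial equivalence $f_! \simeq f_*$ of functors $\Fun(X,\Pr^L_\cW) \to \Fun(Y,\Pr^L_\cW)$. This is where the refined content of \Cref{thm.prbl.ambidext} -- the dashed functor $\Spaces_{/\Pr^L_\cW} \xra{\sf co/\lim^{\Pr^L_\cW}} \Pr^{LL}_\cW$ factoring both $\colim^{\Pr^L_\cW}$ and $\lim^{\Pr^L_\cW}$ -- enters. Namely, applying this refined ambidexterity fiberwise over $Y$ (i.e., to the diagram $X \to Y$ regarded as an object of $(\Spaces_{/\Pr^L_\cW})_{/Y}$ via $F$) yields the desired naturality: both $f_!$ and $f_*$ arise as $Y$-indexed assembly of the common functor ${\sf co/}\lim^{\Pr^L_\cW}$, from which the equivalence $f_! \simeq f_*$ follows canonically. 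The hard part here is precisely this coherence upgrade, but once the refined ambidexterity is available, it is formal.
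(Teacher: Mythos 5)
Your proposal is correct and follows essentially the same route as the paper: both adjoints are Kan extensions, which (since a map of spaces is both a cartesian and a cocartesian fibration) are computed as fiberwise limits and colimits over the fibers $X_y$, which are groupoids, so \Cref{thm.prbl.ambidext} applies. Your extra discussion of upgrading the pointwise equivalence to a natural one via the dashed functor ${\sf co}/\lim^{\Pr^L_\cW}$ is exactly the role that the refined, functorial form of \Cref{thm.prbl.ambidext} plays, so there is no gap.
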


\begin{proof}
The functor $\Fun(X,\Pr^L_\cW) \xla{f^*} \Fun(Y,\Pr^L_\cW)$ admits both adjoints, given by left and right Kan extension (note that $\Pr^L_\cW$ admits all limits and colimits). Because $X \xra{f} Y$ is both a cocartesian fibration and a cartesian fibration, these Kan extensions are respectively computed by fiberwise colimit and fiberwise limit. As the fibers are groupoids, the claim follows from \Cref{thm.prbl.ambidext}.
\end{proof}

\subsection{Gauging and ungauging}\label{subsec:gauging}

We begin by recalling the notions of torus-equivariant A- and B-type 2-categories. Throughout, we fix a homomorphism $G \xra{f} H$ of tori, and write $G^\vee \xla{f^\vee} H^\vee$ for its dual.

\begin{definition}
\label{defn.Loc.three}
A \bit{local system of 2-categories over $BG$} is a presentable stable 2-category equipped with a topological $G$-action. These assemble into the 3-category
\[
\Loc^{(3)}(BG)
:=
\Fun (BG_{\sf{B}} , \Pr^{L,\st_{\kk}}_2 )
~,
\]
where we write $BG_{\sf{B}}$ for the classfying space of the complex torus $G$, thought of as an $\infty$-groupoid. (The subscript $(-)_{\sf{B}}$ is meant to be suggestive of the Betti stack construction.)
There is a natural functor
\[
(B f) ^* : \Loc^{(3)}(BG) \to \Loc^{(3)}(BH)
\]
given by pullback along the map 
$BG\xra{Bf} BH$
of groupoids.
We write $(B f)_! \adj (B f)^* \adj (B f)_*$ for its left and right adjoints. (See \Cref{lem:loc3.adjoints} for the construction of these adjoints.)
\end{definition}

\begin{remark}\label{rem:loc3-from-e2}
Since $G\simeq B\pi_1 G$ is a $K(\pi_1 G, 1),$ an object of $\Loc^{(3)}(BG)$ is equivalently specified as the data of a presentable stable 2-category $\cC \in \Pr^{L,\st_{\kk}}_2$ equipped with an $\EE_2$ homomorphism $\pi_1 G \to \hom(\id_\cC,\id_\cC).$ 
\end{remark}

\begin{definition}\label{defn:3qcoh}
A \bit{quasicoherent sheaf of 2-categories over $BG^\vee$} is a presentable stable 2-category equipped with an action of $\fB(\Perf(BG^\vee))$. These assemble into the 3-category
\[
\QCoh^{(3)}(BG^\vee)
:=
\Fun^{3-\ex} (\fB^2(\Perf(BG^\vee), \otimes), \Pr^{L,\st_{\kk}}_2 )
~,
\]
where we write $\Fun^{3-ex}$ for the internal Hom in $\Stu_{\kk,3},$ as described in \Cref{subsec:stable-ncats}.
The 3-category $\QCoh^{(3)}(BG^\vee)$ is most naturally covariant in the variable $G^\vee$: a homomorphism of tori $H^\vee \xra{f^\vee} G^\vee$ determines a morphism $\Perf(B H^\vee) \xla{(Bf^\vee)^*} \Perf(B G^\vee)$ in $\CAlg(\St)$, and thereafter we obtain a functor \[
(Bf^\vee)_*: \QCoh^{(3)}(B H^\vee) \ra \QCoh^{(3)}(B G^\vee)
\]
by precomposition. As in the A-side case, this functor admits both adjoints, which we denote by $(B f^\vee)^*\adj (B f^\vee)_* \adj (B f^\vee)^!.$
\end{definition}

\begin{remark}
There is a more general definition of the 3-category $\QCoh^{(3)}(X)$ for a stack $X$ (which can be found in \cite{Stefanich-QCoh}). The 3-category $\QCoh^{(3)}(BG^\vee)$ of \Cref{defn:3qcoh} agrees with that more general definition (up to size considerations) for 2-affine stacks. (See \cite{Stefanich-QCoh}*{Theorem 14.3.9} for 2-affineness of $BG^\vee.$)
\end{remark}

\begin{example}\label{ex:linear-coh-actions}
Let $\cA$ be a stably monoidal category. The 2-category $\Mod_{\cA}(\St)$ can be made an object of $\QCoh^{(3)}(BG)$ by 
specifying an $\mathbb{E}_2$ homomorphism $\Perf(BG)\to Z(\cA)$ into the categorical center of $\cA,$ thus
promoting $\cA$ to an algebra object in $\QCoh^{(2)}(BG)$.
One source of such central homomorphisms may be found in the following lemma.
\end{example}

\begin{lemma}\label{lem:e2-constloops}
Let $p:X\to Y$ be a proper surjective map of smooth perfect stacks, and $\cA$ the convolution category
\[
\cA:= \Coh(X\times_Y X) \simeq \End_{\Mod_{\Coh(Y)}(\St)}(\Coh(X))~,
\]
where the equivalence follows from \cite{BZNP}*{Proposition 1.2.1} as in the proof of \Cref{lem:BZNP}. Then the functor
\begin{equation}\label{eq:diag-e2-functor}
\Delta_*p^*:\Coh(Y)\to \Coh(X\times_YX)~,
\end{equation}
where $\Delta: X\to X\times_Y X$ is the diagonal map, admits a canonical lift to an $\EE_2$ homomorphism into the center $Z(\cA)$ of the algebra $\cA.$
\end{lemma}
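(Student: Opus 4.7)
The plan is to exploit the given identification $\cA \simeq \End_{\Mod_{\Coh(Y)}(\St)}(\Coh(X))$, which already exhibits $\cA$ as the endomorphism monoidal category of a module over the symmetric monoidal category $\Coh(Y)$. Since $\Coh(Y) \in \CAlg(\St)$ (so in particular an $\EE_2$-algebra) and $\Coh(X)$ is a $\Coh(Y)$-module via $p^*$, the monoidal category $\cA$ comes tautologically equipped with a $\Coh(Y)$-linear structure. No further geometric input should be required to produce the $\EE_2$ map; the content lies in identifying it concretely.

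I will then invoke the universal property of the Drinfeld center: for any $\EE_2$-algebra $\cB \in \Alg_{\EE_2}(\St)$, the datum of a $\cB$-linear monoidal structure on $\cA \in \Alg(\St)$ is equivalent to an $\EE_2$-map $\cB \to Z(\cA)$. Applied to $\cB = \Coh(Y)$, this furnishes the desired $\EE_2$-homomorphism $\Coh(Y) \to Z(\cA)$. Unwinding, this map sends $F \in \Coh(Y)$ to the image in $\cA$ of the unit $\uno_\cA$ under the $\Coh(Y)$-action, i.e., to $F \cdot \uno_\cA$.

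To identify this concretely with $\Delta_* p^*$, I would trace through the integral-transform equivalence of \cite{BZNP}: the $\Coh(Y)$-action on $\Coh(X)$ sends $F$ to the endofunctor $(-) \otimes p^*F$, and under the kernel/transform correspondence an endofunctor of the form $(-) \otimes G$ for $G \in \Coh(X)$ is represented by the kernel $\Delta_* G \in \Coh(X \times_Y X)$ supported on the diagonal. Substituting $G = p^*F$ yields the kernel $\Delta_* p^* F$, matching formula \eqref{eq:diag-e2-functor}.

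The conceptual content is confined to the first two paragraphs, where the $\EE_2$ structure is produced formally from the ambient $(\infty,2)$-categorical setup; I expect the main obstacle to be bookkeeping around the universal property of the center in $\Alg_{\EE_2}(\St)$ and its compatibility with the identification $\cA \simeq \End_{\Mod_{\Coh(Y)}(\St)}(\Coh(X))$. Once these coherences are in hand, the concrete identification with $\Delta_* p^*$ reduces to a standard computation with integral kernels.
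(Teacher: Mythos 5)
Your argument is correct, but it takes a genuinely different route from the paper's. The paper proves the lemma by citing the explicit computation of the center from \cite{BZNP}*{Theorem 1.2.10}, namely $Z(\cA)\simeq \Coh_{prop/Y}(LY)$ for $LY := Y\times_{Y\times Y}Y$ the algebraic loop space, and then observing that pushforward along the inclusion of constant loops $i:Y\to LY$ is an $\EE_2$ map whose underlying central functor is $\Delta_*p^*$. You instead bypass the computation of the center entirely: you use that $\cA \simeq \End_{\Mod_{\Coh(Y)}(\St)}(\Coh(X))$ is tautologically an $\EE_1$-algebra in $\Mod_{\Coh(Y)}(\St)$ (equivalently, that $\End(\uno)$ of a monoidal $2$-category maps $\EE_2$-centrally into the endomorphisms of any object), and then invoke the universal property of the Drinfeld center to convert this $\Coh(Y)$-linearity into an $\EE_2$ map $\Coh(Y)\to Z(\cA)$. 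Your concluding kernel computation (that $(-)\otimes p^*F$ corresponds to $\Delta_*p^*F$, via the projection formula and $\pi_i\circ\Delta = \id$) is the same identification the paper makes implicitly. The trade-off: your argument is more formal and self-contained for this lemma, needing only the centralizer universal property (Lurie's \textit{Higher Algebra}, \S 5.3.1) rather than the full loop-space description of $Z(\cA)$; the paper's approach costs an extra citation here but yields the geometric identification $Z(\cA)\simeq\Coh_{prop/Y}(LY)$, which it genuinely needs later (in the proof of \Cref{thm:e3-functions}), so the citation is not wasted. One small point worth making explicit if you adopt your route: you should confirm that the center of $\cA$ exists in the relevant ambient category and that the universal property applies there, which is exactly what the appeal to \cite{BZNP} also guarantees.
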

\begin{proof}
The center $Z(\cA)$ is computed in \cite{BZNP}*{Theorem 1.2.10}:
there is an equivalence of $\EE_2$-categories
\[
Z(\cA)\simeq \Coh_{prop/Y}(LY)~,
\] 
where $LY:=Y\times_{Y\times Y}Y$ is the algebraic loop space of $Y$ and $\Coh_{prop/Y}(LY)$ denotes the category of coherent sheaves on $LY$ whose pushforward to $Y$ is coherent.

Pushforward along the inclusion of constant loops $i:Y\to LY$ defines an $\EE_2$ map 
\[
i_*:\Coh(Y)\to \Coh_{prop/Y}(LY)~,
\]
and the resulting functor $\Coh(Y)\to Z(\cA)$ has an underlying central functor $\Coh(Y)\to \cA$ given by \eqref{eq:diag-e2-functor}.
\end{proof}

%

The main result of this subsection is an identification of the respective 3-categories associated to topological actions of $G$ and algebraic actions of $G^\vee$. This multiply-categorified Fourier transform (a categorification of the Fourier transform described in \Cref{ex:intro-cat-fourier}) can be understood as ``fully extended Betti Langlands'' (in the sense of an equivalence of 3-categories) for the group $G$:
\begin{theorem}\label{thm:4d-fourier}
There is an equivalence of 3-categories
\begin{equation}\label{eq:4d-fourier}
\Loc^{(3)}(BG) \simeq \QCoh^{(3)}(BG^\vee)
\end{equation}
which is coherently functorial for maps of tori: 
in particular, given a homomorphism $f: G \to H$ of tori (with dual map $f^\vee:G^\vee \leftarrow H^\vee$), the diagram
\begin{equation}\label{eq:3d-fourier-functoriality}
\begin{tikzcd}
\Loc^{(3)}(BH)
\arrow[leftrightarrow]{r}{\sim}
\arrow{d}[swap]{(B f)^*}
&
\QCoh^{(3)}(BH^\vee)
\arrow{d}{(B f^\vee)_*}
\\
\Loc^{(3)}(BG)
\arrow[leftrightarrow]{r}[swap]{\sim}
&
\QCoh^{(3)}(BG^\vee)
\end{tikzcd}
\end{equation}
commutes.
\end{theorem}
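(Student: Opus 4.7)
The plan is to identify both sides with the 3-category of presentable stable 2-categories equipped with an $\EE_2$-action by the discrete abelian group $\pi_1 G$, using higher-categorical universal properties together with the Langlands identification $\pi_1 G \cong X^*(G^\vee)$.

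First, since $G$ is a complex torus, its underlying homotopy type is $K(\pi_1 G, 1)$, so $BG_{\sf B} \simeq K(\pi_1 G, 2) \simeq \fB^2(\pi_1 G)$, where $\pi_1 G$ is regarded as a discrete abelian (hence $\EE_\infty$-) group. Applying the universal property of the $\kk$-linear $3$-categorical stabilization adjunction $\Sigma^{(\infty,3)}_+ \dashv \fgt$, we rewrite
\[
\Loc^{(3)}(BG) = \Fun\bigl(\fB^2 \pi_1 G,\,\Pr^{L,\st_\kk}_2\bigr) \simeq \Fun^{3\text{-}\ex}\bigl(\Sigma^{(\infty,3)}_+ \fB^2 \pi_1 G,\,\Pr^{L,\st_\kk}_2\bigr).
\]

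The key structural step is a commutation of $\kk$-linear stabilization with the delooping construction $\fB$: for each $n \geq 0$ and each $\EE_1$-monoidal $n$-category $C$, there should be a canonical equivalence $\Sigma^{(\infty,n+1)}_+ \fB C \simeq \fB \Sigma^{(\infty,n)}_+ C$ in $\Stu_{\kk,n+1}$. I would establish this by checking that both sides corepresent the same functor $\cD \mapsto \Alg\bigl(\Fun^{n\text{-}\ex}(\Sigma^{(\infty,n)}_+ C,\,\Omega \cD)\bigr)$ on $\Stu_{\kk,n+1}$, via the universal properties of $\fB$ and of the stabilization adjunctions applied in turn. Iterating twice gives $\Sigma^{(\infty,3)}_+ \fB^2 \pi_1 G \simeq \fB^2 \Sigma^{(\infty,1)}_+ (\pi_1 G)^\delta$. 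As reviewed at the beginning of \Cref{subsec:univ}, there is furthermore a canonical symmetric monoidal equivalence $\Sigma^{(\infty,1)}_+ (\pi_1 G)^\delta \simeq \Perf(BG^\vee)$, using $X^*(G^\vee) \cong \pi_1 G$ to identify both sides with the symmetric monoidal category of $(\pi_1 G)$-graded perfect $\kk$-modules. Substituting back yields
\[
\Loc^{(3)}(BG) \simeq \Fun^{3\text{-}\ex}\bigl(\fB^2 \Perf(BG^\vee),\,\Pr^{L,\st_\kk}_2\bigr) =: \QCoh^{(3)}(BG^\vee).
\]

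For functoriality, each step above is visibly natural in $G$. A homomorphism $f: G \to H$ induces $f_*:\pi_1 G \to \pi_1 H$, which under $\Sigma^{(\infty,1)}_+$ becomes the symmetric monoidal functor $(Bf^\vee)^*:\Perf(BG^\vee) \to \Perf(BH^\vee)$, because $f_*$ on fundamental groups coincides with the map on character lattices induced by $f^\vee$. Restriction of module 2-categories along $(Bf^\vee)^*$ is by definition the pushforward $(Bf^\vee)_*$, which therefore corresponds under our identification to the pullback $(Bf)^*$ on local systems of 2-categories along $BG_{\sf B} \to BH_{\sf B}$, yielding the required commuting square. The main technical obstacle will be the commutation $\Sigma^{(\infty,n+1)}_+ \fB \simeq \fB \Sigma^{(\infty,n)}_+$, which demands careful bookkeeping of the $n$-categorical exactness conditions; this step is closely related to the presentable ambidexterity and enrichment results of \cite{Stefanich-Pres}.
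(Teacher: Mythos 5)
Your proposal is correct and follows essentially the same route as the paper's proof: identify $BG_{\sf B} \simeq B^2\pi_1 G$, commute $\Sigma^{(\infty,3)}_+$ past the double delooping to get $\fB^2\Sigma^{(\infty,1)}_+(\pi_1 G)$, identify $\Sigma^{(\infty,1)}_+(\pi_1 G) \simeq \Perf(BG^\vee)$ via characters, apply $\Fun^{3-\ex}(-,\Pr^{L,\st_{\kk}}_2)$, and deduce the functoriality square from the corresponding square of symmetric monoidal equivalences $\Sigma^{(\infty,1)}_+(\pi_1(-)) \simeq \Perf(B(-)^\vee)$. One caveat: your general claim $\Sigma^{(\infty,n+1)}_+\fB C \simeq \fB\,\Sigma^{(\infty,n)}_+C$ for all $n\geq 0$ fails at $n=0$ (there the stabilization of a one-object category acquires many objects), but you only use $n=1,2$, where it is essentially definitional because for $n+1\geq 2$ the paper constructs $\Sigma^{(\infty,n+1)}_+$ hom-wise as $\Cat(\Sigma^{(\infty,n)}_+)$; so this step is much lighter than the ``main technical obstacle'' you anticipate, and no appeal to presentable ambidexterity or a corepresentability argument is needed.
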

\begin{proof}
Observe the equivalence
\begin{equation}\label{eq:perfbgv}
\Sigma^{(\infty,1)}_+ ( \pi_1 G )
\simeq
\Perf(BG^\vee)
\end{equation}
in $\CAlg(\St)$; indeed, the right side, the category $\mathsf{Rep}^{\mathsf{fin}}(BG^\vee)$ of finite-dimensional representations of $BG^\vee$, is semisimple, with simple objects indexed by the characters of $G^\vee$, or equivalently the cocharacters of $G$, or equivalently $\pi_1 G$. This yields the last equivalence in the composite equivalence
\[
\Sigma^{(\infty,3)}_+ BG
\simeq
\Sigma^{(\infty,3)}_+ B^2 (\pi_1 G)
\simeq
\fB^2 \Sigma^{(\infty,1)}_+ (\pi_1 G)
\simeq
\fB^2 \Perf(BG^\vee)
~,
\]
and thereafter applying $\Fun^{3-\ex}(-,\Pr^{L,\st_{\kk}}_2)$ yields the equivalence \Cref{eq:4d-fourier}. From here, the square \Cref{eq:3d-fourier-functoriality} canonically commutes because the square
\[
\begin{tikzcd}
\Sigma^{(\infty,1)}_+ ( \pi_1 H )
\arrow[leftrightarrow]{r}{\sim}
&
\Perf(B H^\vee)
\\
\Sigma^{(\infty,1)}_+ ( \pi_1 G )
\arrow[leftrightarrow]{r}[swap]{\sim}
\arrow{u}
&
\Perf(B G^\vee)
\arrow{u}
\end{tikzcd}
\]
canonically commutes.
\end{proof}


The equivalence \eqref{eq:4d-fourier} therefore also intertwines the right adjoints of the vertical maps in \eqref{eq:3d-fourier-functoriality}. However, we will find that we would like to establish that the right adjoint of the
left vertical map $(B f)^*$
is intertwined with the {\em left} adjoint of the 
right vertical map $(B f^\vee)_*$
In other words, we will show that the left and right adjoints of these maps agree. This is a consequence of the presentable ambidexterity theorem proven above.

%
%
%
%
%

\begin{lemma}\label{lem:bside-ambidexterity}
The right and left adjoints of the functor 
\[
(B f^\vee)_*: \QCoh^{(3)}(BH^\vee)\to \QCoh^{(3)}(BG^\vee)
\]
are equivalent.
\end{lemma}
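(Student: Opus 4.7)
The plan is to transport the statement to the A-side via the Fourier equivalence of \Cref{thm:4d-fourier}, where we may invoke presentable ambidexterity. By the commutative square \eqref{eq:3d-fourier-functoriality}, the equivalence of 3-categories intertwines $(Bf^\vee)_*$ with $(Bf)^*$. Passing to adjoints of this commutative square, the left adjoint $(Bf^\vee)^*$ of $(Bf^\vee)_*$ corresponds under the equivalence to the left adjoint $(Bf)_!$ of $(Bf)^*$, and the right adjoint $(Bf^\vee)^!$ corresponds to $(Bf)_*$. Hence it suffices to produce an equivalence $(Bf)_! \simeq (Bf)_*$ on the A-side.

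For this, recall that $\Loc^{(3)}(BG) = \Fun(BG_{\sf{B}}, \Pr^{L,\st_{\kk}}_2)$, with $BG_{\sf{B}}$ an $\infty$-groupoid, so that $(Bf)^*$ is precomposition along the map of $\infty$-groupoids $BG_{\sf{B}} \xra{Bf} BH_{\sf{B}}$. Its left and right adjoints are computed as fiberwise colimit and limit, respectively, over the (again groupoid-valued) fibers of $Bf$, valued in $\Pr^{L,\st_{\kk}}_2$. An appropriate 3-categorical generalization of \Cref{thm.prbl.ambidext} and \Cref{cor.ambidext.adjn}, applied with $\Pr^{L,\st_{\kk}}_2$-valued diagrams in place of $\Pr^L_\cW$-valued ones, identifies these two Kan extensions and yields the desired equivalence $(Bf)_! \simeq (Bf)_*$.

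The main subtlety is this one-categorical-level-up use of ambidexterity. The argument behind \Cref{thm.prbl.ambidext} relies only on the facts that the forgetful functor from $\Pr^L_\cW$ preserves limits and admits both adjoints (so that co/limits may be computed in $\what{\Cat}$ as cocartesian or cartesian sections of the corresponding presentable fibration), combined with the observation that every section of a fibration over an $\infty$-groupoid is simultaneously cocartesian and cartesian. Both features persist when $\Pr^L_\cW$ is replaced by $\Pr^{L,\st_{\kk}}_2$ (via the analogous module-theoretic identification in a 3-categorical presentable setting), so the proof transcribes essentially verbatim and delivers the required equivalence.
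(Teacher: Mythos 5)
Your proposal is correct and follows the same route as the paper: transport along the Fourier equivalence of \Cref{thm:4d-fourier} using the square \eqref{eq:3d-fourier-functoriality}, then invoke ambidexterity on the A-side. The one thing to note is that the ``3-categorical generalization'' you flag as the main subtlety is not actually needed: by definition $\Pr^{L,\st_{\kk}}_2 := \Pr^L_\St$, i.e.\ it is literally an instance of $\Pr^L_\cW$ with $\cW = \St$, so \Cref{thm.prbl.ambidext} and \Cref{cor.ambidext.adjn} apply verbatim to $\Loc^{(3)}(BG) = \Fun(BG_{\sf B},\Pr^L_\St)$ and give $(Bf)_! \simeq (Bf)_*$ directly, with no transcription of the argument one categorical level up.
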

\begin{proof}
This follows from \Cref{thm:4d-fourier} and \Cref{cor.ambidext.adjn}. (Note that $\Pr^{L,\st_{\kk}}_2 := \Pr^L_\St$.)
\end{proof}

%
%

The utility of \Cref{lem:bside-ambidexterity} is that the left adjoint $(f^\vee)^*$ of $(f^\vee)_*$ is a categorified deequivariantization.
\begin{observation}
The following diagram commutes:
\[
\begin{tikzcd}[column sep=2cm]
\Alg(\QCoh^{(2)}(BH^\vee)) \arrow[r, "(B f^\vee)^{*}"] \arrow[d, "\Mod_{(-)}(\St)"'] & \Alg(\QCoh^{(2)  }(BH^\vee)) \arrow[d, "\Mod_{(-)}(\St)"]  \\
\QCoh^{(3)}(BH^\vee) \arrow[r, "(B f^\vee)^*"'] & \QCoh^{(3)}(BG^\vee)~.
\end{tikzcd}
\]
\end{observation}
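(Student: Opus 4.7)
The plan is to identify both horizontal arrows as concrete base-change operations and then to deduce commutativity of the square from the compatibility of the Morita construction $\Mod_{(-)}(\St)$ with base change.

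First, the bottom arrow admits a concrete description as a relative tensor product: since $(Bf^\vee)^{\ast}\colon \QCoh^{(3)}(BG^\vee)\to \QCoh^{(3)}(BH^\vee)$ is left adjoint to the restriction functor along the symmetric monoidal map $(Bf^\vee)^{\ast}\colon \Perf(BG^\vee)\to \Perf(BH^\vee)$, and since $\QCoh^{(3)}(BG^\vee)$ identifies with $\Perf(BG^\vee)$-linear presentable stable 2-categories, we have
\[
(Bf^\vee)^{\ast}\cM \simeq \cM\otimes_{\Perf(BG^\vee)}\Perf(BH^\vee),
\]
computed in $\Pr^{L,\st_\kk}_2$. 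In parallel, the top arrow is extension of scalars $\cA\mapsto \cA\otimes_{\Perf(BG^\vee)}\Perf(BH^\vee)$, which is well-defined because the relative tensor product of $\Perf(BG^\vee)$-linear monoidal categories canonically inherits a monoidal structure.

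With both horizontal arrows so identified, the commutativity of the square reduces to the base-change equivalence
\[
\Mod_{\cA\otimes_{\Perf(BG^\vee)}\Perf(BH^\vee)}(\St) \;\simeq\; \Mod_{\cA}(\St)\otimes_{\Perf(BG^\vee)}\Perf(BH^\vee),
\]
the 2-categorical analogue of the familiar identity $\Mod_{A\otimes_R S}\simeq \Mod_A\otimes_R S$ for ordinary rings. We would establish this by exhibiting that both sides corepresent the same functor on $\QCoh^{(3)}(BH^\vee)$: namely, the functor sending a $\Perf(BH^\vee)$-linear presentable stable 2-category $\cN$ to the space of $\cA$-actions on $\cN$ whose underlying $\Perf(BG^\vee)$-action factors through the given $\Perf(BG^\vee)\to \Perf(BH^\vee)$. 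For the left-hand side this is the defining universal property of the module 2-category of the base-changed algebra; for the right-hand side it follows from combining the universal property of $\Mod_\cA(\St)$ as the free $\Perf(BG^\vee)$-linear $\cA$-module 2-category with that of the relative tensor product.

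The main technical hurdle will be verifying that the construction $\Mod_{(-)}(\St)$ preserves the relative tensor products at play, which amounts to checking that centrality of the $\Perf(BG^\vee)$-action is preserved under base change and that the colimit presentation of the relative tensor product interacts well with the formation of module 2-categories. Once these compatibilities are in hand, naturality of the universal characterization in both $\cA$ and $f^\vee$ immediately yields the claimed commuting square.
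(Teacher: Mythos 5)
Your argument is correct and is essentially the justification the paper leaves implicit: the Observation is stated without proof, and the intended reasoning is exactly yours — both horizontal functors are base change along the induced map of $\Perf$-categories, and $\Mod_{(-)}(\St)$ commutes with such base change via the Morita-theoretic equivalence $\Mod_{\cA\otimes_R S}(\St)\simeq \Mod_{\cA}(\St)\otimes_R S$. Your write-up therefore supplies more detail than the paper itself, though be aware that the key base-change equivalence is only sketched via corepresentability in your proposal (and the paper's own arrow directions/labels in this diagram contain typos relative to its Definition of $(Bf^\vee)_*$ and its adjoints, so fixing a consistent convention is worthwhile before filling in the universal-property argument).
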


\begin{lemma}\label{lem:algebraic-ungauging}
Consider an iterated pullback square
\[
\begin{tikzcd}
\bar{X} \arrow[r, "\bar{p}"] \arrow[dr, phantom, "\ulcorner", very near start] \arrow[d, "b"] & \bar{Y} \arrow[r, "\bar{q}"] \arrow[dr, phantom, "\ulcorner", very near start] \arrow[d, "a"] & B G^\vee \arrow[d, "Bf^\vee"] \\
X \arrow[r, "p"] & Y \arrow[r, "q"] & B H^\vee ~,
\end{tikzcd}
\]
where $p$ is a proper map between smooth algebraic stacks. There is an isomorphism of stably monoidal categories
\[
\Coh(\bar{X} \times_{\bar{Y}} \bar{X}) \simeq (B f^\vee)^*\left( \Coh(X \times_{Y} X) \right).
\]

\begin{proof}
Given a map 
of algebraic stacks, the 2-categories $\QCoh^{(2)}$ admit $*$-pushforward and *-pullback functors satisfying base change.
(See \cite{diFiore}*{Chapter 5} or \cite{Stefanich-QCoh}*{Chapter 14} for more details.) Using this, we obtain the sequence of equivalences
\begin{align}
\label{Coh.is.pullback.equivce.one}
(B f^\vee)^* \left(\Coh(X \times_{Y} X ) \right)
&\simeq
(B f^\vee)^* \left( q_* \ul{\hom}_{\QCoh^{(2)}(Y)}(p_* \Perf(X) , p_* \Perf(X)) \right)
\\
\label{Coh.is.pullback.equivce.two}
&\simeq
\bar{q}_* a^* \ul{\hom}_{\QCoh^{(2)}(Y)}(p_* \Perf(X) , p_* \Perf(X))
\\
\nonumber
&\simeq
\bar{q}_* \ul{\hom}_{\QCoh^{(2)}(\bar{Y})}(a^* p_* \Perf(X) , a^* p_* \Perf(X))
\\
\label{Coh.is.pullback.equivce.three}
&\simeq
\bar{q}_* \ul{\hom}_{\QCoh^{(2)}(\bar{Y})}(\bar{p}_* b^* \Perf(X) , \bar{p}_* b^* \Perf(X))
\\
\nonumber
&\simeq
\bar{q}_* \ul{\hom}_{\QCoh^{(2)}(\bar{Y})}(\bar{p}_* \Perf(\bar{X}) , \bar{p}_* \Perf(\bar{X}))
\\
\label{Coh.is.pullback.equivce.four}
&\simeq
\Coh(\bar{X} \times_{\bar{Y}} \bar{X})
~,
\end{align}
in which equivalences \Cref{Coh.is.pullback.equivce.one} and \Cref{Coh.is.pullback.equivce.four} follow from \cite{BZNP}*{Proposition 1.2.1} while equivalences \Cref{Coh.is.pullback.equivce.two} and \Cref{Coh.is.pullback.equivce.three} follow from base change.
\end{proof}
\end{lemma}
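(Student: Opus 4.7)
The plan is to pass from both sides to expressions involving internal Hom in $\QCoh^{(2)}$ via \cite{BZNP}*{Proposition 1.2.1} (used already in the proof of \Cref{lem:BZNP}), and then to connect them by the base change formalism for the 2-categorical $\ast$-pullback and $\ast$-pushforward on $\QCoh^{(2)}$.

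First, I would apply \cite{BZNP}*{Proposition 1.2.1} to the proper map $p: X \to Y$ of smooth algebraic stacks to rewrite
\[
\Coh(X \times_Y X) \simeq \ul{\hom}_{\QCoh^{(2)}(Y)}(p_* \Perf(X), p_* \Perf(X)) \in \Alg(\QCoh^{(2)}(Y))
\]
and then push this forward to $BH^\vee$ along $q$ to view $\Coh(X \times_Y X) \in \Alg(\QCoh^{(2)}(BH^\vee))$. Analogously, since $p$ being proper is stable under base change, $\bar{p}: \bar{X} \to \bar{Y}$ is also proper (and $\bar{X}, \bar{Y}$ are smooth since $BG^\vee \to BH^\vee$ is smooth), so the same proposition applies to $\bar p$ and yields a corresponding description of $\Coh(\bar{X} \times_{\bar{Y}} \bar{X})$ after pushforward to $BG^\vee$ along $\bar q$.

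Next, I would chain together the following equivalences, where I use $(Bf^\vee)^*$ on the algebra of coherent sheaves by its module-category interpretation:
\begin{align*}
(Bf^\vee)^*\Coh(X \times_Y X) &\simeq (Bf^\vee)^* q_* \ul{\hom}_{\QCoh^{(2)}(Y)}(p_*\Perf(X), p_*\Perf(X)) \\
&\simeq \bar q_* a^* \ul{\hom}_{\QCoh^{(2)}(Y)}(p_*\Perf(X), p_*\Perf(X)) \\
&\simeq \bar q_* \ul{\hom}_{\QCoh^{(2)}(\bar Y)}(a^* p_*\Perf(X), a^* p_*\Perf(X)) \\
&\simeq \bar q_* \ul{\hom}_{\QCoh^{(2)}(\bar Y)}(\bar p_* b^*\Perf(X), \bar p_* b^*\Perf(X)) \\
&\simeq \bar q_* \ul{\hom}_{\QCoh^{(2)}(\bar Y)}(\bar p_* \Perf(\bar X), \bar p_* \Perf(\bar X)) \\
&\simeq \Coh(\bar X \times_{\bar Y} \bar X),
\end{align*}
where the first and last steps are \cite{BZNP}*{Proposition 1.2.1}, the second and fourth steps are base change along the two pullback squares, the third step is $\ast$-pullback commuting with internal Hom (a standard consequence of the adjoint functor formalism in $\QCoh^{(3)}$), and the fifth step is $b^*\Perf(X) \simeq \Perf(\bar X)$ (the $\ast$-pullback on $\QCoh^{(2)}$ commutes with taking $\Perf$ since $b$ is a base change of the smooth morphism $Bf^\vee$).

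The main technical obstacle is to make sure that the base change isomorphisms and the compatibility of $\ast$-pullback with internal Hom are available at the level of 2-categories $\QCoh^{(2)}$; these are spelled out in \cite{diFiore}*{Chapter 5} and \cite{Stefanich-QCoh}*{Chapter 14}, but one must verify the required stacks are 1-affine (or at least satisfy enough descent that the needed base change formulas hold). In our situation $BG^\vee \to BH^\vee$ is an affine morphism of 1-affine stacks, and $Y, \bar Y$ are smooth with affine diagonal, so these conditions are unproblematic. The monoidal structures on both sides come from composition in an endomorphism algebra, and the chain of equivalences above is clearly monoidal since each step is. This produces the desired equivalence in $\Alg(\QCoh^{(2)}(BG^\vee))$.
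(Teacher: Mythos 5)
Your proposal follows exactly the same route as the paper's proof: rewrite both sides via \cite{BZNP}*{Proposition 1.2.1} as internal endomorphism objects in $\QCoh^{(2)}$, then chain the identical sequence of equivalences using base change for the two pullback squares, compatibility of $*$-pullback with $\ul{\hom}$, and $b^*\Perf(X)\simeq\Perf(\bar X)$. The extra remarks you add (properness and smoothness being stable under base change, the affineness conditions needed for the 2-categorical base change) are correct and only make the argument more careful than the paper's.
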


\subsection{Toric stacks}\label{subsec:hypertoric}

We now apply the preceding results to a monodromy action on $\Sph,$ and then to a generalization $\Sph_n$ of $\Sph$ which has $n$ commuting monodromy actions.

\begin{notation}
Up until \Cref{notation.fix.ntorus.D.and.subtorus.G.and.quotient.F}, we write $G := \CC^\times.$
\end{notation}

\begin{proposition}\label{prop:monodromy-intertwining}
The 2-category $\Sph$ can be promoted to an object of $\QCoh^{(3)}(BG^\vee)$ so that the structure of $\Sph$ as an object of $\Loc^{(3)}(BG)$ (under the equivalence of \Cref{thm:4d-fourier}) has underlying automorphism of $\id_{\Sph}$ given by the universal twist automorphism
\begin{equation}\label{eq:aside-monodromy2} \begin{tikzcd}[column sep=2cm]
\cC_\Phi
\arrow[yshift=0.9ex]{r}{S}
\arrow[leftarrow, yshift=-0.9ex]{r}[yshift=-0.2ex]{\bot}[swap]{S^R}
\arrow[dashed]{d}[swap]{T_\Phi}
&
\cC_\Psi
\arrow[dashed]{d}{\Sigma^{-2} T_\Psi}
\\
\cC_\Phi
\arrow[yshift=0.9ex]{r}{S}
\arrow[leftarrow, yshift=-0.9ex]{r}[yshift=-0.2ex]{\bot}[swap]{S^R}
&
\cC_\Psi
\end{tikzcd}
\end{equation}
described in \Cref{universal.twist}.
\end{proposition}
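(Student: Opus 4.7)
The plan is to first construct the $\QCoh^{(3)}(BG^\vee)$-structure on $\Sph$ using the spectral description of $\cA$ from \Cref{prop:spectral-description}, and then to identify the monodromy produced by transporting this structure along \Cref{thm:4d-fourier} with the automorphism $\mathbf{T}$ of \Cref{universal.twist}.

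For the first step, I will consider the proper surjection
\[ p : X := \AA^1/\GG_m \sqcup 0/\GG_m \thra \AA^1/\GG_m =: Y \]
from \Cref{prop:spectral-description}, for which $\cA \simeq \Coh(X \times_Y X)$. Applying \Cref{lem:e2-constloops} produces an $\EE_2$-homomorphism $\Delta_* p^* : \Coh(Y) \to Z(\cA)$. Composing with pullback along the canonical projection $\pi : Y = \AA^1/\GG_m \to B\GG_m = BG^\vee$ then yields an $\EE_2$-homomorphism
\[ \Perf(BG^\vee) \xra{\pi^*} \Coh(Y) \xra{\Delta_* p^*} Z(\cA), \]
which by \Cref{ex:linear-coh-actions} promotes $\Sph \simeq \Mod_\cA(\St)$ to an object of $\QCoh^{(3)}(BG^\vee)$.

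Next I will transport this structure along \Cref{thm:4d-fourier} and compute the resulting automorphism of $\id_\Sph$. Under the equivalence $\Sigma^{(\infty,1)}_+(\pi_1 G) \simeq \Perf(BG^\vee)$ of \Cref{eq:perfbgv}, the generator of $\pi_1 G \cong \ZZ$ corresponds to the character $\delta_{-1} \in \Perf(BG^\vee)$ (for a suitable choice of sign), and the induced automorphism of $\id_\Sph$ is that implemented by the action on $\cA$ of the image of $\delta_{-1}$ under the composite above. Tracing through: $\pi^*\delta_{-1} \simeq \cO_Y\langle -1 \rangle$, and applying $\Delta_* p^*$ produces (in the matrix notation of \Cref{prop:spectral-description}) the block-diagonal element
\[ \mathrm{diag}\bigl(\cO_{\AA^1/\GG_m}\langle -1\rangle,\ \delta_{0/\GG_m}\langle -1\rangle\bigr) \in \cA, \]
which is precisely the spectral image of $\mathbf{T}_{univ}$ recorded in \Cref{spectral.univ.twist.calc}. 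Since $\mathbf{T}_{univ}$ corepresents the natural automorphism $\mathbf{T}$ of \Cref{universal.twist}, this completes the identification.

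The hard part will be the bookkeeping in the second step: concretely, verifying that under the equivalence of \Cref{thm:4d-fourier} the $\EE_2$-action of $\delta_{-1}$ on a 2-category really does correspond to the topological monodromy arising from the generator of $\pi_2(BG_{\sf B}) = \pi_1 G$, with the correct sign convention. This amounts to unpacking how the underlying equivalence $\Sigma^{(\infty,3)}_+ BG \simeq \fB^2 \Perf(BG^\vee)$ from the proof of \Cref{thm:4d-fourier} acts on 2-cells of $BG_{\sf B}$, and fixing the sign for the generator of $\pi_1 G$ so as to produce the shift $\langle -1 \rangle$ rather than $\langle +1 \rangle$ prescribed by \Cref{spectral.univ.twist.calc}.
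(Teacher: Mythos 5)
Your proposal follows essentially the same route as the paper's proof: you present $\Sph\simeq\Mod_\cA(\St)$ via its spectral description, use \Cref{lem:e2-constloops} composed with pullback along $\CC/G^\vee\to BG^\vee$ to obtain the $\EE_2$-map $\Perf(BG^\vee)\to Z(\cA)$ (the paper's $\fZ$), and then identify the image of the weight-$(-1)$ character with the block-diagonal object of \Cref{spectral.univ.twist.calc} corepresenting $\mathbf{T}$. The sign/bookkeeping point you flag is handled in the paper exactly as you suggest, by fixing the generator $-1\in\pi_1 G$ consistently with \Cref{def.smith.ideal} and the equivalence \Cref{eq:perfbgv}.
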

\begin{proof}
Let $X = (\CC\sqcup 0)/G^\vee$, $Y=\CC/G^\vee.$ From \Cref{sph.fctrs.are.modules.over.matrix.alg}, \Cref{prop:spectral-description}, and \Cref{lem:BZNP}, we may
present the 2-category $\Sph$ as a module 2-category $\Sph\simeq \Mod_{\cA}(\St),$ where
\begin{equation}\label{eq:spectralalg}
\cA := \Coh(X\times_{Y} X)~.
\end{equation}
\Cref{lem:e2-constloops} provides a homomorphism
\begin{equation}\label{eq:central-hom-to-a}
\fZ:\Perf(BG^\vee)\to Z(\cA)~,
\end{equation}
thus promoting $\Mod_\cA(\St)$ to an object of $\QCoh^{(3)}(BG^\vee)\simeq \Loc^{(3)}(BG).$ As described in \Cref{rem:loc3-from-e2}, this is equivalent to the data of an $\EE_2$ map $\pi_1G\to \End(\id_{\Sph}),$ and we would like to check that this map sends a generator of $\pi_1 G =\ZZ$ to \eqref{eq:aside-monodromy2}.


Due to our choice of sign in \Cref{def.smith.ideal}, the generator of interest to us is the one which is naturally written as $-1\in \ZZ.$ 
Under the equivalence \Cref{eq:perfbgv}, the generator $-1\in \pi_1G$ corresponds to the weight-$(-1)$ representation of $G^\vee,$ or equivalently to the coherent sheaf $\cO_{BG^\vee}\langle -1\rangle \in \Perf(BG^\vee).$ Therefore,
the action of this generator under \Cref{eq:central-hom-to-a} is the automorphism given by multiplication with the central object
\[
\gamma:= \fZ(\cO_{BG^\vee}\langle -1 \rangle)~,
\]
which we now compute.

Recall that $\fZ$ can be computed as
the functor
$\Delta_* p^*,$ where $p$ is the map $X\to Y$ and $\Delta$ is the diagonal $X\to X\times_{Y} X.$ Since $\Delta$ factors through the inclusion
$$
\left(\CC/G^\vee\times_{\CC/G^\vee}\CC/G^\vee\right)
\sqcup 
\left(0/G^\vee\times_{\CC/G^\vee}0/G^\vee\right)
\longra
X\times_Y X~,
$$
the object $\gamma$ is a diagonal object in the matrix category $\cA$: we may write $\gamma=\gamma_{\Phi}\oplus \gamma_{\Psi}$ for its upper-left and lower-right components, respectively.
 
 The object $\gamma_{\Phi}$ is the image of $\cO_{BG^\vee}
\langle -1 \rangle$ under the functor
 \[
 \Coh(BG^\vee)
 \longra
 \Coh(\CC/G^\vee\times_{\CC/G^\vee}\CC/G^\vee)\simeq\Coh(\CC/G^\vee)
 \]
of pullback along the map $p:\CC/G^\vee\to BG^\vee.$ 
Since the pullback preserves the twist $\langle -1\rangle$ and takes the structure sheaf to the structure sheaf, we have $\gamma_{\Phi}=p^*\cO_{BG^\vee}\langle -1\rangle = \cO_{\CC/G^\vee}\langle -1 \rangle$. (This is the object of $f\cV$ which we earlier denoted $h_{-1}$.)

Similarly, $\gamma_{\Psi}$ is given by the image of $\cO_{BG^\vee}\langle -1 \rangle$ under the functor
\begin{equation}\label{eq:central-psi}
\Coh(BG^\vee)
\longra
\Coh(0/G^\vee\times_{\CC/G^\vee} 0/G^\vee)\simeq\Coh(\CC[-1]/G^\vee)~,
\end{equation}
where the first functor in \Cref{eq:central-psi} is the pushforward under the diagonal, so that the composite functor is the pushforward under the inclusion $0/G^\vee\to \CC[-1]/G^\vee.$
This functor therefore sends $\cO_{BG^\vee}\langle -1\rangle$ to $\delta_{0/\CC^\times}\langle -1 \rangle,$ the ($\langle -1 \rangle$-twisted) skyscraper sheaf at the origin of $\CC[-1]/G^\vee$.

We conclude that the object $\gamma$ is presented by the matrix
\[
\gamma = \left( \begin{array}{cc}
\cO_{\CC/\CC^\times}\langle -1 \rangle &
0
\\
0
&
\delta_{0/\CC^\times}\langle -1 \rangle
\end{array}
\right)
\in
\Coh\left(\begin{array}{ll}
\CC/\CC^\times &  0/\CC^\times \\
0/\CC^\times & \CC[-1]/\CC^\times
\end{array}\right)~;
\]
as described in \Cref{spectral.univ.twist.calc}, this is the matrix describing the universal twist automorphism \Cref{eq:aside-monodromy2}.
\end{proof}

\begin{remark}
As we have checked in \Cref{universal.twist}, 
the map \eqref{eq:aside-monodromy2} does define an invertible object of $\End(\id_{\Sph})$ and therefore an $\EE_1$ map $\pi_1(G)\simeq\ZZ\to \End(\id_{\Sph}).$ However, to promote $\Sph$ to an object of $\Loc^{(3)}(BG)$ would require upgrading this $\EE_1$ map to an $\EE_2$ map, which is difficult to accomplish manually; this is why we proved \Cref{prop:monodromy-intertwining} via applying our main theorem and passing to the spectral description of $\Sph.$ However, a sufficiently functorial theory of perverse schobers would have given us this $\EE_2$ map for free, coming from the geometric $S^1$ action on $(\CC,0).$ We can thus interpret \Cref{prop:monodromy-intertwining} as evidence for a rich yet-to-be-developed structure on perverse schobers. (This $S^1$ action may also be manifest in the description of spherical functors as paracyclic Segal categories conjectured in \cite{DKSS}.)
\end{remark}

As an immediate corollary of \Cref{prop:monodromy-intertwining}, we see that passing to $G$-invariants on the A-side undoes the $G^\vee$-quotient on the B-side:
\begin{corollary}\label{cor:ungauging-dim1}
The 2-category $\Sph^G$ of $G$-invariant spherical functors is equivalent to the 2-category of modules over the monoidal convolution category
$
\Coh\left(
(\CC \sqcup 0)
\times_{\CC}
(\CC \sqcup 0)
\right).
$
\end{corollary}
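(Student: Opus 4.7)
The plan is to realize $\Sph^G$ as the pushforward of the local system $\Sph \in \Loc^{(3)}(BG)$ along the terminal map $f \colon G \to 1$, translate this across the equivalence of \Cref{thm:4d-fourier} to a pullback on the B-side, and then compute that pullback explicitly using \Cref{lem:algebraic-ungauging}. Concretely: $G$-invariants of a local system of $2$-categories is the right adjoint $(Bf)_*$ to the pullback $(Bf)^*$; by the naturality square \Cref{eq:3d-fourier-functoriality}, under Fourier $(Bf)^*$ is intertwined with $(Bf^\vee)_*$, and therefore $(Bf)_*$ corresponds to the right adjoint $(Bf^\vee)^!$ on the B-side. By \Cref{lem:bside-ambidexterity}, this right adjoint is canonically equivalent to the \emph{left} adjoint $(Bf^\vee)^*$.

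With that identification in hand, the strategy is to use the spectral description of $\Sph$ (\Cref{sph.fctrs.are.modules.over.matrix.alg}, \Cref{prop:spectral-description}) as $\Mod_\cA(\St)$ for
\[
\cA \simeq \Coh\bigl( X \times_Y X \bigr)~,
\qquad X := (\CC \sqcup 0)/G^\vee,\; Y := \CC/G^\vee~,
\]
and to compute $(Bf^\vee)^*\cA$ via the pullback square
\[
\begin{tikzcd}
\bar X = \CC \sqcup 0 \arrow[r] \arrow[d] & \bar Y = \CC \arrow[r] \arrow[d] & \pt \arrow[d, "Bf^\vee"] \\
X \arrow[r] & Y \arrow[r] & BG^\vee~.
\end{tikzcd}
\]
By the observation preceding \Cref{lem:algebraic-ungauging}, $(Bf^\vee)^*$ of a module $2$-category is computed by pulling back its algebra; and \Cref{lem:algebraic-ungauging} applied to the displayed square identifies $(Bf^\vee)^*\cA$ with $\Coh\bigl((\CC \sqcup 0) \times_\CC (\CC \sqcup 0)\bigr)$ as a monoidal category. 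Assembling these pieces yields the desired equivalence
\[
\Sph^G \simeq \Mod_{\Coh((\CC \sqcup 0)\times_\CC (\CC \sqcup 0))}(\St)~.
\]

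The conceptual heart of the argument is the swap between the right adjoint on the A-side and the left adjoint on the B-side, which is precisely what \Cref{lem:bside-ambidexterity} (via the presentable ambidexterity theorem \Cref{thm.prbl.ambidext}) supplies; without ambidexterity one would be stuck computing the less tractable right adjoint $(Bf^\vee)^!$, which does not have an evident description in terms of pullback of algebras. The remaining steps are largely formal: the pushforward interpretation of invariants is standard, and both the commutation of $(Bf^\vee)^*$ with $\Mod_{(-)}(\St)$ and the base-change identification in \Cref{lem:algebraic-ungauging} have already been established. So the only genuine check is that the chain of identifications indeed produces the correct monoidal structure on $\Coh((\CC\sqcup 0)\times_\CC(\CC\sqcup 0))$, which follows immediately since both \Cref{prop:spectral-description} and \Cref{lem:algebraic-ungauging} go through the same convolution-category presentation coming from \cite{BZNP}*{Proposition 1.2.1}.
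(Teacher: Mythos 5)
Your argument is essentially identical to the paper's proof: both realize $\Sph^G$ as $(Bp)_*\Sph$, use \Cref{thm:4d-fourier} together with the ambidexterity of \Cref{lem:bside-ambidexterity} to replace the intractable right adjoint $(Bp^\vee)^!$ by the pullback $(Bp^\vee)^*$, and then compute that pullback of algebras via \Cref{lem:algebraic-ungauging} applied to the pullback square over $\pt \to BG^\vee$. The only cosmetic difference is that the paper explicitly invokes \Cref{prop:monodromy-intertwining} to justify that the $\Loc^{(3)}(BG)$-structure on $\Sph$ being "ungauged" is the one matching the spectral $\QCoh^{(3)}(BG^\vee)$-structure on $\Mod_\cA(\St)$, a citation you leave implicit.
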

\begin{proof}
Let $p:G\to\{1\}$ be the trivial group homomorphism, and $p^\vee:\{1\}\to G^\vee$ the dual homomorphism. For $\cC$ an object of $\Loc^{(3)}(BG),$ the $G$-invariant 2-category $\cC^G$ is the image of $\cC$ under the pushforward 
\[
(Bp)_*:\Loc^{(3)}(BG)\to \Loc^{(3)}(B\{1\})\simeq\PrLSt_2.
\]
By \Cref{lem:bside-ambidexterity}, the equivalence $\Loc^{(3)}(BG)\simeq \QCoh^{(3)}(BG^\vee)$ of \Cref{thm:4d-fourier} intertwines $(Bp)_*$ with the pullback
\[(Bp^\vee)^*:\QCoh^{(3)}(BG^\vee)\to \QCoh^{(3)}(B\{1\})\simeq \PrLSt_2.\]
The equivalence of \Cref{thm:4d-fourier} identifies $\Sph\in \Loc^{(3)}(BG)$ with the 2-category of modules for $\cA=\Coh^{G^\vee}\left((\CC\sqcup 0)\times_\CC (\CC\sqcup 0)\right),$ and we may conclude from \Cref{lem:algebraic-ungauging} that the image of this 2-category under the pullback $(Bp^\vee)^*$ is the 2-category of modules for the deequivariantized monoidal category $\Coh\left((\CC\sqcup 0)\times_\CC (\CC\sqcup 0)\right).$
\end{proof}

\begin{remark}
Intuitively, an object of $\Sph^G$ may be understood as a spherical adjunction equipped with a trivialization of the monodromy automorphism \Cref{eq:aside-monodromy2}. The canonical maps $T_{\Phi} \to \id_{\cC_\Phi}$ and $\id_{\cC_\Psi} \to T_{\Psi}$ then become the natural transformations
\[
\id_{\cC_\Phi} \xra{x} \id_{\cC_\Phi} \qquad \text{and} \qquad \Sigma^{-2} \id_{\cC_\Psi} \xra{\beta} \id_{\cC_\Psi}~.
\]
underlying the $\Coh(\CC) \simeq \Perf_{\kk[x]}(\cV)$-module structure and $\Coh(\CC[-1]) \simeq \Perf_{\kk[\beta]}(\cV)$-module structure on $\cC_\Phi$ and $\cC_{\Psi},$ respectively.\footnote{Physicists often work with $\ZZ \times \ZZ/2\ZZ$-graded categories with a super Koszul sign rule. This allows one to define a square root of the  ``shearing'' autoequivalence from \Cref{obs:sigma} that can be used to modify the constructions in this paper so that $x$ and $\beta$ will both have degree $(1,1)$. This ensures that invariants attached to surfaces, such as the $\EE_3$-algebra from \Cref{thm:e3-functions}, become finite dimensional in each bi-degree.
}
\end{remark}

\Cref{cor:ungauging-dim1} becomes more geometrically interesting when we take invariants for the actions of higher-dimensional tori. To do this, we will need to replace $\Sph$ by a more interesting 2-category $\Sph_n.$ (In terms of the geometry described in \Cref{subsec:abelian-intro}: just as
$\Sph$ was associated to the geometry of the space $\CC$ with its toric stratification, the 2-category $\Sph_n$ will be associated to $\CC^n$ with its toric stratification.)
\begin{definition}\label{defn:n-spherical-functors}
We write $\Sph_n$ for the 2-category $(\Sph)^{\otimes n}$ obtained as the $n$-fold tensor product (taken in $\PrLSt_2$) of $\Sph$ with itself. Concretely, this is the 2-category of ``spherical $n$-cubes'': functors $\Adj^{\times n} \ra \St$ such that all 1-morphisms in $\Adj^{\times n}$ are carried to spherical functors. This 2-category has $n$ commuting automorphisms of the identity, coming from the monodromies (as described in \Cref{prop:monodromy-intertwining}) associated to the $n$ spherical adjunctions.
\end{definition}

\begin{notation}
\label{notation.fix.ntorus.D.and.subtorus.G.and.quotient.F}
In what follows, we write $D : =(\CC^\times)^n$ for the $n$-torus, with dual $D^\vee \cong (\CC^\times)^n.$ We will also fix a subtorus $G \subset D$ with quotient $F$, so that we have an exact sequence of tori
\[
\begin{tikzcd}
1 \arrow[r] &
G \arrow[r, "i"]&
D \arrow[r, "p"]&
F \arrow[r] &
1
\end{tikzcd}
\]
with dual exact sequence of tori
\[
\begin{tikzcd}
1 \arrow[r]&
F^\vee \arrow[r, "p^\vee"]&
D^\vee \arrow[r, "i^\vee"]&
G^\vee \arrow[r]&
1~.
\end{tikzcd}
\]
\end{notation}
\begin{proposition}\label{prop:sph-n-calc}
Let $X^n := (\CC\sqcup 0)^n/D^\vee$ and $Y^n := \CC^n/D^\vee.$ Then $\Sph_n$ is equivalent to the 2-category of modules over the monoidal category $\Coh(X^n\times_{Y^n}X^n).$

Moreover, these 2-categories may be lifted to objects of the 3-category $\Loc^{(3)}(BD) \simeq \QCoh^{(3)}(BD^\vee),$ where the first structure is as described in \Cref{defn:n-spherical-functors} and the second comes from the $D^\vee$-linear structure of the category $\Coh(Y^n\times_{X^n} Y^n).$
\end{proposition}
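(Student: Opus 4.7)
The strategy is to bootstrap from the $n=1$ case (\Cref{prop:spectral-description} and \Cref{prop:monodromy-intertwining}) using the fact that $\Sph_n := \Sph^{\otimes n}$ is a tensor power in $\PrLSt_2$, combined with compatibility of tensor products on both the module-theoretic and geometric sides of the equivalence. Writing $X_1 := \CC/\GG_m \sqcup 0/\GG_m$, $Y_1 := \CC/\GG_m$, and $\cA_1 := \Coh(X_1 \times_{Y_1} X_1)$, we have $X^n \simeq X_1^n$ and $Y^n \simeq Y_1^n$ under the identification $D^\vee \simeq (\GG_m)^n$, and the $n=1$ case gives $\Sph \simeq \Mod_{\cA_1}(\St)$.

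For the first assertion I would combine three standard compatibilities: (i) the Morita-theoretic identity $\Mod_\cA(\St) \otimes \Mod_\cB(\St) \simeq \Mod_{\cA \otimes \cB}(\St)$ in $\PrLSt_2$; (ii) the K\"unneth equivalence $\Coh(W_1) \otimes \Coh(W_2) \simeq \Coh(W_1 \times W_2)$ for smooth perfect stacks (applicable since $X_1$ and $Y_1$ are smooth); and (iii) the distributivity of products over fiber products, which iterated gives $(X_1 \times_{Y_1} X_1)^n \simeq X_1^n \times_{Y_1^n} X_1^n$. Together these produce a monoidal equivalence $\cA_1^{\otimes n} \simeq \Coh(X^n \times_{Y^n} X^n) =: \cA_n$ with the convolution monoidal structure (matching via compatibility of K\"unneth with the convolution correspondence diagrams), hence $\Sph_n \simeq \Mod_{\cA_n}(\St)$.

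For the 3-categorical enhancement, by \Cref{defn:n-spherical-functors} the $\Loc^{(3)}(BD)$-structure on $\Sph_n$ is the $n$-fold external tensor power of the $\Loc^{(3)}(B\GG_m)$-structure on $\Sph$ established in \Cref{prop:monodromy-intertwining}. By functoriality of the Fourier equivalence of \Cref{thm:4d-fourier} under products of tori, the corresponding $\QCoh^{(3)}(BD^\vee)$-structure on $\Sph_n$ is the $n$-fold tensor product of the factor-wise spectral enhancements; I would then identify the resulting central $\EE_2$-homomorphism into $Z(\cA_n)$ with the one obtained by composing the pullback $\Perf(BD^\vee) \to \Coh(Y^n)$ along the projection $Y^n \to BD^\vee$ with the central map $\Coh(Y^n) \to Z(\cA_n)$ produced by \Cref{lem:e2-constloops} applied to the proper surjection $X^n \to Y^n$ of smooth perfect stacks. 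The main obstacle is the coherent verification of the tensor-product compatibilities at the 3-categorical level, in particular that $Z(\cA_1^{\otimes n}) \simeq Z(\cA_1)^{\otimes n}$ as $\EE_2$-algebras and that \Cref{lem:e2-constloops} is natural under products; via the formula $Z(\cA) \simeq \Coh_{prop/Y}(LY)$ used in \Cref{lem:e2-constloops}, this reduces to a K\"unneth statement for the loop space $LY^n \simeq (LY_1)^n$, but packaging the argument coherently at the level of $\PrLSt_2$ requires some care.
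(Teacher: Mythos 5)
Your proposal is correct and follows essentially the same route as the paper: identify $X^n\times_{Y^n}X^n\cong(X^1\times_{Y^1}X^1)^{\times n}$, apply K\"unneth and the Morita identity $\Mod_{\cA\otimes\cB}(\St)\simeq\Mod_\cA(\St)\otimes\Mod_\cB(\St)$ to match $\cA_1^{\otimes n}$ with $\Coh(X^n\times_{Y^n}X^n)$ and hence with $\Sph_n:=\Sph^{\otimes n}$, and obtain the 3-categorical enhancement from the factorizations $D=(\CC^\times)^n$, $\Perf(BD^\vee)\simeq\Perf(B\CC^\times)^{\otimes n}$ so that the monodromy map $\pi_1 D=\ZZ^n\to\End(\id_{\Sph_n})$ selects the universal twist in each tensor factor. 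The extra coherence points you flag (e.g.\ $Z(\cA_1^{\otimes n})\simeq Z(\cA_1)^{\otimes n}$ and naturality of the constant-loops map under products) are real but are exactly the compatibilities the paper's terser proof implicitly relies on.
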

\begin{proof}
Observe the identification $X^n \times_{Y^n} X^n \cong (X^1 \times_{Y^1} X^1)^{\times n}$. Using \Cref{prop:monodromy-intertwining}, we obtain the composite equivalence
\begin{equation}
    \label{eq:product-sphn}
\Coh(X^n \times_{Y^n} X^n)
\simeq
\Coh ( ( X^1 \times_{Y^1} X^1)^{\times n})
\simeq
\Coh ( X^1 \times_{Y^1} X^1 )^{\otimes n},
\end{equation}
and by definition $\Sph_n:=\Sph^{\otimes n}$ is the 2-category of modules over the RHS.
Moreover, the tensor product structure on the RHS of \Cref{eq:product-sphn} corresponds to the tensor product factorization $\Perf(BD^\vee)\simeq \Perf(B\CC^\times)^{\otimes n}$ coming from the product structure $D^\vee = (\CC^\times)^n.$
Using the dual factorizations $D=(\CC^\times)^n$ and $\pi_1 D = (\pi_1\CC^\times)^n = \ZZ^n,$
we conclude that as an object in $\QCoh^{(3)}(BD^\vee)\simeq \Loc^{(3)}(BD),$ the 2-category of modules over $\Cref{eq:product-sphn}$ has underlying monodromy map $\pi_1 D = \ZZ^n \to \End(\id_{\Sph_n})$ selecting the monodromy automorphisms of \Cref{prop:monodromy-intertwining} in each of the $n$ tensor factors.
\end{proof}

As in \Cref{cor:ungauging-dim1}, we may therefore deduce a description of the invariant 2-category $\Sph_n^{D}$ by $D^\vee$-deequivariantization in the spectral description of $\Sph_n$. More generally, we can describe the invariant 2-category $\Sph_n^G$ by $G^\vee$-deequivariantization.

\begin{theorem}\label{thm:higherdim-mainthm}
  Let $X^n_G = (\CC\sqcup 0)^n / F^\vee$ and  $Y^n_G = \CC^n/F^\vee.$ Then $\Sph_n^G$ is equivalent to the 2-category of modules for the monoidal category $\cA_G:=\Coh(X^n_G\times_{Y^n_G} X^n_G).$
\end{theorem}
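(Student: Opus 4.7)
The plan is to follow the strategy of \Cref{cor:ungauging-dim1}, this time applied to $\Sph_n$ equipped with its $D$-action from \Cref{prop:sph-n-calc}. The first step will be to recognize that the $G$-invariant 2-category $\Sph_n^G$ is, by definition, the value at the basepoint of $BF$ of the pushforward $(Bp)_* \Sph_n \in \Loc^{(3)}(BF)$. Next, I would translate this pushforward to the spectral side via the Fourier equivalence of \Cref{thm:4d-fourier}: the functoriality diagram \eqref{eq:3d-fourier-functoriality} identifies $(Bp)^*$ on the A-side with $(Bp^\vee)_*$ on the B-side, so their right adjoints correspond, and by the ambidexterity result \Cref{lem:bside-ambidexterity} the right adjoint of $(Bp^\vee)_*$ is canonically equivalent to its left adjoint $(Bp^\vee)^*$. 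Hence it suffices to compute $(Bp^\vee)^*$ applied to the spectral model $\Mod_{\cA^n}(\St)$ of $\Sph_n$, where $\cA^n := \Coh(X^n \times_{Y^n} X^n)$.

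Since $(Bp^\vee)^*$ commutes with the formation of modules (as noted in the observation immediately preceding \Cref{lem:algebraic-ungauging}), the problem reduces to showing that $(Bp^\vee)^* \cA^n \simeq \cA_G$. For this I would apply \Cref{lem:algebraic-ungauging} to the iterated pullback
\[
\begin{tikzcd}
X^n_G \arrow{r} \arrow{d} & Y^n_G \arrow{r} \arrow{d} & BF^\vee \arrow{d}{Bp^\vee} \\
X^n \arrow{r} & Y^n \arrow{r} & BD^\vee~.
\end{tikzcd}
\]
The hypotheses will be satisfied: both $X^n$ and $Y^n$ are smooth as quotients of smooth varieties by a torus action, and the map $X^n \to Y^n$ is proper, being a disjoint union of closed embeddings $Z_S/D^\vee \hookrightarrow \CC^n/D^\vee$ indexed by subsets $S \subseteq [n]$ (where $Z_S \subset \CC^n$ is the corresponding coordinate subspace). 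The lemma will then yield $\Coh(X^n_G \times_{Y^n_G} X^n_G) \simeq (Bp^\vee)^* \cA^n$, the left-hand side being $\cA_G$ by definition. Assembling these steps yields $\Sph_n^G \simeq \Mod_{\cA_G}(\St)$, as desired.

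The main obstacle is a coherence verification: one must check that the $D^\vee$-equivariant structure on $\cA^n$ established in \Cref{prop:sph-n-calc} is precisely the one encoded in the stack presentations $X^n = (\CC \sqcup 0)^n/D^\vee$ and $Y^n = \CC^n/D^\vee$ --- equivalently, that the equivalence $\Sph_n \simeq \Mod_{\cA^n}(\St)$ of \Cref{prop:sph-n-calc} is coherently compatible with both the $D$-action on the A-side (coming from the $n$ commuting monodromy automorphisms) and the $D^\vee$-action on the B-side (coming from the stack quotient). Fortunately the statement of \Cref{prop:sph-n-calc} is tuned to assert exactly this compatibility, so I expect no further work beyond unpacking definitions.
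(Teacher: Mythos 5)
Your proposal is correct and follows essentially the same route as the paper's proof: identify the invariants via the Fourier transform of \Cref{thm:4d-fourier}, trade the right adjoint on the spectral side for the left adjoint $(Bp^\vee)^*$ using \Cref{lem:bside-ambidexterity}, and compute the resulting deequivariantization of $\cA^n$ via \Cref{lem:algebraic-ungauging}, with the compatibility of the two equivariant structures supplied by \Cref{prop:sph-n-calc}. The only (cosmetic) difference is that you package the invariants as the basepoint value of $(Bp)_*\Sph_n$ over $BF$, whereas the paper writes them as $(B\pi)_*(Bi)^*\Sph_n$ and correspondingly runs the spectral computation through $B G^\vee$; these agree by base change.
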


\begin{proof}
Let $\pi: G \to \{1\}$ be the projection, and consider the iterated pullback squares
\[
\begin{tikzcd}
X^n_G \arrow[r] \arrow[dr, phantom, "\ulcorner", very near start] \arrow[d] 
& Y^n_G \arrow[r] \arrow[dr, phantom, "\ulcorner", very near start] \arrow[d] 
& B F^\vee \arrow[r] \arrow[dr, phantom, "\ulcorner", very near start] \arrow[d, "Bp^\vee"'] 
& B \{1\} \arrow[d, "B\pi^\vee"]
\\
X^n \arrow[r] 
& Y^n \arrow[r] 
& B D^\vee \arrow[r, "Bi^\vee"]
& B G^\vee~.
\end{tikzcd}
\]
We now deduce the equivalences
\begin{align}
\label{gd.eq1} (B \pi)_* (B i)^* \Sph_n &\simeq (B \pi^\vee)^! (B i^\vee)_* \Coh(X^n \times_{Y^n} X^n) \\
\label{gd.eq2} &  \simeq (B \pi^\vee)^* (B i^\vee)_* \Coh(X^n \times_{Y^n} X^n) \\
\nonumber
& \simeq (B \pi^\vee)^* \Coh(X^n \times_{Y^n} X^n) \\
\label{gd.eq4} & \simeq \Coh(X^n_G\times_{Y^n_G} X^n_G)~,
\end{align}
where \Cref{gd.eq1} follows from Propositions \ref{prop:sph-n-calc} and \ref{prop:monodromy-intertwining}, \Cref{gd.eq2} follows from \Cref{lem:bside-ambidexterity}, and \Cref{gd.eq4} follows from \Cref{lem:algebraic-ungauging}.
\end{proof}

\noindent An interpretation of \Cref{thm:higherdim-mainthm} as 3d mirror symmetry between Gale dual toric cotangent stacks is given as \Cref{mainthm:boundary-duality} in \Cref{subsec:abelian-intro}.

Finally, from the 2-category $\Sph_n^G\simeq \Mod_{\cA_G}(\St),$ we may extract an $\EE_3$-algebra:
\begin{theorem}\label{thm:e3-functions}
  Let $\uno_{Z(\cA_G)}$ be the unit object in the center $Z(\cA_G)$ of the monoidal category $\cA_G.$ Then there is an equivalence
  \begin{equation}\label{eq:e3-functions}
  \End_{Z(\cA_G)}(\uno_{Z(\cA_G)}) \simeq \cO(T^*[2]\CC^n/F^\vee)
  \end{equation}
  between the endomorphism algebra of $\uno_{Z(\cA_G)}$ and the algebra of ``functions on the shifted cotangent stack $T^*[2]\CC^n/F^\vee$'': by definition, this is the algebra of functions on $T^*\CC^n/F^\vee,$ with homological grading changed so that functions linear in the cotangent direction are in degree 2.
\end{theorem}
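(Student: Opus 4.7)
The plan is to combine \Cref{thm:higherdim-mainthm} with the Ben-Zvi--Nadler--Preygel computation of the Drinfeld center (already invoked in the proof of \Cref{lem:e2-constloops}) to reduce the endomorphism calculation to a Koszul-duality computation on the loop space of $Y^n_G := \CC^n/F^\vee$. Specifically, by \cite{BZNP}*{Theorem 1.2.10} we have a canonical equivalence
\[
Z(\cA_G)
\simeq
\Coh_{prop/Y^n_G}(L Y^n_G)
~,
\]
under which the unit object $\uno_{Z(\cA_G)}$ corresponds to $i_* \cO_{Y^n_G}$ for $i : Y^n_G \to L Y^n_G$ the inclusion of constant loops (this is exactly the $\EE_2$-central map appearing in the proof of \Cref{lem:e2-constloops}). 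So what we have to compute is $R\Hom_{L Y^n_G}(i_* \cO_{Y^n_G}, i_* \cO_{Y^n_G})$.

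Next, I would use the fact that $Y^n_G = \CC^n/F^\vee$ is smooth (as a quotient of a smooth variety by a torus), so that the HKR theorem identifies
\[
L Y^n_G
\simeq
T[-1] Y^n_G
=
\Spec_{Y^n_G} \Sym_{\cO_{Y^n_G}} \bigl( \mathbb{L}_{Y^n_G} [1] \bigr)
~,
\]
with the inclusion $i$ corresponding to the canonical augmentation $\Sym(\mathbb{L}_{Y^n_G}[1]) \twoheadrightarrow \cO_{Y^n_G}$. By the adjunction $i^* \adj i_*$, we obtain
\[
R\Hom_{L Y^n_G}(i_* \cO_{Y^n_G}, i_* \cO_{Y^n_G})
\simeq
R\Hom_{Y^n_G}\bigl( i^* i_* \cO_{Y^n_G} , \cO_{Y^n_G} \bigr)~,
\]
and $i^* i_* \cO_{Y^n_G} \simeq \cO_{Y^n_G} \otimes^{L}_{\Sym(\mathbb{L}_{Y^n_G}[1])} \cO_{Y^n_G}$ may be computed via the Koszul resolution.

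The key computation is then the derived Koszul duality between the shifted symmetric algebra $\Sym(\mathbb{L}_{Y^n_G}[1])$ and $\Sym(\mathbb{T}_{Y^n_G}[-2])$: this yields
\[
R\Hom_{\Sym_{\cO_{Y^n_G}}(\mathbb{L}_{Y^n_G}[1])}( \cO_{Y^n_G} , \cO_{Y^n_G} )
\simeq
\Sym_{\cO_{Y^n_G}} \bigl( \mathbb{T}_{Y^n_G} [-2] \bigr)
~,
\]
which is by definition the algebra of functions on $T^*[2] Y^n_G = T^*[2] \CC^n/F^\vee$, with the cotangent directions placed in homological degree $2$ exactly as in the statement. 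Chaining these identifications gives the desired equivalence \Cref{eq:e3-functions}.

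The main obstacle is the bookkeeping in step three: one must verify that the Koszul duality applies locally on $Y^n_G$ (so as to give a well-defined sheaf-level identification) and that the resulting global $R\Hom$ really computes $\cO(T^*[2] Y^n_G)$ rather than some completion or truncation thereof. This is standard in derived algebraic geometry for smooth targets, but the stacky quotient by $F^\vee$ requires one to carry along the residual $F^\vee$-equivariant structure throughout, which identifies the regrading on $\cO(T^*\CC^n/F^\vee)$ that is described in the theorem statement.
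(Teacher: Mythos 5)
Your proposal follows the same overall route as the paper: identify $Z(\cA_G)$ with sheaves on the loop space of $Y^n_G = \CC^n/F^\vee$ via \cite{BZNP}*{Theorem 1.2.10}, recognize the unit as the pushforward of $\cO_{Y^n_G}$ along the inclusion of constant loops, and conclude by Koszul duality between the $(-1)$-shifted tangent and the $2$-shifted cotangent directions. The endomorphism computation itself (adjunction plus Koszul resolution) is correct and is exactly the ``fiberwise Koszul duality'' step of the paper.

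However, your second step contains a genuine error. For the quotient \emph{stack} $Y^n_G = \CC^n/F^\vee$, the loop space $\cL Y^n_G = Y^n_G \times_{Y^n_G \times Y^n_G} Y^n_G$ is the derived inertia stack $\{(x,g) : g\cdot x = x\}/F^\vee$, which has components indexed by group elements with nonempty fixed locus; it is \emph{not} globally equivalent to $T[-1]Y^n_G$. The HKR identification $\cL X \simeq T[-1]X$ that you invoke holds for smooth schemes in characteristic zero, but for stacks only the \emph{formal completion} along constant loops satisfies $\widehat{\cL}\,Y^n_G \simeq \widehat{{\rm T}}[-1]\,Y^n_G$ (this is \cite{BZN-connections}*{Theorem 1.23}, the exponential map). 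The missing step --- which is precisely how the paper proceeds --- is to observe that $i_*\cO_{Y^n_G}$ is set-theoretically supported on the constant loops, so that its self-$\Hom$s may be computed in the full subcategory of objects so supported, i.e.\ in coherent sheaves on the formal neighborhood of the constant loops; only then does the shifted-tangent identification apply. Your closing paragraph worries about completions in the Koszul duality step, but the real issue sits one step earlier, in the passage from the inertia stack to the formal shifted tangent bundle. With that localization inserted, the rest of your argument goes through as written.
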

\begin{proof}
By \cite{BZNP}*{Theorem 1.2.10}, the center $Z(\cA_G)$ of the monoidal category $\cA_G$ is equivalent to the category $\Coh_{prop/(\CC^n/F^\vee)}(\cL(\CC^n/F^\vee))$ of coherent sheaves on the loop stack $\CC^n/F^\vee$ that are proper over $\CC^n/F^\vee.$ The unit object in this $\EE_2$-category is the pushforward
\[
\uno_{Z(\cA_G)} = i_*\cO_{\CC^n/F^\vee}
\]
of the structure sheaf of $\CC^n/F^\vee$ along the map $i:\CC^n/F^\vee\to \cL\CC^n/F^\vee$ representing the inclusion of constant loops.

As the pushforward $i_*\cO_{\CC^n/F^\vee}$ lives in the full subcategory of $\Coh_{prop/(\CC^n/F^\vee)}(\cL(\CC^n/F^\vee))$ on the objects set-theoretically supported on the constant loops $\CC^n/F^\vee,$ we may
%
therefore perform our computation inside 
this full subcategory, or equivalently inside the category of coherent sheaves on
the formal neighborhood (in $\cL\CC^n/F^\vee$) of the space of constant loops: this 
formal neighborhood, which we denote
by $\widehat{\cL}\CC^n/F^\vee,$ is by definition the {\em formal loop space} of $\CC^n/F^\vee.$
By \cite{BZN-connections}*{Theorem 1.23}, there is an equivalence
$\widehat{\cL}\CC^n/F^\vee\simeq\widehat{{\rm T}}[-1](\CC^n/F^\vee)$
between the formal loop space of the stack and its shifted tangent space. The algebra to be computed in \Cref{eq:e3-functions} is therefore equivalent to $\End_{\Coh(\widehat{{\rm T}}[-1]\CC^n/F^\vee)}(\cO_{\CC^n/F^\vee}),$ and the result follows by a fiberwise application of the standard Koszul duality equivalence (described for instance in \cite{Lurie-Rotation}*{\S 3.4} or \cite{BZN-connections}*{\S 5.1}) relating coherent sheaves on the $(-1)$-shifted tangent fiber to perfect modules over functions on the ``2-shifted cotangent fiber,'' exchanging the zero-section with the rank-1 free module.
\end{proof}

\appendix

\section{Categorical conventions}
\label{section.appendix.categorical.conventions}

\subsection{Generalities}

Throughout this paper, we adhere to the ``implicit $\infty$'' convention: all definitions and constructions should be interpreted homotopy-coherently, so that for example by ``$n$-category'' we mean ``$(\infty,n)$-category''. We nevertheless occasionally use ``$\infty$'' for emphasis; we use the word ``discrete'' to emphasize that we are referring to a non-homotopical object.

We take Lurie's books \cite{Lurie-HTT,Lurie-HA} as background references for category theory, although we also give specific citations where relevant. 
We will also need to draw from the theory of enriched categories, which is set up in \cite{GH-enr}. A summary of the basic properties of this theory can be found in \cite{MS-K2}*{\S A.1.12}; in this appendix, we will also recall several auxiliary results about enriched categories proven in \cite{MS-K2}*{\S\S A, B}.

\begin{notation}\label{notation.appendix}
\begin{enumerate}
    \item Given a
symmetric monoidal category $\cW$, 
we write $\Cat(\cW)$ for the symmetric monoidal category of $\cW$-enriched categories. 
    \item If $\cW$ is a category of $n$-categories, we will refer to $\Cat(\cW)$ as an $(n+1)$-category. In particular,
    we write $\Cat_n := \Cat(\Cat_{n-1})$ for the $(n+1)$-category of small $n$-categories. (See \cite{MS-K2}*{Definition A.1.13, Notation A.1.15} for the recursive construction of these $(n+1)$-categories.) As a special case, we write $\Spaces := \Cat_0$ for the 1-category of spaces.
    \item  We write $\iota_k$ to refer to the maximal sub-$k$-category; for any $n \geq k$ this assembles as a functor $\iota_{k+1} \Cat_n \xra{\iota_k} \Cat_k$.
    \item We write $\Alg(\cW) \xra{\fB} \Cat(\cW)$ for the functor taking an algebra object in $\cW$ to the corresponding one-object $\cW$-enriched category.
\end{enumerate}
\end{notation}

Recall that among monoidal categories one can contemplate both strictly and laxly monoidal functors. We note that the right adjoint of a strictly monoidal functor is canonically laxly monoidal. We note too that a laxly monoidal functor can satisfy the condition of being strictly unital. Identical remarks apply in the symmetric monoidal case. Moreover, given an operad $\cO$, an adjunction in which the left adjoint is symmetric monoidal lifts to an adjunction on $\cO$-algebras. (See for instance \cite{GH-enr}*{Proposition A.5.11} for details on the above.)

We use the device of Grothendieck universes, employing the terms ``small'', ``large'', and ``huge'' accordingly. These terms should always be interpreted inclusively, in the sense that e.g.\! the term ``large space'' is shorthand for ``a space that is either small or large''.  In all cases of interest to us, every large (resp.\! huge) category is locally small (resp.\! locally large); this applies both to enriched and unenriched categories. We use hats to denote passage to a larger context, e.g.\! we write $\what{\Spaces}$ to denote the huge category of large spaces. We mostly suppress discussion of size, except where we find it clarifying.

\subsection{Stable $n$-categories}\label{subsec:stable-ncats}

We work $\kk$-linearly, for $\kk$ a fixed commutative ring or commutative ring spectrum. 

We write $\Stu_\SS \subset \Cat$ for the 1-full sub-2-category of small stable categories. We consider this as equipped with the symmetric monoidal structure that corepresents multiexact functors.

We write $\cV := \Perf_\kk \in \CAlg(\Stu_\SS)$ for the stably symmetric monoidal category of perfect $\kk$-modules. For the present discussion it will also be useful to write $\what{\cV} := \Mod_\kk \simeq \Ind(\cV)$ for the presentably symmetric monoidal stable category of $\kk$-modules.

We write 
$\Stu_\kk \simeq \Mod_\cV(\Stu_\SS)$ 
for the 2-category of small stable $\kk$-linear categories. We simply write $\otimes := \otimes_\cV$ for its symmetric monoidal structure, and $\Fun^\ex$ for its adjoint self-enrichment.

We denote by
\[
\begin{tikzcd}[column sep=2cm]
\Spaces
\arrow[yshift=0.9ex]{r}{\Sigma^{(\infty,0)}_+}
\arrow[leftarrow, yshift=-0.9ex]{r}[yshift=-0.2ex]{\bot}[swap]{\Omega^{(\infty,0)}}
&
\what{\cV}
\end{tikzcd}
\quad
\text{and}
\quad
\begin{tikzcd}[column sep=2cm]
\Cat
\arrow[yshift=0.9ex]{r}{\Sigma^{(\infty,1)}_+}
\arrow[leftarrow, yshift=-0.9ex]{r}[yshift=-0.2ex]{\bot}[swap]{\fgt}
&
\St
\end{tikzcd}
\]
the indicated free/forget adjoints.\footnote{The left adjoint $\Sigma^{(\infty,0)}_+$ may be referred to as the ``$\kk$-linear chains'' functor.} Both left adjoints are symmetric monoidal.
The second adjunction is obtained by applying $\Cat(-)$ to the first and then composing with the adjunction
\[
\begin{tikzcd}[column sep=2cm]
\Cat(\what{\cV})
\arrow[yshift=0.9ex]{r}{\Env}
\arrow[hookleftarrow, yshift=-0.9ex]{r}[yshift=-0.2ex]{\bot}
&
\St
\end{tikzcd}
~,
\]
whose (symmetric monoidal) left adjoint is given by the stable envelope and whose right adjoint is fully faithful. (See \cite{MS-K2}*{\S B.1} for more details on the stable envelope and its fully faithful adjoint.)

Given a category $\cA \in \Cat$, we note that $\Sigma^{(\infty,1)}_+ \cA \in \St$ can be characterized as the \textit{finite $\kk$-linear presheaves} on $\cA$: the smallest stable subcategory containing the image of the stabilized Yoneda functor
\[
\cA
\longhookra
\Fun(\cA^\op,\Spaces)
\xra{\Sigma^{(\infty,0)}_+}
\Fun(\cA^\op,\what{\cV})
~.
\]
In these terms, the symmetric monoidality of $\Sigma^{(\infty,1)}_+$ is incarnated via Day convolution.

For any $n \geq 2$, we inductively define a \bit{stable $n$-category} to be a category enriched in the symmetric monoidal $n$-category $\Stu_{\kk,n-1}$ of stable $(n-1)$-categories; these assemble into a (self-enriched) $(n+1)$-category $\Stu_{\kk,n} := \Cat(\Stu_{\kk,n-1})$;\footnote{In the case that $n=2$, this does not quite match the conventions of \cite{MS-K2}, which additionally require the existence of finite sums.}
we write $\Fun^{n-\ex}$ for the self-enrichment.
(We will only need these definitions in the cases $n=2,3$.) 
We inductively obtain an adjunction
\[
\begin{tikzcd}[column sep=2cm]
\Cat_n
\arrow[yshift=0.9ex]{r}{\Sigma^{(\infty,n)}_+}
\arrow[leftarrow, yshift=-0.9ex]{r}[yshift=-0.2ex]{\bot}[swap]{\fgt}
&
\Stu_{\kk,n}
\end{tikzcd}
\]
by applying $\Cat(-)$, whose left adjoint is symmetric monoidal. We generally omit the right adjoint $\fgt$ from our notation.\footnote{This forgetful functor is conservative, and is moreover faithful in the case that $\kk = \SS$ is the sphere spectrum (or more generally whenever $\kk$ is idempotent).}

We note here that stable 2-categories are ``pre-semiadditive'': inside of any stable 2-category, any initial object is also terminal and reversely, and thereafter any finite coproduct is also a finite product and reversely.

\subsection{Presentability}

We write $\Pr^L \subset \what{\Cat}$ for the 1-full sub-2-category whose objects are presentable categories and whose morphisms are left adjoint functors. We consider this as equipped with the symmetric monoidal structure that corepresents multicocontinuous functors. We write $\Fun^L$ for its adjoint self-enrichment.

We employ the theory of presentable enriched categories of \cite[\sec A.3]{MS-K2}. (Although see also \cite{Stefanich-Pres} for a more general theory of presentable $n$-categories.) Our usage thereof may be summarized as follows.

Given a presentably symmetric monoidal category $\cW \in \CAlg(\Pr^L)$, we write $\iota_1 \Pr^L_\cW \subseteq \what{\Cat}(\cW)$ for the huge category of \bit{presentable $\cW$-enriched categories}. 
This category is defined in \cite{MS-K2}*{Definition A.3.2}, but we will work with this category via \cite{MS-K2}*{Theorem A.3.8}, which gives a description of this category
\begin{equation}\label{eq:pres-symmon-pres}
\iota_1 \Pr^L_\cW
\xlongra{\sim}
\Mod_\cW(\iota_1 \Pr^L)
~
\end{equation}
as the category of presentable categories with $\cW$-action.
The equivalence \eqref{eq:pres-symmon-pres} carries a presentable $\cW$-enriched category $\cC \in \iota_1 \Pr^L_\cW$ to its underlying (unenriched) presentable category $U(\cC) \in \iota_1 \Pr^L$ equipped with the $\cW$-action given by tensoring, which is characterized by the universal property that
\[
\hom_{U(\cC)}(W \cdot C , D)
\simeq
\hom_\cW ( W , \hom_\cC ( C , D ) )
\]
for all $W \in \cW$ and all $C,D \in \cC$. We write $\Fun^L_\cW$ for the self-enrichment of $\iota_1 \Pr^L_\cW \simeq \Mod_\cW(\iota_1 \Pr^L)$, which is adjoint to its symmetric monoidal structure $\otimes_\cW$ (computed in $\Mod_\cW(\iota_1 \Pr^L)$).

As a special case, we write $\iota_1 \Pr^L_n := \iota_1 \Pr^L_{\iota_1 \Cat_{n-1}}$, and refer to its objects as \bit{presentable $n$-categories}. Similarly, we write
\[
\iota_1 \Pr^{L,\st_{\kk}}
:=
\iota_1 \Pr^{L,\st_{\kk}}_1
:=
\iota_1 \Pr^L_{\what{\cV}}
\quad
\text{and}
\quad
\iota_1 \Pr^{L,\st_{\kk}}_2
:=
\iota_1 \Pr^L_\St
\]
and respectively refer to objects of these as \bit{presentable stable 1-} or \bit{2-categories}.

Given a morphism $\cW \ra \cW'$ in $\CAlg(\Pr^L)$, we obtain adjoint functors
\[ \begin{tikzcd}[column sep=1.5cm]
\iota_1 \Pr^L_\cW
\simeq
&[-1.7cm]
\Mod_\cW(\iota_1 \Pr^L)
\arrow[bend left]{r}{\cW' \otimes_\cW (-)}
\arrow[leftarrow]{r}[description]{\fgt}[yshift=1.5ex]{\bot}[swap, yshift=-1.5ex]{\bot}
\arrow[bend right]{r}[swap]{\Fun^L_\cW(\cW',-)}
&
\Mod_{\cW'}(\iota_1 \Pr^L)
&[-1.7cm]
\simeq
\iota_1 \Pr^L_{\cW'}
\end{tikzcd}
~.
\]
In particular, it follows that the forgetful functor $\iota_1 \Pr^L_\cW \xla{\fgt} \iota_1 \Pr^L_{\cW'}$ commutes with both limits and colimits.

\subsection{Enriching adjoint functors}
In this section, we lift adjunctions of unenriched categories to enriched adjunctions, using the fact that an unenriched adjoint commuting with (co)tensors has an enriched adjoint (\cite{MS-K2}*{Lemma A.2.14}). Combining this fact with the unenriched adjoint functor theorem for presentable categories, we obtain the following:

\begin{theorem}[\cite{MS-K2}*{Corollary A.3.6}]\label{thm:enriched-adjt-functors}
Let $\cC,\cD\in \iota_1\Pr_\cW^L$ be presentable $\cW$-categories.
\begin{enumerate}
    \item A functor $\cC\xra{F}\cD$ is a left adjoint if and only if it preserves colimits and tensors.
    \item A functor $\cC\xla{G} \cD$ is a right adjoint if and only if it preserves limits, cotensors, and $\kappa$-filtered colimits for some regular cardinal $\kappa.$
    \qed
\end{enumerate}
\end{theorem}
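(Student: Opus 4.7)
The plan is to reduce to the corresponding unenriched statement via the equivalence $\iota_1 \Pr^L_\cW \simeq \Mod_\cW(\iota_1 \Pr^L)$ of \cite{MS-K2}*{Theorem A.3.8}, and then to invoke \cite{MS-K2}*{Lemma A.2.14} to upgrade unenriched adjunctions to $\cW$-enriched ones. Under this equivalence, a $\cW$-functor is equivalent data to an unenriched functor of underlying presentable categories that is suitably compatible with the tensor action of $\cW$; this is the translation that makes the argument go.

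First I would dispose of necessity in both parts by direct inspection. Any enriched left adjoint preserves all enriched colimits, and these include both ordinary (conical) colimits and tensors, yielding the forward direction of (1). Dually, any enriched right adjoint preserves enriched limits, including ordinary limits and cotensors; and accessibility follows from the standard fact that, between presentable categories, any right adjoint preserves $\kappa$-filtered colimits for some regular $\kappa$ \cite{Lurie-HTT}*{Corollary 5.5.2.9}, applied to the unenriched underlying functor.

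For sufficiency in (1), the hypothesis that $F$ preserves colimits, combined with presentability of $\cC$ and $\cD$, produces an unenriched right adjoint $F^R$ by the (unenriched) adjoint functor theorem. The hypothesis that $F$ preserves tensors is then exactly the criterion of \cite{MS-K2}*{Lemma A.2.14}, which promotes the unenriched adjunction $F \adj F^R$ to a $\cW$-enriched adjunction. Sufficiency in (2) proceeds in parallel: preservation of unenriched limits and of $\kappa$-filtered colimits produces an unenriched left adjoint $G^L$ by the adjoint functor theorem, and preservation of cotensors lets Lemma A.2.14 upgrade this to a $\cW$-enriched adjunction.

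No substantive obstacle arises in this argument beyond correctly unpacking the enriched structure; the genuine technical work lives in \cite{MS-K2}*{Theorem A.3.8, Lemma A.2.14}. The only point requiring care is to confirm that the tensors and cotensors appearing in the hypotheses are precisely those encoded by the $\cW$-action under the equivalence $\iota_1 \Pr^L_\cW \simeq \Mod_\cW(\iota_1 \Pr^L)$, which is immediate from the construction of that equivalence.
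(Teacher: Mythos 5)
Your proposal is correct and follows exactly the route the paper indicates for this result: the paper states the theorem as a citation of \cite{MS-K2}*{Corollary A.3.6} and, in the sentence immediately preceding it, describes the same strategy of combining the unenriched adjoint functor theorem for presentable categories with \cite{MS-K2}*{Lemma A.2.14} (an unenriched adjoint preserving co/tensors upgrades to an enriched adjoint). Your handling of the necessity directions and of accessibility of the right adjoint is also standard and unproblematic.
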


The hypotheses of this theorem will be satisfied for certain 2-categories of functors we consider.

\begin{lemma}\label{lem:functorcats.presentability}
Let $\cC$ be a small 2-category and consider the 2-category $\Fun(\cC, \St).$ The underlying 1-category $\iota_1\Fun(\cC,\St)$ is presentable, with limits and colimits computed pointwise, and the 2-category $\Fun(\cC,\St)$ is tensored and cotensored over $\St,$ with tensors and cotensors computed pointwise.
\end{lemma}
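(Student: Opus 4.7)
The plan is to reduce all four claims about $\Fun(\cC,\St)$ to the corresponding facts for $\St$ itself, by exploiting the joint conservativity of evaluation at the objects of $\cC$ to detect the relevant universal properties.

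First, I would record that $\St$ enjoys each of the analogous properties. The symmetric monoidal structure $(\St,\otimes)$ is closed with internal hom $\Fun^\ex$, so $\St$ is tensored and cotensored over itself (this is built into the construction of $\St$ as an adjointly self-enriched category recalled in \Cref{subsec:stable-ncats}). Moreover, $\St$ admits all small 2-categorical limits and colimits: this is standard and may be deduced from the description $\St \simeq \Mod_\cV(\Stu_\SS)$ and the corresponding properties of $\Stu_\SS$. Finally, the underlying 1-category $\iota_1\St$ is presentable.

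Next, for the functor 2-category, I would establish a pointwise description of all structures. For each object $c$ of $\cC$, the evaluation 2-functor $\ev_c : \Fun(\cC,\St) \to \St$ admits both adjoints, given by left and right Kan extension along the inclusion of the representable at $c$; these Kan extensions exist because $\St$ has all small (co)limits. Moreover, the family $\{\ev_c\}$ indexed by the objects of $\cC$ is jointly conservative. It follows by a standard argument that limits and colimits in $\iota_1\Fun(\cC,\St)$ exist and are computed pointwise, and that the tensors and cotensors by an object $K \in \St$, defined pointwise by $(K\otimes F)(c) := K \otimes F(c)$ and $\Fun^\ex(K,F)(c) := \Fun^\ex(K,F(c))$, satisfy the requisite universal properties.

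To deduce presentability of $\iota_1\Fun(\cC,\St)$, I would invoke the standard fact that for $\cC$ small and $\iota_1\St$ presentable, the 1-category of functors is again presentable; a set of generators is obtained by tensoring representable functors into $\St$ by a set of generators of $\iota_1\St$. The main obstacle I anticipate is the 2-categorical bookkeeping: one must check that the pointwise constructions lift coherently through the 2-categorical enrichment rather than merely through the underlying 1-categorical structure, in particular that the evaluation functors are themselves sufficiently structured 2-functors. Once this coherence is installed, each individual step reduces to a routine verification with no further surprises.
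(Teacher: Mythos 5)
There is a genuine gap, and it sits exactly at the step you deferred as ``2-categorical bookkeeping.'' Your pointwise picture of limits, colimits, tensors and cotensors is the right intuition, and joint conservativity of the evaluations $\ev_c$ does detect equivalences of natural transformations (this is the fact, cited elsewhere in the paper, that a transformation of 2-functors is invertible iff its components are). But your presentability argument invokes the standard fact that $\Fun(\cD,\cE)$ is presentable for $\cD$ a small $(\infty,1)$-category and $\cE$ presentable — and that fact does not apply here, because $\cC$ is a 2-category and $\Fun(\cC,\St)$ is the $(\infty,2)$-categorical (i.e.\ $\Cat$-enriched) functor category: its underlying 1-category $\iota_1\Fun(\cC,\St)$ is \emph{not} $\Fun(\iota_1\cC,\iota_1\St)$, nor any functor category out of a small $(\infty,1)$-category in an obvious way, since objects must be compatible with the 2-morphisms of $\cC$ coherently. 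Likewise, your construction of adjoints to $\ev_c$ by ``Kan extension'' and your generators ``obtained by tensoring representables'' presuppose the enriched Kan extension and enriched Yoneda machinery for $\Cat$-enriched categories — which is precisely the content one needs to establish, not a routine verification on top of the unenriched statement. So the proposal is circular at its key step: presentability of the enriched functor category is the theorem, not an input.

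The paper closes this gap by a different route: it cites \cite{MS-K2}*{Propositions A.3.7, B.1.8} (presentability of enriched functor categories in their framework), and, for a self-contained argument, produces a combinatorial simplicial model category presenting $\Fun(\cC,\St)$ via \cite{GHL}*{Proposition 3.3.1}, so that presentability of $\iota_1\Fun(\cC,\St)$ follows from \cite{Lurie-HTT}*{Proposition A.3.7.6}; the pointwise descriptions of co/limits and co/tensors are then checked directly in that model. To repair your argument you would need to either import one of these enriched/model-categorical presentability results explicitly, or develop the $\Cat$-enriched Kan extension and Yoneda apparatus you are implicitly using — at which point you are reproving the cited propositions rather than reducing to the 1-categorical standard fact.
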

\begin{proof}
This is essentially the content of \cite{MS-K2}*{Propositions A.3.7, B.1.8}. In more detail: \cite{Lurie-HTT}*{Proposition A.3.7.6} establishes that presentability for an $\infty$-category $\cC$ is equivalent to the existence of a combinatorial simplicial model category presenting it. In the case of $\Fun(\cC,\St),$ such a combinatorial presentation is available in \cite{GHL}*{Proposition 3.3.1}. The statements about co/tensors and co/limits can be checked directly in this model.
\end{proof}

\Cref{lem:functorcats.presentability} will be used in the following section in our analysis of the 2-category $\Fun(\Adj,\St)$ of stable adjunctions. In the case of functor 2-categories in which the domain is a 1-category, the situation is even simpler. The following construction is used in \Cref{defn.Loc.three}:
\begin{lemma}\label{lem:loc3.adjoints}
Let $f:G\to H$ be a homomorphism of tori. Then the functor
\[
(B f) ^* : \Loc^{(3)}(BG) \to \Loc^{(3)}(BH)
\]
has both adjoints.
\end{lemma}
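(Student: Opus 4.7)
The plan is to deduce the existence of both adjoints directly from \Cref{cor.ambidext.adjn}, specialized to $\cW := \St$. Indeed, by the paper's conventions $\Pr^{L,\st_\kk}_2 = \Pr^L_\St$, and $\St \in \CAlg(\Pr^L)$ is a presentably symmetric monoidal category, so this choice of $\cW$ is allowed in the corollary. The classifying-space map $BG_{\sf B} \longra BH_{\sf B}$ (or rather, its opposite, depending on which direction one interprets the notation $(Bf)^*$) is a morphism in $\Spaces$, and the corollary states that the induced precomposition functor between functor categories $\Fun(-,\Pr^L_\St)$ fits into an adjoint triple. Reading off this adjoint triple exhibits both a left adjoint $(Bf)_!$ and a right adjoint $(Bf)_*$ to $(Bf)^*$.

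The content of \Cref{cor.ambidext.adjn} is in turn provided entirely by the presentable ambidexterity theorem \Cref{thm.prbl.ambidext}: the left and right adjoints to $(Bf)^*$ are given respectively by left and right Kan extension along (the relevant direction of) the map of $\infty$-groupoids $BG_{\sf B} \leftrightarrow BH_{\sf B}$, and these Kan extensions exist and can be computed fiberwise because $\Pr^L_\St$ admits all small limits and colimits. Moreover, since the fibers of a map of $\infty$-groupoids are themselves $\infty$-groupoids, the ambidexterity theorem applies to identify these fiberwise colimits with fiberwise limits, so $(Bf)_!\simeq (Bf)_*$.

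There is no substantial obstacle: the main work was already carried out in \Cref{thm.prbl.ambidext}. The only subtlety worth noting is one of size, namely that $\St$ is large rather than small, so implicitly the functor categories we consider (such as $\iota_1\Pr^L_\St$) are huge; this is consistent with the conventions articulated in \Cref{section.appendix.categorical.conventions} and requires no additional argument.
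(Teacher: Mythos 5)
Your underlying mechanism -- left and right Kan extension along a map of $\infty$-groupoids into a target admitting all limits and colimits -- is the same one the paper's proof rests on, but routing the argument through \Cref{cor.ambidext.adjn} gets the logical order backwards. The statement of \Cref{cor.ambidext.adjn} already presupposes that $(Bf)^*$ admits both adjoints: its actual content is the ambidexterity identification $f_!\simeq f_*$, which is not needed for the present lemma and is the only place the groupoid hypothesis enters (existence of the adjoints needs no such hypothesis). Indeed, \Cref{defn.Loc.three} cites \Cref{lem:loc3.adjoints} precisely for the \emph{construction} of $(Bf)_!$ and $(Bf)_*$, so their existence cannot be taken as an input from the corollary; it is what must be proved, and your second paragraph correctly falls back on the Kan-extension argument to do so.

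The genuine gap is the enrichment. $\Loc^{(3)}(BG)=\Fun(BG_{\sf B},\Pr^{L,\st_{\kk}}_2)$ is a $\Pr^{L,\st_{\kk}}_2$-enriched category (a 3-category), and the lemma asserts adjoints of $(Bf)^*$ in that enriched sense; the bare Kan-extension argument only produces adjoints of the underlying functor of 1-categories $\iota_1 (Bf)^*$. This is exactly why the lemma sits in the appendix subsection on enriching adjoint functors. The paper's proof first observes that, since $BG_{\sf B}$ is an $\infty$-groupoid, the underlying 1-category of $\Loc^{(3)}(BG)$ is the ordinary functor category $\Fun(\iota_1 BG_{\sf B},\iota_1\Pr^{L,\st_{\kk}}_2)$, obtains the unenriched adjoints by Kan extension into a co/complete target, and then upgrades them to enriched adjoints using that co/limits and co/tensors in these functor categories are computed pointwise, so that the unenriched adjoints preserve co/tensors and the lifting principle recalled at the start of that subsection (an unenriched adjoint commuting with co/tensors has an enriched adjoint; cf.\! \Cref{thm:enriched-adjt-functors} and \Cref{lem:functorcats.presentability}) applies. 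Your proposal is missing this last step; as written it establishes adjoints only at the level of $\iota_1$. The size remark you do include is fine but is not where the subtlety lies.
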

\begin{proof}
Note that since $BG_{\sf B}$ is a 1-category (in fact a 0-category), $BG_{\sf B} = \iota_1 BG_{\sf B},$ and therefore the underlying 1-category of $\Loc^{(3)}(BG) = \Fun(BG_{\sf B}, \Pr^{L,\st_{\kk}}_2)$ is given by the 1-categorical functor category
$\Fun(\iota_1 BG_{\sf B}, \iota_1\Pr^{L,\st_{\kk}}_2).$ The unenriched functor $\iota_1 (Bf)^*$ admits both left and right adjoints given by left and right Kan extension (note that $\iota_1 \Pr^{L,\st_{\kk}}_2$ admits all limits and colimits). Furthermore, since (co)limits and (co)tensors in a functor category are computed pointwise, these can be upgraded to adjoints of $(Bf)^*.$
\end{proof}

\section{Monads and adjunctions}
In this appendix we recall some features of the theory of adjunctions in a 2-category $\cC$. This theory is stated in terms of the {\em free adjunction}, a 2-category $\Adj$ described in \cite{RV-adj} (following an earlier description in \cite{Schanuel-Street} in the setting of discrete categories) with the universal property (\cite{RV-adj}*{Proposition 3.3.4}) that adjunctions in $\cC$ correspond to functors $\Adj\to \cC.$ 
One application of this theory is the following result, which justifies use of the phrases ``spherical functor'' and ``spherical adjunction'' interchangeably in our notation.
\begin{theorem}[\cite{Haugseng-adjunctions}*{Theorem 1.1}, extending \cite{RV-adj}*{Theorem 4.4.18}\footnote{\cite{RV-adj} gives an equivalence of the underlying $\infty$-groupoids, whereas \cite{Haugseng-adjunctions} establishes an equivalence of 2-categories.}]\label{thm:adjunctions.are.adjoints}
Let $\cC$ be a 2-category. Then there is an equivalence $\Fun(\Adj,\cC)\simeq \Fun([1],\cC)_{l.adj.}$ between adjunctions in $\cC$ and left adjoint 1-morphisms in $\cC.$
\qed
\end{theorem}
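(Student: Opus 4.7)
The plan is to exhibit the desired equivalence via restriction along the canonical inclusion $\iota: [1] \hookrightarrow \Adj$ selecting the universal left adjoint 1-morphism in $\Adj$. First, one verifies that this restriction factors through $\Fun([1], \cC)_{l.adj.}$: since $\iota$ picks out a 1-morphism in $\Adj$ which itself admits a right adjoint (with unit and counit supplied by the other generating cells), its composite with any functor $F: \Adj \to \cC$ is automatically a left adjoint 1-morphism in $\cC$.

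Essential surjectivity on objects of this restriction is precisely the Riehl--Verity theorem \cite{RV-adj}*{Theorem 4.4.18}: via the Schanuel--Street presentation of $\Adj$, the space of extensions of a given left adjoint in $\cC$ to a functor $\Adj \to \cC$ is contractible, because all higher cells in $\Adj$ are constrained up to contractible choice by the image of the unit and counit, which are forced to satisfy the triangle identities.

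To promote this to an equivalence of 2-categories, I would compare 1- and 2-morphisms via the $\infty$-categorical mate correspondence. A 1-morphism in $\Fun([1], \cC)_{l.adj.}$ between left adjoints $F, F'$ is a lax square $(u, v, \alpha)$ with $\alpha: F'u \Rightarrow vF$; the corresponding 1-morphism in $\Fun(\Adj, \cC)$ requires in addition a compatible lax square involving the right adjoints. The mate $\bar\alpha: uF^R \Rightarrow F'^R v$ of $\alpha$, constructed from $\alpha$ via the units and counits, supplies this compatibility canonically; uniqueness of the extension follows by applying the RV contractibility argument to $\Adj \times [1]$ in place of $\Adj$. A parallel argument at the 2-morphism level—using that 2-morphisms between lax squares in either target are pointwise 2-cells in $\cC$ satisfying naturality conditions that are preserved by mating—completes the comparison.

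The main obstacle is organizing the mate correspondence into a genuinely coherent assignment at the $(\infty, 2)$-categorical level; doing so by hand would require an unpleasant tower of higher coherences. Haugseng's approach avoids this by modeling both sides using (co)cartesian fibrations: adjunctions in $\cC$ correspond to two-sided fibrations over $[1]$, whereas left adjoint 1-morphisms correspond to cocartesian fibrations equipped with cartesian lifts of all morphisms. The equivalence then becomes an instance of the universal property of unstraightening, which encodes all coherence data automatically and bypasses the combinatorics of $\Adj$ in higher dimensions.
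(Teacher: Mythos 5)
Note first that the paper does not prove this statement: it is imported from Haugseng (extending Riehl--Verity) and closed immediately, so the only ``proof'' in the paper is the citation, and your sketch has to be judged against the cited results themselves. Your first two steps match what Riehl--Verity actually supply: restriction along $[1]\hookrightarrow\Adj$ does land in left adjoint 1-morphisms, and their contractibility theorem handles the comparison of underlying $\infty$-groupoids, which is exactly the scope the paper's footnote ascribes to \cite{RV-adj}.

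The gap is in your morphism-level step. You assert that the mate $\bar\alpha$ of a square $\alpha$ between the left adjoints ``supplies the compatibility canonically,'' but with the default reading of $\Fun(\Adj,\cC)$ (functors and natural transformations, so all naturality constraints invertible) a square $(u,v,\alpha)$ extends to a morphism of adjunctions only if the mate of $\alpha$ is \emph{invertible} --- the Beck--Chevalley condition --- and this genuinely fails in examples (e.g.\ in $\Cat$ or $\PrL$, a commuting square of pullback functors whose base-change map between the pushforwards is not an equivalence). So either the target must be the locally full subcategory of right-adjointable squares, or one must work with honestly lax transformations on both sides, where the mate correspondence is a bijection with no invertibility condition; the statement is sensitive to this choice, and your sketch slides between the two readings (you call the squares ``lax'' but never fix the convention for $\Fun(\Adj,\cC)$). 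Your proposed uniqueness argument --- applying Riehl--Verity contractibility to $\Adj\times[1]$ --- does not repair this: functors out of the cartesian product $\Adj\times[1]$ encode squares with invertible constraints (a lax square would require a Gray tensor product in place of the product, about which Riehl--Verity prove nothing), and their theorem concerns extending a single left adjoint to a homotopy coherent adjunction, not extending squares. Making the mate correspondence coherent at the $(\infty,2)$-level with the correct laxness/invertibility bookkeeping is precisely the content of Haugseng's theorem, which your final paragraph in effect concedes by deferring to his fibrational argument. In the context of this paper, which likewise just cites the result, that deferral is reasonable; but as a standalone proof the middle step is a genuine gap rather than a routine verification.
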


We now collect some notation and basic facts which we will use to describe the free adjunction.
\begin{notation}
\begin{enumerate}
    \item We write $\ell,r$ for the two objects of the 2-category $\Adj.$
    \item We write $\ell\xra{L} r$ and $\ell\xla{R} r,$ respectively, for the universal left and right adjoint 1-morphisms between these. 
    \item We write $\Mnd\subset \Adj$ for the full subcategory on the object $\ell\in \Adj.$ As the notation suggests, $\Mnd\simeq\fB\bDelta_+$ is the {\em free monad} \cite{RV-adj}*{\S 6}.
    \item For $\cC$ a 2-category, we write $\Fun(\Mnd,\cC)\xla{\ev_\ell}\Fun(\Adj,\cC)$ for the restriction along the inclusion $\Mnd\hookrightarrow \Adj,$ so that $\ev_\ell$ associates to an adjunction in $\cC$ its underlying monad.
    \item We denote the respective left and right adjoints of $\ev_\ell$ (constructed below in \Cref{lem:KL-EM}) by $\Kl$ and $\EM.$
\end{enumerate}
\end{notation}

%
%
%

We will be interested in the case where $\cC=\St$ is the 2-category of small stable categories. 
%
\begin{lemma}\label{lem:KL-EM}
The restriction functor $\Fun(\Mnd,\St)\xla{\ev_\ell} \Fun(\Adj,\St)$ has both adjoints. 
\end{lemma}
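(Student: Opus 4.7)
The plan is to apply the adjoint functor theorem for presentable $\St$-enriched categories, \Cref{thm:enriched-adjt-functors}. First I would invoke \Cref{lem:functorcats.presentability} to conclude that both $\Fun(\Adj,\St)$ and $\Fun(\Mnd,\St)$ are presentable stable 2-categories, with limits, colimits, tensors, and cotensors all computed pointwise. This is exactly the setup needed to apply the enriched adjoint functor theorem on both sides.

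Next I would observe that the restriction functor $\ev_\ell$ is itself computed pointwise: on $F \in \Fun(\Adj,\St)$ it simply returns the composite $\Mnd \hookrightarrow \Adj \xra{F} \St$. Consequently $\ev_\ell$ preserves arbitrary limits, arbitrary colimits, tensors, and cotensors, each of these being a pointwise structure in both source and target and therefore automatically preserved by any restriction functor.

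With these preservation properties in hand, the conclusion is immediate. By \Cref{thm:enriched-adjt-functors}(1), preservation of colimits and tensors gives $\ev_\ell$ a right adjoint, which one identifies with the Eilenberg--Moore construction $\EM$; by \Cref{thm:enriched-adjt-functors}(2), preservation of limits, cotensors, and filtered colimits gives $\ev_\ell$ a left adjoint, the Kleisli construction $\Kl$.

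The main potential obstacle is not existence of the adjoints per se, which is essentially automatic from pointwise preservation, but rather matching the abstractly-produced adjoints with their classical descriptions as Kleisli and Eilenberg--Moore constructions. Such an identification would be needed for downstream applications (for instance, when using the Kleisli construction to recover a spherical adjunction from its underlying monad in the proof of \Cref{thm.gr.coreps.LHS}), but it is not required for the bare existence claim of the lemma.
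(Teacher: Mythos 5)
Your proposal is correct and follows essentially the same route as the paper: invoke \Cref{lem:functorcats.presentability} for presentability and pointwise co/limits and co/tensors, note that the restriction $\ev_\ell$ preserves all of these, and then apply both halves of \Cref{thm:enriched-adjt-functors}. Your closing remark that identifying the abstract adjoints with the Kleisli and Eilenberg--Moore constructions is a separate matter is also consistent with the paper, which defers that identification to the surrounding discussion and to \Cref{prop:kl-of-surj}.
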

\begin{proof}
From \Cref{lem:functorcats.presentability}, the functor 2-categories $\Fun(\Mnd,\St)$ and $\Fun(\Adj,\St)$ are presentable and the restriction $\ev_\ell$ commutes with co/limits and co/tensors. Therefore, we may apply \Cref{thm:enriched-adjt-functors} to ensure the existence of left and right adjoints.
\end{proof}
The adjoints to $\ev_\ell$ are functors $\Fun(\Mnd,\St)\to \Fun(\Adj,\St)$ that produce an adjunction in $\St$ from a monad in $\St$: as the notation suggests, these are stable $\infty$-categorical versions of the Kleisli and Eilenberg--Moore constructions, respectively. More details on these constructions can be found in \cite{RV-adj}*{\S 6} and \cite{Christ-spherical}*{\S 1.4} but we will only need their universal properties and the following recognition principle for Kleisli adjunctions.

\begin{proposition}\label{prop:kl-of-surj}
An adjunction $(L\dashv R)\in \Fun(\Adj,\St)$ is Kleisli (i.e., it may be recovered as the Kleisli adjunction $\Kl(R\circ L)$ associated to its monad $R\circ L$) if and and only if $L$ is essentially surjective up to stablization.\footnote{The essential image of an
an exact functor $F: \cC \to \cD$ between stable categories is not necessarily stable. We say that $F$ is essentially surjective up to stabilization if $\cD \cong \Sigma^{(\infty,1)}_+ \Image F$.} 
\end{proposition}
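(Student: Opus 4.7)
The plan is to realize $\Kl$ explicitly and then analyze the counit of $\Kl \adj \ev_\ell$. For a monad $T$ on $\cC \in \St$, one can define $\Kl(T)$ as the adjunction $\cC \adjarr \Sigma^{(\infty,1)}_+(\cC_T)$, where $\cC_T$ denotes the classical (unstable) Kleisli category in $\Cat$---same objects as $\cC$, with $\Hom_{\cC_T}(X,Y) := \Hom_\cC(X,TY)$---and the left adjoint $L_{\Kl}$ factors as the canonical map $\cC \to \cC_T$ followed by the stable envelope embedding $\cC_T \hookrightarrow \Sigma^{(\infty,1)}_+(\cC_T)$. Its universal property in $\Fun(\Adj,\St)$ follows by combining the universal property of the unstable Kleisli construction (\cite{RV-adj}*{\S 6}) with the adjunction $\Sigma^{(\infty,1)}_+ \adj \fgt: \Cat \adjarr \St$; this verification is the main routine obstacle in the proof. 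In particular, the underlying monad of $\Kl(T)$ is canonically identified with $T$, so $\Kl$ is fully faithful, and $(L \adj R)$ is Kleisli if and only if the counit $\Kl(RL) \to (L \adj R)$ is an equivalence.

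The forward implication is immediate from the construction: the map $\cC \to \cC_T$ is bijective on objects, hence essentially surjective, so $L_{\Kl}$ has image equal to $\cC_T \subseteq \Sigma^{(\infty,1)}_+(\cC_T)$, whose stable envelope is the entire target.

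For the converse, fix $L \adj R$ with $L$ essentially surjective up to stabilization, set $T := RL$, and let $p: \Sigma^{(\infty,1)}_+(\cC_T) \to \cD$ denote the exact functor underlying the counit $\Kl(T) \to (L \adj R)$ (using \Cref{thm:adjunctions.are.adjoints}); the claim reduces to showing that $p$ is an equivalence. On the generating family $\{L_{\Kl}(X)\}_{X \in \cC}$, the induced map $\Hom(L_{\Kl} X, L_{\Kl} Y) \to \Hom(LX, LY)$ is an equivalence, since both sides compute $\Hom_\cC(X, TY)$ via the respective adjunctions and the counit of $\Kl \adj \ev_\ell$ induces the identity on underlying monads. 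Full faithfulness of $p$ extends to all pairs of objects via a two-step closure argument, fixing one variable at a time and using exactness of $p$ and of $\Hom(-,-)$ to show that the subcategory on which $p$ is fully faithful (with respect to a given generator) is stable and contains $\cC_T$, hence is all of $\Sigma^{(\infty,1)}_+(\cC_T)$. Finally, since $p$ is fully faithful and exact, its essential image is a stable subcategory of $\cD$ (closed under shifts, sums, and cofibers) that contains $L(\cC) = p(L_{\Kl}(\cC))$, and hence contains $\Sigma^{(\infty,1)}_+ L(\cC) = \cD$ by the hypothesis on $L$.
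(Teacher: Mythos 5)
Your converse direction is essentially the paper's argument: identify the counit $\Kl(RL)\to(L\dashv R)$ on the generators $L_{\Kl}(X)$ via the equivalence $\hom(L_{\Kl}X,L_{\Kl}Y)\simeq\hom_\cC(X,TY)\simeq\hom(LX,LY)$ coming from the fact that the counit is a morphism of adjunctions inducing the identity on monads, extend full faithfulness by the two-step closure argument, and get essential surjectivity from the hypothesis on $L$ (the paper states the closure steps even more tersely than you do). The real divergence is in the forward direction and in your treatment of $\Kl$ itself. The paper defines $\Kl$ purely abstractly, as the left adjoint to $\ev_\ell$ supplied by the adjoint functor theorem (\Cref{lem:KL-EM}), and never builds the model $\Sigma^{(\infty,1)}_+(\cC_T)$; it proves that $L_{\Kl(M)}$ is essentially surjective up to stabilization by a retract trick: the corestriction of $\Kl(M)$ to $\Sigma^{(\infty,1)}_+\Image L_{\Kl(M)}$ is again an adjunction with monad $M$, so initiality of $\Kl(M)$ forces the fully faithful inclusion $I$ of that subcategory into $\Kl(M)_r$ to admit a right inverse, hence to be an equivalence. (The remark following the proposition explains that this is a deliberate choice, precisely because their $\Kl$ has no specified set of objects.) Your route through the stabilized unstable Kleisli category is believable, but the step you defer as "the main routine obstacle" is where all the content of your forward direction lives: you must show that the right adjoint $Y\mapsto TY$ extends to an exact right adjoint on $\Sigma^{(\infty,1)}_+(\cC_T)$, that the resulting adjunction in $\St$ really is the value of the left adjoint to $\ev_\ell$ (not merely initial among adjunctions with monad $T$ over a fixed base, but natural in both the monad and the target), and that the comparison with the coherent Kleisli object of \cite{RV-adj} respects all this structure. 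None of this is likely to fail, but it is a genuine construction rather than a formality, and the paper's retract argument buys you the forward implication for free from the universal property alone; conversely, your explicit model, once verified, would also give the unit equivalence $T\simeq\ev_\ell\Kl(T)$ (full faithfulness of $\Kl$) without further argument, which the abstract approach has to extract separately.
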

\begin{remark}
    In the setting of discrete category theory, \Cref{prop:kl-of-surj} is essentially a tautology.\footnote{We learned this fact from Peter LeFanu Lumsdaine's answer at \url{https://mathoverflow.net/questions/26075/characterization-of-kleisli-adjunctions}.} This is because the Kleisli construction of a monad on a discrete category $\cC$ is usually {\em defined} to be a discrete category with the same object set as $\cC$. Since our Kleisli construction is defined by appealing to an abstract functor theorem, we can no longer refer to the set of objects in a specific model, but the result can just as well be deduced from the universal property of $\Kl.$
\end{remark}
\begin{proof}[Proof of \Cref{prop:kl-of-surj}]
Let $M$ be an exact monad on a small stable category $\cC$ and let $\Kl(M) =: (L_{\Kl(M)} \dashv R_{\Kl(M)})$ be the Kleisli adjunction. We will show the left adjoint $L_{\Kl(M)}$ is essentially surjective up to stabilization. Consider the essential image ${\sf Im \,}L_{\Kl(M)}$ of $L_{\Kl(M)}$. There is a morphism of adjunctions
\[
\begin{tikzcd}[column sep=2cm]
\cC
\arrow[yshift=0.9ex]{r}{L_{\Kl(M)}}
\arrow[leftarrow, yshift=-0.9ex]{r}[yshift=-0.2ex]{\bot}[swap]{R_{\Kl(M)} \circ I} \arrow[d, "\id_\cC"']
&
\Sigma^{(\infty,1)}_+ \Image L_{\Kl(M)} \arrow[d, hookrightarrow ,"I"] \\
\cC \arrow[yshift=0.9ex]{r}{L_{\Kl(M)}}
\arrow[leftarrow, yshift=-0.9ex]{r}[yshift=-0.2ex]{\bot}[swap]{R_{\Kl(M)}}
&
\Kl(M)~,
\end{tikzcd}
\]
where $I$ is the full and faithful inclusion. By the universal property of the Kleisli adjunction, the morphism $(\id_{\cC}, I)$ has a right inverse. Thus $I$ is also essentially surjective and hence an equivalence.

Let $(L\dashv R)\in \Fun(\Adj,\St)$ be an adjunction of stable categories, and let $M := R \circ L$ be the corresponding monad. By the universal property of the Kleisli adjunction, we have a morphism of adjunctions
\[
\begin{tikzcd}[column sep=2cm]
\cC \arrow[yshift=0.9ex]{r}{L_{\Kl(M)}}
\arrow[leftarrow, yshift=-0.9ex]{r}[yshift=-0.2ex]{\bot}[swap]{R_{\Kl(M)}} \arrow[d, "\id_\cC"']
&
\Kl(M) \arrow[d, "F"] \\
\cC \arrow[yshift=0.9ex]{r}{L}
\arrow[leftarrow, yshift=-0.9ex]{r}[yshift=-0.2ex]{\bot}[swap]{R}
&
\cD ~.
\end{tikzcd}
\]
Notice that $F$ is an equivalence if and only if $L$ is essentially surjective up to stablization and $F$ restricts to an equivalence between $\Image L_{\Kl(M)}$ and $\Image L$. Since $L \simeq F \circ L_{\Kl(M)}$ essential surjectivity of $F|_{\Image L_{\Kl(M)}}$ is clear. To see that $F|_{\Image L_{\Kl(M)}}$ is fully faithful, it is enough to notice the action of $F$ on morphism spaces is given by the canonical equivalence
\[
\hom_{\Kl(M)}(L_{\Kl(M)}(c), L_{\Kl(M)}(c')) \simeq \hom_{\cC}(c, M(c')) \simeq \hom_{\cD}(L(c), L(c'))~,
\]
which follows from the fact that $(\id_\cC,F)$ is a morphism of adjunctions.
\end{proof}

\section{Monoidal localizations of symmetric monoidal categories}
The purpose of this section is to prove the following result, which is used in the proof of \Cref{thm.gr.coreps.LHS}.

\begin{lemma}\label{prop:BN-localization}
The inclusion $\ZZ_{\leq 0} \hookra \ZZ$ is the monoidal localization at the object $-1$.
\end{lemma}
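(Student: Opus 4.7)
The plan is to verify the universal property of monoidal localization directly: I will show that for any monoidal $\infty$-category $\cC$, precomposition along the inclusion $\iota : \ZZ_{\leq 0} \hookra \ZZ$ induces an equivalence
\[
\iota^* : \Fun^\mon(\ZZ,\cC) \xlongra{\sim} \Fun^\mon_{(-1)\text{-inv}}(\ZZ_{\leq 0},\cC)
\]
onto the full subcategory of monoidal functors $F$ with $F(-1)$ invertible in $\cC$. One direction is immediate: since $-1$ is invertible in $\ZZ$ (with inverse $1$, because $(-1)+1 = 0$), any monoidal functor out of $\ZZ$ sends $-1$ to an invertible object, so $\iota^*$ factors through the asserted subcategory.

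The essential step is to construct an inverse. Given $F : \ZZ_{\leq 0} \to \cC$ with $F(-1)$ invertible, I will build an extension $\tilde F : \ZZ \to \cC$ by declaring $\tilde F(n) := F(n)$ for $n \leq 0$ and $\tilde F(n)$ to be a chosen tensor-inverse of $F(-n)$ for $n \geq 0$ (such an inverse exists since $F(-n) \simeq F(-1)^{\otimes n}$ is a tensor power of an invertible object). The morphisms in $\ZZ$ are then forced: since $\ZZ$ is a poset, every hom-space is either empty or contractible, so the required coherence data for $\tilde F$ to be a monoidal functor amount to conditions rather than extra structure. Full faithfulness of $\iota^*$ follows from the same input: any two extensions of $F$ differ by choices of inverses, and the space of tensor-inverses of a given invertible object in a monoidal $\infty$-category is contractible.

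I expect the main obstacle to be pushing the informal construction above through $\infty$-categorical coherence with full rigor — in particular, producing the monoidal structure maps on $\tilde F$ as a coherent system, not just levelwise. The cleanest route, which I intend to take, is to present $\ZZ$ as a filtered colimit (in $\Mon(\Cat)$) of the obvious system
\[
\ZZ_{\leq 0} \hookra \ZZ_{\leq 1} \hookra \ZZ_{\leq 2} \hookra \cdots
\]
whose transition maps each ``see'' the effect of tensoring with an additional formal inverse to $-1$, and to verify the universal property via a cofinality argument at the level of $\Fun^\mon(-,\cC)$. Alternatively (or as a sanity check), one can invoke the general machinery of monoidal localizations (as in, e.g., Lurie, \emph{Higher Algebra}, \S 2.2.1, or Robalo's construction in the context of symmetric monoidal $\infty$-categories), which produces the localization $\ZZ_{\leq 0}[(-1)^{-1}]$ abstractly; the content is then to identify this localization with $\ZZ$ on underlying posets, which is elementary because group-completing the commutative monoid $(\ZZ_{\leq 0},+)$ yields $(\ZZ,+)$ and hom-spaces on both sides are either empty or contractible.
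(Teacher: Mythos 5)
There is a genuine gap, and it sits exactly where the paper spends its effort: the distinction between the \emph{monoidal} ($\EE_1$) localization asserted in the lemma and the \emph{symmetric monoidal} localization that the available machinery actually computes. Your direct construction of an inverse to $\iota^*$ does not go through as stated: the coherence data of a monoidal functor $\tilde F\colon \ZZ \to \cC$ consists of equivalences $\tilde F(m)\otimes \tilde F(n)\simeq \tilde F(m+n)$ and their higher compatibilities, and these live in $\cC$, not in the poset $\ZZ$; so the fact that hom-spaces of $\ZZ$ are empty or contractible does not reduce them to ``conditions rather than extra structure.'' You acknowledge this obstacle, but both of your proposed rigorous fixes are flawed. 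The filtered system $\ZZ_{\leq 0}\hookra \ZZ_{\leq 1}\hookra \cdots$ is not a diagram in $\Mon(\Cat)$ at all, since $\ZZ_{\leq n}$ for $n\geq 1$ is not closed under addition, so ``present $\ZZ$ as a filtered colimit in $\Mon(\Cat)$ of this system'' does not parse; the telescope that does make sense, $\ZZ_{\leq 0}\xra{+(-1)}\ZZ_{\leq 0}\xra{+(-1)}\cdots$, is Robalo's formula, and it is only known to compute the \emph{symmetric} monoidal localization. Likewise, your fallback appeal to Robalo (or to compatible localizations in Higher Algebra, which concern inverting morphisms rather than objects) again produces the symmetric monoidal localization, and the elementary group-completion remark identifies that localization with $\ZZ$ (this is the easy step, carried out in the paper) but says nothing about the $\EE_1$ universal property the lemma actually claims.

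The missing idea is a comparison result showing that the underlying monoidal category of the symmetric monoidal localization also satisfies the monoidal universal property. The paper proves exactly this: it first identifies $\ZZ_{\leq 0}\hookra\ZZ$ as the symmetric monoidal localization via Robalo's telescope, and then proves that symmetric monoidal localizations compute monoidal localizations, which requires showing that the monoidal localization functor on marked monoidal categories preserves finite products (via a delooping to marked $2$-categories, an exponential-ideal argument, and the fact that a natural transformation of $2$-functors is invertible iff its components are) and then invoking Dunn additivity. Without some argument of this kind, or a genuinely coherent construction of the extension $\tilde F$ together with a proof that restriction is an equivalence of functor categories, your proof plan does not establish the statement.
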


\noindent We will deduce \Cref{prop:BN-localization} from the more general fact that symmetric monoidal localizations compute monoidal localizations (\Cref{prop.smloc.computes.mloc}).

We first prove the symmetric monoidal version of \Cref{prop:BN-localization}.

\begin{lemma}
\label{lem.sm.loc.of.Zleqzero}
The inclusion $\ZZ_{\leq 0} \hookra \ZZ$ is the symmetric monoidal localization at the object $-1$.
\end{lemma}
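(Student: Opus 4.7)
The goal is to verify the universal property of symmetric monoidal localization: for any symmetric monoidal $\infty$-category $\cD$, restriction along $\ZZ_{\leq 0} \hookra \ZZ$ induces a fully faithful functor
\[
\Fun^\otimes(\ZZ, \cD) \longhookra \Fun^\otimes(\ZZ_{\leq 0}, \cD)
\]
with essential image the full sub-$\infty$-category of those $F$ for which $F(-1) \in \cD$ is $\otimes$-invertible.

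The plan is to identify both functor categories explicitly as categories of augmented objects of $\cD$. I would first establish an equivalence $\Fun^\otimes(\ZZ_{\leq 0}, \cD) \simeq \cD_{/\uno}$ via the evaluation $F \longmapsto \bigl(F(-1) \xra{F(-1\to 0)} \uno\bigr)$. This amounts to recognizing $\ZZ_{\leq 0}$ as the free symmetric monoidal $\infty$-category on an augmented object, so that a symmetric monoidal functor out of it is determined, up to contractible choice of coherences, by its value at the generator $-1$ together with its value on the single basic poset morphism $-1 \to 0$ (every other object being a tensor power of $-1$ and every other morphism arising by tensoring the basic one with identities, uniquely composed thanks to the poset property of the source).

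Next, I would establish an analogous equivalence between $\Fun^\otimes(\ZZ, \cD)$ and the full subcategory of $\cD_{/\uno}$ on augmented objects $(x, x \to \uno)$ with $x$ $\otimes$-invertible. The invertibility condition is forced: the image of $-1$ under any symmetric monoidal functor out of $\ZZ$ must be invertible, with inverse the image of $+1$. Conversely, given such a datum, the extension to $\ZZ$ is constructed by sending $n$ to an appropriate tensor power of $x$ or of its $\otimes$-inverse, with poset morphisms assembled by tensoring the augmentation $x \to \uno$ (and its adjoint map $\uno \to x^{-1}$ obtained from the inversion data) against identities. Once both identifications are in place, the restriction functor corresponds precisely to the inclusion of invertible augmented objects into all augmented objects, which is exactly the defining universal property of the symmetric monoidal localization at $-1$.

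The main obstacle is rigorously establishing the $\infty$-categorical universal property of $\ZZ_{\leq 0}$, since higher-coherence data could in principle obstruct the naive $1$-categorical argument. I would handle this either by presenting $\ZZ_{\leq 0}$ as the symmetric monoidal envelope of the $\infty$-operad classifying augmented objects, or -- more directly -- by observing that $\ZZ_{\leq 0}$ is the nerve of a strict symmetric monoidal $1$-poset, so that the required universal property reduces to its (straightforward) $1$-categorical analogue via a truncation argument on the functor $\infty$-categories in play.
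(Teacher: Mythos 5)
Your route is genuinely different from the paper's -- the paper invokes \cite{Robalo}*{Corollary 2.22} to write the symmetric monoidal localization as the sequential colimit of $\ZZ_{\leq 0} \xra{+(-1)} \ZZ_{\leq 0} \xra{+(-1)} \cdots$ and then identifies that filtered colimit with $\ZZ$ directly (essential surjectivity is clear, and full faithfulness holds because hom-spaces commute with filtered colimits) -- but your argument has a gap at its first and most load-bearing step. The poset $\ZZ_{\leq 0}$ is \emph{not} the free symmetric monoidal $\infty$-category on an augmented object, so the identification $\Fun^\otimes(\ZZ_{\leq 0},\cD)\simeq\cD_{/\uno}$ fails. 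In the free symmetric monoidal category $\cF$ on an object $x$ with a map $x\xra{\epsilon}\uno$, the morphisms $\epsilon\otimes\id_x$ and $\id_x\otimes\epsilon\colon x^{\otimes 2}\to x$ are distinct (they are interchanged by precomposition with the symmetry $\tau_{x,x}$, which is not the identity in $\cF$); more generally $\hom_{\cF}(x^{\otimes n},x^{\otimes m})$ is the discrete set of injections of an $m$-element set into an $n$-element set, not a point. Since $\ZZ_{\leq 0}$ is a poset, a symmetric monoidal functor $F$ with $F(-1)=x$ must send the unique arrow $-2\to-1$ to a single map identified simultaneously with $\epsilon\otimes\id_x$ and $\id_x\otimes\epsilon$, together with coherent higher identifications for all $n$. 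This is a genuine restriction on the augmented object: already in the discrete category of vector spaces, $x=\kk^2$ with $\epsilon$ the first-coordinate projection extends to no symmetric monoidal functor out of $\ZZ_{\leq 0}$. For the same reason your fallback via $1$-truncation cannot rescue the argument -- the failure occurs in a $1$-category. (The invertible case you need for $\Fun^\otimes(\ZZ,\cD)$ is better behaved, but you never get to use it.)

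What $\ZZ_{\leq 0}$ actually corepresents is an augmented object equipped with exactly this extra coherence, and making that precise is the ``nontrivial amount of combinatorics'' that the authors explicitly decline to carry out in the closely analogous monoidal setting (see the remark following \Cref{cor.ideals.are.algebras}, on the comparison $\Env^\unital([1],\min)\to(\ZZ_{\leq 0},+)$). If you want a proof that avoids this, the paper's telescope argument is the efficient alternative: the object $-1$ is automatically symmetric in Robalo's sense because $\ZZ_{\leq 0}$ is a poset, the localization is therefore the colimit of the $+(-1)$ telescope, and the functors $\ZZ_{\leq 0}\xra{+n}\ZZ$ exhibit that colimit as $\ZZ$.
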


\begin{proof}
By \cite{Robalo}*{Corollary 2.22}, the symmetric monoidal localization of $\ZZ_{\leq 0}$ is given by
\begin{equation}
\label{telescope.for.sm.loczn.of.Zleqzero}
\colim \left(
\ZZ_{\leq 0}
\xra{+ (-1)}
\ZZ_{\leq 0}
\xra{+ (-1)}
\cdots
\right)
~.
\end{equation}
The symmetric monoidal functor $\Cref{telescope.for.sm.loczn.of.Zleqzero} \ra \ZZ$ arising from its universal property acts on the $n^{\rm th}$ term as the functor $\ZZ_{\leq 0} \xra{+n} \ZZ$. This is clearly surjective, and it is fully faithful since the extraction of hom-spaces (being a finite limit) commutes with filtered colimits.
\end{proof}

In order to prove that these localizations agree, we pass to a more general framework.

\begin{definition}
A \bit{marking} of a monoidal or symmetric monoidal category is a set of equivalence classes of objects that contains all the invertible objects. Similarly, a \bit{marking} of a 2-category is a set of equivalence classes of 1-morphisms that contains all the equivalences.\footnote{These are useful in the proof of \Cref{lem.monloc.pres.finite.products}, since $\Mon(\Cat)$ is not cartesian closed.} We write $\CMon(\Cat)'$, $\Mon(\Cat)'$, and $\Cat_2'$ for the evident categories of these.
\end{definition}

\begin{observation}
We have a solid commutative diagram
\[
\begin{tikzcd}[column sep=1.5cm]
\CMon(\Cat)'
\arrow[yshift=0.9ex, dashed]{r}{\tilde{L}}
\arrow[hookleftarrow, yshift=-0.9ex]{r}[swap]{\tilde{\min}}[yshift=-0.2ex]{\bot}
\arrow{d}[swap]{\fgt'}
&
\CMon(\Cat)
\arrow{d}{\fgt}
\\
\Mon(\Cat)'
\arrow[yshift=0.9ex, dashed]{r}{L}
\arrow[hookleftarrow, yshift=-0.9ex]{r}[swap]{\min}[yshift=-0.2ex]{\bot}
&
\Mon(\Cat)
\\
\Cat_{2,*}'
\arrow[yshift=0.9ex, dashed]{r}{L_2}
\arrow[hookleftarrow, yshift=-0.9ex]{r}[swap]{\min_2}[yshift=-0.2ex]{\bot}
\arrow[hookleftarrow]{u}{\fB'}
&
\Cat_{2,*}
\arrow[hookleftarrow]{u}[swap]{\fB}
\end{tikzcd}
~,
\]
in which the horizontal functors are the fully faithful inclusions given by minimal markings.\footnote{Here, $\Cat_{2,*} := (\Cat_2)_{\pt/}$ denotes the category of pointed 2-categories, and $\fB$ denotes the fully faithful delooping functor (with $\fB'$ given by marking the 1-morphisms that correspond to the marked objects).} Moreover, the minimal marking functors preserve limits, and hence by presentability they admit the indicated left adjoint localization functors.
\end{observation}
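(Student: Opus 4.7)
My plan is to verify each piece of the observation in turn. First I would show that the minimal marking functors $\tilde{\min}$, $\min$, and $\min_2$ are fully faithful. A morphism in $\CMon(\Cat)'$ between two marked symmetric monoidal categories is a symmetric monoidal functor whose action on (equivalence classes of) objects carries the source marking into the target marking. Since any symmetric monoidal functor sends invertible objects to invertible objects, this marking-preservation condition is automatic when both source and target are minimally marked; hence the mapping space in $\CMon(\Cat)'$ between $\tilde{\min}(\cC)$ and $\tilde{\min}(\cD)$ agrees with the mapping space in $\CMon(\Cat)$ between $\cC$ and $\cD$. The same reasoning handles $\min$ (using that monoidal functors preserve invertibles) and $\min_2$ (using that 2-functors preserve equivalences). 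Commutativity of the solid square is then essentially tautological: forgetting $\EE_\infty$-structure to $\EE_1$, or delooping a monoidal category to a pointed 2-category, does not alter which objects (respectively, 1-morphisms) are invertible, so the minimal marking is preserved on the nose by $\fgt$, $\fgt'$, $\fB$, and $\fB'$.

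Next I would address limit preservation. I would describe limits in $\CMon(\Cat)'$ concretely: a diagram $\{(\cC_i, m_i)\}$ of marked symmetric monoidal categories has limit whose underlying symmetric monoidal category is $\lim \cC_i$ (formed in $\CMon(\Cat)$), equipped with the smallest marking closed under isomorphism and containing both the invertibles and the preimages under the projections of the markings $m_i$. When the inputs are minimally marked, the preimages consist of objects of $\lim \cC_i$ all of whose projections are invertible, and such objects are themselves invertible because the projections are jointly conservative symmetric monoidal functors. Hence the induced marking on the limit is the minimal one, so $\tilde{\min}$ preserves limits; the analogous descriptions for $\Mon(\Cat)'$ and $\Cat_{2,*}'$ give limit preservation for $\min$ and $\min_2$ as well.

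Finally, to produce the left adjoints $\tilde{L}$, $L$, $L_2$, I need presentability of $\CMon(\Cat)'$, $\Mon(\Cat)'$, and $\Cat_{2,*}'$. I would realize each as a pullback of presentable $\infty$-categories: concretely, a marked symmetric monoidal category is a symmetric monoidal $\cC$ together with a sub-$\infty$-groupoid $m \subseteq \iota_0 \cC$ containing the invertibles, and such pairs fit into a pullback of $\CMon(\Cat)$ against a presentable $\infty$-category of ``marked $\infty$-groupoids under the invertibles'' over the functor extracting $\iota_0 \cC$ and its invertibles. The main obstacle I anticipate is this presentability check: one must model the condition ``marking contains all invertibles'' as a relative presentable fibration (or equivalently as an accessible reflective subcategory of the $\infty$-category of arbitrarily marked objects), and verify that the resulting limits and filtered colimits behave correctly. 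Once this is in hand, the limit-preserving fully faithful inclusions $\tilde{\min}$, $\min$, $\min_2$ admit left adjoints by the presentable adjoint functor theorem, producing the desired localization functors $\tilde{L}$, $L$, $L_2$.
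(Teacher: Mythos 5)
Your proposal is correct and follows essentially the same route as the paper, which records this statement as an Observation justified only by the one-line remark that the minimal-marking functors are fully faithful inclusions, preserve limits, and hence admit left adjoints by presentability; your write-up supplies exactly the details that remark leaves implicit (full faithfulness via preservation of invertibles, commutativity of the solid square, limit preservation, and presentability feeding the adjoint functor theorem). One small correction: the marking on a limit of marked categories is the set of objects all of whose projections are marked, i.e.\ the intersection $\bigcap_i p_i^{-1}(m_i)$ (the largest marking making the projections marking-preserving), not the smallest marking containing those preimages --- though your subsequent argument via joint conservativity of the projections uses the correct description anyway.
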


Here is our main result.

\begin{proposition}
\label{prop.smloc.computes.mloc}
The diagram
\[ \begin{tikzcd}[column sep=1.5cm]
\CMon(\Cat)'
\arrow{r}{\tilde{L}}
\arrow{d}[swap]{\fgt'}
&
\CMon(\Cat)
\arrow{d}{\fgt}
\\
\Mon(\Cat)'
\arrow{r}[swap]{L}
&
\Mon(\Cat)
\end{tikzcd} \]
commutes (i.e.\! the canonical morphism $L \circ \fgt' \ra \fgt \circ \tilde{L}$ is an equivalence).
\end{proposition}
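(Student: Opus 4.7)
The plan is to present both $\tilde{L}(\cC, W)$ and $L(\fgt'(\cC, W))$ as explicit telescopic filtered colimits, and to conclude from the fact that the forgetful functor $\fgt\colon \CMon(\Cat) \to \Mon(\Cat)$ preserves filtered colimits. First I would reduce to the case of a single marked object: since an arbitrary marking $W$ can be inverted by iterating single-object localizations (transfinitely if $W$ is infinite), and both $L$ and $\tilde{L}$ commute with colimits, this reduction is formal, and it suffices to verify the claim for $W = \{x\}$.

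Next, Robalo's argument (as used in the proof of \Cref{lem.sm.loc.of.Zleqzero}) yields
\[
\tilde{L}(\cC, \{x\}) \simeq \colim_{\CMon(\Cat)}\bigl( \cC \xra{(-)\otimes x} \cC \xra{(-)\otimes x} \cdots \bigr),
\]
where the transition maps are symmetric monoidal endofunctors via the braiding. An essentially identical cofinality argument, using only that the symmetric monoidal structure on $\cC$ supplies $x$ with a central lift and hence $(-) \otimes x$ with a \emph{monoidal} (rather than symmetric monoidal) endofunctor structure on $\fgt'(\cC)$, yields
\[
L(\fgt'(\cC, \{x\})) \simeq \colim_{\Mon(\Cat)}\bigl( \fgt'(\cC) \xra{(-)\otimes x} \fgt'(\cC) \xra{(-)\otimes x} \cdots \bigr).
\]

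It then suffices to show that $\fgt$ carries the first filtered colimit to the second; equivalently, that $\fgt\colon \CMon(\Cat) \to \Mon(\Cat)$ preserves filtered colimits. I would deduce this from the identifications $\CMon(\Cat) \simeq \Alg_{\EE_\infty}(\Cat)$ and $\Mon(\Cat) \simeq \Alg_{\EE_1}(\Cat)$ together with the fact that filtered colimits of algebras over the coherent operads $\EE_\infty$ and $\EE_1$ are computed in the underlying $\infty$-category (cf.~\cite{Lurie-HA}*{\S 3.2}); as the forgetful $\Mon(\Cat) \to \Cat$ is conservative and filtered-colimit-preserving, this forces $\fgt$ itself to preserve filtered colimits, and the desired equivalence then follows by tracing through the universal properties.

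The hard part will be carefully establishing the monoidal cofinality statement used to identify $L(\fgt'(\cC, \{x\}))$ with the telescopic colimit in $\Mon(\Cat)$. While Robalo's theorem handles the symmetric monoidal case, the monoidal analog requires one to verify that monoidal functors out of the colimit really do classify monoidal functors out of $\cC$ inverting $x$, using only the central (not braided) refinement of $(-)\otimes x$. This is essentially a bookkeeping adaptation of Robalo's argument, but it is the ingredient that genuinely distinguishes the present proposition from \Cref{lem.sm.loc.of.Zleqzero}.
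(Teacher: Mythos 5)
Your strategy breaks down at the telescope step, in two ways. First, the transition map $(-)\otimes x\colon \cC \to \cC$ is not a (symmetric) monoidal endofunctor of $\cC$ --- tensoring with an object is only a morphism of $\cC$-modules --- so the diagrams whose colimits you propose to take in $\CMon(\Cat)$ and in $\Mon(\Cat)$ are not actually diagrams in those categories. Robalo's telescope is a colimit of underlying categories (equivalently, in $\Mod_\cC$), and it identifies only the \emph{underlying category} of the localization; the symmetric monoidal structure on $\tilde{L}(\cC,\{x\})$ is not produced by the colimit but separately, from the universal property. Second, and more seriously, even at the level of underlying categories the identification of the localization with the telescope is \emph{false} for a general marked object: Robalo's result requires the cyclic permutation of $x^{\otimes 3}$ to be homotopic to the identity. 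That hypothesis holds trivially in \Cref{lem.sm.loc.of.Zleqzero} because $\ZZ_{\leq 0}$ is a poset, but \Cref{prop.smloc.computes.mloc} concerns arbitrary marked symmetric monoidal categories, where no such symmetry is available. The remaining ingredients (reduction to a single marked object, preservation of filtered colimits by $\fgt$) are reasonable, but they are applied to colimits that do not compute the localizations in question.

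The paper avoids explicit formulas entirely: it first shows (\Cref{lem.monloc.pres.finite.products}) that the monoidal localization functor $L$ preserves finite products, by delooping into marked pointed $2$-categories and checking that the minimally marked ones form an exponential ideal. This makes $L \adj \min$ a cartesian symmetric monoidal adjunction, which therefore induces an adjunction on $\CMon(-)$ of both sides compatibly with the forgetful functors; Dunn additivity then identifies $\CMon(L)$ with $\tilde{L}$, giving the commuting square. If you wish to salvage a computational proof, you would need to replace the telescope by a construction valid for non-symmetric objects, which is considerably more delicate than the bookkeeping adaptation you anticipate.
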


Of course, this immediately implies our original claim.

\begin{proof}[Proof of \Cref{prop:BN-localization}]
This follows by combining \Cref{lem.sm.loc.of.Zleqzero} and \Cref{prop.smloc.computes.mloc}.
\end{proof}

Our proof of \Cref{prop.smloc.computes.mloc} uses the following result.

\begin{lemma}
\label{lem.monloc.pres.finite.products}
The localization functor $\Mon(\Cat)' \xra{L} \Mon(\Cat)$ preserves finite products.
\end{lemma}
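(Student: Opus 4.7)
The plan is to separately verify preservation of the terminal object and of binary products, reducing the latter to an explicit filtered-colimit model for the monoidal localization.

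For the terminal object of $\Mon(\Cat)'$, namely the one-object marked monoidal category whose unique object is the unit, there is nothing to invert, so $L$ preserves it.

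For binary products, given $(\cA, S_A), (\cB, S_B) \in \Mon(\Cat)'$, I would first check that their categorical product is $(\cA \times \cB, S_A \times S_B)$ with the pointwise marking: this marking contains all invertibles of $\cA \times \cB$ (since invertibles in a product are componentwise invertible), the projections preserve markings, and the universal property with respect to a third object $(\cD, T)$ reduces to the componentwise marking condition. By functoriality of $L$, the projections induce a comparison map
\[
\alpha : L(\cA \times \cB, S_A \times S_B) \longra L(\cA, S_A) \times L(\cB, S_B)
\]
in $\Mon(\Cat)$; since the forgetful functor $\Mon(\Cat) \to \Cat$ is conservative and preserves finite products, it suffices to show that the underlying functor of $\alpha$ is an equivalence.

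For this, I would appeal to an explicit filtered-colimit presentation: the underlying $\infty$-category of $L(\cC, S)$ is a filtered colimit in $\Cat$ of copies of $\cC$, with transition maps given by tensoring (on the left or right) with elements of $S$. This is the non-symmetric monoidal analogue of the telescope formula appearing in the proof of the preceding lemma (where \cite{Robalo}*{Corollary 2.22} handles the symmetric case); in the non-symmetric setting, the indexing category must accommodate both left and right tensoring, but it remains filtered. Given this, the indexing category for the product marking $S_A \times S_B$ factors (cofinally) as a product $I_A \times I_B$ of the individual indexing categories, and since filtered colimits commute with finite products in $\Cat$, one computes
\[
\fgt L(\cA \times \cB, S_A \times S_B) \simeq \colim_{I_A \times I_B}(\cA \times \cB) \simeq \colim_{I_A}(\cA) \times \colim_{I_B}(\cB) \simeq \fgt L(\cA, S_A) \times \fgt L(\cB, S_B).
\]

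The hard part will be rigorously establishing the filtered-colimit formula in the non-symmetric monoidal setting. Unlike the clean sequential colimit of $-\otimes s$ functors in the symmetric case, in the merely monoidal setting formally inverting an object requires alternating left and right tensoring; I would expect to address this by adapting Robalo's Bousfield-localization argument to include the maps $s \otimes -$ alongside $-\otimes s$ as ``inverting'' maps, and then verifying by hand that the resulting indexing diagram is filtered and functorial in $(\cC,S)$.
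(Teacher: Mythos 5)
Your reduction steps are fine (identifying the product in $\Mon(\Cat)'$ as the pointwise marking, passing to underlying categories via the conservative product-preserving forgetful functor, handling the terminal object), but the argument then rests entirely on an explicit filtered-colimit presentation of the \emph{non-symmetric} monoidal localization, and this is precisely the part you have not established -- and it is not a routine adaptation. Robalo's telescope formula is a statement about \emph{symmetric} monoidal localization, and even there the sequential colimit along $-\otimes s$ computes the localization only under a symmetry hypothesis on the inverted object (the cyclic permutation on $s^{\otimes 3}$ acting trivially); in the paper this is harmless because the object $(-1)\in\ZZ_{\leq 0}$ lives in a poset, but for an arbitrary marked monoidal category there is no such hypothesis available. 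In the merely monoidal setting you must make $s\otimes -$ and $-\otimes s$ simultaneously invertible, for \emph{every} element of a multiplicatively closed set of markings, and it is not clear that any filtered diagram of copies of $\cC$ with transition maps given by one-sided tensoring has a colimit that (a) carries a monoidal structure, (b) actually inverts the marked objects, and (c) has the universal property defining $L$. The lemma's $L$ is only defined abstractly as a left adjoint via presentability, so without proving such a formula (which would be a substantial result in its own right, and likely needs extra hypotheses) the chain of equivalences computing $\fgt\,L(\cA\times\cB)$ does not get off the ground. The cofinality claim identifying the indexing category for $S_A\times S_B$ with a product $I_A\times I_B$ would also need a genuine proof once the diagrams are written down.

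For contrast, the paper sidesteps any explicit model: it deloops into pointed $2$-categories with marked $1$-morphisms, observes that the delooping intertwines $L$ with the corresponding $2$-categorical localization $L_2$, and then invokes the general criterion that a reflective localization preserves finite products when the reflective subcategory is an exponential ideal (\cite{MG-rezk}*{Lemma 1.22}); the exponential-ideal condition is checked using the fact that a natural transformation of $2$-functors is an equivalence if and only if its components are. If you want to salvage your route, you would either need to prove the non-symmetric localization formula in the required generality or replace that step with a formal argument of this kind.
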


\begin{proof}
Comparing universal properties, we see that the canonical morphism $L_2 \circ \fB' \ra \fB \circ L$ is an equivalence. It is clear that the fully faithful inclusions $\fB'$ and $\fB$ preserve finite products, so it suffices to show that $L_2$ preserves finite products. By \cite{MG-rezk}*{Lemma 1.22}, this follows from the fact that $\min_2$ is the inclusion of an exponential ideal, which in turn follows from the fact that a natural transformation is an equivalence if and only if all of its components are equivalences (see e.g.\! \cite[Lemma 2.5.1]{Macpherson-bivariant}).
\end{proof}

\begin{proof}[Proof of \Cref{prop.smloc.computes.mloc}]
It follows from \Cref{lem.monloc.pres.finite.products} that the adjunction $L \adj \min$ is cartesian symmetric monoidal. Hence, it induces an adjunction
\[ \begin{tikzcd}[column sep=2cm]
\CMon(\Mon(\Cat)')
\arrow[yshift=0.9ex]{r}{\CMon(L)}
\arrow[hookleftarrow, yshift=-0.9ex]{r}[swap]{\CMon(\min)}[yshift=-0.2ex]{\bot}
&
\CMon(\Mon(\Cat))
\end{tikzcd} \]
that commutes with the forgetful functors to the adjunction $L \adj \min$. Moreover, by by Dunn additivity the adjunction $\CMon(L) \adj \CMon(\min)$ coincides with the adjunction $\tilde{L} \adj \tilde{\min}$.
\end{proof}

\bibliographystyle{plain}
\bibliography{refs}
\end{document}